\newcommand{\up}{\vspace{-0.5cm}}
\numberwithin{equation}{section}
\newtheorem{propo}{Proposition}[section]
\newtheorem{corol}[propo]{Corollary}
\newtheorem{theor}[propo]{Theorem}
\newtheorem{lemma}[propo]{Lemma}
\theoremstyle{definition}
\theoremstyle{remark}
\newcommand{\ZZ}{\mathbb{Z}}
 \newcommand{\kk}{\mathbb{K}}
\newcommand{\End}{\operatorname{End}}
\newcommand{\Hom}{\operatorname{Hom}}
\newcommand{\Ker}{\operatorname{Ker}}
\newcommand{\Aut}{\operatorname{Aut}}
\newcommand{\id}{\operatorname{id}}
\newcommand{\GL}{\operatorname{GL}}
\newcommand{\abelian}{\operatorname{Com}}
\newcommand{\Perm}{\mathsf{P}}
\newcommand{\Mat}{\operatorname{Mat}}
\newcommand{\grAlg}{g\mathscr{A}}
\newcommand{\grCom}{cg\mathscr{A}}
\newcommand{\Com}{c\mathscr{A}}
\newcommand{\Gr}{\mathscr{G}r}
\newcommand{\Set}{\mathscr{S}et}
\newcommand{\Lie}{\mathscr{L}ie}
\newcommand{\Kscr}{\mathscr{K}}
\newcommand{\Xscr}{\mathscr{X}}
\newcommand{\Yscr}{\mathscr{Y}}
\newcommand{\Rscr}{\mathscr{R}}
\newcommand{\Gscr}{\mathscr{G}}
\newcommand{\Mscr}{\mathscr{M}}
\newcommand{\Mon}{\mathscr{M}on}
\newcommand{\bracket}[2]{\left\{#1,#2\right\}}
\newcommand{\double}[2]{\left\{\!\!\left\{#1,#2\right\}\!\!\right\}}
\newcommand{\triple}[3]{\left\{\!\!\left\{#1,#2,#3\right\}\!\!\right\}}
\newcommand{\triplep}[3]{\left\vert\!\left\vert#1,#2,#3\right\vert\!\right\vert}
\newcommand{\by}[1]{\stackrel{\eqref{#1}}{=}}
\title{Brackets in representation algebras \\ of  Hopf algebras}
\author{
  Gw\'ena\"el Massuyeau\thanks{IRMA,  Universit\'e de Strasbourg \& CNRS,
 7 rue Ren\'e Descartes, 67084 Strasbourg, France; \emph{current address:} {IMB}, Universit\'e  Bourgogne Franche-Comt\'e \& CNRS, 21000 Dijon, France;
 $\mathtt{gwenael.massuyeau@u\hbox{-}bourgogne.fr}$}
   \and
   Vladimir Turaev\thanks{Department of Mathematics,
Indiana University,
Bloomington IN47405, USA;
 $\mathtt{vturaev@yahoo.com}$}
}
\begin{document}

\maketitle

\vspace{1cm}

\begin{abstract}
For any graded bialgebras  $A$ and~$B$,
we  define a commutative graded algebra $A_B$    representing the functor of  $B$-representations of $A$.
When $A$ is a cocommutative graded Hopf algebra and $B$ is a commutative ungraded Hopf algebra,
we   introduce a method  deriving a     Gerstenhaber bracket in  $A_B$ from      a Fox pairing in~$A$ and a balanced biderivation in~$B$.
Our construction  is inspired by Van den Bergh's non-commutative Poisson  geometry,
  and  may be viewed as an algebraic generalization of  the Atiyah--Bott--Goldman Poisson structures on   moduli spaces of representations of surface groups.
\end{abstract}

\vspace{1cm}

\emph{2010 Mathematics Subject Classification:} 17B63, 16T05

\vspace{0.5cm}

\emph{Keywords and phrases:} Poisson algebra, Hopf algebra, representation algebra,
Gerstenhaber algebra, quasi-Poisson algebra,  double Poisson algebra.

\newpage

\section{Introduction}

 Given  bialgebras  $A$ and~$B$, we introduce a  commutative \emph{representation algebra} $A_B$ which  encapsulates   $B$-representations   of~$A$ (defined in the paper).
For example, if  $A$ is the group algebra of a group~$\Gamma$
and~$B$ is the coordinate algebra of a group scheme~$\Gscr$,
 then  $A_B$ is the coordinate algebra of the affine scheme $C\mapsto \Hom_{  \Gr} (\Gamma, \Gscr (C))$, where~$C$ runs over   all  commutative algebras.
  Another  example:  if $A$ is the enveloping algebra   of a Lie algebra~$\mathfrak{p}$
and  $B$ is  the coordinate  algebra   of a group scheme  with Lie algebra~$\mathfrak{g}$,   then   $A_B$ is the coordinate algebra of
the affine scheme   $C \mapsto
{\Hom_{\Lie}(\mathfrak{p},\mathfrak{g} \otimes    C)}$.

The goal of this paper is to  introduce an algebraic method producing Poisson brackets in the representation algebra~$A_B$.
   We      focus on the case where~$A$ is a cocommutative  Hopf algebra   and~$B$ is a commutative Hopf
algebra as   in the  examples above.
 We   assume      $A$ to be  endowed      with a  bilinear pairing $\rho:{A \times A }\to A$
which is an   antisymmetric   Fox pairing in the sense of \cite{MT_dim_2}.
We introduce  a notion of  a  balanced   biderivation in $B$,
  which is a   symmetric bilinear form $\bullet: B \times B \to \kk$ satisfying certain conditions.
Starting from  such~$\rho$ and~$\bullet$,  we construct an antisymmetric  bracket    in $A_B$  satisfying the Leibniz rules.
Under further  assumptions  on $\rho$ and $\bullet$,    this bracket   satisfies the Jacobi identity, i.e., is a Poisson bracket.

 Our approach is  inspired by Van den Bergh's  \cite{VdB}     Poisson geometry in   non-commuta\-ti\-ve algebras, see also \cite{Cb}.
  Instead  of double brackets and general linear groups as in \cite{VdB},  we work with Fox pairings and   arbitrary group schemes.
  Our  construction of   brackets   yields  as special cases
  the  Poisson   structures   on  moduli spaces of    representations of surface groups   introduced by
Atiyah--Bott  \cite{AB}   and studied by  Goldman \cite{Go1,Go2}.
  Our  construction   also yields   the   quasi-Poisson  refinements of      those structures
  due to Alekseev,  Kosmann-Schwarzbach and Meinrenken, see    \cite{AKsM,MT_dim_2,LS,Ni}.

 Most of our work applies in the    more general setting  of   graded  Hopf algebras.
 The corresponding representation algebras are also graded,  and    we    obtain
Gerstenhaber brackets rather than Poisson brackets.
This generalization combined with \cite{MT_high_dim} yields analogues of the  Atiyah--Bott--Goldman  brackets  for   manifolds of  all dimensions $\geq 3$.

 The  paper consists of  12   sections and 3 appendices. We  first recall  the
language of graded  algebras/coalgebras and related notions  (Section~\ref{Preliminaries}),
 and  we  discuss the representation algebras (Section~\ref{Representation algebras}).
 Then we introduce  Fox pairings (Section~\ref{Fox pairings and biderivations1}) and balanced biderivations (Section~\ref{Fox pairings and biderivations2}).
 We use them to define brackets in representation algebras in Section~\ref{Brackets in representation algebras}.
 In Section~\ref{bfc} we show how to obtain  balanced    biderivations from   trace-like  elements in a Hopf algebra   $B$
 and, in this case, we prove the equivariance of  our bracket  on the representation algebra $A_B$ with respect to a natural  coaction of~$B$.
 In Section~\ref{trace-like_examples}, we discuss    examples of trace-like elements arising  from classical matrix groups.
 The Jacobi identity for our  brackets    is   discussed in Section~\ref{The Jacobi identity},
  which constitutes the technical core of the paper.   In Section~\ref{The quasi-Poisson case} we study  quasi-Poisson brackets.
    In Section~\ref{invariant_subalgebra} we compute the bracket for certain $B$-invariant elements  of $A_B$.
    In Section~\ref{surfaces} we discuss the intersection Fox pairings of surfaces and the induced Poisson and quasi-Poisson brackets on moduli spaces.
In  Appendix~\ref{group_schemes}, we recall
 the basics of the theory of group schemes needed in the paper. In Appendix~\ref{relation_to_VdB} we  discuss  relations to Van den Bergh's theory.
  In Appendix~\ref{free}  we  discuss the case where $B$ is a free commutative Hopf algebra.

Throughout the paper we fix a commutative  ring~$\kk$ which
serves as the ground  ring of all   modules, (co)algebras,   bialgebras, and Hopf algebras.    In
particular, by    a    \emph{module}  we mean a    module over~$\kk$.
  For modules $X$ and $Y$, we denote by $\Hom(X,Y)$ the module  of $\kk$-linear maps $X \to Y$
and we  write $X \otimes Y$ for $X\otimes_\kk Y$. The \emph{dual} of a module $X$ is  the module $X^*=\Hom(X,\kk)$.

\section{Preliminaries}\label{Preliminaries}

We review  the graded versions of the  notions of  a  module, an  algebra, a   coalgebra, a   bialgebra, and a   Hopf algebra.
We also recall the convolution algebras   and various notions related to comodules.

\subsection{Graded  modules}\label{gram}   By a {\it graded  module} we mean a $\ZZ$-graded
module $X=\oplus_{p\in \ZZ} \, X^p$.
An element~$x$ of~$X$ is {\it homogeneous} if $x\in X^p$ for some $p$;
we call $p$ the {\it degree} of $x$ and write   $\vert x\vert =p$.  For any integer $n$, the  {\it $n$-degree} $\vert x\vert_n$ of a homogeneous element $x \in X$   is defined by  $\vert x\vert_n=  \vert x\vert+n$.
 The zero element  $0\in X$ is homogeneous and, by definition, $\vert 0\vert$ and $\vert 0\vert_n$ are arbitrary integers.

 Given graded   modules   $X$
and $Y$, a {\it graded linear map} $X\to Y$ is a linear map $X \to Y$ carrying $X^p$ to $Y^p$ for all $p\in \ZZ$.
  The tensor product $X \otimes Y$ is a graded module in the usual way:
$$
X \otimes Y  = \bigoplus_{p\in \ZZ}\,  \bigoplus_{\substack{ u,v\in \ZZ\\ u+v=p  }}\,  X^u   \otimes    Y^v.
$$

We will   identify modules without grading with graded
modules  concentrated in degree $0$. We   call such modules
\emph{ungraded}.  Similar terminology will be applied  to algebras, coalgebras, bialgebras, and Hopf algebras.

\subsection{Graded  algebras}\label{gral}

A {\it graded algebra} $A=\oplus_{p\in \ZZ} \, A^p$ is a graded module   endowed
with  an associative bilinear multiplication having a two-sided
unit $1_A\in A^0$ such that $A^p A^q\subset A^{p+q}$ for all $p,q \in \ZZ$.
For  graded algebras $A$ and $B$,   a {\it  graded algebra homomorphism} $A\to B$ is a graded  linear
map from~$A$ to~$B$ which is multiplicative  and  sends  $1_A$ to $1_B$.
The tensor product  $A \otimes B$   of   graded algebras $A$ and $B $ is  the graded algebra with underlying graded module   $A \otimes B$
and   multiplication
\begin{equation}\label{mugradedalg}
(x\otimes b) (y\otimes c)=  (-1)^{\vert b \vert\, \vert y \vert  } xy \otimes bc
\end{equation}
for any  homogeneous   $x,y\in A$ and $ b, c\in B$.

 A   graded  algebra $A$ is {\it commutative} if for any   homogeneous $x, y \in  A$, we have
\begin{equation} \label{commu}
xy=(-1)^{\vert x \vert \vert y\vert} yx .
\end{equation}
Every graded  algebra $A$ determines a commutative graded algebra  $\abelian(A)$
obtained as the quotient of $A$ by the 2-sided ideal   generated by  the expressions  $xy-(-1)^{\vert
 x \vert \vert y\vert} yx$ where $x,y$ run over all homogeneous  elements of $ A$.

\subsection{Graded    coalgebras}\label{gral++++}

 A {\it graded coalgebra} is a     graded module $A$ endowed with
  graded linear maps  ${\Delta}=\Delta_A:A\to A\otimes A$  and $\varepsilon=\varepsilon_A:A \to \kk$    such that   $\Delta$
   is a coassociative  comultiplication with counit  $\varepsilon$, i.e.,
  \begin{equation} \label{basics}
   (\Delta \otimes \id_A) \Delta= (\id_A \otimes \Delta) \Delta \quad {\rm and}
   \quad (\id_A \otimes\, \varepsilon) \Delta =\id_A = (\varepsilon  \otimes \id_A) \Delta.
\end{equation}
The graded condition  on   $\varepsilon$  means  that $\varepsilon(A^p)=0$ for all $p \neq 0$.
 The image of any ${x}\in A$ under ${{\Delta}}$ expands
(non-uniquely) as  a   sum $  \sum_i {x}'_i \otimes
 {x}''_i$ where  the index    $i$ runs over a finite set and $ {x}'_i,
 {x}''_i $ are homogeneous elements of~$A$. If $x$ is homogeneous, then we  always assume that    for all~$i$,
  \begin{equation} \label{homog} \vert x'_i \vert +\vert x''_i \vert =\vert x \vert . \end{equation} We  use
 Sweedler's notation, i.e., drop  the index  and the summation sign in the formula $\Delta(x)= \sum_i {x}'_i \otimes
 {x}''_i$ and write simply
${{\Delta}}({x})=   {x}'  \otimes {x}''$. In this notation, the second of the equalities \eqref{basics} may be rewritten as the identity
\begin{equation} \label{cocounit}
  \varepsilon  (x') x''= \varepsilon(x'') x'=x
\end{equation} for all $x\in A$.
 We will sometimes  write $x^{(1)}$ for $x'$ and  $x^{(2)}$ for $x''$,
 and we  will similarly expand the  iterated comultiplications of $x \in A$.
  For  example,  the first of the equalities \eqref{basics}  is written in this notation as
 $$ x'\otimes x'' \otimes
 x'''= x^{(1)}\otimes x^{(2)} \otimes
 x^{(3)}=(x')' \otimes (x')'' \otimes x''=x'\otimes (x'')' \otimes (x'')''.$$
A graded coalgebra~$A$ is  \emph{cocommutative} if for  any $x \in A$,
\begin{equation}\label{coco}x'\otimes x''=(-1)^{\vert x' \vert \vert x'' \vert} \, x''\otimes x' .\end{equation}

\subsection{Graded bialgebras and Hopf algebras}\label{gbaha}  A {\it graded bialgebra} is a     graded algebra $A$
endowed with
  graded algebra homomorphisms  ${\Delta}=\Delta_A:A\to A\otimes A$  and $\varepsilon=\varepsilon_A:A \to \kk$
  such that $(A, \Delta, \varepsilon)$ is a graded coalgebra. The multiplicativity of $\Delta$ implies that  for any $x,y\in A$, we have
   \begin{equation} \label{multimain} (xy)' \otimes (xy)''= (-1)^{\vert y'\vert \vert x''\vert}  x'y'  \otimes x''y''.\end{equation}
 A    graded   bialgebra $A$ is a {\it  graded  Hopf algebra}  if there is  a   graded linear map $s=s_A:A \to A$,  called the \emph{antipode}, such
  that   \begin{equation} \label{antip-prop} s(x')x'' =x' s(x'')=\varepsilon_A(x) 1_A  \end{equation}
  for all $x\in A$.
Such an $s$    is an  antiendomorphism of the underlying  graded algebra of~$A$
in the sense that $s(1_A)=1_A$ and  $s(xy)=(-1)^{\vert x \vert \vert y\vert}s(y) s(x)$ for  all homogeneous $x,y \in A$.
Also, $s$ is an  antiendomorphism  of the underlying  graded coalgebra of $A$ in the sense that $\varepsilon_A s= \varepsilon_A$ and  for  all $x \in A$,
  \begin{equation} \label{flip_antipode}(s(x))' \otimes (s(x))''= (-1)^{\vert x'\vert \vert x''\vert} s(x'') \otimes s(x').\end{equation}
  These properties of~$s$ are verified, for instance, in \cite[Lemma 2.3.1]{MT_high_dim}.

\subsection{Convolution algebras}   \label{convo}

For  a   graded coalgebra~$B$, a graded algebra $C $,   and an integer~$p$, we let $(H_B(C))^p$ be the    module of  all linear maps $f:B \to C$
such that $f(B^k) \subset   C^{k+p}  $ for all $k\in \ZZ$.
The internal direct sum
$$H=
H_B(C) =\bigoplus_{p \in \ZZ } \,  (H_B(C))^p   \subset \Hom(B,C)
$$
is  a graded module. It  carries  the following  \emph{convolution multiplication} $\ast$:
for     $f,g  \in H $, the map $f \ast g :B\to C$ is defined by  $(f  \ast g) (b)=   f (b') \, g(b'') $ for  any $ b\in B$.
Clearly, $H^p \ast H^q \subset H^{p+q}$ for any $p,q\in \ZZ$.
Hence,   the convolution  multiplication  turns  $H $ into  a graded algebra
with unit $\varepsilon_B \cdot 1_C \in H^0$.  The  map $  C\mapsto H_B(C) $  obviously  extends to an  endofunctor $H_B$ of the category of graded algebras.

For   $C=\kk$ , the convolution algebra  $ H_B(C)$  is   the \emph{dual   graded  algebra} $B^*$ of~$B$ consisting of all linear maps $f:B\to \kk$ such that $f(B^p)= 0$ for all but a finite number of $p \in \ZZ$.
 By definition, $(B^*)^p=  \Hom  (B^{-p}, \kk)$ for all $p\in \ZZ$.

As an application of the convolution multiplication,
note that the formulas \eqref{antip-prop}  say  that the antipode $s:A\to A$ in a graded Hopf algebra~$A$ is both a left and a right inverse of $\id_A$ in the   algebra $H_A(A)$.
 As a consequence,  $s$   is unique.

\subsection{Comodules}\label{comodules}

Given a   graded coalgebra    $B$,
  a   (right)    \emph{$B$-comodule} is   a  graded   module~$M$ endowed with  a graded linear map    $\Delta_M: M \to M \otimes B$ such that
\begin{equation}
\label{comodule-def} (\id_M \otimes \Delta_B)\Delta_{M} = (\Delta_{M}  \otimes \id_B)\Delta_{M} , \quad  (\id_M \otimes \varepsilon_B)\Delta_{M} =\id_M.
\end{equation}
  For   $m\in M$, we write $\Delta_{M} (m)=m^\ell \otimes m^r\in M\otimes B$
as   in Sweedler's notation (with $'$ and $''$ replaced by $\ell$ and $r$, respectively).

An element $m\in M$ is   \emph{$B$-invariant} if $\Delta_{M}(m)= m\otimes 1_B$.
  Given  $B$-comodules $M $ and $ N$, we say that
a bilinear   map   $q:M \times M \to N$ is \emph{$B$-equivariant} if for any $m_1,m_2 \in M$,
\begin{equation} \label{B-equivariance}
 q(m_1 ,m_2^\ell) \otimes  m_2^r =   q(m_1^\ell , m_2)^\ell \otimes   q(m_1^\ell, m_2)^r   s_B   (m_1^r) \ \in N \otimes B.
\end{equation}
For    $N=\kk$ with $\Delta_N(n)= n\otimes 1_B$ for all $n\in N$, the formula \eqref{B-equivariance} simplifies to
 \begin{equation} \label{B-equivariancetriv}
q(m_1 , m_2^\ell)  \, m_2^r =   q(m_1^\ell ,m_2) \,  s_B  (m_1^r) \ \in   B.
\end{equation}
A bilinear form   $q:M \times M \to \kk$ satisfying \eqref{B-equivariancetriv}  is said to be \emph{$B$-invariant}.

\section{Representation algebras}\label{Representation algebras}

 We introduce representation algebras of     graded bialgebras.

\subsection{The algebras $\widehat A_B$ and  $A_B$}  \label{A_B}

Let $A$ and~$B$ be    graded  bialgebras. We define a   graded algebra~$  \widehat A_B$  by generators and relations.
The generators are the symbols~$x_b$  where~ $x$ runs over~$A$ and $b$ runs over~$ B$,   and  the   relations are as follows:\\[-0.2cm]

 (i) \emph{The bilinearity relations:} for all  $k\in \kk$,  $x,y\in A$,  and $b,c\in {B}$,
 \begin{equation}\label{addid}
 (kx)_{b}=  x_{kb} =k \, x_{b} , \quad (x+y)_{b}=  x_{b} + y_{b}, \quad x_{b+c}= x_{b} +x_{c};
 \end{equation}

 (ii) \emph{The first multiplicativity relations:} for all   $x,y\in A$  and $b \in {B}$,
\begin{equation}\label{mult}
(xy)_{b}=    x_{b'} \, y_{b''};
\end{equation}

(iii) \emph{The first unitality relations:} for all    $b \in {B}$,
$$(1_A)_{b} = \varepsilon_B({b})\,1;$$

(iv) \emph{The second multiplicativity relations:} for all  $x\in A$ and $b,c\in B$,
\begin{equation}\label{cocomult}
x_{bc}=  x'_b\, x''_c;
\end{equation}

(v) \emph{The second unitality relations:} for all    $x\in A$,
$$x_{(1_B) }= \varepsilon_A(x) \,  1.$$

\noindent Here,  on the right-hand side of  the relations (iii) and (v), the symbol $1$  stands for the  identity   element of $  \widehat A_B$.
 The grading in $\widehat A_B$ is defined by the rule     $\vert x_b \vert=  \vert x \vert + \vert b \vert $ for all  homogeneous  $x\in A$ and   $b\in B$.
 The definition of $\widehat A_B$ is symmetric   in $A$ and $B$:   there is a     graded algebra isomorphism
 $ {\widehat A_B} \simeq \widehat B_A$ defined by $x_b\mapsto b_x$ for $x\in A$, $b\in B$.
 Clearly, the   construction  of  $  \widehat A_B$ is functorial with respect to   graded   bialgebra  homomorphisms of~$A$
 and~$B$.

 The commutative quotient $A_B=\abelian(\widehat A_B)$ of $ \widehat A_B$ is called the {\it  $B$-representation algebra of $A$}.
 It has the same generators and relations as $\widehat A_B$ with   additional commutativity relations
\begin{equation}\label{addirel} x_b\, y_c=(-1)^{\vert x_b\vert\, \vert y_c\vert  }\,  y_c \, x_b \end{equation}
 for all homogeneous $x,y\in A$ and $b,c\in B$.

 To state the universal properties of the algebras $\widehat{A}_B$ and $A_B$, we need the following definition.
A  \emph{$B$-representation of $A$ with coefficients in a graded algebra~$C$} is a  graded algebra homomorphism  $u: A\to H_B(C)\subset \Hom(B,C)$
such that for all $x\in A$ and  $b,c \in B$,
\begin{equation} \label{strange-property} {u}(x)(1_B) =
\varepsilon_A(x) 1_C \quad {\rm {and}} \quad
{u}(x)(b c)= {u}(x')(b )\cdot {u}(x'')(c) .
\end{equation}
  Let $\widehat\Rscr( C)  $ be the set  of all $B$-representations of $A$ with coefficients in $C$. For any graded algebra homomorphism $f:C\to C'$,
let $\widehat\Rscr(f): \widehat\Rscr(C)\to \widehat\Rscr(C')$  be the map that   carries a   homomorphism $u: A \to  H_B(C)$ as above to $H_B(f) \circ u : A \to  H_B(C')$.
This defines a functor  $\widehat\Rscr =\widehat\Rscr^A_B: \grAlg \to \Set$
from the category of   graded algebras and graded algebra homomorphisms $\grAlg$  to the category of sets and maps $ \Set$.
  The restriction of $\widehat\Rscr$ to the full subcategory $\grCom$ of $\grAlg$ consisting of graded commutative algebras is denoted by
  $\Rscr=\Rscr^A_B$.

\begin{lemma}\label{just--+} For any  graded algebra~$C$, there is a natural bijection
\begin{equation}\label{adjunction+ee}
\widehat\Rscr   (C)
\stackrel{\simeq}{\longrightarrow}  \Hom_{  \grAlg} (   \widehat A_B , C).
\end{equation} Consequently, for   any commutative graded algebra~$C$, there is a natural bijection
\begin{equation}\label{adjunction+eee}
\Rscr (C)
\stackrel{\simeq}{\longrightarrow}  \Hom_{  \grCom   } (    A_B , C).
\end{equation}
\end{lemma}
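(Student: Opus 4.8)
The plan is to treat this as the universal property of the presentation of $\widehat A_B$ by generators and relations, matching each defining relation with a clause in the definition of a $B$-representation. First I would observe that the bilinearity relations \eqref{addid} say precisely that the assignment $(x,b)\mapsto x_b$ is $\kk$-bilinear, so that it factors through $A\otimes B$; thus $\widehat A_B$ is the quotient of the free graded algebra (tensor algebra) on the graded module $A\otimes B$, with $\vert x\otimes b\vert = \vert x\vert + \vert b\vert$, by the two-sided ideal generated by the relations \eqref{mult}, (iii), \eqref{cocomult}, and (v). Since these relations are homogeneous (both $\Delta_A$ and $\Delta_B$ preserve degree), the ideal is homogeneous and the quotient is graded. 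Consequently a graded algebra homomorphism $\varphi\colon \widehat A_B\to C$ amounts to a graded linear map $A\otimes B\to C$, $x\otimes b\mapsto \varphi(x_b)$, whose induced homomorphism from the tensor algebra annihilates the ideal generated by those four families of relations.

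Next I would set up the two mutually inverse maps. Given $\varphi\in \Hom_{\grAlg}(\widehat A_B, C)$, define $u=u_\varphi\colon A\to \Hom(B,C)$ by $u(x)(b)=\varphi(x_b)$. Bilinearity in $b$ together with the grading rule $\vert x_b\vert = \vert x\vert + \vert b\vert$ shows that $u(x)$ is a linear map $B\to C$ raising degrees by $\vert x\vert$, i.e. $u(x)\in (H_B(C))^{\vert x\vert}$, while bilinearity in $x$ shows $u$ is graded linear. I would then read off the remaining properties: relation \eqref{mult} together with the definition of the convolution product in Section~\ref{convo} says exactly that $u(xy)=u(x)\ast u(y)$, relation (iii) says $u(1_A)=\varepsilon_B\cdot 1_C$ is the convolution unit, and relations (v) and \eqref{cocomult} are verbatim the two conditions in \eqref{strange-property}. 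Hence $u_\varphi\in \widehat\Rscr(C)$.

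Conversely, given $u\in \widehat\Rscr(C)$, I would define an assignment of generators $x_b\mapsto u(x)(b)\in C^{\vert x\vert + \vert b\vert}$ and check that it respects all of \eqref{addid}--(v): the bilinearity relations follow from the linearity of $u$ and of each $u(x)$, the multiplicativity relations \eqref{mult} and the first unitality from $u$ being a graded algebra homomorphism into the convolution algebra, and the second multiplicativity/unitality from \eqref{strange-property}. By the universal property of the presentation this extends uniquely to a graded algebra homomorphism $\varphi_u\colon \widehat A_B\to C$. Since the two constructions invert each other on generators (and generators generate), I obtain the bijection \eqref{adjunction+ee}; its naturality in $C$ reduces to the identity $H_B(f)(g)=f\circ g$ for a graded algebra homomorphism $f\colon C\to C'$ and $g\in H_B(C)$, which gives $u_{f\circ\varphi}=H_B(f)\circ u_\varphi$.

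Finally, for \eqref{adjunction+eee} I would restrict the first bijection to commutative $C$, where $\widehat\Rscr(C)=\Rscr(C)$ by definition, and invoke the universal property of the abelianization $A_B=\abelian(\widehat A_B)$, namely $\Hom_{\grCom}(A_B,C)=\Hom_{\grAlg}(\widehat A_B,C)$ for commutative $C$; composing the two gives the desired bijection. I expect the only delicate point to be the bookkeeping of gradings---verifying that $u(x)$ lands in the correct homogeneous component $(H_B(C))^{\vert x\vert}$ and that no Koszul signs intervene---but since both the convolution product of Section~\ref{convo} and the multiplicativity relations \eqref{mult}, \eqref{cocomult} are written without signs, the matching is sign-free and this step is routine.
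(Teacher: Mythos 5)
Your proposal is correct and follows essentially the same route as the paper: both directions of the bijection are given by the correspondence $u(x)(b)=\varphi(x_b)$, with the defining relations (i)--(v) of $\widehat A_B$ matched one-by-one against linearity, convolution-multiplicativity, unitality, and the two conditions of \eqref{strange-property}, and the second bijection deduced by restricting to commutative $C$ and using the universal property of $\abelian(\widehat A_B)$. Your explicit framing of $\widehat A_B$ as a quotient of the tensor algebra $T(A\otimes B)$ (used by the paper only later, in the proof of Theorem~\ref{double_to_simple}) and the grading bookkeeping are just slightly more detailed versions of steps the paper leaves implicit.
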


\begin{proof} Consider the map
$\widehat\Rscr (C) \to  \Hom_{  \grAlg} (   \widehat A_B , C)$
which  carries   a graded algebra homomorphism  $u:A \to H_B(C)$ satisfying \eqref{strange-property} to the
graded algebra homomorphism   $ v=v_u:  \widehat A_B\to C$ defined on the   generators    by   the rule    $v( x_{b })=u(x)(b)$.
We must check the compatibility of~$v$ with the defining relations (i)--(v) of $\widehat A_B$.
The compatibility with the relations (i)  follows from the   linearity of $u$.
The compatibility with the relations (ii) is verified as follows: for   $x,y\in A$ and $b\in B$,
\begin{eqnarray*}
v(  (xy)_{b})  &=& u(xy)(b)  \\
&= &   \big(u(x) * u(y)\big)   (b) \\ 
&= &  u(x)   (b' ) \, u(y)   (b'' ) \ = \ v( x_{b' }) \, v( y_{b'' }) \ =  \ v( x_{b' } y_{b'' } ).
\end{eqnarray*}
The compatibility with the relations (iii) is verified as follows: for    $b\in B$,
\begin{equation*} v( (1_A)_{b })= u(1_A)(b) = \varepsilon_B  ( b ) 1_C
=v( \varepsilon_B  ( b ) 1 ).  \end{equation*}
The compatibility with (iv) and (v) is a direct consequence of \eqref{strange-property}:  for   $x\in A$,
$$ v(x_{(1_B) }- \varepsilon_A(x) 1) =v(x_{(1_B) })- v( \varepsilon_A(x) 1)={u}(x)(1_B)- \varepsilon_A(x) 1_C=0$$
and,   for   $b,c\in B$,
$$ v(x_{bc}-  x'_b x''_c) = v(x_{bc})-   v(x'_b)\cdot  v( x''_c)= {u}(x)(b c)- {u}(x')(b )\cdot {u}(x'')(c)=0  .$$
Next, we define a map
$ \Hom_{ \grAlg } (  \widehat A_B , C) \to  \widehat\Rscr (C)$ carrying
a   graded algebra homomorphism $v: \widehat A_B\to C$    to the
graded linear  map   $ u=u_v: A \to H_B(C)$  defined by
  $u(x)(b)= v( x_{b }) $ for all $x\in A$ and $b\in B$.
  The map $u$ is     multiplicative:
  \begin{eqnarray*}
    \big(u(x) \ast u(y)\big) (b) &=&   u(x) (b') \,   u(y) (b'') \\
  &  = & v( x_{b' })\,  v( y_{b'' }) \ = \ v( x_{b' }  y_{b'' }) \ = \ v((xy)_b) \ = \ u(xy)(b)
  \end{eqnarray*}
 for any $x, y\in A$   and   $b\in B$.  Also,   $u $  carries $1_A$ to
  the map $$B\longrightarrow C,\, b \longmapsto  v( (1_A)_{b }) = v(\varepsilon_B  ( b ) 1)  = \varepsilon_B  ( b)   1_C  $$
which is   the unit of the algebra $  H_B(C)$. Thus, $u$ is a graded algebra homomorphism.
It is straightforward to verify that $u$ satisfies   \eqref{strange-property},  i.e.  $u\in \widehat\Rscr (C)$.

 Clearly,     the maps $u\mapsto v_u$ and $v\mapsto u_v$ are mutually inverse. The first of them is the required bijection~\eqref{adjunction+ee}. The naturality   is obvious from the definitions.
\end{proof}

 If both~$A$ and~$B$ are ungraded  (i.e.,    are concentrated in degree 0), then    $ \widehat A_B$ and~$A_B$ are ungraded algebras and, by
 Lemma~\ref{just--+},  the  restriction of the  functor $\Rscr^A_B$
 to the category of commutative ungraded algebras is an affine scheme  with coordinate algebra~$ A_B  $.

\subsection{The case of Hopf algebras}  \label{HopfHopf}
If $A$ and/or $B$ are Hopf algebras,  then we can say a little more about the graded algebras
  $ \widehat A_B$ and $A_B$.  We begin with the following lemma.

\begin{lemma}\label{justjust+} If $A$ and $B$ are graded Hopf algebras with antipodes $s_A$  and   $s_B$, respectively,
then   the following identity holds in $ \widehat A_B$: for any $ x\in A $ and $  b\in B $,
\begin{equation}\label{antipodeidentity} (s_A(x))_{b} = x_{s_B ({b})}. \end{equation}
 Consequently, the same identity holds in $A_B$.
\end{lemma}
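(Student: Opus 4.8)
The plan is to prove the identity \eqref{antipodeidentity} by exploiting the defining relations of $\widehat A_B$ together with the antipode axioms, rather than by any direct manipulation of generators. The key observation is that both sides of \eqref{antipodeidentity} should arise as convolution inverses of the same element $x_b$ (with $x,b$ varied along their comultiplications) inside a suitable convolution algebra, so that the identity follows from uniqueness of inverses. Concretely, I would fix $x\in A$ and consider the two graded linear maps $B\to \widehat A_B$ given by $b\mapsto x_b$ and $b\mapsto x_{s_B(b)}$. The first multiplicativity relation \eqref{mult} and the second unitality relation $x_{(1_B)}=\varepsilon_A(x)\,1$ are exactly the statements needed to compute convolution products, while the antipode property \eqref{antip-prop} for $s_B$ is what will collapse things to the counit.

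First I would set up the convolution algebra $H_B(\widehat A_B)$ from Section~\ref{convo}, whose elements are graded linear maps $B\to \widehat A_B$ with convolution product $(f\ast g)(b)=f(b')\,g(b'')$ and unit $\varepsilon_B\cdot 1$. For the fixed $x$, I would introduce the map $\xi\colon B\to\widehat A_B$, $\xi(b)=x_b$, and the map $\eta\colon B\to\widehat A_B$, $\eta(b)=x_{s_B(b)}$. Using the second multiplicativity relation \eqref{cocomult} in the form $x_{bc}=x'_b\,x''_c$, I would compute $\xi\ast\eta$ and $\eta\ast\xi$; the goal is to show that $\eta$ is a two-sided convolution inverse of a translate of $\xi$ built from the comultiplication of $x$. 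The cleanest route is probably to apply the comultiplication of $x$ first: writing $\Delta(x)=x'\otimes x''$, relation \eqref{cocomult} gives $x_{b'}\,x'' {}_{s_B(b'')}$ as a candidate for $(x_{(1_B)})$ via $x_{b'\,s_B(b'')}=x_{\varepsilon_B(b)1_B}$, and then \eqref{antip-prop} for $s_B$ reduces $b'\,s_B(b'')$ to $\varepsilon_B(b)1_B$, after which the second unitality relation $x_{(1_B)}=\varepsilon_A(x)1$ finishes that computation.

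The main technical point is to combine the two comultiplications correctly: one must use relation \eqref{cocomult} (cocommutativity-type relation in the $B$-variable) together with relation \eqref{mult} (in the $A$-variable) and keep careful track of the Sweedler indices for both $x$ and $b$. I expect the heart of the argument to be establishing, in $\widehat A_B$, the auxiliary identity
\begin{equation*}
(s_A(x))_{b}=(s_A(x))_{b'}\,\varepsilon_B(b'')=(s_A(x))_{b'}\,x''_{s_B(b''')}\,x'''_{b''''}
\end{equation*}
or a similar bookkeeping step that inserts a resolution of the identity $\varepsilon_B(b'')\,1=x''_{s_B(\cdots)}x'''_{(\cdots)}$, and then simplifies using the antipode axioms for both $s_A$ and $s_B$. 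The likely obstacle is precisely the sign and index tracking inherent to the graded Sweedler calculus: several applications of \eqref{basics}, \eqref{antip-prop}, \eqref{mult}, and \eqref{cocomult} must be chained so that an application of $s_A$ on one side matches an application of $s_B$ on the other. Once the identity is proved in $\widehat A_B$, the final sentence is immediate, since $A_B$ is a quotient of $\widehat A_B$ and hence every identity valid in $\widehat A_B$ descends to $A_B$.
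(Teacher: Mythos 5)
Your overall instinct---derive \eqref{antipodeidentity} from uniqueness of inverses in a convolution algebra, using the defining relations of $\widehat A_B$ together with the antipode axioms---is exactly the strategy of the paper. But your concrete setup would fail: for a \emph{fixed} $x$, the maps $\xi(b)=x_b$ and $\eta(b)=x_{s_B(b)}$ are \emph{not} convolution inverses of one another in $H_B(\widehat A_B)$, and no ``translate of $\xi$ built from the comultiplication of $x$'' can repair this inside that algebra. Indeed, $(\xi\ast\eta)(b)=x_{b'}\,x_{s_B(b'')}$, and relation \eqref{cocomult} does not apply to this product: what it gives is
$x'_{b'}\,x''_{s_B(b'')}=x_{b's_B(b'')}=\varepsilon_A(x)\varepsilon_B(b)\,1$,
i.e., the Sweedler components of $x$ must be distributed over the two factors. (Already for $A=\kk\Gamma$ a group algebra and $x=g+h$ a sum of two distinct group-like elements, the cross terms $g_{b'}\,h_{s_B(b'')}$ in $(\xi\ast\eta)(b)$ survive, so $\xi\ast\eta$ is not the convolution unit times $\varepsilon_A(x)$.) Since the needed identities unavoidably involve $\Delta_A(x)$, the convolution must take place in the $A$-variable as well. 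This is precisely what the paper does: it works in the iterated convolution algebra $U=H_A\bigl(H_B(C)\bigr)$, regards the single element $u\in U$ given by $u(x)(b)=x_b$ as the thing to invert, shows that $u\circ s_A$ is a \emph{left} inverse of $u$ (via \eqref{mult}, the first unitality relation (iii), and \eqref{antip-prop} for $s_A$) while $x\mapsto u(x)\circ s_B$ is a \emph{right} inverse (via \eqref{cocomult}, the second unitality relation (v), and \eqref{antip-prop} for $s_B$), and concludes $u\circ s_A = u(\cdot)\circ s_B$ by associativity of $\star$; specializing $C=\widehat A_B$ and $v=\id$ through Lemma~\ref{just--+} then yields \eqref{antipodeidentity}.

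Your fallback ``resolution of the identity'' in the last paragraph is the right repair (it is the inverse-uniqueness argument unrolled into a single computation in $\widehat A_B$), but the displayed identity is incorrect as written: you keep $x$ whole in the factor $(s_A(x))_{b'}$ while its components $x''$, $x'''$ appear in the other factors, and $\varepsilon_B(b'')\,1\neq x''_{s_B(b''')}\,x'''_{b''''}$ (the left-hand side does not depend on $x$). A correct chain is
\[
x_{s_B(b)}
\;=\;\varepsilon_A(x')\,\varepsilon_B(b')\; x''_{s_B(b'')}
\;=\;\bigl(s_A(x^{(1)})\bigr)_{b^{(1)}}\, x^{(2)}_{b^{(2)}}\, x^{(3)}_{s_B(b^{(3)})}
\;=\;\bigl(s_A(x')\bigr)_{b'}\,\varepsilon_A(x'')\,\varepsilon_B(b'')
\;=\;(s_A(x))_b ,
\]
where the second equality inserts $\varepsilon_A(x')\varepsilon_B(b')\,1=\bigl(s_A(x')\bigr)_{b'}x''_{b''}$ (relations \eqref{mult}, (iii), and \eqref{antip-prop} for $s_A$) and the third uses $x''_{b''}\,x'''_{s_B(b''')}=\varepsilon_A(x'')\varepsilon_B(b'')\,1$ (relations \eqref{cocomult}, (v), and \eqref{antip-prop} for $s_B$), both combined with coassociativity. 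Note also that no Koszul signs enter anywhere: the relations \eqref{mult}, \eqref{cocomult} and the paper's convolution products are sign-free, so the graded bookkeeping you worry about is not where the difficulty lies. Your final remark is correct: once the identity holds in $\widehat A_B$, it descends to the quotient $A_B$.
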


\begin{proof} We  claim that for any  graded algebra~$C$ and       any  $x \in A$, $u \in \widehat\Rscr^A_B(C)$,
\begin{equation} \label{antipode_antipode}
{u}\big(s_A(x)\big) = {u}(x) \circ s_B: B \longrightarrow C .
\end{equation}
The proof  of this claim is modeled on the standard proof of the fact that a  bialgebra homomorphism of Hopf algebras commutes with the antipodes.
Namely, let
$$
U = H_{A}\big( H_B(C)) \subset \Hom\big(A,H_B(C)\big)
$$
be the convolution algebra associated to the underlying graded coalgebra of~$A$ and the graded algebra $H_B(C)$.
We denote the convolution multiplication in~$U$ by~$\star$ (not to be confused with the multiplication $\ast$ in   $H_B(C)$).
For   $u \in \widehat\Rscr^A_B(C)$, set $u^+= u s_A:A \to H_B(C)$ and let  $u^-:A \to H_B(C)$ be the map carrying any $x\in A$ to $u(x) s_B:B\to C$.
Observe that $u,u^+,u^-$ belong to $U$.
For all $x\in A$, we have
\begin{eqnarray*}
(u^+ \star u)(x) & = & u^+(x') \ast u(x'') \\
&=& u(s_A(x')) \ast u(x'') \\
& = & u(s_A(x') x'') \ = \ u\big(\varepsilon_A(x)\, 1_A\big) \ = \ \varepsilon_A(x)\, 1_{H_B(C)} \ = \ 1_U(x)  ,
\end{eqnarray*}
where the third and the fifth equalities hold because $u:A\to H_B(C)$ is an algebra homomorphism. Hence $u^+ \star u =1_U$.
Also,  for   $x\in A$ and $b\in B$, we have
\begin{eqnarray*}
(u \star u^-)(x) (b) \ = \ (u(x') \ast u^-(x''))(b)
&=& u(x')(b') \, u^-(x'')(b'') \\
& = &   u(x')   (b') \, u(x'') (s_B(b'')) \\
&= & u (x)\big(b' s_B(b'')\big) \\
& = &    u(x)( \varepsilon_B(b) 1_B) \\
& = & \varepsilon_B (b) \varepsilon_A(x)  1_C \\ 
&= &   \varepsilon_A(x)\,  1_{H_B(C)}(b) \ = \ 1_U(x)(b)
 \end{eqnarray*}
where the fourth and the sixth equalities follow  from  \eqref{strange-property}.
Hence,  $u  \star u^-= 1_U$. Using the associativity of $\star$, we conclude that
$$
u^+=u^+\star 1_U= u^+ \star u \star u^-= 1_U \star u^-=u^-.
$$
This proves the claim above. As a consequence, for any $x\in A$,  $b\in B$  and any graded algebra homomorphism~$v$ from $ \widehat A_B$ to a   graded algebra~$C$, we have
$$v\big((s_A(x))_{b} - x_{s_B ({b})}\big)= v\big((s_A(x))_b\big)- v(x_{s_B(b)}) =\big({u} (s_A(x) ) - {u}(x) \circ s_B\big)(b)=0,$$
where $u=u_v: A\to H_B(C)$ is the  graded algebra homomorphism   corresponding to~$v$ via~\eqref{adjunction+ee}.
Taking $C= \widehat A_B$ and $v=\id$, we obtain that $(s_A(x))_{b} - x_{s_B ({b})}=0$.
\end{proof}

 Given an ungraded   bialgebra~$B$,
 a \emph{(right) $B$-coaction} on a graded algebra~$M$  is   a graded algebra homomorphism     $\Delta   = \Delta_M   : M \to M \otimes B$
  satisfying \eqref{comodule-def}, i.e$.$, turning $M$ into  a (right) $B$-comodule.

\begin{lemma} \label{coaction_M}
Let $A$  be a graded bialgebra and  $B$ be an ungraded  commutative Hopf algebra.
The   graded algebra $ \widehat A_B$  has a unique $B$-coaction $\Delta: \widehat A_B \to \widehat A_B \otimes B$
such that
\begin{equation} \label{Delta_M}
\Delta (x_b) = x_{b''} \otimes s_B(b') b''' \quad \text{for any} \quad x\in A, \, b\in B.
\end{equation}
 Consequently, the graded algebra $A_B= \abelian (\widehat A_B)$  has a unique $B$-coaction   satisfying~\eqref{Delta_M}.
\end{lemma}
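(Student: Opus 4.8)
The plan is to build $\Delta$ from the universal property of $\widehat A_B$ recorded in Lemma~\ref{just--+}, and then to verify the comodule axioms \eqref{comodule-def} on the algebra generators $x_b$. Taking $C=\widehat A_B\otimes B$ in the bijection \eqref{adjunction+ee}, giving a graded algebra homomorphism $\Delta\colon\widehat A_B\to\widehat A_B\otimes B$ amounts to giving a $B$-representation $u\in\widehat\Rscr^A_B(\widehat A_B\otimes B)$, the two being related by $\Delta(x_b)=u(x)(b)$. Accordingly I would set $u(x)(b)=x_{b''}\otimes s_B(b')\,b'''$; since $B$ is ungraded this lands in degree $|x|$, so $u(x)\in H_B(\widehat A_B\otimes B)$ and $u$ is a graded linear map. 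It then remains to check that $u$ is a graded algebra homomorphism satisfying \eqref{strange-property}, because \eqref{Delta_M} is exactly $\Delta(x_b)=u(x)(b)$.

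The two conditions in \eqref{strange-property} come out quickly. For $u(x)(1_B)$ one uses $\Delta_B(1_B)=1_B\otimes 1_B$, the relation $x_{1_B}=\varepsilon_A(x)1$, and $s_B(1_B)=1_B$, obtaining $\varepsilon_A(x)(1\otimes 1_B)$. For $u(x)(bc)=u(x')(b)\cdot u(x'')(c)$ one expands $\Delta_B(bc)=\Delta_B(b)\Delta_B(c)$, rewrites the subscript via $x_{b''c''}=x'_{b''}\,x''_{c''}$ using \eqref{cocomult}, uses that $s_B$ reverses products, and reorders the $B$-factors by commutativity; both sides become $x'_{b''}\,x''_{c''}\otimes s_B(b')s_B(c')\,b'''c'''$. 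The multiplicativity $u(xy)=u(x)\ast u(y)$ is the real computation, and the step I expect to be the main obstacle: evaluating $(u(x)\ast u(y))(b)=u(x)(b')\cdot u(y)(b'')$ produces a sixfold comultiplication of $b$, and one must use the commutativity of $B$ to juxtapose the correct pair of factors, the antipode identity \eqref{antip-prop} to replace that pair by a counit, and the counit identity \eqref{cocounit} to contract, arriving at $x_{b''}\,y_{b'''}\otimes s_B(b')\,b''''$ (a fourfold split); this equals $u(xy)(b)$ once $(xy)_{b''}=x_{(b'')'}\,y_{(b'')''}$ is applied via \eqref{mult}. Preservation of the unit is the same kind of antipode--counit contraction. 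This is precisely where the hypotheses that $B$ be commutative and Hopf are used.

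With the homomorphism $\Delta$, $\Delta(x_b)=x_{b''}\otimes s_B(b')\,b'''$, in hand, I would verify \eqref{comodule-def}. Each side of each axiom is a graded algebra homomorphism out of $\widehat A_B$, so it suffices to test on the generators $x_b$. The counit axiom $(\id\otimes\varepsilon_B)\Delta=\id$ follows from $\varepsilon_B s_B=\varepsilon_B$ together with \eqref{cocounit}. For coassociativity, applying $(\id\otimes\Delta_B)\Delta$ to $x_b$ requires $\Delta_B(s_B(b'))$, which I would expand by the flip formula \eqref{flip_antipode} (the sign is trivial since $B$ is ungraded); writing $b\mapsto b_1\otimes\cdots\otimes b_5$ for the fivefold comultiplication and relabeling by coassociativity of $\Delta_B$, both $(\id\otimes\Delta_B)\Delta(x_b)$ and $(\Delta\otimes\id)\Delta(x_b)$ reduce to $x_{b_3}\otimes s_B(b_2)\,b_4\otimes s_B(b_1)\,b_5$. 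Uniqueness is immediate: a $B$-coaction is in particular an algebra homomorphism, and $\widehat A_B$ is generated by the symbols $x_b$, so \eqref{Delta_M} determines $\Delta$.

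For the final assertion about $A_B=\abelian(\widehat A_B)$, let $\pi\colon\widehat A_B\to A_B$ be the canonical projection. The composite $(\pi\otimes\id_B)\circ\Delta\colon\widehat A_B\to A_B\otimes B$ is a graded algebra homomorphism whose target is commutative, being the tensor product of the commutative graded algebras $A_B$ and $B$; in particular it kills the commutativity relations \eqref{addirel}, so it factors through $\pi$ as a homomorphism $\bar\Delta\colon A_B\to A_B\otimes B$ satisfying $\bar\Delta(x_b)=x_{b''}\otimes s_B(b')\,b'''$. The comodule axioms for $\bar\Delta$ follow from those for $\Delta$, checked again on the generators, and uniqueness holds as before since $A_B$ is generated by the images of the symbols $x_b$.
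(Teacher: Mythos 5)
Your proof is correct, and it reaches the same endpoint as the paper's by a slightly different organization. The paper constructs $\Delta$ by checking directly that the assignment $x_b \mapsto x_{b''} \otimes s_B(b')\,b'''$ is compatible with the defining relations (i)--(v) of $\widehat A_B$; you instead route the construction through the representability bijection \eqref{adjunction+ee} of Lemma~\ref{just--+}, taking $C=\widehat A_B\otimes B$ and exhibiting a $B$-representation $u$ with $\Delta(x_b)=u(x)(b)$. These are the same verification in different clothing: linearity of $u$ corresponds to relations (i), multiplicativity and unitality of $u$ to relations (ii)--(iii), and the two conditions of \eqref{strange-property} to relations (iv)--(v), so the computations you describe (the antipode--counit contraction of the sixfold comultiplication, the reordering of $B$-factors) are exactly the ones appearing in the paper's proof. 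What your packaging buys is that the homomorphism property of $\Delta$ comes for free from Lemma~\ref{just--+}; what the paper's direct check buys is self-containedness. The remaining steps match: comodule axioms verified on generators via \eqref{flip_antipode} and $\varepsilon_B s_B=\varepsilon_B$, uniqueness from generation by the $x_b$, and the descent to $A_B$, which the paper dispatches with the general fact that a $B$-coaction on a graded algebra $M$ induces one on $\abelian(M)$ --- precisely the factorization through $\pi$ that you spell out (your observation that $A_B\otimes B$ is graded commutative, using that $B$ sits in degree $0$, is the point). One small slip in your narrative: the commutativity of $B$ is not actually needed in the convolution-multiplicativity check $u(xy)=u(x)\ast u(y)$, since the factors $b^{(3)}$ and $s_B(b^{(4)})$ to be contracted by \eqref{antip-prop} are already adjacent; it is needed only where you also invoke it, namely in verifying $u(x)(bc)=u(x')(b)\cdot u(x'')(c)$, where one must commute $b'''$ past $s_B(c')$ to assemble $s_B(b'c')$. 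This does not affect the validity of the argument.
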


\begin{proof}
We first prove that \eqref{Delta_M} defines an  algebra homomorphism $\Delta: \widehat A_B \to \widehat A_B \otimes B$.
The compatibility with the bilinearity relations in the definition of $\widehat A_B$ is obvious. We check the
  compatibility with the  first multiplicativity relations: for  $x,y \in A$ and $b\in B$,
\begin{eqnarray*}
\Delta(x_{b'}) \, \Delta( y_{b''}) &=& \big(x_{b^{(2)}} \otimes s_B(b^{(1)}) b^{(3)}\big) \big(y_{b^{(5)}} \otimes s_B(b^{(4)}) b^{(6)}\big) \\
&= & x_{b^{(2)}}  y_{b^{(5)}}\otimes s_B(b^{(1)}) b^{(3)} s_B(b^{(4)}) b^{(6)} \\
&=&  x_{b^{(2)}}  y_{b^{(3)}} \otimes  s_B   (b^{(1)})  b^{(4)}
\ = \  (xy)_{b''} \otimes s_B(b') b''' \ = \ \Delta((xy)_b) .
\end{eqnarray*}
The compatibility with the  first unitality relations:   for   $b\in B$,
\begin{eqnarray*}
\Delta\big((1_A)_b\big) \ = \ (1_A)_{b''} \otimes s_B(b') b''' &=& \varepsilon_B(b'') 1 \otimes s_B(b') b''' \\
&=&  1 \otimes s_B(b') b'' \\
& = & \varepsilon_B(b)\, 1 \otimes 1_B  \ =\  \Delta( \varepsilon_B(b) 1).
\end{eqnarray*}
 The compatibility with the  second multiplicativity relations: for   $x \in A$ and ${b,c \in B}$,
\begin{eqnarray*}
\Delta(x'_b)\, \Delta(x''_c) &=&  \big(x'_{b''} \otimes s_B(b') b'''\big)\,  \big(x''_{c''} \otimes s_B(c') c'''\big) \\
&= & x'_{b''}  x''_{c''} \otimes  s_B(b')  b'''  s_B(c') c''' \\
&=& x_{b''c''} \otimes s_B(c') s_B(b')  b'''c'''
\ = \ x_{b''c''} \otimes s_B(b'c') b'''c'''  \ = \ \Delta(x_{bc}) ,
\end{eqnarray*}
where in the third equality we use the commutativity of~$B$.
  Finally, the compatibility with  the second unitality relations:
  for any $x\in A$, $$\Delta(x_{(1_B)}) = x_{(1_B)} \otimes 1_B=\varepsilon_A(x) 1 \otimes 1_B= \Delta( \varepsilon_A(x) 1) .$$

We now verify \eqref{comodule-def}.
Since $\Delta$ and $\Delta_B$ are algebra homomorphisms, it is enough to check \eqref{comodule-def}  on the generators. For any $x\in A$ and $b\in B$,
\begin{eqnarray*}
 (\id_{\widehat A_B} \otimes \Delta_B) \Delta (x_b) &=& x_{b''} \otimes \Delta_B\big( s_B(b') b'''\big) \\
 &=& x_{b''} \otimes \Delta_B(s_B(b')) \Delta_B(b''')  \\
&=& x_{b^{(3)}} \otimes  s_B(b^{(2)})\,   b^{(4)} \otimes s_B(b^{(1)})\, b^{(5)}\\
&=& \Delta(x_{b''}) \otimes s_B(b') b''' \ = \ ( \Delta \otimes \id_B) \Delta(x_b)
\end{eqnarray*}
 and
\begin{eqnarray*}
 (\id_{\widehat A_B} \otimes \varepsilon_B)\Delta(x_b)
  =   \varepsilon_B \big( s_B(b') b'''\big) x_{b''}
 =    \varepsilon_B   (b') \, \varepsilon_B( b''' ) x_{b''}
   =   \varepsilon_B   (b') x_{  b''}
 =  x_b .
\end{eqnarray*}
The last claim of the lemma follows from the fact that    any  $B$-coaction on a graded algebra~$M$ induces a $B$-coaction on the commutative graded algebra $\abelian (M)$.
\end{proof}

\subsection{Example:  from  monoids to representation algebras}\label{ex_monoid_algebras}

Given a     monoid~$G$, we let $\kk G $ be the  module freely generated by the set $G$.
Multiplications in~$\kk$ and~$G$   induce  a bilinear multiplication in $\kk G$ and turn   $\kk G$
into an ungraded bialgebra    with   comultiplication carrying each   $g\in G$ to  $  g\otimes g $ and with   counit carrying all     $g\in G$ to~$1_\kk$.
If~$G$ is finite, then we can  consider the dual  ungraded  bialgebra $B= (\kk G)^*=  \Hom   (\kk G, \kk)$ with basis $ \{\delta_g\}_{g\in G}$   dual to the basis $G$ of $\kk G$.
Multiplication in~$B$ is computed  by    $\delta_g^2=\delta_g$   for all $g\in G$ and  $\delta_g \delta_h=0$   for   distinct  $g,h\in G$.
Comultiplication in $B$ carries each  $\delta_g$ to   $  \sum_{h,j\in G, hj=g} \delta_h\otimes \delta_j$.
 The unit of $B$ is $\sum_{g\in G} \delta_g$ and the counit    is the evaluation on the    neutral   element of~$G$.

Consider now a  monoid $\Gamma$ with    neutral     element   $\eta$   and  a finite monoid $G$ with   neutral  element   $n$. Consider
   the ungraded   bialgebras $A=\kk \Gamma$ and    $B=(\kk G)^*$. By definition, the representation algebra $A_B$ is the ungraded commutative algebra generated by the symbols $x_g = x_{(\delta_g)}$ for all $x\in \Gamma, g\in G$,
subject to the relations
$$  \eta_g= \left\{ \begin{array}{ll}  1 & \hbox{if } g=  n  , \\ 0 & \hbox{if } g\neq   n  , \end{array}\right.
\ (xy)_g=\sum_{h,j\in G, hj=g} x_h y_j \quad \hbox{for all} \quad x,y \in \Gamma,\, g\in G,$$
and
$$
x_{ n  } = 1, \  \ x_g x_h=  \left\{\begin{array}{ll} x_g & \hbox{if } g=h, \\ 0 & \hbox{if } g\neq h,  \end{array}\right.
\quad \hbox{for all} \quad  x\in \Gamma,\, g,h  \in G.
$$
 Identifying the algebra $H_B(\kk)=B^*$ with $\kk G$ in the natural way,
we  identify the  set $\Rscr^A_B (\kk)$ with the set of multiplicative homomorphisms $\Gamma \to   \kk G$
whose image consists of  elements   $\sum_{g\in G} k_g g\in \kk G$ such that    $k_g^2=k_g$ for all $ g\in G$, $ k_g k_{ h}=0$ for distinct $ g,h\in G$, and $\sum_{g\in G} k_g=1$.
If $\kk$ has no zero-divisors, then this is   just  the set of monoid homomorphisms $ \Gamma \to G $.
  Then   the formula \eqref{adjunction+eee} gives a bijection
\begin{equation}\label{adjunction+ee---}
\Hom_{  \Mon  }(\Gamma, G)
\stackrel{\simeq}{\longrightarrow}  \Hom_{ \Com } (A_B , \kk),
\end{equation} where   $\Mon$ is the category of monoids  and monoid homomorphisms
and $\Com$ is the category of  commutative ungraded algebras
and algebra homomorphisms.
This  computes    the set $\Hom_{\Mon}(\Gamma, G)$   from $A_B$.

 This example   can be generalized   in terms of monoid schemes  (recalled in  Appendix~\ref{Monoid schemes}).
Let   $A=\kk \Gamma$ be the   bialgebra  associated with a  monoid~$\Gamma$
and let   now   $B=\kk[\Gscr]$ be the  coordinate   algebra  of a  monoid   scheme $\Gscr$;   both~$A$ and~$B$ are ungraded bialgebras.
We claim that, for any ungraded commutative algebra~$C$, there is a natural bijection of the set $\Rscr^A_B(C)$
onto the set of monoid homomorphisms $\Hom_{ \Mon  }(\Gamma, \Gscr(C)).$
Indeed, given an   algebra homomorphism    $u: A \to H_B(C)$,
the condition  \eqref{strange-property}  holds for all $x\in A$ if and only if it holds for all $x\in \Gamma\subset A$.
For $x \in \Gamma$, the condition \eqref{strange-property} means that  $u(x): B \to C $ is an algebra homomorphism, i.e$.$ $u(x)\in \Gscr(C)\subset H_B(C)$.
Then the map  $u\vert_\Gamma:\Gamma \to \Gscr(C)$ is a   monoid homomorphism. This  implies our claim.
  This claim and  Lemma~\ref{just--+} show   that   the functor $C\mapsto \Hom_{ \Mon }(\Gamma,\Gscr(C))$ is an affine scheme with coordinate algebra $(\kk \Gamma)_{\kk[\Gscr]}$.

For instance, if  $\Gamma$ is   the   monoid  freely generated by a single element   $x$,
then for any monoid   scheme $\Gscr$, the functor $\Hom_{\Mon}(\Gamma,\Gscr(-))$  is naturally isomorphic to  $\Gscr$.
On the level of coordinate algebras, this corresponds to the algebra isomorphism
\begin{equation*} \label{rank_one}
(\kk \Gamma)_{  \kk[\Gscr]  } \stackrel{\simeq}{\longrightarrow} \kk[\Gscr] , \,   x_b  \longmapsto  b.
\end{equation*}
  If $\Gscr$ is a group scheme, then $B=  \kk[\Gscr]$  is a Hopf algebra and  this  isomorphism   transports the $B$-coaction \eqref{Delta_M}
into the   usual    (right) \emph{adjoint coaction}  of  $B$ on   itself defined by
\begin{equation} \label{adjoint_coaction}
B  {\longrightarrow} B\otimes B,\,\, b   \longmapsto    b'' \otimes s_B(b')\, b''' .
\end{equation}

\subsection{Example:   from Lie algebras to representation algebras}  \label{enveloping_algebras}

Let  $A=U(\mathfrak{p})$ be the enveloping algebra  of a Lie
algebra $\mathfrak{p}$, and let $B=\kk[\Gscr]$ be the coordinate
algebra of an infinitesimally-flat group scheme $\Gscr$ with Lie
algebra $\mathfrak{g}$.  (See   Appendix~\ref{group_schemes} for the terminology.)
Both~$A$ and~$B$ are ungraded Hopf algebras. We claim that, for any ungraded
commutative algebra~$C$, there is a natural bijection of the set
$\Rscr^A_B(C)$ onto the set of Lie algebra homomorphisms
${\Hom_{\Lie}(\mathfrak{p},\mathfrak{g} \otimes C)}$.
   Indeed,  given an   algebra homomorphism    $u: A \to H_B(C)$,
  the condition  \eqref{strange-property} holds  for all $x\in A$ if and only if it holds  for all $x\in \mathfrak{p} \subset A$.
  For $x \in \mathfrak{p}$, the  condition \eqref{strange-property}    means that  $u(x): B \to C$ is a derivation
  with respect to the structure of $B$-module in~$C$ induced by the counit   of $B$.
   By \eqref{inf-flat}, this is equivalent to the inclusion  $u(x) \in \mathfrak{g} \otimes    C$.
  Then the map  $u\vert_{\mathfrak{p}}:\mathfrak{p}  \to \mathfrak{g}   \otimes    C$ is a Lie algebra homomorphism. This   implies our claim.
  This claim and  Lemma~\ref{just--+} show   that  the functor $C\mapsto \Hom_{\Lie}(\mathfrak{p},\mathfrak{g}  \otimes    C)$ is an affine scheme
with coordinate algebra $(U(\mathfrak{p}))_{\kk[\Gscr]}$.

 \section{Fox pairings}\label{Fox pairings and biderivations1}

We recall  the theory of  Fox pairings from  \cite{MT_dim_2}.

 \subsection{Fox pairings and transposition}

 Let $A$  be  a graded Hopf  algebra with  counit   $\varepsilon=\varepsilon_A$   and invertible antipode $s=s_A$.
Following  \cite{MT_dim_2},     a  {\it Fox pairing of degree $n\in \ZZ$} in~$A$  is a bilinear map   $\rho:A \times A \to A$ such
that $\rho (A^p, A^q)\subset A^{p+q+n}$ for all $p,q\in \ZZ$ and
\begin{equation}\label{Fox1}
\rho (x, y  z)= \rho (x, y ) z + \varepsilon  (y ) \rho (x, z)  ,
\end{equation}
\begin{equation}\label{Fox2}
\rho (x y, z) =\rho (x , z)\,  \varepsilon (y) +  x    \rho (y, z)
\end{equation}
for any $x,y,z\in A$.   These conditions imply  that $\rho(1_A, A)=\rho(A, 1_A)=0$.

 The \emph{transpose} of a Fox pairing $\rho: A \times A \to A $   of degree $n$ is the bilinear map  $\overline{\rho}: A \times A \to A$    defined by
\begin{equation} \label{transpose_formula}
\overline\rho(x,y) = (-1)^{\vert x \vert_n \vert y \vert_n} s^{-1} \rho\big(s(y),s(x)\big)
\end{equation}
for any homogeneous $x,y \in A$.

\begin{lemma}\label{transpose}
The transpose     of a Fox pairing  of degree~$n$   is a Fox pairing   of degree~$n$.
\end{lemma}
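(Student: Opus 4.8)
The plan is to verify directly that $\overline\rho$ meets the three requirements in the definition of a Fox pairing: the degree condition and the two Leibniz-type identities \eqref{Fox1} and \eqref{Fox2}. The degree is immediate: since the antipode $s$ is a graded linear map, so is $s^{-1}$, and hence for $x\in A^p$, $y\in A^q$ we get $s(y)\in A^q$, $s(x)\in A^p$, $\rho(s(y),s(x))\in A^{p+q+n}$, and finally $\overline\rho(x,y)=(-1)^{\vert x\vert_n\vert y\vert_n}s^{-1}\rho(s(y),s(x))\in A^{p+q+n}$. Before the main computation I would record the three algebraic facts to be used repeatedly: that $s$ is a graded algebra antiendomorphism, i.e.\ $s(xy)=(-1)^{\vert x\vert\vert y\vert}s(y)s(x)$; that the same antimultiplicativity holds for $s^{-1}$ (a one-line consequence obtained by substituting $x\mapsto s^{-1}(x)$, $y\mapsto s^{-1}(y)$ and using invertibility of $s$); and that $\varepsilon s=\varepsilon$, whence also $\varepsilon s^{-1}=\varepsilon$.

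To prove \eqref{Fox1} for $\overline\rho$, I would start from the defining formula \eqref{transpose_formula} applied to $\overline\rho(x,yz)$, rewrite $s(yz)=(-1)^{\vert y\vert\vert z\vert}s(z)s(y)$, and apply the identity \eqref{Fox2} for $\rho$ to $\rho(s(z)s(y),s(x))$. This yields two terms, one carrying the factor $\varepsilon(s(y))=\varepsilon(y)$. I would then apply $s^{-1}$, using its antimultiplicativity to commute the factor $s(z)$ past $\rho(s(y),s(x))$; this produces a Koszul sign governed by $\vert z\vert$ and by $\vert\rho(s(y),s(x))\vert=\vert x\vert+\vert y\vert+n$. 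After multiplying through by the prefactor $(-1)^{\vert x\vert_n\vert yz\vert_n}$ and collecting, the two surviving terms should match $\overline\rho(x,y)\,z$ and $\varepsilon(y)\,\overline\rho(x,z)$ exactly. The identity \eqref{Fox2} for $\overline\rho$ is then obtained by the symmetric computation: expand $\overline\rho(xy,z)$, use $s(xy)=(-1)^{\vert x\vert\vert y\vert}s(y)s(x)$, apply the \emph{other} axiom \eqref{Fox1} of $\rho$ to $\rho(s(z),s(y)s(x))$, and repeat the same $s^{-1}$-and-collect procedure to arrive at $x\,\overline\rho(y,z)+\overline\rho(x,z)\,\varepsilon(y)$.

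The hard part will be nothing conceptual but rather the sign bookkeeping with the shifted degrees $\vert x\vert_n=\vert x\vert+n$: each of the two identities comes down to checking two congruences of exponents modulo $2$. I expect one of the two to reduce to a clean cancellation, the discrepancy between the computed exponent and the target exponent being an even multiple (of the form $2\vert z\vert(\vert x\vert+n)$ and the like). The other congruence, namely the one attached to the $\varepsilon(y)$-term, will hold only when $\vert y\vert=0$; but this is precisely the range in which $\varepsilon(y)$ can be nonzero, since $\varepsilon$ is graded and kills $A^p$ for $p\neq0$. Thus, working with homogeneous $x,y,z$ and invoking this vanishing makes the troublesome term disappear exactly when the signs would otherwise fail to agree. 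Keeping this remark in view is, I think, the organizing principle that keeps the computation honest; with it, everything else is routine bilinearity together with the antihomomorphism property of $s^{\pm1}$.
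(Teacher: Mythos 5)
Your proposal is correct and takes essentially the same route as the paper: expand the transpose via \eqref{transpose_formula}, use the antimultiplicativity of $s$ and $s^{-1}$, apply the opposite Fox axiom of $\rho$ (the paper verifies \eqref{Fox2} for $\overline\rho$ via \eqref{Fox1} of $\rho$ and calls the other identity similar, while you do the mirror image), and resolve the signs — including the key observation, which the paper uses only implicitly, that the $\varepsilon(y)$-term can be nonzero only when $\vert y\vert=0$, exactly where its sign works out.
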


\begin{proof} Let $\rho$ be a Fox pairing  of degree~$n$ in~$A$.
  For   any homogeneous  $x, y,z \in A$,
\begin{eqnarray*}
 \overline{\rho}(x y,z) &=&  (-1)^{ \vert x y \vert_n \vert z\vert_n} s^{-1}\rho\!\left(s(z),s(x  y) \right)\\
&=& (-1)^{ \vert xy \vert_n \vert z\vert_n + \vert x \vert \vert y \vert}  s^{-1}\rho\!\left(s (z),s(y) s( x) \right)  \\
&=& (-1)^{ \vert xy \vert_n \vert z\vert_n + \vert x \vert \vert y \vert}  \, s^{-1} \big(\rho\!\left(s (z),s(y)  \right)  s( x)   +\varepsilon (s(y)) \, \rho\!\left(s (z),s(x)  \right) \big) \\
&=& (-1)^{ \vert y \vert_n \vert z\vert_n} x\,  s^{-1}  \rho\!\left(s (z),s(y)  \right)+  (-1)^{ \vert x \vert_n \vert z\vert_n} \varepsilon (y)\, s^{-1}  \rho\!\left(s (z),s(x)  \right) \\
&=& x  \overline{\rho}(y,z)+  \overline{\rho}(x,z) \, \varepsilon (y).
\end{eqnarray*}
This verifies \eqref{Fox2}, and \eqref{Fox1} is verified similarly.
 That $\overline \rho$ has degree~$n$ is obvious.
\end{proof}

We say that a  Fox pairing $\rho$ in~$A$   is \emph{antisymmetric} if $\overline \rho=-\rho$.
It is especially easy to produce antisymmetric Fox pairings in the case of involutive~$A$.
Recall that a graded Hopf algebra   $A$   is \emph{involutive} if its antipode   $s=s_A$   is an involution.
  For instance,   all  commutative graded Hopf algebras and all cocommutative graded Hopf algebras are involutive.
 In this case, we have  $\overline{\overline \rho }=\rho$  for any Fox pairing $\rho$  and, as a consequence, the Fox pairing $\rho -\overline \rho$ is antisymmetric.

   \begin{lemma}\label{transpose++++}
Let $\rho$ be  a Fox pairing  of degree~$n$  in  a cocommutative  graded Hopf algebra~$A$ with antipode $s=s_A$ and counit
$\varepsilon=\varepsilon_A$.  Then for any $x,y\in A$, we have
\begin{equation}\label{+++s}  \rho (s(x),s(y) )
  =      s(x')\,  \rho(x'',y') \, s(y'')    . \end{equation}
If $\rho$ is antisymmetric, then so is the bilinear form  $\varepsilon \rho: A \times A \to \kk$.
\end{lemma}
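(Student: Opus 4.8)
The plan is to establish \eqref{+++s} by stripping off the two antipodes one variable at a time, using only the Leibniz rules \eqref{Fox1}, \eqref{Fox2}, the antipode relation \eqref{antip-prop}, coassociativity \eqref{basics}, and the vanishing $\rho(1_A,A)=\rho(A,1_A)=0$. For the first variable I would start from the identity $s(x^{(1)})x^{(2)}\otimes x^{(3)} = 1_A \otimes x$, which follows from \eqref{basics}, \eqref{antip-prop} and \eqref{cocounit}. Applying $\rho(-,s(y))\otimes \id_A$ to both sides and using $\rho(1_A,-)=0$ gives $\rho(s(x^{(1)})x^{(2)},s(y))\otimes x^{(3)} = 0$; expanding the first argument with \eqref{Fox2} and contracting the free tail $x^{(3)}$ against $\varepsilon$ through \eqref{cocounit} collapses this to the reduction
\[
\rho(s(x),s(y)) = - s(x')\,\rho(x'',s(y)).
\]

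Next I would run the mirror-image argument in the second variable: starting from $y^{(1)}\otimes y^{(2)}s(y^{(3)}) = y\otimes 1_A$, applying $\id_A\otimes \rho(z,-)$, using $\rho(-,1_A)=0$, and expanding with \eqref{Fox1}, I obtain $\rho(z,s(y)) = -\rho(z,y')\,s(y'')$ for every $z\in A$. Substituting $z=x''$ into this and feeding the result into the displayed first-variable reduction, the two minus signs cancel and produce exactly $\rho(s(x),s(y)) = s(x')\,\rho(x'',y')\,s(y'')$, which is \eqref{+++s}.

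For the second assertion I would apply $\varepsilon$ to \eqref{+++s} written with the variables swapped, namely $\rho(s(y),s(x)) = s(y')\,\rho(y'',x')\,s(x'')$. Since $\varepsilon$ is a graded algebra homomorphism and $\varepsilon s=\varepsilon$, the right-hand side becomes $\varepsilon(y')\,(\varepsilon\rho)(y'',x')\,\varepsilon(x'')$, and contracting the two counits via \eqref{cocounit} gives $(\varepsilon\rho)(s(y),s(x)) = (\varepsilon\rho)(y,x)$. Combining this with the transpose formula \eqref{transpose_formula} and the relation $\varepsilon s^{-1}=\varepsilon$ (a consequence of $\varepsilon s=\varepsilon$), I get $(\varepsilon\overline\rho)(x,y) = (-1)^{\vert x\vert_n\vert y\vert_n}(\varepsilon\rho)(y,x)$. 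If $\rho$ is antisymmetric, then $\overline\rho=-\rho$ forces $\varepsilon\overline\rho=-\varepsilon\rho$, whence $(\varepsilon\rho)(x,y) = -(-1)^{\vert x\vert_n\vert y\vert_n}(\varepsilon\rho)(y,x)$, the required antisymmetry of the bilinear form $\varepsilon\rho$. One may note that on the support of $\varepsilon\rho$, where $\vert x\vert+\vert y\vert=-n$, the sign $(-1)^{\vert x\vert_n\vert y\vert_n}$ coincides with $(-1)^{\vert x\vert\vert y\vert}$.

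I expect the main obstacle to be purely notational: carrying the triple coproduct, the placement of the antipode, and the two counit contractions through the Sweedler bookkeeping cleanly enough that no spurious Koszul sign is introduced (all the relevant structure maps $\varepsilon$, $\varepsilon s$, $\varepsilon s^{-1}$ land in degree $0$, so the contractions are genuinely sign-free). Once the two one-variable reduction formulas are pinned down, assembling \eqref{+++s} and passing to $\varepsilon\rho$ is entirely formal; cocommutativity of $A$ is used only insofar as it guarantees the involutivity of $s$, so that $s^{-1}$ in \eqref{transpose_formula} is well behaved.
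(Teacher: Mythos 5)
Your proof is correct, and for the identity \eqref{+++s} it takes a genuinely different route from the paper's. Both arguments begin identically: expanding $0=\rho(s(x')x'',s(y))$ via \eqref{Fox2} to get $\rho(s(x),s(y))=-s(x')\,\rho(x'',s(y))$ (your tensor-factor bookkeeping with $x^{(3)}$ is an unnecessary detour, but harmless). The difference is in the second-variable reduction. The paper first proves $\rho(x,y)=-\rho(x,s(y'))\,y''$ from the antipode identity $s(y')y''=\varepsilon(y)1_A$, and then substitutes $y\mapsto s(y)$; this substitution forces it to invoke the flip formula \eqref{flip_antipode}, the involutivity of $s$, \emph{and} the cocommutativity of $A$ to untangle $\Delta(s(y))$. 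You instead expand $0=\rho(z,y's(y''))$ directly via \eqref{Fox1}, using the other half of the antipode axiom \eqref{antip-prop}, which yields $\rho(z,s(y))=-\rho(z,y')\,s(y'')$ with no substitution at all. The payoff is real: your argument shows that \eqref{+++s} holds in \emph{any} graded Hopf algebra with invertible antipode, whereas the paper's proof genuinely uses cocommutativity at that step. (The second assertion, the antisymmetry of $\varepsilon\rho$, you prove exactly as the paper does: apply $\varepsilon$ to \eqref{+++s}, contract the counits, and combine with \eqref{transpose_formula} via $\varepsilon s^{-1}=\varepsilon$.) One small correction to your closing remark: your proof does not use involutivity of $s$ anywhere, so cocommutativity is not needed even "insofar as it guarantees involutivity" --- the only thing required is that $s$ be invertible with $\varepsilon s^{-1}=\varepsilon$, and invertibility of the antipode is already a standing hypothesis in the definition of a Fox pairing (Section~\ref{Fox pairings and biderivations1}), while $\varepsilon s^{-1}=\varepsilon$ follows formally from $\varepsilon s=\varepsilon$. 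So your proof is strictly more general than the statement requires; the cocommutativity hypothesis in the lemma is there because it is needed elsewhere in the paper, not for this identity.
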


\begin{proof} We have
\begin{eqnarray*}
 0 \ = \ {\rho}(\varepsilon (x)  1_A  ,y)&=&   \rho(s(x') x'',y)\\
&=&  \rho(s(x') ,y)  \, \varepsilon  (x'') +s(x') \rho(  x'',y)  \\
&=&  \rho(s(x' \varepsilon  (x'')) ,y)    +s(x') \rho(  x'',y) \\
&  = & \rho(s(x),y)+   s(x') \rho(  x'',y)  .
\end{eqnarray*}
Therefore
\begin{equation}\label{firstrho+}
\rho(s(x),y)=-  s(x') \rho( x'',y).
\end{equation}
A similar computation shows that
$ \rho(x ,y)=- \rho( x, s(y'))y'' $. Replacing  here  $y$ by $s(y)$ and using the involutivity of  $s$ and the cocommutativity of $A$, we obtain  that
\begin{equation}\label{firstrho++}
\rho(x, s(y))=-  (-1)^{\vert y' \vert \vert y'' \vert}  \rho( x, y'' ) s(y')=  -   \rho( x, y' ) s(y'') .
\end{equation}
The formulas \eqref{firstrho+} and  \eqref{firstrho++} imply  that
$$
\rho (s(x),s(y) ) = -      s(x')\,  \rho(x'', s(y) )  =      s(x') \rho(x'',y') s(y'')    .
$$

  If we now assume that   $\overline   \rho= -\rho$, then  for any homogeneous $x,y \in A$, we have
\begin{eqnarray*}
\varepsilon  \rho(x,y) & \by{transpose_formula}&  - (-1)^{\vert x \vert_n \vert y \vert_n} \varepsilon \rho\big(s (y),s (x)\big)  \\
& \by{+++s} &  - (-1)^{\vert x \vert_n \vert y \vert_n} \varepsilon (y')\, \varepsilon  \rho(y'',x')\, \varepsilon (x'') \ = \   - (-1)^{\vert x \vert_n \vert y \vert_n}  \varepsilon  \rho(y,x).
\end{eqnarray*}

\up
 \end{proof}

 \subsection{Examples}\label{Examples-rho}

1.  Given a graded Hopf algebra $A$, any   $a\in A^n$ with $n\in \ZZ$  gives rise to  a    Fox pairing $\rho_a$ in~$A$ of degree $n$ by
\begin{equation}
\label{trivial_Fox}
 \rho_a  (x,y)=\big(x -\varepsilon_A   (x)\, 1_A  \big) \,  a \, \big(y- \varepsilon_A   (y)\,   1_A  \big)
\end{equation} for any $x,y\in A$. If   $ s_A  (a)=(-1)^{ n+1 }a$, then $\rho_a$   is antisymmetric.

  2. Consider the tensor algebra
  $$
  A  =T(X)=\bigoplus_{p \geq 0} \, X^{\otimes p}
  $$
  of an ungraded  module~$X$, where the $p$-th homogeneous summand  $X^{\otimes p}$ is the tensor product    of~$p$ copies  of~$X$.
 We provide~$A$ with    the  usual   structure of a   cocommutative graded  Hopf algebra,
 where
$\Delta_A (x)=x\otimes 1+ 1\otimes x$, $\varepsilon_A (x)=0$, and $s_A(x)=-x$ for all $x  \in X=X^{\otimes 1} $.
Each  bilinear   pairing    ${\ast:X \times X \to X}$  extends uniquely  to a Fox pairing $\rho_\ast$ of degree~$-1$ in~$A $ such that $\rho_\ast(x,y)=x\ast y$
for all $x,y \in X $. It is easy to see that  $\rho_\ast$ is antisymmetric if and only if~$\ast$ is   commutative.

3.  Let $M$ be a smooth oriented  manifold of dimension $d>2$ with non-empty boundary.
Suppose  for simplicity that  the ground ring $\kk$ is a field and consider the graded algebra  $ H(\Omega ;\kk )$,
where     $\Omega $ is  the loop space of   $M$  based at a point  of $  \partial M$
and  $H(-;\kk)$ is the    singular homology of a space with coefficients in $\kk$.
  Using intersections of   families of loops   in $M$,
we define in  \cite{MT_high_dim}    a canonical operation in~$ H (\Omega ;\kk )$   which is equivalent (see Appendix~\ref{FPvDB})
to an   antisymmetric  Fox pairing   of degree  $  2-d$ in~$ H (\Omega ;\kk )$.
A parallel construction for surfaces is quite elementary; it  will be   reviewed and   discussed   in Section~\ref{surfaces}.

  \section{Balanced biderivations}\label{Fox pairings and biderivations2}

 We      introduce   balanced biderivations in ungraded Hopf algebras.

 \subsection{Biderivations}\label{Balanced bilinear forms} Let  $B$ be an ungraded   algebra endowed
with   an algebra homomorphism $\varepsilon: B \to \kk$. A linear map $\mu:B\to
\kk$ is   a \emph{derivation} if $\mu(bc)= \varepsilon(b) \mu (c)+
\varepsilon(c) \mu (b)$  for all $b,c \in B$. Clearly,    $\mu $
is a derivation if and only if $\mu(1_B+I^2)=0$, where $I^2\subset B$
is the square of the ideal $I=\Ker(\varepsilon)$ of $ B$.
A bilinear form $\bullet:B \times B \to \kk$   is a  \emph{biderivation} if it is a derivation in each variable, i.e.,
\begin{eqnarray}
\label{bider}
 (bc) \bullet d &=&    \varepsilon  (b)\, c\bullet d+     \varepsilon   (c) \, b \bullet  d,\\
 b \bullet (cd) &=&  \varepsilon  (c)\, b\bullet d+     \varepsilon   (d) \, b \bullet  c
 \end{eqnarray}
 for any $b,c,d \in B$.  Clearly, $\bullet$ is a biderivation if and only if both its left and right annihilators contain $1_B+I^2$.
 Thus, there is a one-to-one correspondence\\[-0.3cm]
\begin{equation} \label{correspondence}
\xymatrix{
\left\{\hbox{\small biderivations in $B$} \right\}  \ar@/^1.5pc/[rr]^-{\hbox{\scriptsize restriction}}
&& \ar@/^1.5pc/[ll]^-{\hbox{\scriptsize pre-composition with $p\times p$}} \left\{\hbox{\small bilinear forms in $I/I^2$} \right\}
}
\end{equation}
where $p: B \to I/I^2$ is the  linear map defined by $p(b) = b-\varepsilon(b) 1_B  \!\! \mod I^2$ for any~$b\in B$.

 For further use, we state  a well-known method   producing a presentation of the module~$I/I^2$
by  generators and relations from a presentation of the algebra $B$ by generators and relations. Suppose that   $B$
is generated by a set $X \subset B$ and that $R $ is a set of defininig relations for $B$ in these generators.
Then the  vectors $\{p(x)\}_{x\in X} $ generate the module $ I/I^2$.
Each relation $r\in R$ is a non-commutative polynomial in the variables $x\in X$ with coefficients in $\kk$.
Replacing every  entry of $x$ in $r$ by $\varepsilon (x)+ p(x)$ for all $x\in X$, and taking the linear part of the resulting polynomial, we obtain
a formal  linear combination of the symbols $\{p(x)\}_{x\in X} $ representing zero in $I/I^2$. Doing this for all $r\in R$,  we obtain
a set of defining relations for the module $I/I^2$ in the generators $\{p(x)\}_{x\in X} $.

\subsection{Balanced bilinear forms}\label{Balanced bilinear forms2}
Let  $B$ be an ungraded Hopf algebra with   counit $\varepsilon=\varepsilon_B$ and antipode $s=s_B$.
A~bilinear form $\bullet: B \times B \to \kk$ is  \emph{balanced} if
\begin{equation} \label{balanced}
(b \bullet c'')\,  s(c') c'''  =   (c  \bullet  b'' )\,   s(b''') b'
\end{equation}
for any $b,c \in B$.  Balanced forms are symmetric: to see it,  apply~$\varepsilon $ to both sides of  \eqref{balanced}.
The following lemma gives  a  useful reformulation of   \eqref{balanced}    for commutative~$B$.

\begin{lemma}
A   bilinear form $\bullet: B \times B \to \kk$   in a commutative  ungraded  Hopf algebra $B$ is balanced  if and only if for any $b,c\in B$,
\begin{equation} \label{balanced_bis}
(b'' \bullet c')\, b' s(c'')  = \   (c'' \bullet b' )\,  s(c') b''.
\end{equation}
\end{lemma}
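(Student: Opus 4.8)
The plan is to derive \eqref{balanced_bis} from \eqref{balanced} by a reversible chain of Hopf-algebraic manipulations, and then run the chain backwards for the converse. Throughout I would exploit that $B$ is commutative, so that its antipode $s=s_B$ is an algebra homomorphism with $s^2=\id_B$ and all products may be reordered at will; besides this I would use only the antipode axioms \eqref{antip-prop} in the collapsing form $s(b')b''=b's(b'')=\varepsilon(b)1_B$, the coassociativity \eqref{basics}, and the flip formula \eqref{flip_antipode}. A useful orienting observation is that, since balanced forms are symmetric, \eqref{balanced} is precisely the assertion that $\bullet$ is invariant in the sense of \eqref{B-invariancetriv} with respect to the adjoint coaction \eqref{adjoint_coaction} $c\mapsto c''\otimes s(c')c'''$: substituting $m^\ell=m''$, $m^r=s(m')m'''$ into \eqref{B-invariancetriv} and simplifying $s(s(b')b''')=s(b''')b'$ by $s^2=\id_B$ reproduces \eqref{balanced} after using $(b''\bullet c)=(c\bullet b'')$. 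Thus the real content of both \eqref{balanced} and \eqref{balanced_bis} is adjoint-invariance, which is what ultimately forces them to be equivalent.

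Concretely, I would apply to the whole identity \eqref{balanced} an operation that comultiplies the variable $c$ once more and contracts an adjacent leg against the antipode via $x's(x'')=\varepsilon(x)1_B$. On the side of \eqref{balanced} where $c$ carries the two-sided conjugation $s(c')(\cdots)c'''$ this lowers the conjugation to a single antipode, while on the opposite side, where $c$ occurs "whole'', the same operation splits it into $c',c''$; running the symmetric operation in the variable $b$ then replaces the conjugation $s(b''')(\cdots)b'$ by the bare factors $b',b''$, producing the shape of \eqref{balanced_bis}. Each such comultiply-and-collapse step is invertible — its inverse comultiplies and multiplies by a bare leg in place of its antipode, undoing the collapse through \eqref{antip-prop} — so the identical chain, traversed in reverse, recovers \eqref{balanced} from \eqref{balanced_bis} and establishes the "if'' direction.

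The main obstacle is bookkeeping, and it is a genuine one: because $B$ is \emph{not} assumed cocommutative, the Sweedler legs $c',c''$ (and $b',b''$) may never be transposed, so the equivalence cannot be obtained by a term-by-term rewriting of one side into the other. Indeed the naive leg-swap one is tempted to perform already fails on group-like elements, which is exactly why the operations above must be applied to the identity as a whole and must be collapsed against the coupled side. The delicate point is therefore to choose, at each step, which leg to hit with the antipode and whether to multiply on the left or the right, so that every leg created by a comultiplication is removed by an antipode--counit collapse \eqref{antip-prop} landing on genuinely adjacent legs, with the final leg assignments matching \eqref{balanced_bis} rather than a cocommutative variant of it. Tracking the iterated comultiplications through \eqref{basics} and \eqref{flip_antipode} so that these collapses are legitimate is the crux of the computation.
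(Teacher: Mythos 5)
Your plan is correct and is essentially the paper's own proof: the paper likewise passes between \eqref{balanced} and \eqref{balanced_bis} by a short Sweedler-calculus computation — inserting a counit factor, expanding it as $s(b')b''$ via \eqref{antip-prop}, applying the assumed identity once, and collapsing the leftover adjacent antipode legs, with the commutativity of $B$ used to reorder factors and the symmetry of the form (obtained by applying $\varepsilon$) used implicitly when swapping arguments of $\bullet$. The only difference is organizational — the paper rewrites one side of the target identity into the other in a single chain of equalities, whereas you apply comultiply-and-contract operations to the hypothesis identity as a whole (first in $c$, then in $b$) and then invert the chain for the converse — but the underlying moves, and the bookkeeping they require, are identical.
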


\begin{proof}
For any $b, c \in B$, we have
\begin{eqnarray*}
(b'' \bullet c')\, b' s(c'') &=& (b^{(2)} \bullet c')\, \varepsilon(b^{(3)}) b^{(1)} s(c'') \\
 &=& (b^{(2)} \bullet c')\, s(b^{(3)}) b^{(1)} b^{(4)} s(c'') \\
 & \by{balanced}  &    (c^{(2)} \bullet b' )\,   s(c^{(1)}) c^{(3)} b'' s(c^{(4)}) \\
 &=&    (c^{(2)} \bullet b')\,  s(c^{(1)}) \varepsilon(c^{(3)}) b'' \ = \   (c'' \bullet b')\,  s(c') b''.
\end{eqnarray*}
Conversely,
\begin{eqnarray*}
(b \bullet c'') s(c') c''' &=& (b' \bullet c'')\, s(c') \varepsilon(b'') c''' \\
 &=& (b' \bullet c'')\, s(c') b'' s(b''') c''' \\
& \by{balanced_bis}&  ( c' \bullet b'')\,   b' s(c'') s(b''') c'''\\
&=&  (c' \bullet b'' )\,   b'  s(b''') \varepsilon(c'') \ = \  (c \bullet b'' )\,   b' s(b''').
\end{eqnarray*}

\up
\end{proof}

 We will     mainly   consider balanced biderivations   in    commutative    ungraded Hopf algebras.
Examples of   balanced biderivations will be given   in Sections~\ref{bfc} and~\ref{trace-like_examples}.

\subsection{Remarks} \label{about_bd}

Let $B$ be an ungraded Hopf algebra.

1. If $B$ is cocommutative, then all symmetric  bilinear forms in $B$ are balanced.

2. Assume that $B$ is commutative.
It follows from the definitions that  a symmetric bilinear form in  $B$ is balanced  if and only if it is  $B$-invariant
with respect to the adjoint  coaction   \eqref{adjoint_coaction} of~$B$.
This is  equivalent to the invariance under the conjugation   action of  the  group  scheme associated with~$B$,
see  Appendices~{\ref{induced_actions}--\ref{equivariance}}.

\section{Brackets in representation algebras} \label{Brackets in representation algebras}

In this section, we   construct  brackets  in   representation algebras.

\subsection{Brackets}

  Let $n\in \ZZ$.    An  \emph{${{n}}$-graded bracket}     in a graded algebra $A$ is a  bilinear map
$\bracket{-}{-} :A\times A\to A$ such that
$\bracket{A^p}{A^q}\subset A^{p+q+{{n}}}$
 for all $p,q\in \ZZ$ and   the
following  {\it ${{n}}$-graded Leibniz rules} are met   for all homogeneous $x,y,z\in A$:
\begin{eqnarray}
\label{poisson1} \bracket{x}{yz} &= &\bracket{x}{y}z + (-1)^{\vert x\vert_{{n}} \vert y\vert} \, y \bracket{x}{z},\\
\label{poisson2} \bracket{xy}{z} &= & x  \bracket{y}{z}   +
(-1)^{\vert y\vert \vert z\vert_{{n}}}     \bracket {x}{z} y.
\end{eqnarray}
An ${{n}}$-graded bracket  $\bracket{-}{-}$ in $A$  is {\it  antisymmetric}
if for all homogeneous $x,y  \in A$,
\begin{equation}\label{antis} \bracket{x}{y}=- (-1)^{ \vert x \vert_{{n}} \vert
y\vert_{{n}}} \bracket{y}{x}.
\end{equation}
For an antisymmetric bracket,  the identities \eqref{poisson1} and \eqref{poisson2} are equivalent to   each other.

 Given an $n$-graded bracket $\bracket{-}{-}$ in a graded algebra~$A$, the {\it   Jacobi identity} says that
   \begin{equation}\label{jaco}
   (-1)^{ \vert x \vert_{{n}} \vert z \vert_{{n}}} \bracket {x}  {\bracket{y}{z}}
+ (-1)^{ \vert x \vert_{{n}}\vert y \vert_{{n}}} \bracket  {y} {\bracket{z}{x}}
+(-1)^{  \vert y \vert_{{n}} \vert z \vert_{{n}} } \bracket   {z} {\bracket{x}{y}}    =0
\end{equation}
for all homogeneous $x,y,z\in A$. An antisymmetric   $n$-graded bracket satisfying the Jacobi identity
 is called a {\it  Gerstenhaber  bracket   of degree $n $.}
 Gerstenhaber  brackets  of degree $0$ in  ungraded algebras are called   {\it   Poisson brackets}.

\subsection{The main construction}\label{The main construction}

 We   formulate  our main construction  which,  under certain assumptions on Hopf algebras~$A$ and~$B$,
produces  a bracket in   $A_B$    from   an antisymmetric  Fox pairing in~$A$ and a balanced biderivation  in~$B$.

\begin{theor} \label{double_to_simple}
Let~$\rho$ be an antisymmetric Fox pairing    of degree $n\in \ZZ$ in a cocommutative  graded Hopf algebra~$A$.
Let~$\bullet$ be a balanced biderivation in a commutative ungraded Hopf algebra~$B$.
  Then   there  is a unique $n$-graded bracket $\bracket{-}{-}$ in  $A_B  $    such that
\begin{eqnarray}
\label{main_formula} \bracket{x_b}{y_c}    &=&        (-1)^{ \vert   x''\vert   \vert y' \vert_n}  (c''\bullet b^{(2)})\, \rho(x',y')_{ {{  s_B }}(b^{(3)})\, b^{(1)}}\,   x''_{b^{(4)}}\,   y''_{c' }
\end{eqnarray}
for all   $x,y \in A$ and   $b,c \in B$. This $n$-graded bracket   is antisymmetric.
\end{theor}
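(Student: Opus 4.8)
\emph{Uniqueness and strategy.} Since $A_B$ is generated as an algebra by the symbols $x_b$, every $n$-graded bracket is determined by its values on pairs of generators through the Leibniz rules \eqref{poisson1}--\eqref{poisson2} and bilinearity; hence at most one bracket can satisfy \eqref{main_formula}, and the content of the theorem is existence together with antisymmetry. To prove existence I would define the bracket on generators by \eqref{main_formula}, extend it by the Leibniz rules, and check that the extension passes to the quotient defining $A_B$.

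\emph{Existence.} Let $P$ be the free graded-commutative algebra on the graded module $A\otimes B$, writing $x_b$ for the image of $x\otimes b$; this builds in the bilinearity relations \eqref{addid}, and $A_B=P/J$ where $J$ is the ideal generated by the relations \eqref{mult}, \eqref{cocomult}, (iii) and~(v). The right-hand side of \eqref{main_formula} is multilinear in $x,y,b,c$ (each of $\rho$, $\bullet$ and the comultiplications being linear), so it yields a well-defined bilinear map on the generators of $P$ with values in $P$. As $P$ is free graded-commutative, this extends uniquely to an $n$-graded bracket on $P$ obeying \eqref{poisson1}--\eqref{poisson2}; no separate check against graded commutativity \eqref{addirel} is needed, since the Leibniz rules are automatically consistent with commutativity in each variable. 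A standard argument then shows that this bracket descends to $P/J$ as soon as $\bracket{g}{r}$ and $\bracket{r}{g}$ lie in $J$ for every generator $g$ of $P$ and every generator $r$ of~$J$, because the two Leibniz rules propagate these inclusions to $\bracket{P}{J}\subseteq J$ and $\bracket{J}{P}\subseteq J$.

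\emph{The consistency checks.} It remains to verify, for each relation $r$ among \eqref{mult}, \eqref{cocomult}, (iii), (v) and each generator $z_d$, that $\bracket{r}{z_d}=0=\bracket{z_d}{r}$ in $A_B$; this is where the hypotheses enter, one axiom per relation. The first multiplicativity relation \eqref{mult}, encoding products in $A$, is matched by expanding $\rho\big(x',(yz)'\big)$ and $\rho\big((xy)',z'\big)$ through the Fox-pairing Leibniz rules \eqref{Fox1} and \eqref{Fox2} and regrouping the Sweedler factors by \eqref{multimain}. The second multiplicativity relation \eqref{cocomult}, encoding products in $B$, is matched by the biderivation property \eqref{bider} of $\bullet$. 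The unitality relations (iii) and~(v) reduce to $\rho(1_A,-)=\rho(-,1_A)=0$ and to the vanishing of $\bullet$ on $1_B$. The delicate point throughout is the antipode-twisted subscript $s_B(b^{(3)})\,b^{(1)}$: propagating it through these identities forces repeated use of the antipode axioms \eqref{antip-prop}, the cocommutativity of $A$, and the commutativity of $B$, and I expect this bookkeeping to be the main obstacle.

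\emph{Antisymmetry.} Finally I would prove \eqref{antis} on generators, the general case then following from the Leibniz rules. Exchanging $x_b$ and $y_c$ in \eqref{main_formula} and comparing with $\bracket{x_b}{y_c}$, one rewrites $\rho(y',x')$ using the antisymmetry $\overline\rho=-\rho$ and the transpose formula \eqref{transpose_formula}, pushes the resulting antipodes past $\rho$ by Lemma~\ref{transpose++++}, and transports the scalar $c''\bullet b^{(2)}$ to the position $b''\bullet c^{(2)}$ by the balanced condition \eqref{balanced} (in the form \eqref{balanced_bis}), aided by the antipode identity \eqref{antipodeidentity} in reconciling the twisted subscripts; cocommutativity of $A$ and commutativity of $A_B$ then identify the two sides. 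The balanced condition is precisely the symmetry needed for this step, so this computation, entangled with the relation-\eqref{cocomult} check above, forms the technical heart of the proof.
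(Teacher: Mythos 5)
Your proposal is correct, and its computational core is the same as the paper's: the Fox-pairing axioms \eqref{Fox1}--\eqref{Fox2} absorb the relations \eqref{mult}, the biderivation property \eqref{bider} absorbs \eqref{cocomult}, the vanishing of $\rho$ and $\bullet$ on units handles (iii) and (v), and antisymmetry comes from the balanced condition \eqref{balanced} combined with Lemma~\ref{transpose++++}, the identity \eqref{antipodeidentity}, and cocommutativity. The packaging, however, differs in a way worth comparing. The paper works with the tensor algebra $T=T(A\otimes B)$ and defines the pairing with values in $A_B$ itself (its Leibniz rules \eqref{poisson1new}--\eqref{poisson2new} involve the projection $\pi:T\to A_B$). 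Because the target is already $A_B$, the relations of $A_B$ and the identity \eqref{antipodeidentity} are available immediately, so antisymmetry can be proved \emph{first}; since an antisymmetric pairing has equal left and right annihilators, each defining relation then needs to be checked on only one side (the paper checks \eqref{cocomult} in the second slot and \eqref{mult} in the first), and the commutativity relations \eqref{addirel} are disposed of by a one-line Leibniz argument. In your scheme the bracket takes values in the free graded-commutative algebra $P$, where the formula \eqref{main_formula} is genuinely \emph{not} antisymmetric --- its antisymmetry holds only modulo $J$, because the computation uses \eqref{antipodeidentity}, \eqref{mult} and \eqref{cocomult} --- so you are right to postpone antisymmetry until after descent; but this ordering is exactly what forces you to verify every relation in both arguments, roughly doubling the computations, and requiring \eqref{Fox1} and \eqref{Fox2} separately where the paper needs only one of them per relation. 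Two smaller points: (a) your appeal to freeness of $P$ to extend a bilinear map on generators to a biderivation satisfying \eqref{poisson1}--\eqref{poisson2} is a true but nontrivial fact (it rests on the sign check that these Leibniz rules are compatible with \eqref{commu}, equivalently on freeness of K\"ahler differentials), and deserves an argument, whereas the analogous extension on the tensor algebra is immediate; (b) your propagation of antisymmetry from generators to all of $A_B$ is valid --- the defect $\bracket{u}{v}+(-1)^{\vert u\vert_n\vert v\vert_n}\bracket{v}{u}$ is itself a biderivation vanishing on generators. In short, both routes succeed, and you could recover the paper's economy within your setting by proving antisymmetry of your $P$-valued bracket modulo $J$ \emph{before} the descent checks.
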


\begin{proof}
Observe first that   the condition \eqref{balanced}  allows us to rewrite the formula \eqref{main_formula} in the following equivalent form
\begin{eqnarray}
\label{b_Fox_2C} \bracket{x_b}{y_c}    &=&      (-1)^{ \vert   x''\vert   \vert y' \vert_n}  (b'\bullet c^{(3)})\, \rho(x',y')_{ { {s}_B }(c^{(2)})\, c^{(4)}}\,   x''_{b''} \,  y''_{c^{(1)}}\, .
\end{eqnarray}
  Every graded module $X$ determines a graded tensor algebra
  $T(X )=  \bigoplus_{k \geq 0}  \, X^{\otimes k}$ with the grading
 $$\vert x_1 \otimes x_2 \otimes  \cdots \otimes x_k \vert =\vert x_1 \vert  + \vert x_2 \vert  + \cdots + \vert
 x_k \vert$$ for any $k\geq 0$ and any homogeneous $ x_1,  \dots  , x_k \in X$.
Applying this construction to   $X=A \otimes B$, we obtain a graded algebra   $T=T\!\left(A\otimes B\right)$. For any $x\in A$ and $ b\in B$, we set $x_b=x\otimes b \in X \subset T$.
  Let $\pi:T\to A_B$ be  the   projection carrying each such~$x_b $ to the corresponding generator~$x_b$ of~$ A_B$.   It follows from the definition of~$T$ that the formula  \eqref{main_formula} defines uniquely   a  bilinear map
$\bracket{-}{-}:T \times T \to A_B$  such that for all homogeneous
$\alpha, \beta, \gamma\in T$,
\begin{eqnarray}
\label{poisson1new} \bracket{\alpha}{\beta \gamma} &= &\bracket{\alpha}{\beta} \pi (\gamma) + (-1)^{\vert \alpha\vert_{{n}} \vert \beta\vert} \, \pi (\beta) \bracket{\alpha}{\gamma},\\
\label{poisson2new} \bracket{\alpha \beta}{\gamma} &= & \pi
(\alpha) \bracket{\beta}{\gamma} + (-1)^{\vert \beta\vert \vert
\gamma\vert_{{n}}} \bracket {\alpha}{\gamma} \pi (\beta).
\end{eqnarray}
It is clear from the definitions that $\bracket{T^p}{T^q} \subset
A_B^{p+q+n}$  for any $p,q \in \ZZ$.

We check now that  for any homogeneous
$\alpha, \beta \in T$,    $$\bracket{\beta}{\alpha}
   = - (-1)^{\vert \alpha \vert_n \vert
\beta \vert_n} \bracket{\alpha}{\beta  }.$$
 In view of the Leibniz rules
\eqref{poisson1new}  and   \eqref{poisson2new}, it suffices to verify
this equality for the generators $\alpha=x_b$ and $\beta=y_c$ with
homogeneous $x,y\in A$ and $b,c \in B$. In this computation and in the rest of the proof, we denote the (involutive) antipodes in~$A$ and~$B$ by the same letter~$s$; this should not lead to a confusion. We have
  \begin{eqnarray*}
 && \bracket{y_c}{x_b} \\
&\stackrel{ \eqref{main_formula}}{=}&    (-1)^{ \vert   y''\vert   \vert x' \vert_n}  (b''\bullet c^{(2)})\, \rho(y',x')_{ {{s}}(c^{(3)}) c^{(1)}}\,   y''_{c^{(4)}}\,   x''_{b' } \\
 &\stackrel{  \eqref{transpose_formula}  }{=}&  -  (-1)^{ \vert   y''\vert   \vert x' \vert_n + \vert   y'\vert_n   \vert x' \vert_n}  (b''\bullet c^{(2)})\, (s\rho(s(x'), s(y')))_{ {{s}}(c^{(3)}) c^{(1)}}\,   y''_{c^{(4)}}\,   x''_{b' } \\
&\stackrel{ \eqref{antipodeidentity}}{=}&  -  (-1)^{ \vert   y \vert_n   \vert x' \vert_n }  (b''\bullet c^{(2)})\, ( \rho(s(x'), s(y')))_{ {{s}}(c^{(1)}) c^{(3)}}\,   y''_{c^{(4)}}\,   x''_{b' }   \\
&\stackrel{ \eqref{+++s}}{=}&    -  (-1)^{ \vert   y \vert_n    \vert x'x'' \vert_n   }  (b''\bullet c^{(2)})\, ( s(x') \rho(x'',  y' ) s(y''))_{ {{s}}(c^{(1)}) c^{(3)}}\,   y'''_{c^{(4)}}\,   x'''_{b' } \\
&\stackrel{  \eqref{flip_antipode}, \eqref{mult}}{=}&    -  (-1)^{ \vert   y \vert_n      \vert x'x'' \vert_n    }  (b''\bullet c^{(4)})\,   \\
&& \quad s(x')_{ {{s}}(c^{(3)}) c^{(5)}} \rho(x'',  y' )_{ {{s}}(c^{(2)}) c^{(6)}} s(y'')_{ {{s}}(c^{(1)}) c^{(7)}}\,   y'''_{c^{(8)}}\,   x'''_{b' } \\
&\stackrel{ \eqref{antipodeidentity}}{=}&    -  (-1)^{ \vert   y \vert_n      \vert x'x'' \vert_n    }  (b''\bullet c^{(4)})\,   \\
&& \quad  x'_{ {{s}}(c^{(5)}) c^{(3)}}  \rho(x'',  y' )_{ {{s}}(c^{(2)}) c^{(6)}} \,  y''_{ {{s}}(c^{(7)}) c^{(1)}}\,   y'''_{c^{(8)}}\,   x'''_{b' } \\
&\stackrel{ \eqref{cocomult}}{=}&    -  (-1)^{ \vert   y \vert_n     \vert x'x'' \vert_n   }  (b''\bullet c^{(4)})\,   \\
&& \quad  x'_{ {{s}}(c^{(5)}) c^{(3)}} \rho(x'',  y' )_{ {{s}}(c^{(2)}) c^{(6)}} \,  y''_{ {{s}}(c^{(7)}) c^{(1)} c^{(8)}}\,   x'''_{b' } \\
&\stackrel{ \eqref{antip-prop}}{=}&   -   (-1)^{ \vert   y \vert_n     \vert x'x'' \vert_n  }  (b''\bullet c^{(4)})\,    x'_{ {{s}}(c^{(5)}) c^{(3)}} \rho(x'',  y' )_{ {{s}}(c^{(2)}) c^{(6)}} \,  y''_{  c^{(1)}} \,   x'''_{b' } \\
&\stackrel{ \eqref{coco}}{=}&   -   (-1)^{ \vert   y \vert_n     \vert x'x'' \vert_n   + \vert   x' \vert    \vert x'' \vert }  (b''\bullet c^{(4)})\,    x''_{ {{s}}(c^{(5)}) c^{(3)}} \rho(x',  y' )_{ {{s}}(c^{(2)}) c^{(6)}}  \, y''_{  c^{(1)}} \,   x'''_{b' } \\
&\stackrel{ \eqref{commu}}{=}&    -  (-1)^{  \vert   y \vert_n  \vert x'  \vert_n  }  (b''\bullet c^{(4)})\,    \rho(x',  y' )_{ {{s}}(c^{(2)}) c^{(6)}} \,  y''_{  c^{(1)}} \,  x''_{ {{s}}(c^{(5)}) c^{(3)}} \,  x'''_{b' } \\
&\stackrel{ \eqref{cocomult}}{=}&    -  (-1)^{\vert   y \vert_n  \vert x'  \vert_n   }  (b''\bullet c^{(4)})\,    \rho(x',  y' )_{ {{s}}(c^{(2)}) c^{(6)}} \,  y''_{  c^{(1)}} \,  x''_{ {{s}}(c^{(5)}) c^{(3)} b' } \\
&\stackrel{ \eqref{balanced}}{=}&   -   (-1)^{\vert   y \vert_n  \vert x'  \vert_n   }  (c^{(3)}\bullet b^{(3)})\,    \rho(x',  y' )_{ {{s}}(c^{(2)}) c^{(4)}} \, y''_{  c^{(1)}} \,  x''_{ {{s}}(b^{(2)}) b^{(4)} b^{(1)}}   \\
&\stackrel{ \eqref{antip-prop}}{=}&   -   (-1)^{ \vert   y \vert_n  \vert x'  \vert_n    }  (c^{(3)}\bullet b')\,    \rho(x',  y' )_{ {{s}}(c^{(2)}) c^{(4)}}  \, y''_{  c^{(1)}} \,  x''_{ b''}   \\
&\stackrel{\eqref{commu}}{=} &    -  (-1)^{ \vert   y \vert_n  \vert x'  \vert_n  +\vert   x'' \vert \vert   y'' \vert  }  (b'\bullet c^{(3)} )\,    \rho(x',  y' )_{ {{s}}(c^{(2)}) c^{(4)}} \,  x''_{ b''}  \,   y''_{  c^{(1)}} \,  \\
&\stackrel{\eqref{b_Fox_2C}}{=}&  - (-1)^{\vert x\vert_n \vert y\vert_n}\bracket{x_b}{y_c}\, ,
\end{eqnarray*}
where at the end we use the congruence
$$   \vert y \vert_n \vert x'\vert_n  +\vert x''\vert \vert
y''\vert \equiv   \vert x\vert_n  \vert y\vert_n+  \vert x''\vert
\vert y'\vert_n\,   \mod 2 .$$

The antisymmetry of the   pairing $\bracket{-}{-}:T \times
T \to A_B$  implies that its left and right annihilators are equal.
We show now  that the annihilator contains   $\Ker \pi$. This will imply that the   pairing $\bracket{-}{-} $ descends to a bracket in $A_B  $ satisfying all the
requirements of the theorem.
We need only  to verify that   the defining  relations   of $A_B$
 annihilate  $\bracket{-}{-} $.  For
any homogeneous $x,y\in A$ and $b,c,d\in B$, we have
\begin{eqnarray*}
&&  \bracket{x_{b }}{y_{cd}} \\
&\stackrel{ \eqref{main_formula}}{=}&    (-1)^{ \vert   x''\vert   \vert y' \vert_n}  ((cd)''\bullet b^{(2)})\, \rho(x',y')_{ {{s}}(b^{(3)}) b^{(1)}}\,   x''_{b^{(4)}}\,   y''_{(cd)' } \\
&\stackrel{ \eqref{multimain}}{=}& (-1)^{ \vert   x''\vert   \vert y' \vert_n}  (c''d''\bullet b^{(2)})\, \rho(x',y')_{ {{s}}(b^{(3)}) b^{(1)}}\,   x''_{b^{(4)}}\,   y''_{c'd' }
\\
&\stackrel{ \eqref{bider}, \eqref{cocomult}  }{=}& (-1)^{ \vert   x''\vert   \vert y' \vert_n} \varepsilon (c'') (d''\bullet b^{(2)})\, \rho(x',y')_{ {{s}}(b^{(3)}) b^{(1)}}\,   x''_{b^{(4)}}\,   y''_{c'} \,y'''_{d' }\\ &&  + (-1)^{ \vert   x''\vert   \vert y' \vert_n} \varepsilon (d'') (c''\bullet b^{(2)})\, \rho(x',y')_{ {{s}}(b^{(3)}) b^{(1)}}\,   x''_{b^{(4)}}\,   y''_{c'} \,y'''_{d' }
\\
&\stackrel{  \eqref{addid},   \eqref{cocounit}}{=}&    (-1)^{ \vert   x''\vert   \vert y' \vert_n}  (d''\bullet b^{(2)})\, \rho(x',y')_{ {{s}}(b^{(3)}) b^{(1)}}\,   x''_{b^{(4)}}\, y''_c \, y'''_{ d' }\\ &&  +   (-1)^{ \vert   x''\vert   \vert y' \vert_n}
 (  c'' \bullet b^{(2)})\, \rho(x',y')_{ {{s}}(b^{(3)}) b^{(1)}}\,   x''_{b^{(4)}}\,   y''_{c'   } \,y'''_d
\\
&\stackrel{ \eqref{coco}, \eqref{main_formula}}{=}&   (-1)^{ \vert   x''\vert   \vert y' \vert_n+  \vert   y''\vert   \vert y''' \vert}  (d''\bullet b^{(2)})\, \rho(x',y')_{ {{s}}(b^{(3)}) b^{(1)}}\,   x''_{b^{(4)}}\, y'''_c \, y''_{ d' }\\ &&  +  \bracket{x_{b }}{y'_{c}} y''_d\\
&\stackrel{ \eqref{addirel}}{=}&   (-1)^{ \vert   x''\vert   \vert y' \vert_n }  (d''\bullet b^{(2)})\, \rho(x',y')_{ {{s}}(b^{(3)}) b^{(1)}}\,   x''_{b^{(4)}}\,   y''_{ d' }\, y'''_c  +  \bracket{x_{b }}{y'_{c}} y''_d\\
&\stackrel{ \eqref{main_formula}}{=}&
\bracket{x_b}{y'_d} y''_c+ \bracket{x_b}{y'_c} y''_d \\
&\stackrel{  \eqref{coco}, \eqref{addirel} }{=}&     (-1)^{\vert x \vert_n \vert
y' \vert } \, y'_c \bracket{x_b}{y''_d} +\bracket{x_b}{y'_c} y''_d   \  \stackrel{   \eqref{poisson1new}}{=} \ \bracket{x_b}{y'_{c}\, y''_d}.
\end{eqnarray*}
For any homogeneous  $x,y,z \in A$ and $b,c \in B$, we have
\begin{eqnarray*}
\!\!\!\!&&\!\!\!\!\!\!\!\! \bracket{(xy)_b}{z_c}\\
\!\!\!\! &\stackrel{ \eqref{main_formula}}{=}&\!\!\!\!\!\!\!\!    (-1)^{ \vert   (xy)''\vert   \vert z' \vert_n}  (c''\bullet b^{(2)})\, \rho((xy)', z ')_{ {{s}}(b^{(3)}) b^{(1)}}\,   (xy)''_{b^{(4)}}\,   z''_{c' }  \\
\!\!\!\! &\stackrel{ \eqref{multimain}}{=}&\!\!\!\!\!\!\!\!   (-1)^{ \vert   x''y''\vert   \vert z' \vert_n+\vert   x'' \vert   \vert y' \vert}  (c''\bullet b^{(2)})\, \rho( x'y', z ')_{ {{s}}(b^{(3)}) b^{(1)}}\,   (x''y'')_{b^{(4)}}\,   z''_{c' }      \\
 \!\!\!\! &\stackrel{ \eqref{Fox2}}{=}&\!\!\!\!\!\!\!\!   (-1)^{ \vert   x''y''\vert   \vert z' \vert_n+\vert   x'' \vert   \vert y' \vert}  (c''\bullet b^{(2)})\, (x' \rho(  y', z '))_{ {{s}}(b^{(3)}) b^{(1)}}\,   (x''y'')_{b^{(4)}}\,   z''_{c' }     \\
 \!\!\!\!   && + (-1)^{ \vert   x''y''\vert   \vert z' \vert_n+\vert   x'' \vert   \vert y' \vert}  (c''\bullet b^{(2)})\, \varepsilon(y') \rho( x' , z ')_{ {{s}}(b^{(3)}) b^{(1)}}\,   (x''y'')_{b^{(4)}}\,   z''_{c' }      \\
 \!\!\!\!  &\stackrel{ \eqref{mult}, \eqref{cocounit}}{=}&\!\!\!\!\!\!\!\!   (-1)^{ \vert   x''y''\vert   \vert z' \vert_n+\vert   x'' \vert   \vert y' \vert}  (c''\bullet b^{(3)})\,  x'_{{s}(b^{(5)}) b^{(1)}}  \rho(  y', z ')_{ {{s}}(b^{(4)}) b^{(2)}}\,    x''_{b^{(6)}} y''_{b^{(7)}}\,   z''_{c' }     \\
  && + (-1)^{ \vert   x''y \vert   \vert z' \vert_n }  (c''\bullet b^{(2)})\,   \rho( x' , z ')_{ {{s}}(b^{(3)}) b^{(1)}}\,   (x''y)_{b^{(4)}}\,   z''_{c' }      \\
 \!\!\!\!  &\stackrel{ \eqref{addirel}, \eqref{mult}}{=}&\!\!\!\!\!\!\!\!   (-1)^{ \vert    y''\vert   \vert z' \vert_n }  (c''\bullet b^{(3)})\,  x'_{{s}(b^{(5)})   b^{(1)}} x''_{b^{(6)}} \rho(  y', z ')_{ {{s}}(b^{(4)}) b^{(2)}}\,    y''_{b^{(7)}}\,   z''_{c' }     \\
   && + (-1)^{ \vert   x''y \vert   \vert z' \vert_n }  (c''\bullet b^{(2)})\,   \rho( x' , z ')_{ {{s}}(b^{(3)}) b^{(1)}}\,    x''_{b^{(4)}}\, y_{b^{(5)}} \,  z''_{c' }      \\
 \!\!\!\!   &\stackrel{ \eqref{cocomult}, \eqref{addirel}}{=}&\!\!\!\!\!\!\!\!   (-1)^{ \vert    y''\vert   \vert z' \vert_n }  (c''\bullet b^{(3)})\,  x_{{s}(b^{(5)})   b^{(1)} b^{(6)}} \rho(  y', z ')_{ {{s}}(b^{(4)}) b^{(2)}}\,    y''_{b^{(7)}}\,   z''_{c' }     \\
    && + (-1)^{ \vert   x''  \vert   \vert z' \vert_n +  \vert   y  \vert   \vert z \vert_n    }  (c''\bullet b^{(2)})\,   \rho( x' , z ')_{ {{s}}(b^{(3)}) b^{(1)}}\,    x''_{b^{(4)}} \,  z''_{c' } \, y_{b^{(5)}}\\
 \!\!\!\!    &\stackrel{  \eqref{antip-prop},  \eqref{cocounit},   \eqref{main_formula}}{=}&\!\!\!\!\!\!   (-1)^{ \vert    y''\vert   \vert z' \vert_n }  (c''\bullet b^{(3)})\,  x_{   b^{(1)}  }
      \rho(  y', z ')_{ {{s}}(b^{(4)}) b^{(2)}}\,    y''_{b^{(5)}}\,   z''_{c' }     \\
     && + (-1)^{\vert y \vert \vert z\vert_n} \bracket{x_{b'}} {z_{c}} y_{b''}\\
 \!\!\!\! &\stackrel{   \eqref{main_formula}}{=}&\!\!\!\!\!\!\!\! x_{b'}\bracket{y_{b''}} {z_{c}} + (-1)^{\vert y \vert \vert z\vert_n} \bracket{x_{b'}} {z_{c}} y_{b''}\ \stackrel{ \eqref{poisson2new}}{=} \
\bracket{x_{b'} y_{b''}}{z_{c}}.
\end{eqnarray*}
 For any $y\in A$ and $b,c\in B$, the equality
$\rho(  1_A, y)=0$ implies that  $\bracket{(1_A)_b}{y_c} =0$. The
Leibniz rule \eqref{poisson1new} implies that $\bracket{
1_{T}}{y_c}=0$. Hence,
$$ \bracket{(1_A)_b-\varepsilon_B(b) 1_{T}}{y_c} =  \bracket{(1_A)_b }{y_c}- \varepsilon_B(b)  \bracket{  1_{T}}{y_c}=  0 - 0 =  0 .
$$
The formula  \eqref{bider} implies that $1_B \bullet B=0 $. Hence,
for any $x,y\in A$ and $c \in B$,
$$\bracket{x_{(1_B)}-\varepsilon_A(x)
1_{T}} {y_c} = \bracket{x_{(1_B)}}{y_c}- \varepsilon_A(x) \bracket{
1_{T}}{y_c} =0-0=0.
$$
 Finally,  the Leibniz rule \eqref{poisson1new} easily implies that
$\bracket{T}{ \beta \gamma - (-1)^{\vert \beta \vert \vert
 \gamma \vert } \gamma \beta  }  =0$ for any
  homogeneous $ \beta, \gamma \in T$.
This concludes the proof of the claim that all   defining   relations   of $A_B$
 annihilate   $\bracket{-}{-} $ and concludes the proof of the theorem.
\end{proof}

  \subsection{A special  case}\label{The cocommutative case}
  The bracket constructed  in Theorem~\ref{double_to_simple}  may  not satisfy the Jacobi identity.
  We will  formulate further  conditions  on our data    guaranteeing  the Jacobi identity.
The next  theorem is the simplest result in this direction.

\begin{theor}
 If, under the conditions of  Theorem~\ref{double_to_simple},    $B$ is   cocommutative, then
   the bracket   constructed in that theorem   is Gerstenhaber of degree~$n$.
\end{theor}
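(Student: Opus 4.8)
The plan is to prove the Jacobi identity \eqref{jaco}, since antisymmetry and the Leibniz rules \eqref{poisson1}--\eqref{poisson2} for our bracket are already furnished by Theorem~\ref{double_to_simple}. First I would invoke the standard fact that, for any antisymmetric $n$-graded bracket obeying the Leibniz rules in a commutative graded algebra, the associated Jacobiator
\[
J(\alpha,\beta,\gamma)=(-1)^{\vert\alpha\vert_n\vert\gamma\vert_n}\bracket{\alpha}{\bracket{\beta}{\gamma}}+(-1)^{\vert\alpha\vert_n\vert\beta\vert_n}\bracket{\beta}{\bracket{\gamma}{\alpha}}+(-1)^{\vert\beta\vert_n\vert\gamma\vert_n}\bracket{\gamma}{\bracket{\alpha}{\beta}}
\]
is a graded derivation in each of its three arguments. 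Since $A_B$ is generated as an algebra by the elements $x_b$, it then suffices to verify that $J(x_b,y_c,z_d)=0$ for all homogeneous $x,y,z\in A$ and all $b,c,d\in B$; the general case follows by extending along the Leibniz rules slot by slot.

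Next I would exploit the cocommutativity of~$B$. Because $B$ is now both commutative and cocommutative, its Sweedler factors may be permuted freely, and every symmetric bilinear form on it is automatically balanced (first remark of Section~\ref{about_bd}), consistently with the hypothesis on $\bullet$. The computation proceeds by expanding the inner bracket $\bracket{y_c}{z_d}$ via \eqref{main_formula}, then applying the Leibniz rules \eqref{poisson1}--\eqref{poisson2} together with a second use of \eqref{main_formula} to the outer bracket, and likewise for the two cyclic permutations. Along the way I would use the antipode axioms \eqref{antip-prop}, the identity \eqref{antipodeidentity}, and the second multiplicativity relation \eqref{cocomult} to transport the conjugation subscripts $s_B(b^{(3)})b^{(1)}$ produced by the two nested applications of \eqref{main_formula} into a common normal form. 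Cocommutativity is precisely what permits the comultiplication legs of $b$, $c$, $d$ coming from the inner and outer brackets to be matched against one another, and what makes the two biderivation factors $(\,\cdot\bullet\cdot\,)$ combine symmetrically.

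In this normal form the three cyclic summands of $J(x_b,y_c,z_d)$ should assemble into terms that cancel by the antisymmetry of~$\rho$, i.e.\ $\overline\rho=-\rho$ in the form \eqref{transpose_formula} and \eqref{+++s}, together with the symmetry of~$\bullet$. The main obstacle is the combinatorial bookkeeping: each summand of $J$ spawns a large number of terms carrying intricate Sweedler indices and Koszul signs, and the crux is to check that, once cocommutativity has been used to permute the comultiplication factors, the conjugation contributions cancel in pairs and the residual terms close up under the antisymmetry of~$\rho$. This cancellation is exactly the phenomenon that, in the general non-cocommutative setting of Section~\ref{The Jacobi identity}, breaks down and must be compensated by extra hypotheses on $\rho$ and $\bullet$; cocommutativity of~$B$ removes the obstruction outright, which is why no such additional hypotheses appear in the present statement.
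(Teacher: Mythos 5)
Your opening reduction is fine: the Jacobiator of an antisymmetric $n$-graded bracket obeying the Leibniz rules is a derivation in each slot, so it suffices to check it on the generators $x_b$; this is also how the paper proceeds. The gap is in everything after that. Your proposal defers the actual verification (``should assemble'', ``the crux is to check''), and the mechanism it predicts is not the one that operates. The decisive point, which you never identify, is that cocommutativity of~$B$ kills the conjugation subscript of the $\rho$-term in each \emph{single} bracket, before any comparison between the three cyclic summands of the Jacobiator is made: since $B$ is cocommutative one may permute the Sweedler legs of~$b$, replacing $\rho(x',y')_{s_B(b^{(3)})b^{(1)}}\, x''_{b^{(4)}}$ by $\rho(x',y')_{s_B(b^{(3)})b^{(4)}}\, x''_{b^{(1)}}$, and then \eqref{antip-prop} together with the second unitality relation gives $\rho(x',y')_{1_B}=\varepsilon_A\rho(x',y')\,1$. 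Hence
\begin{equation*}
\bracket{x_b}{y_c} \;=\; (c''\bullet b'')\;\varepsilon_A \rho(x'',y')\;   x'_{b'}\,   y''_{c'},
\end{equation*}
so the bracket depends on $\rho$ only through the scalar pairing $\varepsilon_A\rho$. In particular there is no ``pairwise cancellation of conjugation contributions'' across the Jacobiator, which is what your common-normal-form plan anticipates; the conjugation terms vanish outright, one bracket at a time.

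With this collapse in hand the remaining work is still a genuine computation, and it runs on different fuel than you suggest: each outer bracket splits by the Leibniz rule into two pieces $P+Q$, and the Jacobi identity follows from the single identity $P(z,x,y;d,b,c)=-Q(x,y,z;b,c,d)$, proved using the symmetry of $\bullet$, the cocommutativity of both $A$ and $B$, degree bookkeeping for $\varepsilon_A\rho$ (nonzero values force $\vert x'\vert=-\vert y'\vert_n$), and, crucially, the antisymmetry of the bilinear form $\varepsilon_A\rho$ established in Lemma~\ref{transpose++++} --- not the antisymmetry of $\rho$ itself in the form \eqref{transpose_formula}. Since your proposal neither finds the simplification nor performs the cancellation, it is a plausible plan rather than a proof.
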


\begin{proof}
For any homogeneous $x,y\in A$ and any $b,c\in B$, we have
\begin{eqnarray*}
\bracket{x_b}{y_c} & \by{main_formula} & (-1)^{ \vert   x''\vert   \vert y' \vert_n}  (c''\bullet b^{(2)})\, \rho(x',y')_{ {{s}_B}(b^{(3)}) b^{(1)}}\,   x''_{b^{(4)}}\,   y''_{c' } \\
&=&   (-1)^{ \vert   x''\vert   \vert y' \vert_n}  (c''\bullet b^{(2)})\, \rho(x',y')_{ {{s}_B}(b^{(3)}) b^{(4)}}\,   x''_{b^{(1)}}\,   y''_{c' } \\
&=&   (-1)^{ \vert   x''\vert   \vert y' \vert_n}  (c''\bullet b'')\, \rho(x',y')_{ 1_B }\,   x''_{b'}\,   y''_{c' } \\
&=& (-1)^{ \vert   x''\vert   \vert y' \vert_n}  (c''\bullet b'')\, \varepsilon_A \rho(x',y')\,   x''_{b'}\,   y''_{c' } \\
&=& (-1)^{ \vert   x''\vert   \vert x' \vert}  (c''\bullet b'')\, \varepsilon_A \rho(x',y')\,   x''_{b'}\,   y''_{c' } \\
&=&   (c''\bullet b'')\, \varepsilon_A \rho(x'',y')\,   x'_{b'}\,   y''_{c' }.
\end{eqnarray*}
 Here, in the  second  equality we use the cocommutativity of $B$,   in the penultimate equality
 we use  that  if  ${\varepsilon_A \rho(x',y') \neq 0}$, then  $\vert x' \vert = -\vert y' \vert_n$, and in the  last   equality, we use the graded cocommutativity of $A$.
Therefore, for any homogeneous ${x,y,z\in A}$ and  any   $b,c,d \in B$,
\begin{eqnarray*}
 (-1)^{  \vert y \vert_{{n}} \vert z \vert_{{n}} } \bracket   {z_d} {\bracket{x_b}{y_c}}
&=& (-1)^{  \vert y \vert_{{n}} \vert z \vert_{{n}} } (c''\bullet b'')\, \varepsilon_A \rho(x'',y')\,    \bracket{z_d}{x'_{b'}y''_{c'}} \\
&=& (-1)^{  \vert y x' \vert_{{n}} \vert z \vert_{{n}} } (c''\bullet b'')\, \varepsilon_A \rho(x'',y')\,    x'_{b'} \bracket{z_d}{y''_{c'}} \\
&& + (-1)^{  \vert y \vert_{{n}} \vert z \vert_{{n}} } (c''\bullet b'')\, \varepsilon_A \rho(x'',y')\,    \bracket{z_d}{x'_{b'}} y''_{c'} \\
&=& P(z,x,y;d,b,c) + Q(z,x,y;d,b,c)
\end{eqnarray*}
where
\begin{eqnarray*}
P(z,x,y;d,b,c)  \!\!\!\!\!  &=& \!\!\!\!\!  (-1)^{  \vert y x' \vert_{{n}} \vert z \vert_{{n}} } (c'''\bullet b'')\,  (d''\bullet c'')\, \varepsilon_A \rho(x'',y')\,  \varepsilon_A \rho(z'',y'')\,   x'_{b'} z'_{d'} {y'''_{c'}}, \\
Q(z,x,y;d,b,c) \!\!\!\!\!   &= & \!\!\!\!\!   (-1)^{  \vert y \vert_{{n}} \vert z \vert_{{n}} } (c''\bullet b''')\,  (d'' \bullet b'')\, \varepsilon_A \rho(x''',y')\, \varepsilon_A\rho(z'',x')\,  z'_{d'} x''_{b'} y''_{c'}.
\end{eqnarray*}
We have
\begin{eqnarray*}
&& P(z,x,y;d,b,c)  \\
&=&  (-1)^{  \vert y x' \vert_{{n}} \vert z \vert_{{n}} }  (d''\bullet c''')\, ( b'' \bullet c'')\,  \varepsilon_A \rho(z'',y'')\, \varepsilon_A \rho(x'',y')\,     x'_{b'} z'_{d'} {y'''_{c'}}\\
&=&   (-1)^{  \vert x \vert_n \vert z \vert_{{n}} + \vert y'' y''' \vert_n \vert z \vert_n}  (d''\bullet c''')\, ( b'' \bullet c'')\,  \varepsilon_A \rho(z'',y'')\, \varepsilon_A \rho(x'',y')\,     x'_{b'} z'_{d'} {y'''_{c'}} \\
&=&    (-1)^{  \vert x \vert_n \vert z \vert_{{n}} + \vert y'' \vert_n \vert z \vert_n + \vert y''' \vert \vert z'' \vert_n}  (d''\bullet c''')\, ( b'' \bullet c'')\\
&&  \qquad \varepsilon_A \rho(z'',y'')\, \varepsilon_A \rho(x'',y')\,     x'_{b'} {y'''_{c'}}  z'_{d'} \\
&=&   - (-1)^{  \vert x \vert_n \vert z \vert_{{n}} + \vert y'' \vert_n \vert z' \vert + \vert y''' \vert \vert z'' \vert_n}  (d''\bullet c''')\, ( b'' \bullet c'') \\
&& \qquad \quad \varepsilon_A \rho(y'',z'')\, \varepsilon_A \rho(x'',y')\,     x'_{b'} {y'''_{c'}}  z'_{d'} \\
&=&   - (-1)^{  \vert x \vert_n \vert z \vert_{{n}} + \vert z'' \vert  \vert z' \vert + \vert y''' \vert \vert z'' \vert_n}  (d''\bullet c''')\, ( b'' \bullet c'') \\
&& \qquad \quad  \varepsilon_A \rho(y'',z'')\, \varepsilon_A \rho(x'',y')\,     x'_{b'} {y'''_{c'}}  z'_{d'} \\
&=&   - (-1)^{  \vert x \vert_n \vert z \vert_{{n}} + \vert y''' \vert \vert z' \vert_n}  (d''\bullet c''')\, ( b'' \bullet c'')\,  \varepsilon_A \rho(y'',z')\, \varepsilon_A \rho(x'',y')\,     x'_{b'} {y'''_{c'}}  z''_{d'} \\
&=&   - (-1)^{  \vert x \vert_n \vert z \vert_{{n}} + \vert y''' \vert \vert y'' \vert}  (d''\bullet c''')\, ( b'' \bullet c'')\,  \varepsilon_A \rho(y'',z')\, \varepsilon_A \rho(x'',y')\,     x'_{b'} {y'''_{c'}}  z''_{d'} \\
&=&   - (-1)^{  \vert x \vert_n \vert z \vert_{{n}} }  (d''\bullet c''')\, ( b'' \bullet c'')\,  \varepsilon_A \rho(y''',z')\, \varepsilon_A \rho(x'',y')\,     x'_{b'} {y''_{c'}}  z''_{d'}\\
&=&  -Q(x,y,z;b,c,d).
\end{eqnarray*}
 These nine  equalities are consequences,  respectively, of the following facts:
(1)~the bilinear form $\bullet$ is symmetric and~$B$ is cocommutative;
(2)~if  ${\varepsilon_A \rho(x'',y') \neq 0}$, then  $\vert x' \vert \equiv \vert x x'' \vert \equiv  \vert x \vert_n -\vert y' \vert  \, \mod 2$;
(3)~the   (graded)    commutativity of $A_B$;
(4)~the antisymmetry of $\varepsilon_A \rho$ (Lemma~\ref{transpose++++});
(5)~if  ${\varepsilon_A \rho(y'',z'') \neq 0}$, then  $\vert y'' \vert_n=- \vert z'' \vert$;
(6)~the cocommutativity of $A$;
(7)~if  ${\varepsilon_A \rho(y'',z') \neq 0}$, then  $\vert z' \vert_n=- \vert y'' \vert$;
(8)~the cocommutativity of $A$;
(9)~the definition of $Q(x,y,z;b,c,d)$. The Jacobi identity easily follows.
\end{proof}

\section{Balanced biderivations from trace-like elements}   \label{bfc}

 From now on, we focus on    balanced biderivations associated with so-called trace-like  elements of Hopf algebras.
 Here  we introduce trace-like elements and define the associated balanced biderivations.

\subsection{Trace-like elements}\label{ndce}

Consider an ungraded  Hopf algebra $B$ with   comultiplication $\Delta=\Delta_B$, counit $\varepsilon =\varepsilon_B  $  and antipode~$s  =s_B $.
An element $t$ of $B$ is \emph{cosymmetric}  if the tensor $\Delta(t)\in B \otimes B$   is invariant under the flip   map, that is
 \begin{equation} \label{sym}
 t' \otimes t''=t'' \otimes t'.
 \end{equation}

\begin{lemma}\label{lemma-cosym}
If $t\in B$ is cosymmetric, then  for  any  integer $n\geq 2$, the $(n-1)$-st iterated comultiplication of~$t$  is
invariant under   cyclic permutations:
\begin{equation} \label{cyclic_inv}
 t^{(1)} \otimes t^{(2)} \otimes \cdots \otimes t^{(n-1)}  \otimes t^{(n)} =  t^{(2)}  \otimes t^{(3)} \otimes \cdots \otimes t^{(n)} \otimes t^{(1)} .
\end{equation}
\end{lemma}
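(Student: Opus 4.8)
The plan is to prove the identity directly from coassociativity and cosymmetry \eqref{sym}, with no induction on $n$. Write $\Delta^{(k)}$ for the $k$-fold iterated comultiplication $B \to B^{\otimes(k+1)}$, which is well defined by coassociativity, so that the $(n-1)$-st iterated comultiplication is $\Delta^{(n-1)}(t) = t^{(1)} \otimes \cdots \otimes t^{(n)}$. The key observation is that coassociativity lets one expand $\Delta^{(n-1)}$ in two ways by peeling off the outermost $\Delta$:
$$\Delta^{(n-1)} = (\Delta^{(n-2)} \otimes \id_B)\,\Delta = (\id_B \otimes \Delta^{(n-2)})\,\Delta.$$
First I would use the left-hand expansion to write $\Delta^{(n-1)}(t) = \Delta^{(n-2)}(t') \otimes t''$, where $\Delta(t) = t' \otimes t''$. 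Then I would invoke cosymmetry: since $t' \otimes t'' = t'' \otimes t'$ in $B \otimes B$, applying the linear map $\Delta^{(n-2)} \otimes \id_B$ to both sides yields $\Delta^{(n-2)}(t') \otimes t'' = \Delta^{(n-2)}(t'') \otimes t'$. Finally I would reinterpret the right-hand side through the other expansion: since $\Delta^{(n-1)}(t) = t' \otimes \Delta^{(n-2)}(t'')$, we have $t^{(1)} = t'$ and $t^{(2)} \otimes \cdots \otimes t^{(n)} = \Delta^{(n-2)}(t'')$, so that $\Delta^{(n-2)}(t'') \otimes t'$ is precisely $t^{(2)} \otimes \cdots \otimes t^{(n)} \otimes t^{(1)}$. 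Chaining these equalities gives the asserted cyclic invariance.

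The computation is short, and the only point requiring care is the bookkeeping of Sweedler indices across the two expansions of $\Delta^{(n-1)}$. The main subtlety I expect is that cosymmetry must be applied to the two tensorands of $\Delta(t)$ \emph{before} either is further comultiplied: one has to keep $t''$ as the factor that gets expanded by $\Delta^{(n-2)}$ in the final step, while $t'$ is the single bystanding factor that migrates from the first to the last slot. Since $\Delta^{(n-2)} = \id_B$ when $n = 2$, the argument then collapses to \eqref{sym} itself, so no separate base case is needed and the same chain of equalities covers all $n \geq 2$ uniformly.
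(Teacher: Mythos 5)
Your proof is correct, but it is organized differently from the paper's. The paper proceeds by induction on $n$: the base case $n=2$ is the cosymmetry relation \eqref{sym}, and the inductive step applies $\id_B \otimes \Delta \otimes \id_{B^{\otimes(n-2)}}$ to both sides of the $n$-fold identity to obtain the $(n+1)$-fold one, so that only the basic coassociativity equation $(\Delta \otimes \id)\Delta = (\id \otimes \Delta)\Delta$ is ever invoked, one insertion at a time. You instead give a one-shot argument: you apply the single linear map $\Delta^{(n-2)} \otimes \id_B$ to the two sides of \eqref{sym} and identify the resulting tensors with $\Delta^{(n-1)}(t)$ and its cyclic shift via the two expansions $\Delta^{(n-1)} = (\Delta^{(n-2)} \otimes \id_B)\Delta = (\id_B \otimes \Delta^{(n-2)})\Delta$. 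This is shorter and makes the mechanism transparent --- cyclic invariance is literally the flip-invariance of $\Delta(t)$ transported through one leg --- but it leans on generalized coassociativity (the unambiguous definition of $\Delta^{(n-2)}$ and the two peeling identities), which is itself usually established by the very induction the paper performs; since the paper's Sweedler conventions in Section~\ref{gral++++} already presuppose this fact, your use of it is legitimate. One small point of rigor: statements like ``$t^{(1)} = t'$'' are abuses of Sweedler notation (these are sums of paired components, not individual equalities); the clean formulation is that the map $x \otimes y \mapsto \Delta^{(n-2)}(y) \otimes x$ applied to $\Delta(t)$ computes, on one hand, the cyclic shift of $\Delta^{(n-1)}(t)$ and, on the other hand (after using \eqref{sym}), $\Delta^{(n-1)}(t)$ itself. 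Your argument is exactly this, so the gap is purely notational.
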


\begin{proof}
For $n=2$, this is~\eqref{sym}.
If  \eqref{cyclic_inv} holds for some  $n \geq 2$,   then it holds for~$n+1$ too:
\begin{eqnarray*}
&& t^{(1)}  \otimes t^{(2)}   \otimes \cdots \otimes t^{(n)} \otimes t^{(n+1)}\\
&=& \left(\id_{B} \otimes \Delta  \otimes  \id_{B^{\otimes (n-2)}}
\right)  \big(t^{(1)} \otimes t^{(2)} \otimes \cdots \otimes
t^{(n-1)} \otimes t^{(n)} \big)\\
&=& \left(\id_{B} \otimes \Delta \otimes  \id_{B^{\otimes (n-2)}} \right)  \big(  t^{(2)}  \otimes t^{(3)} \otimes \cdots \otimes t^{(n)} \otimes t^{(1)}  \big) \\
&=&  t^{(2)}  \otimes  t^{(3)} \otimes t^{(4)}  \otimes \cdots
\otimes t^{(n+1)} \otimes t^{(1)}.
\end{eqnarray*}

\up
\end{proof}

Recall  the notion of a derivation  $B\to \kk$ from Section~\ref{Balanced bilinear forms},
  and    let  $\mathfrak{g}=  \mathfrak{g}_B   $  be the module consisting of all derivations $ B\to \kk$.
(When  $B$ is commutative,  $\mathfrak{g}$ is the Lie algebra of the group scheme associated to $B$.)
Restricting the  derivations   to $I=\Ker \varepsilon$, we
obtain a $\kk$-linear isomorphism $\mathfrak{g}  \simeq  (I/I^2)^*=   \Hom (I/I^2, \kk)$.
Let $p:B \to I/I^2$ be the surjection defined by $p(b)= b- \varepsilon(b) \mod I^2$ for  $b\in B$.
 An element $t$ of $B$  is \emph{infinitesimally-nonsingular} if  the linear map
\begin{equation} \label{psi_t}
 \mathfrak{g} \longrightarrow I/I^2, \ \mu \longmapsto \mu(t')\, p(t'')
\end{equation}
is an isomorphism.
Given such a $t$,   for  any $b\in B$, we let    $\overline b  =\overline b _t \in \mathfrak{g}$ be the pre-image of
$ p(b) \in I/I^2$   under the  isomorphism~\eqref{psi_t}.

An element  of $B$ is \emph{trace-like} if it is cosymmetric and infinitesimally-nonsingular.

\begin{lemma}\label{mainlemma}
 If~$B$ is commutative and $t\in B$ is   trace-like,   then the bilinear form  $\bullet_t: B \times B\to \kk$
defined by   $b \bullet_t  c = \overline b (c)  $   is a balanced biderivation in~$B$.
Moreover, for any    $b,c \in B$, we have
\begin{equation} \label{bullets}
b \bullet_t c   = (b \bullet_t t')\, (c\bullet_t t'')  .
\end{equation}
\end{lemma}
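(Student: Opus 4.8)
The plan is to prove the reproducing formula~\eqref{bullets} first, to extract from it both the symmetry and the biderivation property of~$\bullet_t$, and to treat the balanced condition~\eqref{balanced} as the technical core.

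To prove~\eqref{bullets}, recall that by definition $\overline{c}$ is the preimage of $p(c)$ under the isomorphism~\eqref{psi_t}, i.e.\ $\overline{c}(t')\,p(t'')=p(c)$ in $I/I^2$. Since $t$ is cosymmetric, \eqref{sym} lets me interchange the two tensor factors to obtain $\overline{c}(t'')\,p(t')=p(c)$ as well. Now $\overline{b}$ is a derivation, hence vanishes on $1_B$ and on $I^2$ and so factors through $p$, with $\overline{b}(p(x))=\overline{b}(x)$ for all $x$. Applying $\overline{b}$ to the last identity yields $b\bullet_t c=\overline{b}(c)=\overline{c}(t'')\,\overline{b}(t')=(b\bullet_t t')(c\bullet_t t'')$, which is~\eqref{bullets}. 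Interchanging the factors once more by~\eqref{sym} gives $b\bullet_t c=\overline{b}(t'')\,\overline{c}(t')=c\bullet_t b$, so $\bullet_t$ is symmetric. The biderivation property then follows at once: $\bullet_t$ is a derivation in the second variable because $\overline{b}\in\mathfrak{g}$ is one, and in the first variable because $p(bc)=\varepsilon(b)\,p(c)+\varepsilon(c)\,p(b)$ together with the linearity of the inverse of~\eqref{psi_t} gives $\overline{bc}=\varepsilon(b)\,\overline{c}+\varepsilon(c)\,\overline{b}$, which is~\eqref{bider}.

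The balanced condition is the main obstacle, and I expect it to rest on the following invariance lemma: a cosymmetric element $t$ of a commutative Hopf algebra~$B$ is invariant under the adjoint coaction~\eqref{adjoint_coaction}, that is $t''\otimes s(t')\,t'''=t\otimes 1_B$. To see this I would use the cyclic invariance of $\Delta^{(2)}(t)$ from Lemma~\ref{lemma-cosym} (with $n=3$, applied twice) in the form $t'\otimes t''\otimes t'''=t'''\otimes t'\otimes t''$: applying the linear map $x\otimes y\otimes z\mapsto y\otimes s(x)z$ to both sides turns $t''\otimes s(t')\,t'''$ into $t'\otimes s(t''')\,t''$, commutativity of~$B$ rewrites this as $t'\otimes t''\,s(t''')$, and the antipode axiom~\eqref{antip-prop} collapses the second factor to $\varepsilon(t'')\,1_B$, leaving $t\otimes 1_B$.

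Finally, I would deduce~\eqref{balanced} by transferring this invariance of~$t$ to the form~$\bullet_t$. Writing the left-hand side of~\eqref{balanced} as $(b\bullet_t c'')\,s(c')\,c'''$ and expanding $b\bullet_t c''$ through~\eqref{bullets} and symmetry, the factor arising from~$t$ can be moved across the adjoint coaction applied to~$c$ by means of $t''\otimes s(t')\,t'''=t\otimes 1_B$ together with the antipode identities~\eqref{antip-prop}, \eqref{flip_antipode} and the commutativity of~$B$; the resulting expression is symmetric in the roles of~$b$ and~$c$ and should match the right-hand side of~\eqref{balanced}. This last Sweedler computation, converting the invariance of~$t$ into an invariance of~$\bullet_t$, is where I expect the genuine difficulty to lie; the conceptually cleaner alternative is to establish the $B$-equivariance of the assignment $c\mapsto\overline{c}$ directly, so that balancedness becomes the adjoint-invariance of~$\bullet_t$ in the sense discussed in the Remarks of Section~\ref{about_bd}.
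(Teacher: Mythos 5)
Your treatment of \eqref{bullets}, of the symmetry of $\bullet_t$, and of the biderivation property is correct and essentially the same as the paper's: the paper likewise obtains \eqref{bullets} from $c \equiv \overline c(t')\,t'' \equiv \overline c(t'')\,t' \pmod{\kk 1_B + I^2}$, and disposes of the biderivation property by noting that both annihilators of $\bullet_t$ contain $1_B+I^2$. Your auxiliary lemma --- that a cosymmetric $t$ satisfies $t'' \otimes s(t')\,t''' = t \otimes 1_B$ --- is also true, and your proof of it (Lemma~\ref{lemma-cosym} with $n=3$ applied twice, then commutativity and the antipode axiom) is fine.

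However, the balanced condition \eqref{balanced}, which is the actual content of the lemma, is not proved: you give only a plan, and you yourself flag the decisive Sweedler computation as the point "where the genuine difficulty lies". The plan as stated does not go through directly. Expanding $b\bullet_t c''$ by \eqref{bullets} and symmetry turns the left side of \eqref{balanced} into $(b\bullet_t t^{(1)})\cdot\big((t^{(2)}\bullet_t c'')\,s(c')\,c'''\big)$, which is an expression of exactly the same shape with $t^{(2)}$ in place of $b$ --- the reduction is circular, and invariance of $t$ under the adjoint coaction does not by itself break the circle. The missing ingredient, which is the core of the paper's proof, is a reduction modulo $I^2$: one must show that the expression $(c\bullet_t b'')\,s(b''')\,b'$ is unchanged when $b$ is altered by an element of $\kk 1_B + I^2$ (this is the paper's computation with $d,e\in I$, using the derivation property and the commutativity of $B$), so that one may legitimately substitute $b \equiv \overline b(t')\,t''$ and $c \equiv \overline c(t')\,t''$. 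After that substitution, the two sides of \eqref{balanced} become the expressions \eqref{fico1} and \eqref{fico2} in the fourfold tensor $t^{(1)}\otimes t^{(2)}\otimes t^{(3)}\otimes t^{(4)}$, and it is the $n=4$ cyclic invariance of Lemma~\ref{lemma-cosym} --- not the threefold adjoint-invariance you established --- that identifies them. Your "cleaner alternative" (equivariance of $c\mapsto\overline c$, so that balancedness becomes adjoint-invariance of $\bullet_t$ in the sense of Remark~\ref{about_bd}.2) is a legitimate conceptual route, but it too requires proving that the adjoint coaction descends to $I/I^2$ and that the isomorphism \eqref{psi_t} is equivariant; none of this is carried out, so the central claim of the lemma remains unestablished in your proposal.
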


\begin{proof}
 It is clear that  both the  left and the right annihilators   of    $\bullet=\bullet_t$ contain $1_B+I^2$;   hence $\bullet$ is a biderivation.
 To verify that it  is balanced, we check     \eqref{balanced} for any $b,c\in B$.
It follows from the definitions  that
$$
 b - \varepsilon(b) =\overline b (t') (t'' -\varepsilon(t''))      \mod I^2
$$
  and, since $t'\varepsilon (t'') =t$, we obtain
$$
b=\varepsilon(b)   - \overline{b}(t)   + \overline b (t') t'' +\sum_i d_ie_i
$$
where the index $i$ runs over a finite set and $d_i, e_i\in I$ for all $i$.
Hence
\begin{eqnarray*}
&&b'\otimes b'' \otimes b''' \\
&=&   \big( \varepsilon(b) - \overline{b}(t) \big)\,   1_B \otimes 1_B \otimes 1_B+ \overline b (t')\, t''\otimes t'''\otimes t''''   +\sum_i  d'_i e'_i \otimes d''_i e''_i \otimes d'''_i e'''_i.
\end{eqnarray*}
Using this expansion and the equality $\overline c(1_B)=0$, we obtain  that
$$
(c \bullet  b'')\,  s(b ''') b '  =  \overline c  (b'') s(b ''') b '
 =   \overline b (t')\,  \overline c (t''') \,    s(t'''') t''+ \sum_i \overline c (d''_i e''_i)\, s(d'''_i e'''_i)\, d'_i e'_i.
$$
The $i$-th term   is equal to zero for all $i$. Indeed, for any   $d,e\in I$,   we have
\begin{eqnarray*}
\overline c (d'' e'' ) s(d'''  e''' ) d'  e' &=&  \varepsilon (d'') \overline c (  e'' ) s(e''') s(  d''' ) d'  e'
+\varepsilon (e'') \overline c (  d'' ) s(e''') s(  d''' ) d'  e' \\
&=& \overline c (  e'' ) s(e''') s(  d'' ) d'  e'
+  \overline c (  d'' ) s(e'') s(  d''' ) d'  e' \\
 &=& \varepsilon (d) \overline c (  e'' ) s(e''')    e'+ \varepsilon (e) \overline c (  d'' ) s(  d''' ) d'   \ = \ 0
\end{eqnarray*}
where we use the commutativity of $B$ and the equalities $\varepsilon (d)=\varepsilon (e)=0$. Thus,
\begin{equation}\label{fico1}  (c \bullet  b'')\,  s(b ''') b ' =     \overline b (t')\, \overline c (t''') \, s(t'''') t''.\end{equation}
Similarly,  starting from the expansion   $c=\varepsilon(c)   - \overline{c}(t)   + \overline c (t') t''\!  \mod I^2 $, we obtain
\begin{equation}\label{fico2}  (b  \bullet  c'')\,  s(c') c'''=   \overline c (t')\, \overline b (t''') \, s(t'') t''''.\end{equation}
 It follows from \eqref{cyclic_inv} that
the right-hand sides of the equalities \eqref{fico1} and \eqref{fico2} are equal.
 We conclude that   $ (c \bullet  b'')\,  s(b ''') b ' = (b  \bullet  c'')\,  s(c') c'''$.

 Formula \eqref{bullets} is proved as follows: $$
b \bullet_t c= \overline{b}  (c)= \overline{b}(t')\, \overline{c}(t'')  = (b \bullet_t t')\, (c\bullet_t t'')  .
$$ Here the   second equality holds because  $c  =\overline c (t') t''  =\overline c (t'') t'     \mod ( \kk   1_B+I^2)$.
\end{proof}

\subsection{Brackets re-examined}

We reformulate the bracket    constructed in Theorem~\ref{double_to_simple} in the case where the balanced biderivation   arises from a trace-like element.

 \begin{theor} \label{equivariant_bracket}
  Assume, under the conditions of Theorem~\ref{double_to_simple}, that $\bullet=\bullet_t$ for a trace-like element $t\in B$.
Then the resulting bracket $\bracket{-}{-}:A_B \times A_B \to A_B $      is computed   by
\begin{equation} \label{formulafromt3}
 \bracket{x_b}{y_c}
 =    (-1)^{ \vert   x''\vert   \vert y' \vert_n}  (b' \bullet t^{(2)})\, (c'' \bullet t^{(4)}) \,  \rho(x',y')_{  s_B(t^{(1)}) t^{(3)}}\,   x''_{b''}\,   y''_{c'}
\end{equation}
for any $x,y \in A$ and $b,c \in B$.
  Furthermore, the bracket  $\bracket{-}{-}$ is $B$-equivariant  with respect to the $B$-coaction on $A_B$  defined in Lemma~\ref{coaction_M}.
\end{theor}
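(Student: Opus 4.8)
The plan is to obtain \eqref{formulafromt3} directly from the main formula \eqref{main_formula} by feeding in the trace-like element $t$. First I would isolate the block $(c'' \bullet b^{(2)})\, s_B(b^{(3)})\, b^{(1)}$ occurring in \eqref{main_formula}, where $b^{(1)}, b^{(2)}, b^{(3)}$ are three consecutive factors of the iterated coproduct $b^{(1)} \otimes b^{(2)} \otimes b^{(3)} \otimes b^{(4)}$ of $b$, while the fourth factor $b^{(4)}$ sits in the generator $x''_{b^{(4)}}$. By coassociativity this block is exactly the left-hand side of the identity \eqref{fico1} established inside the proof of Lemma~\ref{mainlemma}, applied to the element whose coproduct supplies $b^{(1)}, b^{(2)}, b^{(3)}$ (with its second argument specialized to $c''$ and with $b^{(4)}$ held fixed). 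This rewrites the block as $(b' \bullet t^{(1)})(c'' \bullet t^{(3)})\, s_B(t^{(4)})\, t^{(2)}$, turning the subscript of $\rho(x',y')$ into a $t$-indexed one and relabelling $b^{(4)}$ as the single surviving copy $b''$ in $x''_{b''}$. One application of the cyclic invariance \eqref{cyclic_inv} of Lemma~\ref{lemma-cosym} (for $n=4$), shifting the indices of $t^{(1)} \otimes t^{(2)} \otimes t^{(3)} \otimes t^{(4)}$ by one, then converts this expression into \eqref{formulafromt3}. This part is short and purely formal; the sign $(-1)^{\vert x''\vert\,\vert y'\vert_n}$ is untouched since all manipulations happen in the ungraded algebra $B$.

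For the $B$-equivariance I would verify \eqref{B-equivariance} with $M=N=A_B$, $q = \bracket{-}{-}$, and the coaction $\Delta$ of Lemma~\ref{coaction_M}. I would first reduce to the algebra generators $m_1 = x_b$, $m_2 = y_c$: the collection of pairs $(m_1,m_2)$ satisfying \eqref{B-equivariance} is bilinear and closed under taking products in either variable, because $\Delta$ is a graded algebra homomorphism, the bracket obeys the Leibniz rules \eqref{poisson1}--\eqref{poisson2}, and $s_B$ is an algebra antihomomorphism and an involution. Since this collection contains the generators, it is all of $A_B \times A_B$. The antihomomorphism property of $s_B$ is precisely what makes the more delicate first-variable reduction compatible with the twisting factor $s_B(m_1^r)$ in \eqref{B-equivariance}.

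On generators, the left-hand side of \eqref{B-equivariance} is $\bracket{x_b}{y_{c''}} \otimes s_B(c')\, c'''$ and the right-hand side is $\bracket{x_{b''}}{y_c}^\ell \otimes \bracket{x_{b''}}{y_c}^r\, s_B(b''')\, b'$, where I have simplified $s_B\big(s_B(b')\, b'''\big) = s_B(b''')\, b'$. I would expand both sides using the explicit bracket \eqref{formulafromt3} together with the coaction $\Delta(z_d) = z_{d''} \otimes s_B(d')\, d'''$ of \eqref{Delta_M}, and then match the two Sweedler expressions. The decisive input is the balancedness \eqref{balanced} of $\bullet_t$ --- equivalently its invariance under the adjoint coaction, see Remark~\ref{about_bd} --- which is exactly the conjugation-invariance underlying equivariance; the cosymmetry and cyclic invariance of $t$ (Lemma~\ref{lemma-cosym}), the antipode identities \eqref{antip-prop} and \eqref{flip_antipode}, and the cocommutativity of $A$ are used to realign the $t$-, $b$-, and $c$-indices.

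I expect the main obstacle to be this last matching. Both the coaction and the formula \eqref{formulafromt3} carry an antipode-twist together with several coproducts, so each side expands into a long Sweedler sum, and reconciling them demands carefully choosing where to insert \eqref{balanced} and the antipode identities. Working with \eqref{formulafromt3} rather than \eqref{main_formula} should ease this, since the $B$-invariant element $t$ now carries the entire internal $B$-dependence of the bracket and is unaffected by the coaction, so that the twist $s_B(m_1^r)$ on the right-hand side only has to be absorbed into the $b$- and $c$-indices.
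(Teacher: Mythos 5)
Your proposal is correct and its overall architecture coincides with the paper's proof, with one genuine (if minor) divergence in the first half. For \eqref{formulafromt3}, the paper proceeds by first factoring the biderivation via \eqref{bullets}, writing $(c''\bullet b^{(2)})=(b^{(2)}\bullet t')(c''\bullet t'')$, and then applying the balanced identity \eqref{balanced} once to move the $b$-dependence out of the subscript of $\rho$; you instead invoke \eqref{fico1} from inside the proof of Lemma~\ref{mainlemma} and follow it with one cyclic shift \eqref{cyclic_inv}. The two routes are equivalent in content --- \eqref{fico1} together with \eqref{cyclic_inv} is exactly what proves that $\bullet_t$ is balanced, so you are inlining that argument rather than quoting its conclusion --- and your index bookkeeping (the shift $t^{(i)}\mapsto t^{(i+1)}$ turning $(b'\bullet t^{(1)})(c''\bullet t^{(3)})\,s_B(t^{(4)})\,t^{(2)}$ into $(b'\bullet t^{(2)})(c''\bullet t^{(4)})\,s_B(t^{(1)})\,t^{(3)}$) is right. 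For the equivariance, your reduction to generators and your generator-level identity, after the simplification $s_B\big(s_B(b')b'''\big)=s_B(b''')b'$, are exactly the paper's \eqref{equi-rewritten}; the paper's reduction uses closure under simultaneous products $(m_1m_3,m_2m_4)$ plus the unit cases, which together with bilinearity is interchangeable with your ``closure in either variable.'' What you leave as a sketch is, however, where essentially all of the paper's labor sits: it expands $\Delta\big(\bracket{x_b}{y_c}\big)$ from \eqref{formulafromt3} and \eqref{Delta_M} and, using precisely the tools you name (\eqref{balanced}, \eqref{antip-prop}, \eqref{cyclic_inv}), arrives at the clean intermediate identity $\Delta\big(\bracket{x_b}{y_c}\big)=\bracket{x_{b''}}{y_{c''}}\otimes s_B(b')b'''s_B(c')c'''$, from which \eqref{equi-rewritten} follows by specializing $b\leadsto b''$ and cancelling antipode pairs; aiming for this intermediate formula rather than matching the two sides of \eqref{equi-rewritten} directly is the organizational trick worth adopting. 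One small correction: cocommutativity of $A$ and \eqref{flip_antipode}, which you list among your tools for this step, are not actually needed, since the coaction \eqref{Delta_M} only touches the $B$-legs of the generators.
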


\begin{proof}
Set $s=s_B$.  We first prove formula \eqref{formulafromt3}:
\begin{eqnarray*}
 \bracket{x_b}{y_c}&=&   (-1)^{ \vert   x''\vert   \vert y' \vert_n}  (b^{(2)} \bullet c'')\,  \rho(x',y')_{ {{s}}(b^{(3)}) b^{(1)}}\,   x''_{b^{(4)}}\,   y''_{ c'} \\
 &\by{bullets}&   (-1)^{ \vert   x''\vert   \vert y' \vert_n}  (b^{(2)} \bullet t')\, (c'' \bullet t'') \,  \rho(x',y')_{ {{s}}(b^{(3)}) b^{(1)}}\,   x''_{b^{(4)}}\,   y''_{ c'} \\
& \by{balanced} &  (-1)^{ \vert   x''\vert   \vert y' \vert_n}  (b' \bullet t^{(2)})\, (c'' \bullet t^{(4)}) \,  \rho(x',y')_{ {{s}}(t^{(1)}) t^{(3)}}\,   x''_{b''}\,   y''_{c'}.
\end{eqnarray*}
  In order to prove the $B$-equivariance of $\bracket{-}{-}$,  we must show  that for any   $m_1, m_2 \in A_B$,
\begin{equation} \label{B-equivariance++}
\bracket{m_1}{  m_2^\ell  } \otimes  m_2^r =   \bracket{m_1^\ell }{ m_2}^\ell \otimes   \bracket{m_1^\ell}{ m_2}^r  s(m_1^r).
\end{equation}
   Using the $n$-graded Leibniz rules   for $\bracket{-}{-}$   and the fact that the comodule map $\Delta: {A_B\to A_B\otimes B}$  is a graded  algebra homomorphism,
one easily checks that if  \eqref{B-equivariance++} holds for   pairs $(m_1,m_2)$ and  $(m_3,m_4)$,
then \eqref{B-equivariance++}  holds for the pair $(m_1m_3,m_2m_4)$. Also, both sides of \eqref{B-equivariance++}  are equal to $0$ if $m_1=1$ or $m_2=1$.
Therefore it  suffices   to verify \eqref{B-equivariance++} for $m_1=x_b$ and $m_2=y_c$ with $x,y\in A$ and $b,c\in B$. In this case,  \eqref{B-equivariance++} may be rewritten as
\begin{equation} \label{equi-rewritten}
 \bracket{x_b}{ y_{c''}} \otimes  s(c')  c'''   =   \bracket{x_{b''}}{y_c}^\ell \otimes   \bracket{x_{b''}}{y_c}^r  s(b''') b'.
\end{equation}
Applying   $\Delta: A_B\to A_B\otimes B$ to both sides of~\eqref{formulafromt3}, we obtain
\begin{eqnarray*}
&& \Delta(\bracket{x_{b}}{y_c}) \\
&  =  &      (-1)^{ \vert   x''\vert   \vert y' \vert_n}  (b'\bullet t^{(2)})\,(c''\bullet t^{(4)})\,  \Delta( \rho(x',y')_{  s (t^{(1)}) t^{(3)} } ) \, \Delta(  x''_{b''}  )\, \Delta(  y''_{c' } )\\
&\by{cocomult}&   (-1)^{ \vert   x''\vert   \vert y' \vert_n}   (b'\bullet t^{(2)})\,(c''\bullet t^{(4)})\\
& & \cdot     \Delta\big( \rho(x',y')'_{ {{s}}(t^{(1)})}\big) \, \Delta\big( \rho(x',y')''_{ t^{(3)}  }  \big) \, \Delta(  x''_{b''}  )\, \Delta(  y''_{c' } )\\
&\by{Delta_M}&   (-1)^{ \vert   x''\vert   \vert y' \vert_n}  (b^{(1)}\bullet t^{(4)})\,(c^{(4)}\bullet t^{(8)})\,  \big( \rho(x',y')'_{ {{s}}(t^{(2)})} \otimes   t^{(3)}  s(t^{(1)} )\big)   \\
&&  \cdot     \big( \rho(x',y')''_{ t^{(6)}  } \otimes s(t^{(5)} ) t^{(7)} \big)\,  \big(  x''_{b^{(3)}} \otimes s(b^{(2)}) b^{(4)} \big)\, \big(  y''_{c^{(2)} } \otimes s(c^{(1)}) c^{(3)} \big)\\
&\by{mugradedalg}& (-1)^{ \vert   x''\vert   \vert y' \vert_n}  (b^{(1)}\bullet t^{(4)})\,(c^{(4)}\bullet t^{(8)})\,   \rho(x',y')'_{ {{s}}(t^{(2)})}  \rho(x',y')''_{ t^{(6)}  }  x''_{b^{(3)}}     y''_{c^{(2)} } \\
&&    \otimes  \, t^{(3)}  s(t^{(1)} ) s(t^{(5)} ) t^{(7)} s(b^{(2)}) b^{(4)} s(c^{(1)}) c^{(3)}    \\
&\by{cocomult}& (-1)^{ \vert   x''\vert   \vert y' \vert_n}  (b^{(1)}\bullet t^{(4)})\,(c^{(4)}\bullet t^{(8)})\,   \rho(x',y')_{ {{s}}(t^{(2)})  t^{(6)}  } \,  x''_{b^{(3)}}     y''_{c^{(2)} } \\
&&    \otimes  \, t^{(3)}  s(t^{(1)} ) s(t^{(5)} ) t^{(7)} s(b^{(2)}) b^{(4)} s(c^{(1)}) c^{(3)}  \\
&\by{balanced}& (-1)^{ \vert   x''\vert   \vert y' \vert_n}  (b^{(2)}\bullet t^{(3)})\,(c^{(4)}\bullet t^{(6)})\,   \rho(x',y')_{ {{s}}(t^{(2)})  t^{(4)}  }  x''_{b^{(5)}}     y''_{c^{(2)} } \\
&&    \otimes  \,    s(t^{(1)} )  t^{(5)} s(b^{(1)}) b^{(3)} s(b^{(4)}) b^{(6)} s(c^{(1)}) c^{(3)} \\
&\by{antip-prop}& (-1)^{ \vert   x''\vert   \vert y' \vert_n}  (b^{(2)}\bullet t^{(3)})\,(c^{(4)}\bullet t^{(6)})\,   \rho(x',y')_{ {{s}}(t^{(2)})  t^{(4)}  }  x''_{b^{(3)}}     y''_{c^{(2)} } \\
&&    \otimes  \,    s(t^{(1)} )  t^{(5)} s(b^{(1)})   b^{(4)}  s(c^{(1)}) c^{(3)}  \\
&\by{cyclic_inv}& (-1)^{ \vert   x''\vert   \vert y' \vert_n}  (b^{(2)}\bullet t^{(2)})\,(c^{(4)}\bullet t^{(5)})\,   \rho(x',y')_{ {{s}}(t^{(1)})  t^{(3)}  }  x''_{b^{(3)}}     y''_{c^{(2)} } \\
&&    \otimes  \,    s(t^{(6)} )  t^{(4)} s(b^{(1)})   b^{(4)}
s(c^{(1)}) c^{(3)}  \\
&\by{balanced}& (-1)^{ \vert   x''\vert   \vert y' \vert_n}  (b^{(2)}\bullet t^{(2)})\,(c^{(5)}\bullet t^{(4)})\,   \rho(x',y')_{ {{s}}(t^{(1)})  t^{(3)}  }  x''_{b^{(3)}}     y''_{c^{(2)} } \\
&&    \otimes  \,      s(b^{(1)})   b^{(4)}
s(c^{(1)}) c^{(3)}    s(c^{(4)}) c^{(6)}\\
& \by{antip-prop} & (-1)^{ \vert   x''\vert   \vert y' \vert_n}  (b^{(2)}\bullet t^{(2)})\,(c^{(3)}\bullet t^{(4)})\,   \rho(x',y')_{ {{s}}(t^{(1)}) t^{(3)}  }  x''_{b^{(3)}}     y''_{c^{(2)} } \\
&&    \otimes  \,      s(b^{(1)})   b^{(4)}
s(c^{(1)})   c^{(4)} \\&\by{formulafromt3}& \bracket{x_{b''}}{y_{c''}}   \otimes  \,      s(b')   b'''
s(c')   c'''.
\end{eqnarray*}
It follows that
\begin{eqnarray*}   \bracket{x_{b''}}{y_c}^\ell \otimes   \bracket{x_{b''}}{y_c}^r  s(b''') b'&=& \bracket{x_{b^{(3)}}}{y_{c''}}   \otimes  \,      s(b^{(2)})   b^{(4)}
s(c')   c'''  s(b^{(5)})   b^{(1)}\\
&=& \bracket{x_{b }}{y_{c''}}   \otimes  \,
s(c')   c'''  . \end{eqnarray*}
This proves \eqref{equi-rewritten} and concludes the proof of the theorem.
\end{proof}

\subsection{Remarks} \label{about_tle}

  Let $B$ be   a commutative ungraded Hopf algebra.

    1.   It can be  verified that an element  of $B$ is cosymmetric
if and only if it is $B$-invariant under the adjoint coaction \eqref{adjoint_coaction} of~$B$.
 Note that an element of $B$ is invariant  under  the adjoint coaction  if and only if this element is invariant  under the conjugation  action of the   group scheme  determined by $B$,
see Appendix~\ref{induced_actions}.

2. We call  a symmetric bilinear form   $ X\times X \to \kk$ in a module $X$   \emph{nonsingular}
if   the adjoint linear map $X \to X^*$  is an isomorphism.  For a trace-like   $t\in B$, the symmetric bilinear form in $I/I^2$ induced by   $\bullet_t$ is nonsingular.
 As a consequence,  not all balanced biderivations in $B$  arise from trace-like elements. For instance, the zero bilinear form $B \times B \to \kk$ is a balanced biderivation not arising from a trace-like element of $B$.

 3. In   general, a  trace-like element   $t\in B$  cannot  be recovered from~$ \bullet_t$.
 For instance, $s(t)\in B$ is also a trace-like element  and   $ \bullet_{s(t)}=\bullet_{t} $. However,
  in many  examples,    $s(t) \neq t$.

\section{Examples of trace-like elements} \label{trace-like_examples}

We give   examples of   trace-like elements  in commutative   ungraded   Hopf algebras arising from   classical group schemes,
and we compute the corresponding brackets in representation algebras.
 Throughout  this section, we fix an integer $N\geq 1$ and   set  $\overline N = \{1, \ldots, N\}$.

\subsection{The general linear group}  \label{GL_ex}

Consider  the group scheme $\GL_N$
 assigning to every commutative  ungraded algebra~$C$ the group   $\GL_N(C)$   of invertible $N\times N$ matrices over~$C$.
 The  coordinate algebra,  $B$, of  $\GL_N$   is the  commutative  ungraded  Hopf algebra generated by the symbols $u$ and   $\{t_{ij}\}_{ i,j\in \overline N}$
 subject to the single    relation ${u \det(T) =1}$, where~$T$ is the $N\times N$ matrix with entries $ t_{ij}$.
The   comultiplication~$\Delta$, the counit $\varepsilon$, and the antipode $s$  in~$B$ are  computed   by
$$
\Delta(t_{ij}) =  \sum_{k\in \overline N}  t_{ik} \otimes t_{kj}, \quad \Delta(u)= u \otimes u, \quad \varepsilon(t_{ij}) =\delta_{ij}, \quad  \varepsilon(u)=1
$$
and
$$
s(u) = \det(T) , \quad
s(t_{ij})= (-1)^{i+j} u \cdot  \big((j,i)\hbox{-th minor of }  T\big) .
$$
It is clear from the definitions that   the element   \begin{equation} \label{element t}  t=\sum_{i \in \overline N}   t_{ii}  \in B \end{equation}    is  cosymmetric.
  We claim that $t$ is infinitesimally-nonsingular. To see this, for  any $i,j \in \overline{N}$,
  denote by  $\tau_{ij}$   the  class of $t_{ij}- \delta_{ij} \in I=\Ker(\varepsilon)$ in $I/I^2$.
  Computing   $I/I^2$   from the presentation of $B$ above, we obtain that this module  is free   with basis    $\{\tau_{ij}\}_{ i, j}$.
  Let $\{\tau_{ij}^*\}_{ i, j}$ be   the dual basis of $\mathfrak{g} \simeq (I/I^2)^*$.
  It is easy to check that the linear map~\eqref{psi_t} sends $\tau_{ij}^*$ to $\tau_{ji}$ for any $i,j$.   This map is an isomorphism, and so~$t$ is  infinitesimally-nonsingular and  trace-like.
 The  balanced biderivation   $  \bullet_t   : B \times B \to \kk$
is computed   by  $t_{ij}  \bullet_t   t_{kl} =   \delta_{il} \delta_{jk} $  for all $i,j,k,l \in \overline N$.

Consider a cocommutative graded Hopf algebra~$A$ carrying    an antisym\-metric Fox pairing~$\rho$  of degree $n\in \ZZ$.
Theorem \ref{double_to_simple}  produces a bracket $\bracket{-}{-}$ in the representation algebra~$A_B$. We compute this bracket
on the elements $x_{ij} = x_{(t_{ij})}$  and $y_{kl} = y _{(t_{kl})}$  for any $x,y\in A$ and $i,j,k,l \in \overline{N}$. In the following computation (and in similar computations below) we sum up over all repeating indices:
\begin{eqnarray}
\notag && \bracket{x_{ij}}{y_{kl}}  \\
\notag   & \by{main_formula} &        (-1)^{ \vert   x''\vert   \vert y' \vert_n}  (t_{vl}  \bullet_t   t_{pq})\, \rho(x',y')_{ {{s}}(t_{qr}) t_{ip} }\,   x''_{rj}\,   y''_{kv} \\
\notag  &=&     (-1)^{ \vert   x''\vert   \vert y' \vert_n}  \rho(x',y')_{ {{s}}(t_{vr}) t_{il} }\,   x''_{rj}\,   y''_{kv}   \\
\notag  &=&     (-1)^{ \vert   x''\vert   \vert y' \vert_n}  s_A(\rho(x',y')')_{vr}\,  \rho(x',y')''_{ il }\,  x''_{rj}\,   y''_{kv}\\
\notag  &=&     (-1)^{ \vert   x''\vert   \vert y' \rho(x',y')''\vert_n + \vert y'' \vert \vert x y' \vert_n}   y''_{kv} s_A(\rho(x',y')')_{vr}\, x''_{rj}\,  \rho(x',y')''_{ il }\\
\notag  &=&     (-1)^{ \vert   x''\vert   \vert y' \rho(x',y')''\vert_n + \vert y'' \vert \vert x y' \vert_n}   \big(y'' s_A(\rho(x',y')') x''\big)_{kj}\,  \rho(x',y')''_{ il }\\
\label{almost_VdB-} &=&  (-1)^{ \vert   x'\vert   \vert  \rho(x'',y'')' \vert + \vert y' \vert \vert x  \vert_n}   \big(y' s_A(\rho(x'',y'')') x'\big)_{kj}\,  \rho(x'',y'')''_{ il },
\end{eqnarray}
 where   the last equality follows from the cocommutativity of~$A$.
 The formula \eqref{almost_VdB-} fully determines the bracket $\bracket{-}{-}$  in $A_B$ because
the  algebra $A_B$ is generated by the set $\{x_{ij}\, \vert \, x\in A, i,j \in  \overline{N}\}$. The latter follows from the identity
$$
x_u = x_{s(\det(T))} = \big(s(x)\big)_{\det(T)} \quad {\text{for any}} \quad x\in A.
$$

\subsection{The special linear group}

Assume   that $N$ is invertible in $\kk$,
and  consider  the group scheme $\operatorname{SL}_N$
assigning to every commutative  ungraded  algebra $C$ the group $\operatorname{SL}_N(C)$ of $N \times N$ matrices over $C$  with determinant$1$.
The    coordinate algebra, $B$, of  $\operatorname{SL}_N$    is the commutative  ungraded Hopf algebra generated by the symbols $\{t_{ij}\}_{i,j \in \overline{N}}$ subject to the single relation $\det(T)=1$
where $T$ is the $N \times N$ matrix with entries $t_{ij}$.
The   comultiplication~$\Delta$, the counit $\varepsilon$, and the antipode $s$  in~$B$ are  computed   by
$$
\Delta(t_{ij}) =  \sum_{k\in \overline N}  t_{ik} \otimes t_{kj}, \quad \varepsilon(t_{ij}) =\delta_{ij}, \quad
s(t_{ij})= (-1)^{i+j} \cdot  \big((j,i)\hbox{-th minor of }  T\big) .
$$
The same formula \eqref{element t}  as above defines a cosymmetric   $t\in B$.
To show that $t$ is infinitesi\-mal\-ly-nonsingular, let  $\tau_{ij} $   be   the  class of $ t_{ij}- \delta_{ij}    \in I=\Ker(\varepsilon)$   in $I/I^2$
for   $i,j \in \overline{N}$. Computing   $I/I^2$   from the presentation of $B$ above, we obtain that this module is generated by the  $\{\tau_{ij}\}_{i,j }$ subject to the single relation $\tau_{11} + \cdots + \tau_{NN}=0$.
Hence $I/I^2$ is free  with basis   $\{\tau_{ij}\}_{ i \neq j} \cup \{\tau_{ii}\}_{ i \neq N}$.
Let  $\{\tau_{ij}^*\}_{ i \neq j} \cup \{\tau_{ii}^*\}_{ i\neq N }$ be   the dual basis of  $\mathfrak{g} \simeq (I/I^2)^*$.
The   linear map \eqref{psi_t} defined by~$t$ carries $\tau_{ij}^*$ to $\tau_{ji}$ for any $i \neq j$
and  carries  $\tau_{ii}^*$ to ${\tau_{ii}+ \sum_{j\neq N } \tau_{jj}}$ for any $i\neq N$; since $1/N \in \kk$, this map     is an isomorphism. So,  $t$ is trace-like.
The  balanced biderivation  $  \bullet_t   $ in $B$
is   computed   by  $t_{ij} \bullet_t  t_{kl} = {\delta_{il} \delta_{jk}  - \delta_{ij} \delta_{kl}/N}$  for all $i,j,k,l \in \overline N$.

 Consider a cocommutative graded Hopf algebra~$A$ carrying    an antisym\-metric Fox pairing~$\rho$  of degree $n\in \ZZ$.
The bracket $\bracket{-}{-}$ in $A_B$ given by Theorem \ref{double_to_simple} is determined by its values on the elements $x_{ij}=x_{(t_{ij})}$ and $y_{kl}= y_{(t_{kl})}$,
where $x,y \in A$ and $i,j,k,l\in \overline{N}$.  We have
\begin{eqnarray*}
\bracket{x_{ij}}{y_{kl}}    & \by{main_formula} &        (-1)^{ \vert   x''\vert   \vert y' \vert_n}  (t_{vl} \bullet_t   t_{pq})\, \rho(x',y')_{ {{s}}(t_{qr}) t_{ip} }\,   x''_{rj}\,   y''_{kv} \\
&=&     (-1)^{ \vert   x''\vert   \vert y' \vert_n}  \rho(x',y')_{ {{s}}(t_{vr}) t_{il} }\,   x''_{rj}\,   y''_{kv} \\
&& -\frac{  (-1)^{ \vert   x''\vert   \vert y' \vert_n} }{N} \rho(x',y')_{ {{s}}(t_{pr}) t_{ip} }\,   x''_{rj}\,   y''_{kl} \\
&=&     (-1)^{ \vert   x''\vert   \vert y' \vert_n}  s_A(\rho(x',y')')_{vr}\,  \rho(x',y')''_{ il }\,  x''_{rj}\,   y''_{kv}\\
&& -\frac{  (-1)^{ \vert   x''\vert   \vert y' \vert_n} }{N} \rho(x',y')_{ \varepsilon(t_{ir}) }\,   x''_{rj}\,   y''_{kl} \\
&=&     (-1)^{ \vert   x''\vert   \vert y' \rho(x',y')''\vert_n + \vert y'' \vert \vert x y' \vert_n}   y''_{kv} s_A(\rho(x',y')')_{vr}\, x''_{rj}\,  \rho(x',y')''_{ il }\\
&&    -\frac{  (-1)^{ \vert   x''\vert   \vert y' \vert_n} }{N} \rho(x',y')_{\delta_{ir} } \, x''_{rj}\,   y''_{kl} \\
&=&     (-1)^{ \vert   x''\vert   \vert y' \rho(x',y')''\vert_n + \vert y'' \vert \vert x y' \vert_n}   \big(y'' s_A(\rho(x',y')') x''\big)_{kj}\,  \rho(x',y')''_{ il }\,   \\
&& -\frac{  (-1)^{ \vert   x''\vert   \vert y' \vert_n} }{N}  \delta_{ir}\,  \varepsilon_A \rho(x',y')\,   x''_{rj}\,   y''_{kl}\\
  &=&   (-1)^{ \vert   x'\vert   \vert  \rho(x'',y'')' \vert + \vert y' \vert \vert x  \vert_n}   \big(y' s_A(\rho(x'',y'')') x'\big)_{kj}\,  \rho(x'',y'')''_{ il } \\
&& -\frac{  (-1)^{ \vert   x''\vert   \vert y' \vert_n} }{N} \varepsilon_A \rho(x',y')\,   x''_{ij}\,   y''_{kl}.
\end{eqnarray*}

\subsection{The orthogonal group}
 A matrix over an ungraded algebra is {\it orthogonal} if it is a 2-sided inverse of the transpose matrix. Assume   that $2$ is invertible in $\kk$,
and  consider  the group scheme  $\operatorname{O}_N$
assigning to every commutative ungraded  algebra $C$ the group $\operatorname{O}_N(C)$ of $N \times N$ orthogonal  matrices over $C$.
The    coordinate algebra, $B$, of  $\operatorname{O}_N$   is the commutative   ungraded  Hopf algebra generated by the symbols    $\{t_{ij}\}_{i,j \in \overline{N}}$
subject to the  relations $t_{ik} t_{jk} =\delta_{ij}$ for all $i,j \in \overline N$  (here and below we sum over repeated indices.)
The   comultiplication~$\Delta$, the counit $\varepsilon$, and the antipode $s$  in~$B$ are  computed   by
$$
\Delta(t_{ij}) =  \sum_{k\in \overline N}  t_{ik} \otimes t_{kj}, \quad \varepsilon(t_{ij}) =\delta_{ij}, \quad  s(t_{ij})= t_{ji}.
$$
 The   formula \eqref{element t}    defines a trace-like   $t\in B$. To show that $t$ is infinitesimally-nonsingular,  let $\tau_{ij}\in I/I^2$ be the  class of $t_{ij}- \delta_{ij} \in I=\Ker(\varepsilon)$ for any $i,j \in \overline{N}$.
 Computing   $I/I^2$   from the presentation of $B$ above, we obtain that this module is generated by the  $\{\tau_{ij}\}_{i,j }$ subject to the   relations $\tau_{ij} + \tau_{ji}=0$ for all $i,j\in \overline{N}$.
The set $\{\tau_{ij}\}_{ i < j}$ is a basis of $I/I^2$,     and we let $\{\tau_{ij}^*\}_{ i<j}$ be  the dual basis of  $\mathfrak{g} \simeq (I/I^2)^*$.
The   linear map \eqref{psi_t} defined by $t$ carries $\tau_{ij}^*$ to $-2\tau_{ij}$ for any $i < j$, so that \eqref{psi_t}  is an isomorphism.
The  balanced biderivation  $  \bullet_t $ in $B$
is   computed   by  $t_{ij} \bullet_t t_{kl} = {(\delta_{il} \delta_{jk}  - \delta_{ik} \delta_{jl})/2}$  for all $i,j,k,l \in \overline N$.

Let $\rho $  be an antisymmetric Fox pairing of degree  $n\in \ZZ$  in a cocommutative graded Hopf algebra $A$.
The bracket $\bracket{-}{-}$ in $A_B$ given by Theorem \ref{double_to_simple} is determined by its values on the elements $x_{ij}=x_{(t_{ij})}$ and $y_{kl}= y_{(t_{kl})}$
for  $x,y \in A$ and $i,j,k,l\in \overline{N}$. We compute
\begin{eqnarray*}
&& 2\bracket{x_{ij}}{y_{kl}}    \\
& \by{main_formula} &        (-1)^{ \vert   x''\vert   \vert y' \vert_n} 2  (t_{vl}  \bullet_t   t_{pq})\, \rho(x',y')_{ {{s}}(t_{qr}) t_{ip} }\,   x''_{rj}\,   y''_{kv} \\
&=&  (-1)^{ \vert   x''\vert   \vert y' \vert_n}  \rho(x',y')_{ {{s}}(t_{vr}) t_{il} }\,   x''_{rj}\,   y''_{kv} -    (-1)^{ \vert   x''\vert   \vert y' \vert_n} \rho(x',y')_{ {{s}}(t_{lr}) t_{iv} }\,   x''_{rj}\,   y''_{kv}.
\end{eqnarray*}
 The first term in the last expression is computed   as in the $\GL_N$ case and is equal~to
$$(-1)^{ \vert   x'\vert   \vert  \rho(x'',y'')' \vert + \vert y' \vert \vert x  \vert_n}   \big(y' s_A(\rho(x'',y'')') x'\big)_{kj}\,  \rho(x'',y'')''_{ il }.$$
The second term in the expansion of $2\bracket{x_{ij}}{y_{kl}}$   is computed as follows:
\begin{eqnarray*}
&&  (-1)^{ \vert   x''\vert   \vert y' \vert_n} \rho(x',y')_{ {{s}}(t_{lr}) t_{iv} }\,   x''_{rj}\,   y''_{kv}\\
&=& (-1)^{ \vert   x''\vert   \vert y' \vert_n} s_A(\rho(x',y')')_{ lr }\,  \rho(x',y')''_{  iv }\,  x''_{rj}\,   s_A(y'')_{vk} \\
&=&  (-1)^{ \vert   x''\vert   \vert y' \rho(x',y')''\vert_n} s_A(\rho(x',y')')_{ lr }\, x''_{rj}\,  \rho(x',y')''_{  iv }\,    s_A(y'')_{vk} \\
&=&  (-1)^{ \vert   x''\vert   \vert y' \rho(x',y')''\vert_n} \big(s_A(\rho(x',y')')\, x''\big)_{ lj}\,  \big(\rho(x',y')''s_A(y'')\big)_{  ik} \\
&=&  (-1)^{ \vert   x''\vert   \vert x' \rho(x',y')'\vert} \big(s_A(\rho(x',y')')\, x''\big)_{ lj}\,  \big(\rho(x',y')''s_A(y'')\big)_{  ik} \\
&=&  (-1)^{ \vert   x'\vert   \vert \rho(x'',y')'\vert} \big(s_A(\rho(x'',y')')\, x'\big)_{ lj}\,  \big(\rho(x'',y')''s_A(y'')\big)_{  ik}.
\end{eqnarray*}
We conclude that
\begin{eqnarray*}
\bracket{x_{ij}}{y_{kl}}    & =& \frac{(-1)^{ \vert   x'\vert   \vert  \rho(x'',y'')' \vert + \vert y' \vert \vert x  \vert_n}}{2}     \big(y' s_A(\rho(x'',y'')') x'\big)_{kj}\,  \rho(x'',y'')''_{ il } \\
&& -\frac{(-1)^{ \vert   x'\vert   \vert \rho(x'',y')'\vert} }{2} \big(s_A(\rho(x'',y')')\, x'\big)_{ lj}\,  \big(\rho(x'',y')''s_A(y'')\big)_{  ik}.
\end{eqnarray*}

\section{The Jacobi identity in representation algebras}\label{The Jacobi identity}

We formulate  additional   conditions   in Theorem~\ref{double_to_simple}   ensuring  the   Jacobi identity.

 \subsection{Tritensor maps} \label{triple}

  Given a graded algebra~$A$ and  a permutation $(i_1, i_2 ,i_3)$ of the sequence $(1, 2,3)$,
  we  let $\Perm_{i_1 i_2 i_3}:A^{\otimes 3}\to A^{\otimes 3}$ be    the linear map   carrying  any     $x_1\otimes x_2 \otimes x_3 $
with homogeneous $x_1, x_2, x_3\in A$ to $(-1)^{t} x_{i_1}
\otimes x_{i_2} \otimes   x_{i_3}$
 where    $t\in \ZZ$ is the sum of the products  $\vert x_{i_p} \vert \vert x_{i_q} \vert$
over all pairs of indices  $ p < q $  such that $i_p>i_q$. We call  $\Perm_{i_1 i_2 i_3} $      the  \emph{graded permutation}.  For $n\in \ZZ$, we
similarly define  the   {\it ${{n}}$-graded permutation}
$\Perm_{i_1 i_2 i_3,n} :A^{\otimes 3}\to A^{\otimes 3}$  using
  $\vert\!-\!\vert_{{n}}=\vert\!-\!\vert +n $ instead of $\vert\!-\!\vert$.

  Assume  now    that $A$ is a  graded Hopf algebra with antipode $s=s_A$.
   Any   antisymmetric   Fox pairing $\rho:A \times A \to A$   of degree~$n$ determines a linear map  $\digamma= \digamma_{\!\rho}  : A^{\otimes 3} \to A^{\otimes 3}$ by
\begin{eqnarray*}
&&  \digamma  (x,y,z)\\
&=&   (-1)^{\vert y' \vert  \vert x\vert_n + \vert z'\vert \vert x'' y'' \vert + \vert x' z'\vert \vert  \rho(x'',y'')' \vert + \vert\rho(x'',y'')''\vert \vert \rho( \rho(x'',y'')''', z'')'\vert  }\\
&& \cdot y' s  \big(\rho( x'', y'')'\big)  x' \otimes z' s \big(\rho\big( \rho(x'',y'')''', z''\big)' \big)\rho(x'',y'')'' \otimes  \rho\big( \rho(x'',y'')''', z''\big)''
\end{eqnarray*}
for any homogeneous $x,y,z\in A$.  The \emph{tritensor map   $\triplep{-}{-}{-} = \triplep{-}{-}{-}_\rho$    induced by}   $\rho$ is defined by
\begin{equation} \label{triring}
\triplep{-}{-}{-}   =  \sum_{i=0}^2 \Perm^i_{312} \circ \digamma \circ \Perm^{-i}_{312,n} \in  \End(A^{\otimes 3}).
\end{equation}
If this  endomorphism of $A^{\otimes 3}$   is  identically equal to zero, then we say that $\rho$ is \emph{Gerstenhaber}    \emph{of degree~$n$}.
For instance, as explained in  Appendix~\ref{FPvDB},
the Fox pairing   in Example~\ref{Examples-rho}.3     is Gerstenhaber of degree $ 2-d$.

\subsection{The main  theorem}  \label{Main result}

   We    state   our main theorem  concerning the Jacobi identity.

\begin{theor}\label{MAINMAIN}
Let $A$ be a cocommutative graded Hopf algebra carrying an   antisymmetric Fox pairing $\rho$ of degree $n\in \ZZ$.
Let $B$ be a commutative ungraded Hopf algebra   endowed    with a trace-like element $t\in B$.
If $\rho$ is Gerstenhaber, then so is the $n$-graded  bracket $\bracket{-}{-}$ in the algebra $A_B$  produced by
   Theorem \ref{double_to_simple}    from   $\rho$ and    $\bullet = \bullet_t : B \times B \to \kk$.
\end{theor}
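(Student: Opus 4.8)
The plan is to prove the Jacobi identity \eqref{jaco} for $\bracket{-}{-}$ by first reducing it to the algebra generators of $A_B$ and then matching the resulting expression with the tritensor map \eqref{triring}; the hypothesis that $\rho$ is Gerstenhaber will then force the latter to vanish. Write $J(u,v,w)$ for the left-hand side of \eqref{jaco}, so that the assertion is $J\equiv 0$. A direct computation using only the antisymmetry \eqref{antis} of the bracket and the Leibniz rules \eqref{poisson1}--\eqref{poisson2} shows that $J$ is graded cyclically symmetric and a graded derivation in its first argument, hence in all three arguments: the ``extra'' terms arising when one expands, say, $J(u_1u_2,v,w)$ cancel in pairs by antisymmetry. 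Since $A_B$ is generated as an algebra by the symbols $\{x_b\}_{x\in A,\,b\in B}$, it therefore suffices to prove $J(x_b,y_c,z_d)=0$ for all homogeneous $x,y,z\in A$ and all $b,c,d\in B$.

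To compute $J(x_b,y_c,z_d)$, I would start from the trace-like formula \eqref{formulafromt3}. By that formula the inner bracket $\bracket{x_b}{y_c}$ is a sum of products of three subscripted generators of $A_B$, the first carrying $\rho(x',y')$ and the other two carrying $x''$ and $y''$. Applying the outer bracket $\bracket{z_d}{-}$ via \eqref{poisson1}, each such product splits into three summands, according to whether $\bracket{z_d}{-}$ lands on the factor carrying $\rho(x',y')$, on the one carrying $x''$, or on the one carrying $y''$. This produces a \emph{deep} family of terms, in which a second copy of $\rho$ is nested inside the first (yielding the expressions $\rho(\rho(x'',y'')''',z'')$ appearing in the definition of $\digamma$), and a \emph{shallow} family, in which the outer bracket contributes a single application of $\rho$ to two distinct letters. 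Throughout this expansion I would use the cocommutativity of $A$, the commutativity of $B$, the symmetry of $\bullet_t$, and above all the factorization \eqref{bullets} together with the cyclic invariance \eqref{cyclic_inv} of the iterated comultiplication of $t$ (Lemma~\ref{lemma-cosym}) to move the Sweedler components of $t,b,c,d$ past one another.

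The argument then splits into two halves. First, the shallow terms organize into cyclically matched pairs that cancel, exactly as in the proof of the cocommutative special case in Section~\ref{Brackets in representation algebras}; here the antisymmetry of $\rho$, the symmetry of $\bullet_t$, and the graded antisymmetry of the bracket are what force the cancellation. Second, the deep terms must be identified with \eqref{triring}. The key point is that \eqref{bullets} and \eqref{cyclic_inv} collapse all the surviving $B$-bookkeeping, so that the three cyclic deep contributions become precisely the three summands $\Perm^i_{312}\circ\digamma\circ\Perm^{-i}_{312,n}$ of $\triplep{x}{y}{z}_\rho$, with their three tensor factors subscripted by the residual $B$-data and multiplied together in $A_B$. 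In other words, the deep part of $J(x_b,y_c,z_d)$ is the image of the element $\triplep{x}{y}{z}_\rho\in A^{\otimes 3}$ under a fixed $\kk$-linear map $A^{\otimes 3}\to A_B$ determined by $b,c,d$ and $t$. Since $\rho$ is Gerstenhaber, $\triplep{x}{y}{z}_\rho=0$, and therefore $J(x_b,y_c,z_d)=0$, completing the proof.

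The main obstacle I anticipate is the combined sign- and coproduct-bookkeeping in the identification step. One must carry a large number of iterated Sweedler factors of $t$ through the computation, verify that the Koszul signs assembled from the various $n$-degrees reproduce exactly the sign prefactor occurring in the definition of $\digamma$, and check that the relabeling forced by \eqref{cyclic_inv} corresponds precisely to conjugation by $\Perm_{312}$. The other delicate point is to show that the shallow terms cancel termwise, and not merely in aggregate; this is where the antisymmetry of both $\rho$ and the bracket, together with the symmetry of $\bullet_t$, is used most heavily.
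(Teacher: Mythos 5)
Your proposal follows essentially the same route as the paper: the paper's Lemma~\ref{Jacobitor} carries out exactly your plan, expanding the cyclic sum on generators via \eqref{formulafromt3} and the Leibniz rules into ``deep'' terms (its $P$-terms, where the outer bracket hits the $\rho$-factor) and ``shallow'' terms (its $Q$- and $R$-terms), cancelling the shallow terms pairwise in the cyclic sum (formula \eqref{formulaidid}), and assembling the deep terms into the tritensor map using \eqref{cyclic_inv} and the balanced property, so that the Gerstenhaber hypothesis annihilates the right-hand side; the reduction to generators via the derivation property of the Jacobiator is the same (implicit) step the paper also invokes in the quasi-Poisson case. The only, immaterial, imprecision is that the deep part is governed by $\triplep{x'}{y'}{z'}$ paired with the subscripted factors $x''$, $y''$, $z''$ (and involves $p=t+s_B(t)$ rather than $t$ alone), so it is not literally a linear image of the single element $\triplep{x}{y}{z}$; but since ``Gerstenhaber'' means the tritensor map vanishes identically as an endomorphism of $A^{\otimes 3}$, your conclusion is unaffected.
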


  This theorem is   a direct consequence of the following lemma.
   To state the lemma,  we let $s=s_B$   be  the antipode of   $B$ and  define a bilinear map  $\curlyvee =  \curlyvee_t   :B \times B \to B$    by
\begin{equation} \label{ltimes}
b \curlyvee  c  =  ( b' \bullet  c'' )  \,   b''     s    (c')
\end{equation}
for any $b,c\in  B$. By \eqref{balanced_bis}, we   also   have
\begin{equation} \label{ltimes_bis}
b \curlyvee  c  =     ( b'' \bullet  c' )  \,   b'     s    (c'').
\end{equation}

\begin{lemma} \label{Jacobitor}
 Set   $p=t+s (t)\in B$. Then, for   any  homogeneous $x,y,z\in A$  and~$b,c,d\in B$,
$$
 (-1)^{\vert x \vert_n \vert z \vert_n } \bracket{x_b}{\bracket{y_c}{z_d}} + (-1)^{\vert z \vert_n \vert y \vert_n } \bracket{z_d}{\bracket{x_b}{y_c}} + (-1)^{\vert y \vert_n \vert x \vert_n } \bracket{y_c}{\bracket{z_d}{x_b}} 
 $$ \vspace{-0.7cm}
 \begin{eqnarray}
\notag &=& -(-1)^{\vert x'' \vert \vert y' z' \vert + \vert y'' \vert \vert z'\vert_n+\vert x \vert_n \vert z \vert_n} \cdot\\
\label{||-,-,-||} && \quad  \triplep{x'}{y'}{z'}^\ell_{p^{(1)}}\, \triplep{x'}{y'}{z'}^m_{p^{(5)}}\, \triplep{x'}{y'}{z'}^r_{p^{(3)}}   x''_{b \curlyvee p^{(2)}}   y''_{ c \curlyvee p^{(6)}} z''_{d \curlyvee p^{(4)}}
\end{eqnarray}
where  the   tritensor map    induced by   $\rho$ is expanded  in  the form  
$$\triplep{-}{-}{-}=\triplep{-}{-}{-}^\ell \otimes \triplep{-}{-}{-}^m \otimes \triplep{-}{-}{-}^r.$$
\end{lemma}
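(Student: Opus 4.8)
The plan is to expand the left-hand side of \eqref{||-,-,-||} directly from the explicit formula \eqref{formulafromt3}, and then to recognise the outcome as the tritensor map. Because the three nested brackets on the left are cyclic images of one another, I would first analyse a single summand, say $\bracket{z_d}{\bracket{x_b}{y_c}}$. Evaluating the inner bracket by \eqref{formulafromt3} yields a sum of products of three generators of $A_B$, of the schematic shape $\rho(x',y')_{e}\,x''_{f}\,y''_{g}$, where the $B$-labels $e,f,g$ are assembled from $b$, $c$, the contractions $\bullet=\bullet_t$, and iterated comultiplications of $t$. Applying the outer bracket $\bracket{z_d}{-}$ and using the Leibniz rule \eqref{poisson1} together with the relations \eqref{cocomult} and \eqref{addirel} of $A_B$, I split $\bracket{z_d}{\bracket{x_b}{y_c}}$ into one \emph{double-$\rho$} contribution, in which $z'$ is paired with a comultiplicand of $\rho(x',y')$ to form an iterated pairing $\rho\big(z',\rho(x',y')''\big)$, and two \emph{single-$\rho$} contributions $\rho(z',x'')$ and $\rho(z',y'')$.

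I would then show that the six single-$\rho$ contributions coming from the three cyclic summands cancel in pairs, so that only the double-$\rho$ contributions survive. The six terms carry the pairings $\rho(x',y'')$, $\rho(x',z'')$, $\rho(z',x'')$, $\rho(z',y'')$, $\rho(y',z'')$, $\rho(y',x'')$, which fall into the three transposed pairs $\{\rho(x',y''),\rho(y',x'')\}$, $\{\rho(x',z''),\rho(z',x'')\}$, $\{\rho(y',z''),\rho(z',y'')\}$. Within each pair I would use the antisymmetry of $\rho$, rewritten via \eqref{transpose_formula} and the nested-antipode identity \eqref{+++s}, and the cocommutativity \eqref{coco} of $A$ to realign the comultiplied factors; the alignment of the accompanying $B$-labels is then forced by the symmetry of $\bullet$, the balancedness \eqref{balanced}, the relation \eqref{bullets}, and the cyclic invariance \eqref{cyclic_inv} of the iterated comultiplication of the cosymmetric element $t$ (Lemma~\ref{lemma-cosym}), after the antipode axioms \eqref{antip-prop} collapse the spurious factors $s_B(t^{(i)})t^{(i+1)}$.

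It remains to match the cyclic sum of the three double-$\rho$ contributions with the right-hand side of \eqref{||-,-,-||}. On the $A$-side I would transform each iterated pairing $\rho\big(z',\rho(x',y')''\big)$ into the expression defining $\digamma$, using the Fox identities \eqref{Fox1}--\eqref{Fox2}, the formulas \eqref{firstrho+}, \eqref{firstrho++}, \eqref{+++s} of Lemma~\ref{transpose++++}, and cocommutativity of $A$; the three cyclic summands then reproduce exactly the three terms $\Perm^i_{312}\circ\digamma\circ\Perm^{-i}_{312,n}$ of \eqref{triring}. On the $B$-side the labels must condense into $p^{(1)},p^{(3)},p^{(5)}$ on the three tritensor factors and into $b\curlyvee p^{(2)}$, $c\curlyvee p^{(6)}$, $d\curlyvee p^{(4)}$ on the outer generators. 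The appearance of $p=t+s_B(t)$ is the crux: the label $s_B(t^{(1)})t^{(3)}$ carried by the inner $\rho$ in \eqref{formulafromt3}, once re-split by the outer bracket and passed through \eqref{flip_antipode} and \eqref{antipodeidentity}, contributes both a $t$-part and, through $s_B$, an $s_B(t)$-part; since $s_B(t)$ is again cosymmetric, the cyclic invariance \eqref{cyclic_inv} applies to $p$ and lets me repackage these contributions as comultiplicands of the single element $p$, while the definition \eqref{ltimes}--\eqref{ltimes_bis} of $\curlyvee$ absorbs the residual $\bullet$-contractions against $b$, $c$, $d$.

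I expect the $B$-side bookkeeping of the last step to be the principal obstacle: following the roughly eightfold iterated comultiplications of $t$ through two successive applications of \eqref{formulafromt3}, and checking that \eqref{balanced}, \eqref{cyclic_inv}, and the antipode axioms conspire to yield precisely the interleaved pattern $p^{(1)},\dots,p^{(6)}$ of the statement, is long and delicate. A secondary difficulty is the Koszul sign accounting; I would organise all transpositions through the $n$-degree $\vert-\vert_n$ and verify that the prefactor $-(-1)^{\vert x''\vert\vert y'z'\vert+\vert y''\vert\vert z'\vert_n+\vert x\vert_n\vert z\vert_n}$ of \eqref{||-,-,-||} is exactly what accumulates.
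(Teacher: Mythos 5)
Your outline follows the same overall strategy as the paper's proof: expand each cyclic summand by \eqref{formulafromt3} and the Leibniz rule into one double-$\rho$ piece and two single-$\rho$ pieces, cancel the six single-$\rho$ pieces against each other, and identify the surviving double-$\rho$ pieces with the tritensor map, the element $p=t+s_B(t)$ arising because the label $s_B(t^{(1)})t^{(3)}$ of the inner $\rho$-generator is split by \eqref{cocomult} and the Leibniz rule into a $t$-labelled and an $s_B(t)$-labelled contribution (the paper's $P_2$ and $P_1$, the latter obtained from the former by the substitution $t\leadsto s_B(t)$, using $\bullet_t=\bullet_{s_B(t)}$). However, your cancellation scheme for the single-$\rho$ terms contains a genuine error: each such term carries \emph{two} $\rho$-factors, the inner one inherited from the inner bracket and the outer one produced by the outer bracket, and you match terms by their outer factors alone. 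Writing $Q_1=Q(x,y,z;b,c,d)$, $R_1=R(x,y,z;b,c,d)$ for the two single-$\rho$ pieces of the first summand and $Q_2,R_2$, $Q_3,R_3$ for their cyclic images, the full $\rho$-content of the six terms is
$$
Q_1:\{\rho(y',z'),\rho(x',y'')\},\quad R_1:\{\rho(y',z'),\rho(x',z'')\},\quad Q_2:\{\rho(x',y'),\rho(z',x'')\},
$$
$$
R_2:\{\rho(x',y'),\rho(z',y'')\},\quad Q_3:\{\rho(z',x'),\rho(y',z'')\},\quad R_3:\{\rho(z',x'),\rho(y',x'')\}.
$$
Your pair $\{\rho(x',y''),\rho(y',x'')\}$ is the pair $(Q_1,R_3)$; but $Q_1$ carries the inner factor $\rho(y',z')$ while $R_3$ carries the inner factor $\rho(z',x')$, and for an arbitrary antisymmetric Fox pairing no relation in $A_B$ identifies a product involving a $\rho(y,z)$-type generator with one involving a $\rho(z,x)$-type generator. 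The same obstruction kills your other two pairs, so the cancellation you describe cannot take place.

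The matching that actually works --- and is the paper's identity \eqref{formulaidid} --- pairs $Q_1$ with $R_2$, $R_1$ with $Q_3$, and $Q_2$ with $R_3$: then both the inner and the outer $\rho$-factors agree, one on the nose and the other after a single transposition, which the paper effects via the already-established antisymmetry of the bracket $\bracket{-}{-}$ (rather than invoking \eqref{transpose_formula} and \eqref{+++s} afresh), together with the cocommutativity of $A$ to redistribute the comultiplication factors of the shared variable; the two copies of $t$ labelling the two $\rho$-generators then exchange roles, and since they are the same element the terms cancel. With this repair, the remainder of your outline does track the paper's argument, and the residual work is precisely the long $B$-side bookkeeping (via \eqref{cyclic_inv}, \eqref{balanced}, \eqref{ltimes}, \eqref{ltimes_bis}) that you anticipate.
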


\begin{proof}
 Applying  \eqref{formulafromt3} and   the Leibniz rule, we obtain
\begin{eqnarray*}
\bracket{x_b}{\bracket{y_c}{z_d}} &=&
(-1)^{\vert y'' \vert \vert z'\vert_n} (c'\bullet t^{(2)}) (d'' \bullet t^{(4)} ) \bracket{x_b} { \rho(y',z')_{ s(t^{(1)}) t^{(3)}}\,   y''_{c''}\,   z''_{d'}}\\
&=& P(x,y,z;b,c,d) +  Q(x,y,z;b,c,d) +   R(x,y,z;b,c,d)
\end{eqnarray*}
where
\begin{eqnarray*}
 P(x,y,z;b,c,d) &=&  (-1)^{\vert y'' \vert \vert z'\vert_n} (c'\bullet t^{(2)}) (d'' \bullet t^{(4)} ) \bracket{x_b} { \rho(y',z')_{ s(t^{(1)}) t^{(3)}}}   y''_{c''}\,   z''_{d'} ,\\[0.2cm]
 Q(x,y,z;b,c,d)  &=& (-1)^{\vert y'' \vert \vert z'\vert_n+ \vert y' z' \vert_n \vert x \vert_n } (c'\bullet t^{(2)}) (d'' \bullet t^{(4)} )\cdot\\
 &&  \rho(y',z')_{ s(t^{(1)}) t^{(3)}}  \bracket{x_b} {    y''_{c''}} \,   z''_{d'}   ,\\[0.2cm]
R(x,y,z;b,c,d) &=&  (-1)^{ \vert y'' \vert \vert z'\vert_n + \vert y z'\vert_n \vert x \vert_n } (c'\bullet t^{(2)}) (d'' \bullet t^{(4)} )\cdot \\
&& \rho(y',z')_{ s(t^{(1)}) t^{(3)}}\,  y''_{c''} \bracket{x_b} {    z''_{d'}  }  .
\end{eqnarray*}
We claim that   \begin{equation}\label{formulaidid}
 (-1)^{\vert x \vert_n \vert z \vert_n} Q(x,y,z;b,c,d) =- (-1)^{\vert y \vert_n \vert z \vert_n} R (z,x,y;d,b,c) .
\end{equation}
To prove \eqref{formulaidid},    we let  $u$ be another ``copy'' of the element $t\in B$. Then
\begin{eqnarray*}
&& Q(x,y,z;b,c,d) \\
&=&  (-1)^{\vert y'' \vert \vert z'\vert_n+ \vert y'z'\vert_n \vert x \vert_n } (c'\bullet t^{(2)}) (d'' \bullet t^{(4)} )\, \rho(y',z')_{ s(t^{(1)}) t^{(3)}}  \bracket{x_b} {    y''_{c''}}    z''_{d'} \\
& \by{formulafromt3} &  (-1)^{\vert y'' y''' \vert \vert z'\vert_n+ \vert y' z' \vert_n \vert x \vert_n + \vert x'' \vert  \vert y''\vert_n  } \,
 (c'\bullet t^{(2)}) (d'' \bullet t^{(4)} ) (b' \bullet u^{(2)}) (c''' \bullet u^{(4)}) \cdot  \\
&&   \rho(y',z')_{ s(t^{(1)}) t^{(3)}}\, \rho(x',y'')_{s(u^{(1)})u^{(3)}}\, x''_{b''}\,  y'''_{c''}  \,   z''_{d'}
\end{eqnarray*}
and
\begin{eqnarray*}
 && R(z,x,y;d,b,c) \\
 &=&   (-1)^{\vert x'' \vert \vert y'\vert_n + \vert x y'\vert_n \vert z \vert_n  } (b'\bullet t^{(2)}) (c'' \bullet t^{(4)} )\,  \rho(x',y')_{ s(t^{(1)}) t^{(3)}}\, x''_{b''}\,  \bracket{z_d} {    y''_{c'}  } \\
&=& - (-1)^{ \vert x'' \vert \vert y'\vert_n + \vert x y\vert \vert z \vert_n } (b'\bullet t^{(2)}) (c'' \bullet t^{(4)} )\, \rho(x',y')_{ s(t^{(1)}) t^{(3)}}\,  x''_{b''}\,  \bracket{    y''_{c'}  }{z_d}   \\
& \by{formulafromt3} & - (-1)^{\vert x'' \vert \vert y'\vert_n + \vert x y\vert \vert z \vert_n + \vert y'''\vert \vert z'\vert_n} \,
 (b'\bullet t^{(2)}) (c''' \bullet t^{(4)} ) ( c' \bullet u^{(2)}) ( d'' \bullet u^{(4)}) \cdot \\
&& \quad  \rho(x',y')_{ s(t^{(1)}) t^{(3)}}\,  x''_{b''}\,  \rho(y'',z')_{s(u^{(1)})u^{(3)}}\, y'''_{c''}\,  z''_{d'} \\
&= & - (-1)^{\vert x'' \vert \vert y'\vert_n + \vert x y\vert \vert z \vert_n + \vert y'''\vert \vert z'\vert_n+\vert x y' \vert_n \vert y'' z' \vert_n} \cdot\\
&& \quad (b'\bullet t^{(2)}) (c''' \bullet t^{(4)} ) ( c' \bullet u^{(2)}) ( d'' \bullet u^{(4)}) \cdot \\
&&   \quad  \rho(y'',z')_{s(u^{(1)})u^{(3)}}\, \rho(x',y')_{ s(t^{(1)}) t^{(3)}}\,  x''_{b''}\, y'''_{c''}\,  z''_{d'} \\
&= & - (-1)^{\vert x'' \vert \vert y''\vert_n + \vert x y\vert \vert z \vert_n + \vert y'''\vert \vert z'\vert_n+\vert x y'' \vert_n \vert y' z' \vert_n + \vert y' \vert \vert y'' \vert} \cdot \\
&& \quad (b'\bullet t^{(2)}) (c''' \bullet t^{(4)} ) ( c' \bullet u^{(2)}) ( d'' \bullet u^{(4)}) \cdot \\
&&   \quad  \rho(y',z')_{s(u^{(1)})u^{(3)}}\, \rho(x',y'')_{ s(t^{(1)}) t^{(3)}}\,  x''_{b''}\, y'''_{c''}\,  z''_{d'} \\
&= & - (-1)^{\vert x'' \vert \vert y''\vert_n + \vert x y\vert \vert z \vert_n + \vert y''y'''\vert \vert z'\vert_n+\vert x \vert_n \vert y' z' \vert_n } \cdot \\
&& \quad  (b'\bullet t^{(2)}) (c''' \bullet t^{(4)} ) ( c' \bullet u^{(2)}) ( d'' \bullet u^{(4)}) \cdot \\
&& \quad    \rho(y',z')_{s(u^{(1)})u^{(3)}}\, \rho(x',y'')_{ s(t^{(1)}) t^{(3)}}\,  x''_{b''}\, y'''_{c''}\,  z''_{d'}.
\end{eqnarray*}
 Comparing these expressions, we   obtain \eqref{formulaidid}.
Formula \eqref{formulaidid}  implies that all   $Q$-terms and   $R$-terms   on the left-hand side of
\eqref{||-,-,-||} cancel out.   It remains to compute
$$
\hbox{\small $(-1)^{\vert x \vert_n \vert z \vert_n} P(x,y,z;b,c,d) + (-1)^{\vert y \vert_n \vert x \vert_n} P(y,z,x;c,d,b) 
+ (-1)^{\vert z \vert_n \vert y \vert_n} P(z,x,y;d,b,c).$}
$$
 To this end,   we expand
\begin{eqnarray*}
&& P(x,y,z;b,c,d)\\
&=&  (-1)^{\vert y'' \vert \vert z'\vert_n} (c'\bullet t^{(2)}) (d'' \bullet t^{(4)} ) \bracket{x_b} { \rho(y',z')_{ s(t^{(1)}) t^{(3)}}}   y''_{c''}\,   z''_{d'}  \\
 &=&  (-1)^{\vert y'' \vert \vert z'\vert_n} (c'\bullet t^{(2)}) (d'' \bullet t^{(4)} ) \bracket{x_b} { \rho(y',z')'_{ s(t^{(1)}) }   \rho(y',z')''_{  t^{(3)}}  }   y''_{c''}\,   z''_{d'}  \\
&  =  & P_1(x,y,z;b,c,d) + P_2(x,y,z;b,c,d)
\end{eqnarray*}
where
\begin{eqnarray*}
P_1(x,y,z;b,c,d) &=&  (-1)^{\vert y'' \vert \vert z'\vert_n} (c'\bullet t^{(2)}) (d'' \bullet t^{(4)} ) \cdot \\
&& \bracket{x_b} { \rho(y',z')'_{ s(t^{(1)}) }   }  \rho(y',z')''_{  t^{(3)}}\,   y''_{c''}\,   z''_{d'}   \\
\end{eqnarray*}
and
\begin{eqnarray*}
 P_2(x,y,z;b,c,d)
&=& (-1)^{\vert y'' \vert \vert z'\vert_n+ \vert x \vert_n \vert \rho(y',z')' \vert} (c'\bullet t^{(2)}) (d'' \bullet t^{(4)} ) \cdot \\
&& \rho(y',z')'_{ s(t^{(1)}) } \bracket{x_b} {    \rho(y',z')''_{  t^{(3)}}  }   y''_{c''}\,   z''_{d'}    .
\end{eqnarray*}
We claim that
\begin{eqnarray}
\notag &&  (-1)^{\vert x \vert_n \vert z \vert_n}    P_2(x,y,z;b,c,d)  \\
&& +  (-1)^{\vert y \vert_n \vert x \vert_n}  P_2(y,z,x;c,d,b) +  (-1)^{\vert z \vert_n \vert y \vert_n}  P_2(z,x,y;d,b,c) \\
\notag  &=&  - (-1)^{ \vert x'' \vert \vert y' z' \vert + \vert y'' \vert \vert z'\vert_n+\vert x \vert_n \vert z \vert_n} \cdot \\
\label{P2} &&\quad  \triplep{x'}{y'}{z'}^\ell_{t^{(1)}} \triplep{x'}{y'}{z'}^m_{t^{(5)}}\, \triplep{x'}{y'}{z'}^r_{t^{(3)}}
x''_{b \curlyvee t^{(2)}}   y''_{ c \curlyvee t^{(6)}} z''_{d \curlyvee t^{(4)}}
\end{eqnarray}
and similarly that
\begin{eqnarray}
\notag &&  (-1)^{\vert x \vert_n \vert z \vert_n}    P_1(x,y,z;b,c,d) \\
&& +  (-1)^{\vert y \vert_n \vert x \vert_n}  P_1(y,z,x;c,d,b) +  (-1)^{\vert z \vert_n \vert y \vert_n}  P_1(z,x,y;d,b,c) \\
\notag   &=&   - (-1)^{\vert x'' \vert \vert y' z' \vert + \vert y'' \vert \vert z'\vert_n+\vert x \vert_n \vert z \vert_n}  \cdot\\
\label{P1} && \quad \triplep{x'}{y'}{z'}^\ell_{v^{(1)}} \triplep{x'}{y'}{z'}^m_{v^{(5)}}\, \triplep{x'}{y'}{z'}^r_{v^{(3)}}   x''_{b \curlyvee v^{(2)}}   y''_{ c \curlyvee v^{(6)}} z''_{d \curlyvee v^{(4)}}
\end{eqnarray}
where  $v=s(t)$.
Since $p=t+v$, these two claims will  imply \eqref{||-,-,-||}.  Observe that
\begin{eqnarray*}
\!\!\! && \!\!\! P_1(x,y,z;b,c,d) \\
\!\!\! &=& \!\!\!  (-1)^{\vert y'' \vert \vert z'\vert_n} \big(c'\bullet s(v^{(3)})\big) \big(d'' \bullet s(v^{(1)} )\big) \bracket{x_b} { \rho(y',z')'_{v^{(4)}}   }  \rho(y',z')''_{  s(v^{(2)})}\,   y''_{c''}\,   z''_{d'}  \\
\!\!\! &=& \!\!\! (-1)^{\vert y'' \vert \vert z'\vert_n} \big(c'\bullet v^{(3)}\big) \big(d'' \bullet v^{(1)} \big) \bracket{x_b} { \rho(y',z')'_{v^{(4)}}   }  \rho(y',z')''_{  s(v^{(2)})}\,   y''_{c''}\,   z''_{d'}  \\
\!\!\! &  \by{cyclic_inv}& \!\!\! (-1)^{\vert y'' \vert \vert z'\vert_n} \big(c'\bullet v^{(2)}\big) \big(d'' \bullet v^{(4)} \big) \bracket{x_b} { \rho(y',z')'_{v^{(3)}}   }  \rho(y',z')''_{  s(v^{(1)})}\,   y''_{c''}\,   z''_{d'} \\
\!\!\! &=& \!\!\!  (-1)^{\vert y'' \vert \vert z'\vert_n+ \vert \rho(y',z')''\vert \vert x \rho(y',z')'\vert_n} \big(c'\bullet v^{(2)}\big) \big(d'' \bullet v^{(4)} \big) \cdot \\
\!\!\! && \!\!\! \rho(y',z')''_{  s(v^{(1)})}\,  \bracket{x_b} { \rho(y',z')'_{v^{(3)}}   }   y''_{c''}\,   z''_{d'} \\
 \!\!\! &=& \!\!\!  (-1)^{\vert y'' \vert \vert z'\vert_n+ \vert \rho(y',z')'\vert \vert x\vert_n} \big(c'\bullet v^{(2)}\big) \big(d'' \bullet v^{(4)} \big)\cdot\\
 \!\!\! & & \!\!\!  \rho(y',z')'_{  s(v^{(1)})}\,  \bracket{x_b} { \rho(y',z')''_{v^{(3)}}   }   y''_{c''}\,   z''_{d'},
\end{eqnarray*}
which shows that $P_1(x,y,z;b,c,d)$ is obtained from $P_2(x,y,z;b,c,d)$ by the change $t\leadsto v$.
Since the balanced biderivation $\bullet= \bullet_t$ coincides with $\bullet_v$,  \eqref{P1} is equivalent to \eqref{P2}.
Thus we need only   to prove \eqref{P2}. To this end,   we  compute
{\small
\begin{eqnarray*}
\!\!\!\!\!\!\!\!\!&& P_2(x,y,z;b,c,d) \\
\!\!\!\!\!\!\!\!\!&\!\!\!=\!\!\!&\!\!\! -(-1)^{\vert y'' \vert \vert z'\vert_n+ \vert x \vert_n \vert y'z' \vert} (c'\bullet t^{(2)}) (d'' \bullet t^{(4)} )
 \rho(y',z')'_{ s(t^{(1)}) } \bracket{    \rho(y',z')''_{  t^{(3)}}  }{x_b}   y''_{c''}\,   z''_{d'}  \\
 \!\!\!\!\!\!\!\!\!& \!\!\!\by{main_formula} \!\!\!& \!\!\!  -(-1)^{\vert y'' \vert \vert z'\vert_n+ \vert x \vert_n \vert y'z' \vert+ \vert  \rho(y',z')''' \vert \vert x'\vert_n}
 (c'\bullet t^{(2)}) (d'' \bullet t^{(7)} ) ( b'' \bullet t^{(4)} )\cdot \\
\!\!\!\!\!\!\!\!\!&&\!\!\! \quad \rho(y',z')'_{ s(t^{(1)}) }\, \rho\big(\rho(y',z')'' ,x'\big)_{s(t^{(5)}) t^{(3)}}\, \rho(y',z')'''_{t^{(6)}}\, x''_{b'}\,  y''_{c''}\,   z''_{d'} \\
 \!\!\!\!\!\!\!\!\!&\!\!\! = \!\!\! &\!\!\!   -(-1)^{\vert y'' \vert \vert z'\vert_n+ \vert x \vert_n \vert y'z' \vert+ \vert  \rho(y',z')''' \vert \vert x'\vert_n}
 (c'\bullet t^{(2)}) (d'' \bullet t^{(7)} ) ( b'' \bullet t^{(4)} )\cdot \\
 \!\!\!\!\!\!\!\!\!&\!\!\!&\!\!\! \quad \rho(y',z')'_{ s(t^{(1)}) }\, s\big(\rho\big(\rho(y',z')'' ,x'\big)'\big)_{t^{(5)} }\, \rho\big(\rho(y',z')'' ,x'\big)''_{t^{(3)}}\, \rho(y',z')'''_{t^{(6)}}\, x''_{b'}\,  y''_{c''}\,   z''_{d'} \\
 \!\!\!\!\!\!\!\!\!&\!\!\! =\!\!\!  &\!\!\!   -(-1)^{\vert y'' \vert \vert z'\vert_n+ \vert x \vert_n \vert y'z' \vert+ \vert  \rho(y',z')''' \vert \vert x' \rho(\rho(y',z')'' ,x')''\vert_n}
 (c'\bullet t^{(2)}) (d'' \bullet t^{(6)} ) ( b'' \bullet t^{(4)} )\cdot \\
 \!\!\!\!\!\!\!\!\!&\!\!\!&\!\!\! \quad \rho(y',z')'_{ s(t^{(1)}) }\, \left(s\big(\rho\big(\rho(y',z')'' ,x'\big)'\big)\, \rho(y',z')'''\right)_{t^{(5)}}\, \rho\big(\rho(y',z')'' ,x'\big)''_{t^{(3)}}\, x''_{b'}\,  y''_{c''}\,   z''_{d'} \\
 \!\!\!\!\!\!\!\!\!&\!\!\! = \!\!\! &\!\!\!   -(-1)^{\vert y'' \vert \vert z'\vert_n+ \vert x \vert_n \vert y'z' \vert+ \vert  \rho(y',z')''' \vert \vert x' \rho(\rho(y',z')'' ,x')''\vert_n+ \vert xyz'\vert\vert z''\vert} \cdot \\
 \!\!\!\!\!\!\!\!\!&\!\!\!&\!\!\!  \quad (c'\bullet t^{(2)}) (d'' \bullet t^{(6)} ) ( b'' \bullet t^{(4)} )\cdot \\
 \!\!\!\!\!\!\!\!\!&\!\!\!&\!\!\! \quad z''_{d'}\, \rho(y',z')'_{ s(t^{(1)}) }\, \left(s\big(\rho\big(\rho(y',z')'' ,x'\big)'\big)\, \rho(y',z')'''\right)_{t^{(5)}}\, \rho\big(\rho(y',z')'' ,x'\big)''_{t^{(3)}}\, x''_{b'}\,  y''_{c''} \\
 \!\!\!\!\!\!\!\!\!&\!\!\! = \!\!\! &\!\!\!   -(-1)^{\vert y'' \vert \vert z' x \rho(y',z')'' \rho(y',z')''' \vert+ \vert x \vert_n \vert y'z' \vert+ \vert  \rho(y',z')''' \vert \vert  x' \rho(\rho(y',z')'' ,x')''\vert_n+ \vert xyz'\vert\vert z''\vert} \cdot \\
 \!\!\!\!\!\!\!\!\!&\!\!\!&\!\!\!  \quad (c'\bullet t^{(2)}) (d'' \bullet t^{(6)} ) ( b'' \bullet t^{(4)} )\cdot \\
 \!\!\!\!\!\!\!\!\!&\!\!\!&\!\!\! \quad z''_{d'}\,   \rho(y',z')'_{ s(t^{(1)}) }\, y''_{c''}\, \left(s\big(\rho\big(\rho(y',z')'' ,x'\big)'\big)\, \rho(y',z')'''\right)_{t^{(5)}}\, \rho\big(\rho(y',z')'' ,x'\big)''_{t^{(3)}}\, x''_{b'} \\
 \!\!\!\!\!\!\!\!\!&\!\!\! =  \!\!\!&\!\!\!   -(-1)^{\vert y'' \vert \vert   \rho(y',z')' y'  \vert+ \vert x \vert_n \vert y z' \vert+ \vert  \rho(y',z')''' \vert \vert  x' \rho(\rho(y',z')'' ,x')''\vert_n+ \vert xyz'\vert\vert z''\vert} \cdot \\
 \!\!\!\!\!\!\!\!\!&\!\!\!&\!\!\!  \quad (c'\bullet t^{(2)}) (d'' \bullet t^{(6)} ) ( b'' \bullet t^{(4)} )\cdot \\
 \!\!\!\!\!\!\!\!\!&\!\!\!&\!\!\! \quad z''_{d'}\,   \rho(y',z')'_{ s(t^{(1)}) }\, y''_{c''}\, \left(s\big(\rho\big(\rho(y',z')'' ,x'\big)'\big)\, \rho(y',z')'''\right)_{t^{(5)}}\, \rho\big(\rho(y',z')'' ,x'\big)''_{t^{(3)}}\, x''_{b'} \\
 \!\!\!\!\!\!\!\!\!&\!\!\! = \!\!\! &\!\!\!   -(-1)^{\vert y'' \vert \vert   \rho(y',z')' y'  \vert+ \vert x \vert_n \vert y z' \vert+ \vert  \rho(y',z')''' \vert \vert  x' \rho(\rho(y',z')'' ,x')''\vert_n}\cdot\\
 \!\!\!\!\!\!\!\!\!&\!\!\!&\!\!\! \quad (-1)^{  \vert xyz'\vert\vert z''\vert+\vert x'' \vert \vert x'y'z' \rho(y',z')'\vert} \cdot  \\
 \!\!\!\!\!\!\!\!\!&\!\!\!&\!\!\!  \quad (c'\bullet t^{(2)}) (d'' \bullet t^{(6)} ) ( b'' \bullet t^{(4)} )\cdot \\
 \!\!\!\!\!\!\!\!\!&\!\!\!&\!\!\! \quad z''_{d'}\,   \rho(y',z')'_{ s(t^{(1)}) }\, y''_{c''}\, x''_{b'}\, \left(s\big(\rho\big(\rho(y',z')'' ,x'\big)'\big)\, \rho(y',z')'''\right)_{t^{(5)}}\, \rho\big(\rho(y',z')'' ,x'\big)''_{t^{(3)}} \\
 \!\!\!\!\!\!\!\!\!& \!\!\!\by{cyclic_inv} \!\!\!&\!\!\!    -(-1)^{\vert y'' \vert \vert   \rho(y',z')' y'  \vert+ \vert x \vert_n \vert y z' \vert+ \vert  \rho(y',z')''' \vert \vert  x' \rho(\rho(y',z')'' ,x')''\vert_n} \cdot\\
  \!\!\!\!\!\!\!\!\!&\!\!\!&\!\!\! \quad (-1)^{ \vert xyz'\vert\vert z''\vert+\vert x'' \vert \vert x'y'z' \rho(y',z')'\vert} \cdot \\
 \!\!\!\!\!\!\!\!\!&\!\!\!&\!\!\!  \quad (c'\bullet t^{(5)}) (d'' \bullet t^{(3)} ) ( b'' \bullet t^{(1)} )\cdot \\
 \!\!\!\!\!\!\!\!\!&\!\!\!&\!\!\! \quad z''_{d'}\,   s\big(\rho(y',z')'\big)_{ t^{(4)} }\, y''_{c''}\, x''_{b'}\, \left(s\big(\rho\big(\rho(y',z')'' ,x'\big)'\big)\, \rho(y',z')'''\right)_{t^{(2)}}\, \rho\big(\rho(y',z')'' ,x'\big)''_{t^{(6)}}\\
 \!\!\!\!\!\!\!\!\!&\!\!\! = \!\!\!& \!\!\!   -(-1)^{\vert y'' \vert \vert   \rho(y',z')' y'  \vert+ \vert x \vert_n \vert y z' \vert+ \vert  \rho(y',z')''' \vert \vert  x' \rho(\rho(y',z')'' ,x')''\vert_n }\cdot \\
 \!\!\!\!\!\!\!\!\!&\!\!\!&\!\!\! \quad (-1)^{  \vert xyz'\vert\vert z''\vert+\vert x'' \vert \vert x'y'z' \rho(y',z')'\vert} \cdot\\
 \!\!\!\!\!\!\!\!\!&\!\!\!&\!\!\!  \quad (c'\bullet t^{(11)}) (d'' \bullet t^{(5)} ) ( b'' \bullet t^{(1)} ) \quad z''_{d' s(t^{(6)}) t^{(7)}  }\,   s\big(\rho(y',z')'\big)_{ t^{(8)} }\, y''_{t^{(9)} s(t^{(10)}) c''   } \cdot \\
 \!\!\!\!\!\!\!\!\!&\!\!\!&\!\!\!   \quad x''_{b' s(t^{(2)}) t^{(3)}  }\, \left(s\big(\rho\big(\rho(y',z')'' ,x'\big)'\big)\, \rho(y',z')'''\right)_{t^{(4)}}\, \rho\big(\rho(y',z')'' ,x'\big)''_{t^{(12)}} \\
 \!\!\!\!\!\!\!\!\!&\!\!\! = \!\!\!&\!\!\!    -(-1)^{\vert y'' y''' \vert \vert   \rho(y',z')' y'  \vert+ \vert x \vert_n \vert y z' \vert+ \vert  \rho(y',z')''' \vert \vert  x' \rho(\rho(y',z')'' ,x')''\vert_n} \cdot \\
 \!\!\!\!\!\!\!\!\!&\!\!\!&\!\!\! \quad (-1)^{\vert xyz'\vert\vert z'' z'''\vert+\vert x'' x''' \vert \vert x'y'z' \rho(y',z')'\vert} \cdot \\
 \!\!\!\!\!\!\!\!\!&\!\!\!&\!\!\!  \quad (c'\bullet t^{(8)}) (d'' \bullet t^{(4)} ) ( b'' \bullet t^{(1)} ) \ z''_{d' s(t^{(5)}) }\,   \left(z''' s\big(\rho(y',z')'\big)y'' \right) _{ t^{(6)} }\, y'''_{ s(t^{(7)}) c''   } \cdot \\
 \!\!\!\!\!\!\!\!\!&\!\!\!&\!\!\!   \quad x''_{b' s(t^{(2)})  }\, \left( x''' s\big(\rho\big(\rho(y',z')'' ,x'\big)'\big)\, \rho(y',z')'''\right)_{t^{(3)}}\, \rho\big(\rho(y',z')'' ,x'\big)''_{t^{(9)}} \\
 \!\!\!\!\!\!\!\!\!& \!\!\!\stackrel{\eqref{ltimes}, \eqref{ltimes_bis}}{=}  \!\!\!&\!\!\!    -(-1)^{\vert y'' y''' \vert \vert   \rho(y',z')' y'  \vert+ \vert x \vert_n \vert y z' \vert+ \vert  \rho(y',z')''' \vert \vert  x' \rho(\rho(y',z')'' ,x')''\vert_n}\cdot \\
 \!\!\!\!\!\!\!\!\!&\!\!\!&\!\!\! \quad (-1)^{ \vert xyz'\vert\vert z''z'''\vert+\vert x'' x'''\vert \vert x'y'z' \rho(y',z')'\vert} \cdot\\
 \!\!\!\!\!\!\!\!\!&\!\!\!&\!\!\!   \quad z''_{d \curlyvee t^{(3)} }\,   \left(z''' s\big(\rho(y',z')'\big)y'' \right) _{ t^{(4)} }\, y'''_{ c \curlyvee t^{(5)}   } \cdot \\
 \!\!\!\!\!\!\!\!\!&\!\!\!&\!\!\!   \quad x''_{b \curlyvee  t^{(1)} }\, \left( x''' s\big(\rho\big(\rho(y',z')'' ,x'\big)'\big)\, \rho(y',z')'''\right)_{t^{(2)}}\, \rho\big(\rho(y',z')'' ,x'\big)''_{t^{(6)}}\\
 \!\!\!\!\!\!\!\!\!&\!\!\! =\!\!\! &\!\!\!     -(-1)^{\vert y'' y''' \vert \vert   \rho(y',z'')' y'  \vert+ \vert x \vert_n \vert y z'' \vert+ \vert  \rho(y',z'')''' \vert \vert  x' \rho(\rho(y',z'')'' ,x')''\vert_n} \cdot \\
 \!\!\!\!\!\!\!\!\!&\!\!\!&\!\!\! \quad (-1)^{ \vert xy\vert\vert z'z'''\vert+ \vert z'''\vert \vert z' z'' \vert +\vert x'' x'''\vert \vert x'y'z'' \rho(y',z'')'\vert} \cdot \\
 \!\!\!\!\!\!\!\!\!&\!\!\!&\!\!\!   \quad z'''_{d \curlyvee t^{(3)} }\,   \left(z' s\big(\rho(y',z'')'\big)y'' \right) _{ t^{(4)} }\, y'''_{ c \curlyvee t^{(5)}   } \cdot \\
 \!\!\!\!\!\!\!\!\!&\!\!\!&\!\!\!   \quad x''_{b \curlyvee  t^{(1)} }\, \left( x''' s\big(\rho\big(\rho(y',z'')'' ,x'\big)'\big)\, \rho(y',z'')'''\right)_{t^{(2)}}\, \rho\big(\rho(y',z'')'' ,x'\big)''_{t^{(6)}} \\
 \!\!\!\!\!\!\!\!\!&\!\!\! =\!\!\! &\!\!\!  -(-1)^{\vert  y''' \vert \vert   \rho(y'',z'')' y''  \vert+  \vert y' \vert \vert \rho(y'',z'')' \vert + \vert x \vert_n \vert y z'' \vert+ \vert  \rho(y'',z'')''' \vert \vert  x' \rho(\rho(y'',z'')'' ,x')''\vert_n} \cdot \\
 \!\!\!\!\!\!\!\!\!&\!\!\!&\!\!\! \quad (-1)^{ \vert xy\vert\vert z'z'''\vert+ \vert z'''\vert \vert z' z'' \vert +\vert x'' x'''\vert \vert x'y''z'' \rho(y'',z'')'\vert} \cdot  \\
 \!\!\!\!\!\!\!\!\!&\!\!\!&\!\!\!   \quad z'''_{d \curlyvee t^{(3)} }\,   \left(z' s\big(\rho(y'',z'')'\big)y' \right) _{ t^{(4)} }\, y'''_{ c \curlyvee t^{(5)}   } \cdot \\
 \!\!\!\!\!\!\!\!\!&\!\!\!&\!\!\!   \quad x''_{b \curlyvee  t^{(1)} }\, \left( x''' s\big(\rho\big(\rho(y'',z'')'' ,x'\big)'\big)\, \rho(y'',z'')'''\right)_{t^{(2)}}\, \rho\big(\rho(y'',z'')'' ,x'\big)''_{t^{(6)}} \\
 \!\!\!\!\!\!\!\!\!&\!\!\! = \!\!\!& \!\!\!   -(-1)^{\vert  y''' \vert \vert   \rho(y'',z'')' y''  \vert+  \vert y' \vert \vert \rho(y'',z'')' \vert + \vert x \vert_n \vert y z'' \vert+ \vert  \rho(y'',z'')''' \vert \vert  x'' \rho(\rho(y'',z'')'' ,x'')''\vert_n}\cdot \\
 \!\!\!\!\!\!\!\!\!&\!\!\!&\!\!\! \quad  (-1)^{ \vert xy\vert\vert z'z'''\vert+ \vert z'''\vert \vert z' z'' \vert +\vert x''' x'\vert \vert y''z'' \rho(y'',z'')'\vert+ \vert x'''\vert \vert x' x'' \vert} \cdot  \\
 \!\!\!\!\!\!\!\!\!&\!\!\!&\!\!\!   \quad z'''_{d \curlyvee t^{(3)} }\,   \left(z' s\big(\rho(y'',z'')'\big)y' \right) _{ t^{(4)} }\, y'''_{ c \curlyvee t^{(5)}   } \cdot \\
 \!\!\!\!\!\!\!\!\!&\!\!\!&\!\!\!   \quad x'''_{b \curlyvee  t^{(1)} }\, \left( x' s\big(\rho\big(\rho(y'',z'')'' ,x''\big)'\big)\, \rho(y'',z'')'''\right)_{t^{(2)}}\, \rho\big(\rho(y'',z'')'' ,x''\big)''_{t^{(6)}} \\
 \!\!\!\!\!\!\!\!\!&\!\!\! = \!\!\!& \!\!\!   -(-1)^{\vert  y''' \vert \vert   \rho(y'',z'')' y''  \vert+  \vert y' \vert \vert \rho(y'',z'')' \vert+ \vert x \vert_n \vert y z'' \vert+ \vert  \rho(y'',z'')'' \vert \vert  \rho(\rho(y'',z'')''' ,x'')'\vert} \cdot \\
 \!\!\!\!\!\!\!\!\!&\!\!\!&\!\!\! \quad (-1)^{\vert xy\vert\vert z'z'''\vert+ \vert z'''\vert \vert z' z'' \vert +\vert x''' x'\vert \vert y''z'' \rho(y'',z'')'\vert+ \vert x'''\vert \vert x' x'' \vert} \cdot  \\
 \!\!\!\!\!\!\!\!\!&\!\!\!&\!\!\!   \quad z'''_{d \curlyvee t^{(3)} }\,   \left(z' s\big(\rho(y'',z'')'\big)y' \right) _{ t^{(4)} }\, y'''_{ c \curlyvee t^{(5)}   } \cdot \\
 \!\!\!\!\!\!\!\!\!&\!\!\!&\!\!\!   \quad x'''_{b \curlyvee  t^{(1)} }\, \left( x' s\big(\rho\big(\rho(y'',z'')''' ,x''\big)'\big)\, \rho(y'',z'')''\right)_{t^{(2)}}\, \rho\big(\rho(y'',z'')''' ,x''\big)''_{t^{(6)}} \\
 \!\!\!\!\!\!\!\!\!& \!\!\!= \!\!\!& \!\!\!  -(-1)^{\vert  y''' \vert \vert   \rho(y'',z'')' y''  \vert+  \vert y' \vert \vert \rho(y'',z'')' \vert +\vert x \vert_n \vert y z'' \vert+ \vert  \rho(y'',z'')'' \vert \vert  \rho(\rho(y'',z'')''' ,x'')'\vert} \cdot \\
 \!\!\!\!\!\!\!\!\!&\!\!\!&\!\!\! \quad (-1)^{\vert xy\vert\vert z'z'''\vert+ \vert z'''\vert \vert z' z'' \vert +\vert  x'\vert \vert y''z'' \rho(y'',z'')'\vert} \cdot   \\
 \!\!\!\!\!\!\!\!\!&\!\!\!&\!\!\!   \quad z'''_{d \curlyvee t^{(3)} }\,   \left(z' s\big(\rho(y'',z'')'\big)y' \right) _{ t^{(4)} }\, y'''_{ c \curlyvee t^{(5)}   } \cdot \\
 \!\!\!\!\!\!\!\!\!&\!\!\!&\!\!\!   \quad  \left( x' s\big(\rho\big(\rho(y'',z'')''' ,x''\big)'\big)\, \rho(y'',z'')''\right)_{t^{(2)}}\, \rho\big(\rho(y'',z'')''' ,x''\big)''_{t^{(6)}}\, x'''_{b \curlyvee  t^{(1)} } \\
 \!\!\!\!\!\!\!\!\!&\!\!\! =\!\!\! &\!\!\!  -(-1)^{\vert  y''' \vert \vert  z'' x  \vert+  \vert y' \vert \vert \rho(y'',z'')' \vert +\vert x \vert_n \vert y z'' \vert+ \vert  \rho(y'',z'')'' \vert \vert  \rho(\rho(y'',z'')''' ,x'')'\vert} \cdot \\
 \!\!\!\!\!\!\!\!\!&\!\!\!&\!\!\! \quad (-1)^{\vert xy\vert\vert z'z'''\vert+ \vert z'''\vert \vert z' z'' \vert +\vert  x'\vert \vert y''z'' \rho(y'',z'')'\vert}   \, z'''_{d \curlyvee t^{(3)} }\,   \left(z' s\big(\rho(y'',z'')'\big)y' \right) _{ t^{(4)} }  \cdot \\
 \!\!\!\!\!\!\!\!\!&\!\!\!&\!\!\!   \quad  \left( x' s\big(\rho\big(\rho(y'',z'')''' ,x''\big)'\big)\, \rho(y'',z'')''\right)_{t^{(2)}}\, \rho\big(\rho(y'',z'')''' ,x''\big)''_{t^{(6)}}\, x'''_{b \curlyvee  t^{(1)} }\, y'''_{ c \curlyvee t^{(5)}   } \\
 \!\!\!\!\!\!\!\!\!& \!\!\!= \!\!\!& \!\!\!  -(-1)^{\vert  y''' \vert \vert  z'' x  \vert+  \vert y' \vert \vert \rho(y'',z'')' \vert +\vert x \vert_n \vert y z'' \vert+ \vert  \rho(y'',z'')'' \vert \vert  \rho(\rho(y'',z'')''' ,x'')'\vert} \cdot \\
 \!\!\!\!\!\!\!\!\!&\!\!\!&\!\!\! \quad (-1)^{\vert xy\vert\vert z'\vert+\vert  x'\vert \vert y''z'' \rho(y'',z'')'\vert} \cdot   \\
 \!\!\!\!\!\!\!\!\!&\!\!\!&\!\!\!   \quad \,   \left(z' s\big(\rho(y'',z'')'\big)y' \right) _{ t^{(4)} }\,   \left( x' s\big(\rho\big(\rho(y'',z'')''' ,x''\big)'\big)\, \rho(y'',z'')''\right)_{t^{(2)}} \cdot \\
 \!\!\!\!\!\!\!\!\!&\!\!\!&\!\!\!   \quad  \rho\big(\rho(y'',z'')''' ,x''\big)''_{t^{(6)}}\, x'''_{b \curlyvee  t^{(1)} }\, y'''_{ c \curlyvee t^{(5)}   }\, z'''_{d \curlyvee t^{(3)} } \\
 \!\!\!\!\!\!\!\!\!&\!\!\! =\!\!\! &\!\!\!   -(-1)^{\vert  y''' \vert \vert  z'' x  \vert+ \vert xy\vert\vert z'\vert+\vert  x'\vert \vert y''z''\vert + \vert x \vert_n \vert y z'' \vert} \cdot \\
 \!\!\!\!\!\!\!\!\!&\!\!\! \!\!\! &\!\!\! \quad (-1)^{ \vert x'y' \vert \vert \rho(y'',z'')' \vert + \vert  \rho(y'',z'')'' \vert \vert  \rho(\rho(y'',z'')''' ,x'')'\vert} \cdot \\
 \!\!\!\!\!\!\!\!\!&\!\!\!&\!\!\!   \quad \,   \left(z' s\big(\rho(y'',z'')'\big)y' \right) _{ t^{(4)} }\,   \left( x' s\big(\rho\big(\rho(y'',z'')''' ,x''\big)'\big)\, \rho(y'',z'')''\right)_{t^{(2)}} \cdot \\
 \!\!\!\!\!\!\!\!\!&\!\!\!&\!\!\!   \quad  \rho\big(\rho(y'',z'')''' ,x''\big)''_{t^{(6)}}\, x'''_{b \curlyvee  t^{(1)} }\, y'''_{ c \curlyvee t^{(5)}   }\, z'''_{d \curlyvee t^{(3)} } \\
 \!\!\!\!\!\!\!\!\!&\!\!\! =\!\!\! &\!\!\!   -(-1)^{ \vert y z' \vert \vert x \vert_n + \vert y'' \vert \vert x z'\vert} \cdot \\
 \!\!\!\!\!\!\!\!\!&\!\!\!&\!\!\!  \quad \digamma(y',z',x')^\ell_{t^{(4)}}  \digamma(y',z',x')^m_{t^{(2)}}  \digamma(y',z',x')^r_{t^{(6)}}   x''_{b \curlyvee  t^{(1)} }\, y''_{ c \curlyvee t^{(5)}   }\, z''_{d \curlyvee t^{(3)} } \\
 \!\!\!\!\!\!\!\!\!&\!\!\! \by{cyclic_inv}\!\!\! &\!\!\!    -(-1)^{ \vert y z' \vert \vert x \vert_n + \vert y'' \vert \vert x z'\vert } \cdot   \\
 \!\!\!\!\!\!\!\!\!&\!\!\!&\!\!\!  \quad \digamma(y',z',x')^\ell_{t^{(5)}}  \digamma(y',z',x')^m_{t^{(3)}}  \digamma(y',z',x')^r_{t^{(1)}}   x''_{b \curlyvee  t^{(2)} }\, y''_{ c \curlyvee t^{(6)}   }\, z''_{d \curlyvee t^{(4)} } .
\end{eqnarray*}}
It follows that the left-hand side of \eqref{P2} is equal to
\begin{eqnarray*}
\!\!\!&\!\!\!\!\!\!&\!\!\!  -(-1)^{ \vert y z'' \vert_n \vert x \vert_n + \vert y'' \vert \vert x z'\vert} \cdot \\
\!\!\!&\!\!\!&\!\!\!  \quad \digamma(y',z',x')^\ell_{t^{(5)}}  \digamma(y',z',x')^m_{t^{(3)}}  \digamma(y',z',x')^r_{t^{(1)}}   x''_{b \curlyvee  t^{(2)} }\, y''_{ c \curlyvee t^{(6)}   }\, z''_{d \curlyvee t^{(4)} } \\
 \!\!\!&\!\!\!&\!\!\! -(-1)^{ \vert z x'' \vert_n \vert y \vert_n + \vert z'' \vert \vert y x'\vert } \cdot \\
\!\!\!&\!\!\!&\!\!\!  \quad \digamma(z',x',y')^\ell_{t^{(5)}}  \digamma(z',x',y')^m_{t^{(3)}}  \digamma(z',x',y')^r_{t^{(1)}}   y''_{c \curlyvee  t^{(2)} }\, z''_{d \curlyvee t^{(6)}   }\, x''_{b \curlyvee t^{(4)} } \\
\!\!\! &\!\!\!&\!\!\! -(-1)^{ \vert x y'' \vert_n \vert z \vert_n + \vert x'' \vert \vert z y'\vert   } \cdot \\
\!\!\!&\!\!\!&\!\!\!  \quad \digamma(x',y',z')^\ell_{t^{(5)}}  \digamma(x',y',z')^m_{t^{(3)}}  \digamma(x',y',z')^r_{t^{(1)}}   z''_{d \curlyvee  t^{(2)} }\, x''_{ b \curlyvee t^{(6)}   }\, y''_{c \curlyvee t^{(4)} } \\
\!\!\!&\!\!\! \by{cyclic_inv} \!\!\!&\!\!\! -(-1)^{ \vert y z'' \vert_n \vert x \vert_n + \vert y'' \vert \vert x z'\vert} \cdot \\
\!\!\!&\!\!\!&\!\!\!  \quad \digamma(y',z',x')^\ell_{t^{(5)}}  \digamma(y',z',x')^m_{t^{(3)}}  \digamma(y',z',x')^r_{t^{(1)}}   x''_{b \curlyvee  t^{(2)} }\, y''_{ c \curlyvee t^{(6)}   }\, z''_{d \curlyvee t^{(4)} } \\
\!\!\! &\!\!\!&\!\!\! -(-1)^{ \vert z x'' \vert_n \vert y \vert_n + \vert z'' \vert \vert y x'\vert } \cdot   \\
\!\!\!&\!\!\!&\!\!\!  \quad \digamma(z',x',y')^\ell_{t^{(3)}}  \digamma(z',x',y')^m_{t^{(1)}}  \digamma(z',x',y')^r_{t^{(5)}}   y''_{c \curlyvee  t^{(6)} }\, z''_{d \curlyvee t^{(4)}   }\, x''_{b \curlyvee t^{(2)} } \\
 \!\!\!&\!\!\!&\!\!\! -(-1)^{  \vert x y'' \vert_n \vert z \vert_n + \vert x'' \vert \vert z y'\vert   } \cdot  \\
\!\!\!&\!\!\!&\!\!\!  \quad \digamma(x',y',z')^\ell_{t^{(1)}}  \digamma(x',y',z')^m_{t^{(5)}}  \digamma(x',y',z')^r_{t^{(3)}}   z''_{d \curlyvee  t^{(4)} }\, x''_{ b \curlyvee t^{(2)}   }\, y''_{c \curlyvee t^{(6)} } \\
\!\!\!&\!\!\! = \!\!\! &\!\!\!  -(-1)^{\vert y z'' \vert_n \vert x \vert_n + \vert y'' \vert \vert x z'\vert} \cdot\\
\!\!\!&\!\!\!&\!\!\!  \quad \digamma(y',z',x')^\ell_{t^{(5)}}  \digamma(y',z',x')^m_{t^{(3)}}  \digamma(y',z',x')^r_{t^{(1)}}   x''_{b \curlyvee  t^{(2)} }\, y''_{ c \curlyvee t^{(6)}   }\, z''_{d \curlyvee t^{(4)} } \\
\!\!\! &\!\!\!&\!\!\! -(-1)^{ \vert z x'' \vert_n \vert y \vert_n + \vert z'' \vert \vert y x'\vert + \vert x'' \vert \vert y'' z'' \vert} \cdot \\
\!\!\!&\!\!\!&\!\!\!  \quad \digamma(z',x',y')^\ell_{t^{(3)}}  \digamma(z',x',y')^m_{t^{(1)}}  \digamma(z',x',y')^r_{t^{(5)}}   x''_{b \curlyvee t^{(2)} }\, y''_{c \curlyvee  t^{(6)} }\, z''_{ d \curlyvee t^{(4)}   }  \\
\!\!\! &\!\!\!&\!\!\! -(-1)^{ \vert x y'' \vert_n \vert z \vert_n + \vert x'' \vert \vert z y'\vert + \vert z'' \vert \vert x'' y'' \vert } \cdot  \\
\!\!\!&\!\!\!&\!\!\!  \quad \digamma(x',y',z')^\ell_{t^{(1)}}  \digamma(x',y',z')^m_{t^{(5)}}  \digamma(x',y',z')^r_{t^{(3)}}    x''_{ b \curlyvee t^{(2)}   }\, y''_{c \curlyvee t^{(6)} }\, z''_{d \curlyvee  t^{(4)} } \\
\!\!\!&\!\!\!=\!\!\!&\!\!\!  - (-1)^{\vert x'' \vert \vert y' z' \vert + \vert y'' \vert \vert z'\vert_n + \vert x \vert_n \vert z \vert_n} \cdot \\
\!\!\!&\!\!\!&\!\!\!  \triplep{x'}{y'}{z'}^\ell_{t^{(1)}} \triplep{x'}{y'}{z'}^m_{t^{(5)}}\, \triplep{x'}{y'}{z'}^r_{t^{(3)}}  x''_{b \curlyvee t^{(2)}}   y''_{ c \curlyvee t^{(6)}} z''_{d \curlyvee t^{(4)}}.
\end{eqnarray*}
 This  proves \eqref{P2} and concludes the proof of the lemma.
\end{proof}

\subsection{Remark}
 The formula \eqref{||-,-,-||} may be rewritten in the form
\begin{eqnarray}
\notag  &&   \bracket{x_b}{\bracket{y_c}{z_d}} + (-1)^{\vert x y \vert \vert z \vert_n  } \bracket{z_d}{\bracket{x_b}{y_c}}
+ (-1)^{  \vert x \vert_n \vert y z\vert } \bracket{y_c}{\bracket{z_d}{x_b}} \\
\notag &=&   - (-1)^{\vert x'' \vert \vert y' z' \vert + \vert y'' \vert \vert z'\vert_n } \cdot \\
\label{Jac} && \quad \triplep{x'}{y'}{z'}^\ell_{p^{(1)}} \triplep{x'}{y'}{z'}^m_{p^{(5)}}\, \triplep{x'}{y'}{z'}^r_{p^{(3)}}
x''_{b \curlyvee p^{(2)}}   y''_{ c \curlyvee p^{(6)}} z''_{d \curlyvee p^{(4)}} .
\end{eqnarray}
 Though we will not need it, we mention an  equivalent version  of  \eqref{Jac}.
Assume the conditions of   Theorem \ref{MAINMAIN}     and set $v=s_B(t)$.
It can be proved that,  for any  homogeneous $x,y,z\in A$  and   any $b,c,d\in B$, we have
\begin{eqnarray*}
 && \triplep{x'}{y'}{z'}^\ell_{v^{(1)}} \triplep{x'}{y'}{z'}^m_{v^{(5)}}\,
 \triplep{x'}{y'}{z'}^r_{v^{(3)}}  x''_{b \curlyvee v^{(2)}}   y''_{ c \curlyvee v^{(6)}} z''_{d \curlyvee v^{(4)}}\\
 &=& -(-1)^{\vert x' \vert_n \vert y' \vert_n}  \triplep{y'}{x'}{z'}^\ell_{t^{(3)}} \triplep{y'}{x'}{z'}^m_{t^{(1)}} \triplep{y'}{x'}{z'}^r_{t^{(5)}}
  x''_{b \curlyvee t^{(2)}} y''_{c \curlyvee t^{(4)}} z''_{d \curlyvee t^{(6)}} .
\end{eqnarray*}
Therefore \eqref{Jac} is equivalent to the following identity:
\begin{eqnarray*}
 &&   \bracket{x_b}{\bracket{y_c}{z_d}} + (-1)^{\vert x y \vert \vert z \vert_n  } \bracket{z_d}{\bracket{x_b}{y_c}} + (-1)^{  \vert x \vert_n \vert y z\vert } \bracket{y_c}{\bracket{z_d}{x_b}} \\
&=& (-1)^{\vert x \vert_n \vert y' \vert_n + \vert x'' y'' \vert  \vert z' \vert_n} \cdot \\
&& \triplep{y'}{x'}{z'}^\ell_{t^{(3)}} \triplep{y'}{x'}{z'}^m_{t^{(1)}} \triplep{y'}{x'}{z'}^r_{t^{(5)}}  x''_{b \curlyvee t^{(2)}} y''_{c \curlyvee t^{(4)}} z''_{d \curlyvee t^{(6)}}  \\
 &&  - (-1)^{\vert x'' \vert \vert y' z' \vert + \vert y'' \vert \vert z'\vert_n} \cdot \\
 && \quad \triplep{x'}{y'}{z'}^\ell_{t^{(1)}} \triplep{x'}{y'}{z'}^m_{t^{(5)}}\, \triplep{x'}{y'}{z'}^r_{t^{(3)}}  x''_{b \curlyvee t^{(2)}}   y''_{ c \curlyvee t^{(6)}} z''_{d \curlyvee t^{(4)}}
\end{eqnarray*}

\section{Quasi-Poisson brackets  in representation algebras}\label{The quasi-Poisson case}

 Quasi-Poisson   structures  on manifolds  were introduced by Alekseev,  Kosmann-Schwarz\-bach, and Meinrenken    \cite{AKsM};   see also \cite{VdB}.
 We adapt their definition to  an algebraic     set-up
 and  establish   a  version of Theorem~\ref{MAINMAIN}  producing   quasi-Poisson brackets.

\subsection{Quasi-Poisson   brackets} \label{qP_algebras}

  Let   $B$ be a commutative ungraded Hopf algebra  endowed with a trace-like element $t\in B$.
Let  $M$ be an ungraded algebra with a $B$-coaction $ \Delta_M: M
\to M \otimes B$. A    bilinear map   $\bracket{-}{-}:M \times M \to M$ is a   \emph{quasi-Poisson bracket} with respect  to $t$  if   it is
 $B$-equivariant, antisymmetric, and   if it  satisfies  the   {Leibniz rules} and the following \emph{quasi-Jacobi identity}:    for any $u,v,w\in M$,
\begin{eqnarray}
\label{qP}   \!\!\!\! \!\!\!\!  \!\!\!\!    && \bracket{u}{\bracket{v}{w}} + \bracket{w}{\bracket{u}{v}} + \bracket{v}{\bracket{w}{u}} \\
\notag \!\!\!\! \!\!\!\! \!\!\!\!   & =&  ( u^r \bullet t') (v^r\bullet
t'') (w^r\bullet t''') \, u^\ell v^\ell  w^\ell - ( u^r \bullet t') (v^r\bullet
t''') (w^r\bullet t'') \, u^\ell v^\ell  w^\ell
\end{eqnarray}
where $\bullet=\bullet_t:B \times B \to \kk$ is the  balanced biderivation  associated with $t$ and   we  use Sweedler's notation   $\Delta_M(m)=m^\ell \otimes m^r$ for  $m \in M$.

Observe that a    quasi-Poisson bracket in $M$ restricts to a Poisson bracket on the  subalgebra $M^{\operatorname{inv}} $ of $M$ consisting of $B$-invariant elements:
$$
M^{\operatorname{inv}} = \{ m \in M : \Delta_M(m) = m \otimes 1_B \}.
$$

\subsection{The quasi-Jacobi identity in representation algebras}

 A Fox pairing  (of degree~0) in
an ungraded Hopf algebra $A$ is \emph{quasi-Poisson} if it is antisymmetric and the induced
  tritensor map     (defined in Section~\ref{triple}) satisfies
\begin{eqnarray}
\label{qP_triple}  \!\!\!\! \!\!\!\!    \triplep{x}{y}{z} &=&
1_A \otimes y \otimes  xz + yx \otimes 1_A \otimes  z  +  x \otimes zy\otimes  1_A +   y \otimes z \otimes x\\
 \!\!\!\! \!\!\!\!   \nonumber && - 1_A \otimes  zy \otimes  x  -  y \otimes 1_A  \otimes  xz -   yx  \otimes z \otimes  1_A- x \otimes y\otimes  z   
\end{eqnarray}
  for any $x,y,z\in A$. A geometric example of a quasi-Poisson Fox pairing will be given in Section~\ref{surfaces}.

\begin{theor} \label{qPoisson_case}
Let
  $A$ be  a cocommutative ungraded Hopf algebra
carrying a quasi-Poisson Fox pairing $\rho $. Let   $B$ be a commutative ungraded Hopf algebra   endowed with a trace-like element $t $.
Then the bracket $\bracket{-}{-}$ in $A_B$  produced by    Theorem \ref{double_to_simple} from $\rho$ and
$\bullet=\bullet_t$ is quasi-Poisson with respect to  $t$.
\end{theor}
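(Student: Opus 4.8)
The definition of a quasi-Poisson bracket (Section~\ref{qP_algebras}) requires four properties: $B$-equivariance, antisymmetry, the Leibniz rules, and the quasi-Jacobi identity~\eqref{qP}. Three of these are already at hand. Since $A$ is ungraded, Theorem~\ref{double_to_simple} applies with $n=0$ and yields a bracket that is antisymmetric and satisfies the Leibniz rules, while Theorem~\ref{equivariant_bracket} gives $B$-equivariance precisely because $\bullet=\bullet_t$ comes from the trace-like element~$t$. Thus the plan is to reduce the theorem to the quasi-Jacobi identity and to derive the latter from Lemma~\ref{Jacobitor}, in close analogy with the proof of Theorem~\ref{MAINMAIN}; the only difference is that the tritensor map is no longer zero but is given explicitly by~\eqref{qP_triple}.

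Concretely, I would fix $x,y,z\in A$ and $b,c,d\in B$, set $u=x_b$, $v=y_c$, $w=z_d$, and note that for $n=0$ every sign in Lemma~\ref{Jacobitor} is trivial, so that the left-hand side of~\eqref{qP} equals the cyclic sum appearing in~\eqref{||-,-,-||}, namely
\begin{equation*}
-\,\triplep{x'}{y'}{z'}^\ell_{p^{(1)}}\, \triplep{x'}{y'}{z'}^m_{p^{(5)}}\, \triplep{x'}{y'}{z'}^r_{p^{(3)}}\; x''_{b \curlyvee p^{(2)}}\, y''_{c \curlyvee p^{(6)}}\, z''_{d \curlyvee p^{(4)}},
\end{equation*}
with $p=t+s_B(t)$. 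I would then substitute the eight summands of~\eqref{qP_triple} (with $x,y,z$ replaced by $x',y',z'$) into this expression. The six summands carrying a factor $1_A$ in one tensor slot collapse the corresponding coproduct component of~$p$ via the first unitality relation $(1_A)_b=\varepsilon_B(b)\,1$, whereas products such as $x'z'$ are split through the first multiplicativity relation~\eqref{mult}. In every summand, generators sharing the same $A$-part are then recombined by the second multiplicativity relation~\eqref{cocomult}, and the commutativity of $A_B$ lets me group the result as a single $x$-generator times a single $y$-generator times a single $z$-generator, so that each summand acquires the same overall shape as the terms of~\eqref{qP}.

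The remaining, and main, task is to match this with the right-hand side of~\eqref{qP}. The bridge is that for a generator $m=x_b$ the coaction factor $(m^r\bullet t^{(i)})\,m^\ell$ of~\eqref{qP}, expanded through $\Delta(x_b)=x_{b''}\otimes s_B(b')b'''$ and the biderivation property~\eqref{bider}, becomes a scalar $\bullet$-pairing times an $x$-generator of exactly the form of the recombined factors $x''_{b\curlyvee p^{(j)}}$ produced above (recall $x_{b\curlyvee c}=(b'\bullet c'')\,x_{b''s_B(c')}$ from~\eqref{ltimes}). To carry out the identification I would split $p=t+s_B(t)$, use the equality $\bullet_{s_B(t)}=\bullet_t$ observed in Section~\ref{about_tle} together with the involutivity of $s_B$ (valid since $B$ is commutative), and invoke the cyclic invariance~\eqref{cyclic_inv} of the iterated comultiplication of the cosymmetric element~$t$ to realign the coproduct indices. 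The hard part will be purely the bookkeeping: one must check that the eight substituted summands reorganize into precisely the two coaction-type terms of~\eqref{qP}, with the two signs matching. I expect this to be a long but entirely mechanical verification whose only genuine inputs are the balancedness~\eqref{balanced} of $\bullet$, the cyclic invariance~\eqref{cyclic_inv}, and the definitions~\eqref{ltimes}--\eqref{ltimes_bis} of $\curlyvee$; once the quasi-Jacobi identity is established, combining it with the three properties recalled above completes the proof.
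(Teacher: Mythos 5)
Your proposal reproduces the paper's own proof essentially step for step: antisymmetry, the Leibniz rules, and $B$-equivariance quoted from Theorems~\ref{double_to_simple} and~\ref{equivariant_bracket}, then the quasi-Jacobi identity obtained from Lemma~\ref{Jacobitor} (with all signs trivial since $n=0$), substitution of the eight terms of~\eqref{qP_triple} with the collapse/recombination via the unitality and multiplicativity relations and~\eqref{ltimes}--\eqref{ltimes_bis}, the splitting $p=t+s_B(t)$ handled through the antipode and~\eqref{cyclic_inv}, and finally the match with the coaction expansion of the right-hand side of~\eqref{qP} via Lemma~\ref{coaction_M} and~\eqref{bider}. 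The one step you elide is the justification for verifying~\eqref{qP} only on the generators $x_b$: as the paper notes, both sides of~\eqref{qP} are trilinear maps $A_B\times A_B\times A_B\to A_B$ that are derivations in each variable, and that observation should be stated since the identity is claimed for all elements of $A_B$, not just generators.
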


\begin{proof}
 The  bilinear map $\bracket{-}{-}:A_B \times A_B \to A_B $    is antisymmetric and
satisfies the  Leibniz rules by Theorem
\ref{double_to_simple}. This bracket is $B$-equivariant by Theorem \ref{equivariant_bracket}.  It  remains to verify the
  identity \eqref{qP}.   It is easily seen that both
sides of    \eqref{qP} define trilinear maps $A_B \times A_B \times A_B \to A_B$ which are derivations in each variable.  Therefore it
is enough to check \eqref{qP} on the generators of  the   algebra $A_B$. We need to prove that, for any $x,y,z\in A$ and $b,c,d\in B$,
\begin{eqnarray}
\label{qP_gen} && \bracket{x_b}{\bracket{y_c}{z_d}} + \bracket{z_d}{\bracket{x_b}{y_c}} + \bracket{y_c}{\bracket{z_d}{x_b}} \\
\notag &=& V(x,y,z;b,c,d;t  )- V(x,z,y;b,d,c ;t )
\end{eqnarray}
where
  $$V(x,y,z;b,c,d  ;t  )= ((x_b)^r\bullet t') ((y_c)^r\bullet t'') ((z_d)^r\bullet t''')\, (x_b)^\ell(y_c)^\ell(z_d)^\ell. $$
  Here and below we use Sweedler's notation   for the  $B$-coaction on $A_B$.

In the sequel, we denote the comultiplication, the counit and the
antipode  in $A$ and $B$ by the same letters $\Delta$,
$\varepsilon$ and $s$, respectively. Lemma \ref{Jacobitor}  gives
\begin{eqnarray}
\notag && \bracket{x_b}{\bracket{y_c}{z_d}} + \bracket{z_d}{\bracket{x_b}{y_c}} + \bracket{y_c}{\bracket{z_d}{x_b}}\\
\label{U_U} &=& -U(x,y,z;b,c,d;t) - U(x,y,z;b,c,d;s(t))
 \end{eqnarray}
where, for any cosymmetric  $u\in B$,
$$ U(x,y,z;b,c,d;u)
=   \triplep{x'}{y'}{z'}^\ell_{u^{(1)}} \triplep{x'}{y'}{z'}^m_{u^{(5)}}\, \triplep{x'}{y'}{z'}^r_{u^{(3)}}
x''_{b \curlyvee u^{(2)}}   y''_{ c \curlyvee u^{(6)}} z''_{d \curlyvee u^{(4)}}.
$$
We compute  the latter expression  using
\eqref{qP_triple}:
\begin{eqnarray*}
  \!\!\!\! \!\!\!\!  \!\!\!\!   && \!\!\!\!  \!\!\!\!   U(x,y,z;b,c,d;u) \\
 \!\!\!\!  \!\!\!\! \!\!\!\!  &=& \!\!\!\!  \!\!\!\!  \varepsilon(u^{(1)}) y'_{u^{(5)}}  (x'z')_{u^{(3)}} x''_{b \curlyvee u^{(2)}}   y''_{ c \curlyvee u^{(6)}} z''_{d \curlyvee u^{(4)}} \\
 \!\!\!\!  \!\!\!\!  \!\!\!\!  && \!\!\!\!  \!\!\!\! + (y'x')_{u^{(1)}}  \varepsilon(u^{(5)}) z'_{u^{(3)}}  x''_{b \curlyvee u^{(2)}}   y''_{ c \curlyvee u^{(6)}} z''_{d \curlyvee u^{(4)}} \\
\!\!\!\!  \!\!\!\!  \!\!\!\!   && \!\!\!\!  \!\!\!\!  +  x'_{u^{(1)}}   (z'y')_{u^{(5)}}  \varepsilon(u^{(3)})  x''_{b \curlyvee u^{(2)}}   y''_{ c \curlyvee u^{(6)}} z''_{d \curlyvee u^{(4)}} \\
\!\!\!\!  \!\!\!\!  \!\!\!\!  && \!\!\!\!  \!\!\!\!  +   y'_{u^{(1)}}   z'_{u^{(5)}}   x'_{u^{(3)}} x''_{b \curlyvee u^{(2)}}   y''_{ c \curlyvee u^{(6)}} z''_{d \curlyvee u^{(4)}}  \\
\!\!\!\!  \!\!\!\!  \!\!\!\!   && \!\!\!\!  \!\!\!\!  -  \varepsilon(u^{(1)})  (z'y')_{u^{(5)}}  x'_{u^{(3)}} x''_{b \curlyvee u^{(2)}}   y''_{ c \curlyvee u^{(6)}} z''_{d \curlyvee u^{(4)}}   \\
\!\!\!\!  \!\!\!\!  \!\!\!\!   && \!\!\!\!  \!\!\!\!  -  y'_{u^{(1)}}  \varepsilon(u^{(5)})   (x'z')_{u^{(3)}}  x''_{b \curlyvee u^{(2)}}   y''_{ c \curlyvee u^{(6)}} z''_{d \curlyvee u^{(4)}}  \\
\!\!\!\!  \!\!\!\! \!\!\!\!   && \!\!\!\!  \!\!\!\! -   (y'x')_{u^{(1)}}   z'_{u^{(5)}}  \varepsilon(u^{(3)}) x''_{b \curlyvee u^{(2)}}   y''_{ c \curlyvee u^{(6)}} z''_{d \curlyvee u^{(4)}} \\
\!\!\!\!  \!\!\!\! \!\!\!\!   && \!\!\!\!  \!\!\!\!  - x'_{u^{(1)}}  y'_{u^{(5)}}  z'_{u^{(3)}} x''_{b \curlyvee u^{(2)}}   y''_{ c \curlyvee u^{(6)}} z''_{d \curlyvee u^{(4)}}  \\
 \!\!\!\!  \!\!\!\! \!\!\!\!   &=& \!\!\!\!  \!\!\!\!  y'_{u^{(4)}}  (x'z')_{u^{(2)}} x''_{b \curlyvee u^{(1)}}   y''_{ c \curlyvee u^{(5)}} z''_{d \curlyvee u^{(3)}} + (y'x')_{u^{(1)}}   z'_{u^{(3)}}  x''_{b \curlyvee u^{(2)}}   y''_{ c \curlyvee u^{(5)}} z''_{d \curlyvee u^{(4)}} \\
 \!\!\!\!  \!\!\!\! \!\!\!\!   && \!\!\!\!  \!\!\!\!  +  x'_{u^{(1)}}   (z'y')_{u^{(4)}}    x''_{b \curlyvee u^{(2)}}   y''_{ c \curlyvee u^{(5)}} z''_{d \curlyvee u^{(3)}}  +   y'_{u^{(1)}}   z'_{u^{(5)}}   x'_{u^{(3)}} x''_{b \curlyvee u^{(2)}}   y''_{ c \curlyvee u^{(6)}} z''_{d \curlyvee u^{(4)}}  \\
\!\!\!\!  \!\!\!\!  \!\!\!\!  && \!\!\!\!  \!\!\!\!  -   (z'y')_{u^{(4)}}  x'_{u^{(2)}} x''_{b \curlyvee u^{(1)}}   y''_{ c \curlyvee u^{(5)}} z''_{d \curlyvee u^{(3)}}   -  y'_{u^{(1)}}     (x'z')_{u^{(3)}}  x''_{b \curlyvee u^{(2)}}   y''_{ c \curlyvee u^{(5)}} z''_{d \curlyvee u^{(4)}}  \\
\!\!\!\!  \!\!\!\!  \!\!\!\!   && \!\!\!\!  \!\!\!\! -   (y'x')_{u^{(1)}}   z'_{u^{(4)}} x''_{b \curlyvee u^{(2)}}   y''_{ c \curlyvee u^{(5)}} z''_{d \curlyvee u^{(3)}} - x'_{u^{(1)}}  y'_{u^{(5)}}  z'_{u^{(3)}} x''_{b \curlyvee u^{(2)}}   y''_{ c \curlyvee u^{(6)}} z''_{d \curlyvee u^{(4)}}\\
 \!\!\!\!  \!\!\!\! \!\!\!\!   &=& \!\!\!\!  \!\!\!\!   y'_{u^{(5)}}  x'_{u^{(2)}} z'_{u^{(3)}} x''_{b \curlyvee u^{(1)}}   y''_{ c \curlyvee u^{(6)}} z''_{d \curlyvee u^{(4)}} + y'_{u^{(1)}} x'_{u^{(2)}}   z'_{u^{(4)}}  x''_{b \curlyvee u^{(3)}}   y''_{ c \curlyvee u^{(6)}} z''_{d \curlyvee u^{(5)}} \\
 \!\!\!\!  \!\!\!\! \!\!\!\!   && \!\!\!\!  \!\!\!\!   +  x'_{u^{(1)}}   z'_{u^{(4)}} y'_{u^{(5)}}    x''_{b \curlyvee u^{(2)}}   y''_{ c \curlyvee u^{(6)}} z''_{d \curlyvee u^{(3)}}  +   y'_{u^{(1)}}   z'_{u^{(5)}}   x'_{u^{(3)}} x''_{b \curlyvee u^{(2)}}   y''_{ c \curlyvee u^{(6)}} z''_{d \curlyvee u^{(4)}}  \\
\!\!\!\!  \!\!\!\! \!\!\!\!   && \!\!\!\!  \!\!\!\!  -   z'_{u^{(4)}} y'_{u^{(5)}}  x'_{u^{(2)}} x''_{b \curlyvee u^{(1)}}   y''_{ c \curlyvee u^{(6)}} z''_{d \curlyvee u^{(3)}}   -  y'_{u^{(1)}}     x'_{u^{(3)}} z'_{u^{(4)}}  x''_{b \curlyvee u^{(2)}}   y''_{ c \curlyvee u^{(6)}} z''_{d \curlyvee u^{(5)}}  \\
\!\!\!\!  \!\!\!\! \!\!\!\!   && \!\!\!\!  \!\!\!\!  -   y'_{u^{(1)}} x'_{u^{(2)}}   z'_{u^{(5)}} x''_{b \curlyvee u^{(3)}}   y''_{ c \curlyvee u^{(6)}} z''_{d \curlyvee u^{(4)}} - x'_{u^{(1)}}  y'_{u^{(5)}}  z'_{u^{(3)}} x''_{b \curlyvee u^{(2)}}   y''_{ c \curlyvee u^{(6)}} z''_{d \curlyvee u^{(4)}}\\
 \!\!\!\!  \!\!\!\!  \!\!\!\!  &=& \!\!\!\!  \!\!\!\!  x_{(b \curlyvee u^{(1)})u^{(2)}}   y_{u^{(5)}(c \curlyvee u^{(6)})}  z_{u^{(3)}(d \curlyvee u^{(4)})}   + x_{u^{(2)}( b \curlyvee u^{(3)})}  y_{(c \curlyvee u^{(6)})u^{(1)} }     z_{u^{(4)}(d \curlyvee u^{(5)})}   \\
 \!\!\!\!  \!\!\!\!  \!\!\!\!   && \!\!\!\!  \!\!\!\!   +  x_{u^{(1)}(b \curlyvee u^{(2)})}  y_{u^{(5)}( c \curlyvee u^{(6)})}  z_{(d \curlyvee u^{(3)})u^{(4)}}      +   x_{(b \curlyvee u^{(2)})u^{(3)}}  y_{(c \curlyvee u^{(6)}) u^{(1)}}   z_{(d \curlyvee u^{(4)}) u^{(5)}}    \\
\!\!\!\!  \!\!\!\! \!\!\!\!   && \!\!\!\!  \!\!\!\!  -   x_{(b \curlyvee u^{(1)}) u^{(2)}}  y_{u^{(5)}(c \curlyvee u^{(6)})}    z_{(d \curlyvee u^{(3)})u^{(4)}}   -   x_{(b \curlyvee u^{(2)}) u^{(3)}}  y_{(c \curlyvee u^{(6)}) u^{(1)}}    z_{u^{(4)}(d \curlyvee u^{(5)})}    \\
\!\!\!\!  \!\!\!\!  \!\!\!\!   && \!\!\!\!  \!\!\!\!  -  x_{u^{(2)}(b \curlyvee u^{(3)})}   y_{( c \curlyvee u^{(6)}) u^{(1)}}   z_{(d \curlyvee u^{(4)})u^{(5)}}   - x_{u^{(1)}(b \curlyvee u^{(2)}) }  y_{u^{(5)}(c \curlyvee u^{(6)})}  z_{u^{(3)}(d \curlyvee u^{(4)})} \\
\!\!\!\!  \!\!\!\!  \!\!\!\!   &\by{cyclic_inv}&   \!\!\!\!  \!\!\!\!  x_{(b \curlyvee u^{(1)})u^{(2)}}  y_{u^{(5)}(c \curlyvee u^{(6)})} z_{u^{(3)}(d \curlyvee u^{(4)})}   +  x_{u^{(3)}( b \curlyvee u^{(4)})}    y_{(c \curlyvee u^{(1)})u^{(2)} }  z_{u^{(5)}(d \curlyvee u^{(6)})}   \\
\!\!\!\!  \!\!\!\!  \!\!\!\!   && \!\!\!\!  \!\!\!\!   +  x_{u^{(1)}(b \curlyvee u^{(2)})}   y_{u^{(5)}( c \curlyvee u^{(6)})}    z_{(d \curlyvee u^{(3)})u^{(4)}}  +   x_{(b \curlyvee u^{(3)})u^{(4)}} y_{(c \curlyvee u^{(1)}) u^{(2)}}   z_{(d \curlyvee u^{(5)}) u^{(6)}}     \\
\!\!\!\!  \!\!\!\! \!\!\!\!   && \!\!\!\!  \!\!\!\!  -     x_{(b \curlyvee u^{(1)}) u^{(2)}} y_{u^{(5)}(c \curlyvee u^{(6)})}   z_{(d \curlyvee u^{(3)})u^{(4)}} -   x_{(b \curlyvee u^{(3)}) u^{(4)}} y_{(c \curlyvee u^{(1)}) u^{(2)}}     z_{u^{(5)}(d \curlyvee u^{(6)})}    \\
\!\!\!\!  \!\!\!\!  \!\!\!\!   && \!\!\!\!  \!\!\!\!  -    x_{u^{(3)}(b \curlyvee u^{(4)})} y_{( c \curlyvee u^{(1)}) u^{(2)}}  z_{(d \curlyvee u^{(5)})u^{(6)}}   - x_{u^{(1)}(b \curlyvee u^{(2)}) }  y_{u^{(5)}(c \curlyvee u^{(6)})}  z_{u^{(3)}(d \curlyvee u^{(4)})} \\
\!\!\!\!  \!\!\!\!  \!\!\!\! &\stackrel{\eqref{ltimes}, \eqref{ltimes_bis}}{=}  & (b'' \bullet u') (c' \bullet u''') (d' \bullet u'')\,  x_{b'}  y_{c''} z_{d''}   +  (b' \bullet u'') (c'' \bullet u') (d' \bullet u''')\,  x_{b''}  y_{c'} z_{d''}  \\
 \!\!\!\!  \!\!\!\! \!\!\!\!  \!\!\!\!  \!\!\!\! && \!\!\!\!  \!\!\!\!  +  (b' \bullet u') (c' \bullet u''') (d'' \bullet u'')\,  x_{b''}  y_{c''} z_{d'}     +   (b'' \bullet u'') (c'' \bullet u') (d'' \bullet u''')\,  x_{b'}  y_{c'} z_{d'}       \\
\!\!\!\!  \!\!\!\!  \!\!\!\!  \!\!\!\!  \!\!\!\!  && \!\!\!\!  \!\!\!\!  -     (b'' \bullet u') (c' \bullet u''') (d'' \bullet u'')\,  x_{b'}  y_{c''} z_{d'}   -   (b'' \bullet u'') (c'' \bullet u') (d' \bullet u''')\,  x_{b'}  y_{c'} z_{d''}     \\
\!\!\!\!  \!\!\!\!  \!\!\!\!  \!\!\!\!  \!\!\!\! && \!\!\!\!  \!\!\!\!   -  (b' \bullet u'') (c'' \bullet u') (d'' \bullet u''')\,  x_{b''}  y_{c'} z_{d'}     - (b' \bullet u') (c' \bullet u''') (d' \bullet u'')\,  x_{b''}  y_{c''} z_{d''}.
\end{eqnarray*}
In particular, we have
\begin{eqnarray*}
 \!\!\!\!    && \!\!\!\!    U(x,y,z;b,c,d;s(t)) \\
\!\!\!\!    &=& \!\!\!\!    (b'' \bullet s(t''')) (c' \bullet s(t')) (d' \bullet s(t''))\,  x_{b'}  y_{c''} z_{d''}   \\
\!\!\!\!    && \!\!\!\!    +  (b' \bullet s(t'')) (c'' \bullet s(t''')) (d' \bullet s(t'))\,  x_{b''}  y_{c'} z_{d''}  \\
\!\!\!\!     && \!\!\!\!     +  (b' \bullet  s(t''')) (c' \bullet  s(t')) (d'' \bullet  s(t''))\,  x_{b''}  y_{c''} z_{d'}    \\
 \!\!\!\!    && \!\!\!\!    +   (b'' \bullet  s(t'')) (c'' \bullet  s(t''')) (d'' \bullet  s(t'))\,  x_{b'}  y_{c'} z_{d'}       \\
\!\!\!\!    && \!\!\!\!    -     (b'' \bullet  s(t''')) (c' \bullet  s(t')) (d'' \bullet  s(t''))\,  x_{b'}  y_{c''} z_{d'}   \\
\!\!\!\!    && \!\!\!\!    -   (b'' \bullet  s(t'')) (c'' \bullet  s(t''')) (d' \bullet  s(t'))\,  x_{b'}  y_{c'} z_{d''}     \\
\!\!\!\!    && \!\!\!\!     -  (b' \bullet  s(t'')) (c'' \bullet  s(t''')) (d'' \bullet  s(t'))\,  x_{b''}  y_{c'} z_{d'} \\
\!\!\!\!    && \!\!\!\!       - (b' \bullet  s(t''')) (c' \bullet  s(t')) (d' \bullet  s(t''))\,  x_{b''}  y_{c''} z_{d''} \\
\!\!\!\!    &=& \!\!\!\!    -(b'' \bullet t''') (c' \bullet t') (d' \bullet t'')\,  x_{b'}  y_{c''} z_{d''}   -  (b' \bullet t'') (c'' \bullet t''') (d' \bullet t')\,  x_{b''}  y_{c'} z_{d''}  \\
 \!\!\!\!    && \!\!\!\!    -  (b' \bullet  t''') (c' \bullet  t') (d'' \bullet  t'')\,  x_{b''}  y_{c''} z_{d'}     -  (b'' \bullet  t'') (c'' \bullet  t''') (d'' \bullet  t')\,  x_{b'}  y_{c'} z_{d'}       \\
\!\!\!\!    && \!\!\!\!     +   (b'' \bullet  t''') (c' \bullet  t') (d'' \bullet  t'')\,  x_{b'}  y_{c''} z_{d'}   +   (b'' \bullet  t'') (c'' \bullet  t''') (d' \bullet  t')\,  x_{b'}  y_{c'} z_{d''}     \\
\!\!\!\!    && \!\!\!\!     +  (b' \bullet  t'') (c'' \bullet  t''') (d'' \bullet  t')\,  x_{b''}  y_{c'} z_{d'}     +  (b' \bullet  t''') (c' \bullet  t') (d' \bullet  t'')\,  x_{b''}  y_{c''} z_{d''} \\
\!\!\!\!    & \by{cyclic_inv} & \!\!\!\!     -U(x,z,y;b,d,c;t)
\end{eqnarray*}
and it follows from \eqref{U_U} that
\begin{eqnarray*}
&&  \bracket{x_b}{\bracket{y_c}{z_d}} + \bracket{z_d}{\bracket{x_b}{y_c}} + \bracket{y_c}{\bracket{z_d}{x_b}} \\
 & = & -U(x,y,z;b,c,d;t) + U(x,z,y;b,d,c;t).
 \end{eqnarray*}

 Formula \eqref{qP_gen} will follow from the equality \begin{equation}\label{vuvu} V(x,z,y;b,d,c;t)= U(x,y,z;b,c,d;t), \end{equation} which  we now prove.
Computing the   coaction    on  $x_b$ by Lemma \ref{coaction_M}, we obtain
\begin{eqnarray*}
V(x,y,z;b,c,d ; t )  &=&     ((s(b') b''')\bullet t')\  ((y_c)^r\bullet t'') ((z_d)^r\bullet t''')\, x_{b''} (y_c)^\ell(z_d)^\ell \\
&=& - \varepsilon(b''') (b' \bullet t')  ((y_c)^r\bullet t'') ((z_d)^r\bullet t''') \, x_{b''} (y_c)^\ell(z_d)^\ell \\
&& + \varepsilon (b') (b'''\bullet t')   ((y_c)^r\bullet t'') ((z_d)^r\bullet t''')\, x_{b''} (y_c)^\ell(z_d)^\ell \\
&=& - (b' \bullet t')   ((y_c)^r\bullet t'') ((z_d)^r\bullet t''') \, x_{b''} (y_c)^\ell(z_d)^\ell \\
&& +  (b''\bullet t')   ((y_c)^r\bullet t'') ((z_d)^r\bullet t''')\, x_{b'} (y_c)^\ell(z_d)^\ell
\end{eqnarray*}
 Further computing the coaction   on  $y_c$ by Lemma \ref{coaction_M}, we obtain
\begin{eqnarray*}
 V(x,y,z;b,c,d ; t  )   &=& - (b' \bullet t')   (c'\bullet t'') ((z_d)^r\bullet t''')  x_{b''} y_{c''} (z_d)^\ell  \\
&& + (b' \bullet t')   (c''\bullet t'') ((z_d)^r\bullet t''')  x_{b''} y_{c'} (z_d)^\ell  \\
&&  +  (b''\bullet t')   (c'\bullet t'') ((z_d)^r\bullet t''')\, x_{b'} y_{c''}(z_d)^\ell  \\
&& -    (b''\bullet t')   (c''\bullet t'') ((z_d)^r\bullet t''')\, x_{b'} y_{c'}(z_d)^\ell.
\end{eqnarray*}
Finally,  applying Lemma \ref{coaction_M} to $z_d$, we obtain
\begin{eqnarray*}
&& V(x,y,z;b,c,d ; t )   \\
&=& - (b' \bullet t')   (c'\bullet t'') (d'\bullet t''')\,  x_{b''} y_{c''} z_{d''}  + (b' \bullet t')   (c'\bullet t'') (d''\bullet t''')\,  x_{b''} y_{c''} z_{d'} \\
&&  + (b' \bullet t')   (c''\bullet t'') (d'\bullet t''')\,  x_{b''} y_{c'} z_{d''}  - (b' \bullet t')   (c''\bullet t'') (d''\bullet t''')\,  x_{b''} y_{c'} z_{d'} \\
&& +  (b''\bullet t')   (c'\bullet t'') (d'\bullet t''')\, x_{b'} y_{c''} z_{d''}  -  (b''\bullet t')   (c'\bullet t'') (d''\bullet t''')\, x_{b'} y_{c''}z_{d'} \\
&& -    (b''\bullet t')   (c''\bullet t'') (d'\bullet t''')\, x_{b'} y_{c'} z_{d''}   + (b''\bullet t')   (c''\bullet t'') (d''\bullet t''')\, x_{b'} y_{c'}z_{d'}.
\end{eqnarray*}
 The equality \eqref{vuvu} follows. This concludes the proof of the  theorem.
\end{proof}

\subsection{Remark}

The definition of a quasi-Poisson bracket  in Section \ref{qP_algebras}  involves  a trace-like element in a  commutative ungraded Hopf algebra $B$.
One can give a more general definition  depending only on the choice of  a balanced biderivation in $B$
whose associated symmetric bilinear form in $I/I^2$ (where $I=\ker \varepsilon_B$) is nonsingular.
 A~quasi-Poisson algebra in this general sense is  also a  ``quasi-Poisson algebra over a Lie pair'' in the sense of \cite{MT_dim_2}.
We do not study this general definition here.

\section{Computations on invariant elements} \label{invariant_subalgebra}

  We   discuss  algebraic operations associated with a Fox pairing and use them to  compute  our brackets
  on certain invariant elements of the representation algebras.

\subsection{Operations derived from a Fox pairing} \label{further_operations}

Let $A$ be  a cocommutative graded Hopf algebra.
We   define     modules  $\check{A}$ and  $A^r\! \otimes_A\! {}^\ell\! A$    as follows.
Set  $\check{A} =A / [A,A]$,
where $[A,A]$  is   the submodule  of $A$   generated by the commutators $yz-(-1)^{\vert y \vert \vert z \vert}zy$ for any homogeneous $y,z \in A$.
The class   of an $x \in A$ in $\check{A}$   is  denoted by $\check x$.
 Next define the \emph{left adjoint action} $\operatorname{ad}^\ell: A \times A \to A$
  and the \emph{right adjoint action} $\operatorname{ad}^r: A \times A \to A$   by
$$
\operatorname{ad}^\ell(y, z) = (-1)^{\vert z \vert \vert y''\vert} y'\, z\, s_A(y'') \quad \hbox{and} \quad
\operatorname{ad}^r(z,y) = (-1)^{\vert z \vert \vert y'\vert} s_A(y')\, z\, y''
$$
 for any homogeneous $y,z\in A$.
These actions    yield   left and right $A$-module structures on~$A$; the  resulting   left and right $A$-modules are denoted by ${}^\ell\! A$ and $A^r$ respectively.
The  tensor product $A^r\! \otimes_A\! {}^\ell\! A$ over $A$ is the   module  mentioned above.  Note the following  linear maps:
\begin{eqnarray}
\label{multi} A^r\! \otimes_A\! {}^\ell\! A \longrightarrow \check{A}, && x\otimes y \longmapsto xy\!\!\!\mod [A,A],\\
\label{multi^s} A^r\! \otimes_A\! {}^\ell\! A \longrightarrow \check{A}, && x\otimes y \longmapsto x\, s_A(y)\!\!\!\mod [A,A],\\
\label{proj}  A^r\! \otimes_A\! {}^\ell\! A \longrightarrow \check{A} \otimes \check{A} &&  x\otimes y \longmapsto \check x  \otimes \check y.
\end{eqnarray}

\begin{lemma} \label{Theta}
Let  $\rho $ be a Fox pairing    of degree $n\in \ZZ$ in~$A$.
Then there is  a bilinear map
$$
\Theta= \Theta_\rho: \check A \times \check A \longrightarrow A^r\! \otimes_A\! {}^\ell\! A
$$
 such that for any $x,y \in A$,
$$
\Theta (\check x , \check y ) =  \operatorname{ad}^r(x',\rho(x'',y'))  \otimes y''
=  x' \otimes \operatorname{ad}^\ell(\rho(x'',y') , y'').
$$
\end{lemma}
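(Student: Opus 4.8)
The plan is to split the statement into two independent checks: first, that the two displayed expressions represent one and the same element of $A^r\!\otimes_A\!{}^\ell\!A$; and second, that this common value depends only on the classes $\check x,\check y$, i.e.\ that the assignment annihilates commutators in each variable.

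The equality of the two formulas is immediate from the construction of the tensor product. Recall that $A^r\!\otimes_A\!{}^\ell\!A$ is the quotient of $A^r\otimes_\kk{}^\ell\!A$ by the balancing relations $\operatorname{ad}^r(w,a)\otimes v=w\otimes\operatorname{ad}^\ell(a,v)$, since $\operatorname{ad}^r$ and $\operatorname{ad}^\ell$ are the right and left $A$-actions defining $A^r$ and ${}^\ell\!A$. Taking $w=x'$, $a=\rho(x'',y')$ and $v=y''$ yields
\[
\operatorname{ad}^r\big(x',\rho(x'',y')\big)\otimes y''=x'\otimes\operatorname{ad}^\ell\big(\rho(x'',y'),y''\big),
\]
so both formulas define the same bilinear map $\Theta\colon A\times A\to A^r\!\otimes_A\!{}^\ell\!A$; bilinearity is clear because $\rho$, the two adjoint actions, and $\otimes_A$ are all bilinear.

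It then remains to prove descent, namely $\Theta(pq,y)=(-1)^{\vert p\vert\,\vert q\vert}\Theta(qp,y)$ and $\Theta(x,pq)=(-1)^{\vert p\vert\,\vert q\vert}\Theta(x,qp)$ for homogeneous $p,q$. I would handle the first variable with the first formula and the Fox rule \eqref{Fox2}, and the second variable with the second formula and \eqref{Fox1}; the two computations are mirror images under the symmetry exchanging $A^r$ and ${}^\ell\!A$. For the first variable, expand $\Delta(pq)$ by \eqref{multimain}, apply \eqref{Fox2} to split $\rho\big((pq)'',y'\big)$ into its two Leibniz summands, collapse the $\varepsilon$-summand with the counit \eqref{cocounit}, and rewrite the surviving summand using the module axiom $\operatorname{ad}^r(-,ab)=\operatorname{ad}^r(\operatorname{ad}^r(-,a),b)$ together with the balancing relation to move the extra factor of $p$ across the tensor sign. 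Cocommutativity of $A$---used to permute the Sweedler legs of $p$ and $q$---and the antipode identity \eqref{antip-prop} then recombine the two contributions into $(-1)^{\vert p\vert\,\vert q\vert}\Theta(qp,y)$.

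The main obstacle is exactly this descent computation. Each tool is individually routine, but one must carry $\Delta\rho(\cdots)$ as an irreducible symbol throughout, track with care the Koszul signs generated by \eqref{multimain} and by the graded adjoint actions, and invoke the balancing relation at precisely the right moment so that the two Leibniz summands reassemble into a commutator-antisymmetric expression. The essential structural point is that working in $A^r\!\otimes_A\!{}^\ell\!A$ rather than in $A\otimes A$ allows the surplus group-like factors produced by \eqref{Fox2} and \eqref{Fox1} to be absorbed through balancing; combined with cocommutativity, this is what forces the defining relations of $\check A$ to be respected.
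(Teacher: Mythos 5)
Your proposal is correct and follows essentially the same route as the paper: define $\Theta$ on $A\times A$ by the two displayed formulas (equal by the balancing relation defining $A^r\!\otimes_A\!{}^\ell\!A$), then check that commutators are annihilated in each variable using \eqref{multimain}, the Fox rules \eqref{Fox1}--\eqref{Fox2}, the module axiom for the adjoint actions, cocommutativity of $A$, and the antipode identity \eqref{antip-prop}, with the two variables handled by mirror-image computations. The only cosmetic difference is that the paper's first-variable computation stays entirely in the left tensor factor (balancing is used only to identify the two defining formulas, never mid-computation), whereas you route the surviving Leibniz summand through the balancing relation before collapsing $\operatorname{ad}^r(p'q',p'')$ via cocommutativity and the antipode --- an equivalent detour.
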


\begin{proof}
For any  $x,y \in A$, we set
$$
\Theta(  x ,  y ) =  \operatorname{ad}^r(x',\rho(x'',y'))  \otimes y'' =
  x' \otimes \operatorname{ad}^\ell(\rho(x'',y') , y'') \in A^r\! \otimes_A\! {}^\ell\! A.
$$
We need to check that
\begin{equation} \label{left--}
\Theta( x_1 x_2, y ) = (-1)^{\vert x_1 \vert \vert x_2 \vert }\, \Theta( x_2 x_1, y)
\end{equation}
for any homogeneous $x_1,x_2,y \in A$, and  that
\begin{equation} \label{right--}
\Theta( x,y_1 y_2) = (-1)^{\vert y_1 \vert \vert y_2 \vert }\, \Theta( x, y_2 y_1)
\end{equation}
for any homogeneous $x,y_1,y_2\in A$.
We have
\begin{eqnarray*}
\Theta( x_1 x_2, y )
&=&  (-1)^{ \vert x_1'' \vert \vert x'_2 \vert }
\operatorname{ad}^r(x'_1 x_2',\rho(x''_1 x''_2,y'))  \otimes y''  \\
&=&  (-1)^{ \vert x_1'' \vert \vert x'_2 \vert }
\operatorname{ad}^r(x'_1 x_2',x''_1 \rho(x''_2,y'))  \otimes y''\\
 && + (-1)^{ \vert x_1'' \vert \vert x'_2 \vert }  \varepsilon(x''_2)\,
\operatorname{ad}^r(x'_1 x_2',\rho(x''_1 ,y'))  \otimes y''\\
&=&  (-1)^{\vert x_1'' \vert \vert x'_2 \vert }
\operatorname{ad}^r\big(\operatorname{ad}^r(x'_1 x_2',x''_1), \rho(x''_2,y')\big)  \otimes y''\\
 && +    (-1)^{\vert x_1'' \vert \vert x_2 \vert }    \operatorname{ad}^r(x'_1 x_2,\rho(x''_1 ,y'))  \otimes y''\\
 &=&  (-1)^{\vert  x_1'' x_1'''\vert \vert x'_2\vert+ \vert x_1'' \vert \vert x_1'x_2'\vert }
\operatorname{ad}^r\big( s(x_1'') x_1' x_2' x_1''' , \rho(x''_2,y')\big)  \otimes y''\\
 && +  (-1)^{\vert x_1'' \vert \vert x_2 \vert }    \operatorname{ad}^r(x'_1 x_2,\rho(x''_1 ,y'))  \otimes y''\\
  &=&  (-1)^{\vert  x_1'''\vert \vert x'_2\vert}
\operatorname{ad}^r\big( s(x_1') x_1'' x_2' x_1''' , \rho(x''_2,y')\big)  \otimes y'' \\
&&+    (-1)^{\vert x_1'' \vert \vert x_2 \vert }     \operatorname{ad}^r(x'_1 x_2,\rho(x''_1 ,y'))  \otimes y''\\
 &=&  (-1)^{\vert  x_1\vert \vert x'_2\vert} \operatorname{ad}^r\big( x_2' x_1 , \rho(x''_2,y')\big)  \otimes y'' \\
 && +   (-1)^{\vert x_1'' \vert \vert x_2 \vert }     \operatorname{ad}^r(x'_1 x_2,\rho(x''_1 ,y'))  \otimes y''
\end{eqnarray*}
which immediately implies \eqref{left--}. The identity \eqref{right--} is  verified similarly.
\end{proof}

The  compositions of $ \Theta $ with the maps  \eqref{multi}, \eqref{multi^s}, \eqref{proj} are denoted, respectively, by
\begin{eqnarray*}
\langle - ,- \rangle&=&\langle - ,- \rangle_\rho: \check{A} \times \check{A}  \longrightarrow \check{A},\\
  \langle - ,- \rangle\!^s& =& \langle - ,- \rangle\!^s_\rho: \check{A} \times \check{A}  \longrightarrow \check{A},\\
\vert-,-\vert&=& \vert-,-\vert_\rho:  \check{A} \times \check{A}  \longrightarrow  \check{A} \otimes \check{A}
\end{eqnarray*}
  Explicitly, we have for any $x,y \in A$,
\begin{eqnarray}
\label{<-,->}  \!\!\!\! \!\!\!\!  \!\!\!\!   \langle \check x ,\check y \rangle \!\!\!\! &=&  \!\!\!\!  (  (-1)^{\vert x' \vert \vert \rho(x'',y')'\vert }\, s_A(\rho(x'',y')')x'\rho(x'',y')'' y''  \!\!\mod [A,A], \\
\label{<-,->^s}  \!\!\!\!  \!\!\!\!  \!\!\!\!   \langle \check x ,\check y \rangle\!^s  \!\!\!\!  &=&  \!\!\!\!   (-1)^{\vert x' \vert \vert \rho(x'',y')'\vert }\, s_A(\rho(x'',y')')x'\rho(x'',y')'' s_A(y'')  \!\!\mod [A,A], \\
\label{|-,-|}  \!\!\!\! \!\!\!\!  \!\!\!\!   \vert \check x, \check y \vert  \!\!\!\!  &=&  \!\!\!\!  \varepsilon_A \rho(x'',y')\,  \check{x}' \otimes \check{y}''.
\end{eqnarray}

 \subsection{The invariant subalgebra}
Let $A$ be  a cocommutative graded Hopf algebra,   and let $B$ be  an ungraded commutative Hopf algebra.
 Recall the $B$-coaction $\Delta: {A_B \to A_B \otimes B}$
   given by Lemma \ref{coaction_M} and the    subalgebra   $A_B^{\operatorname{inv}} \subset A_B $    of  $B$-invariant elements of $A_B$.
    Lemma~\ref{lemma-cosym} implies that $x_b \in A_B^{\operatorname{inv}}$  for all $x \in A$ and all cosymmetric $b\in B$.
  The defining relations of the algebra $A_B$ imply that such $x_b $   depends only  on  $\check x \in \check A$.
We will compute our bracket in $A_B$ on   such elements.   We start with the following lemma.

\begin{lemma}
Let $\bullet:B \times B \to \kk$ be a balanced biderivation. For any  cosymmetric elements $b,c\in B$, there is   a linear map
$$
b \smile c: A^r\! \otimes_A\! {}^\ell\! A \longrightarrow A_B^{\operatorname{inv}}
$$
carrying any $x\otimes y\in A^{\otimes 2}$ to  $(b' \bullet c')\, x_{b''}\, y_{c''} \in A_B$.
\end{lemma}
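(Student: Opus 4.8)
The plan is to first isolate the underlying bilinear map and then perform two independent checks: that it passes to the tensor product over $A$, and that its values are $B$-invariant. So I would begin by setting $\Phi(x,y) = (b'\bullet c')\, x_{b''}\, y_{c''} \in A_B$ for $x,y \in A$; this is bilinear and hence determines a linear map $A \otimes A \to A_B$. The content of the lemma is then that $\Phi$ descends along the projection $A \otimes A \to A^r\!\otimes_A\!{}^\ell\!A$ and takes values in $A_B^{\operatorname{inv}}$.

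The main step is the descent, that is, the balancing relation $\Phi\big(\ad^r(x,a),\, y\big) = \Phi\big(x,\, \ad^\ell(a,y)\big)$ for all $x,y,a \in A$. Expanding the left-hand side via $\ad^r(x,a) = (-1)^{\vert x\vert\vert a'\vert} s_A(a')\, x\, a''$, the iterated first multiplicativity relation \eqref{mult}, and the antipode identity \eqref{antipodeidentity} of Lemma~\ref{justjust+} (in the form $(s_A(a'))_\beta = (a')_{s_B(\beta)}$), one obtains a sum over the comultiplications of $a,b,c$ in which $a'$ carries a label built from $s_B$ of a Sweedler component of $b$ while $a''$ carries a component of $b$; expanding the right-hand side gives a symmetric expression with the roles of $b$ and $c$ interchanged. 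To match the two I would use the graded-commutativity \eqref{addirel} of $A_B$ to reorder generators (absorbing the Koszul signs), the balanced condition \eqref{balanced} together with its form \eqref{balanced_bis} to transfer the biderivation pairing from the $b$-components to the $c$-components, and the cyclic invariance \eqref{cyclic_inv} of the iterated comultiplications of the cosymmetric elements $b$ and $c$ (Lemma~\ref{lemma-cosym}) to reposition the argument of $\bullet$ among the Sweedler components. This interlocking of balancedness and cosymmetry is the delicate bookkeeping at the heart of the argument, and I expect it to be the main obstacle.

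For the invariance, I would apply the $B$-coaction of Lemma~\ref{coaction_M} to $\Phi(x,y)$. Using multiplicativity of $\Delta$ and formula \eqref{Delta_M}, one finds $\Delta\big(\Phi(x,y)\big) = (b^{(1)}\bullet c^{(1)})\, x_{b^{(3)}} y_{c^{(3)}} \otimes s_B(b^{(2)}) b^{(4)} s_B(c^{(2)}) c^{(4)}$, and the task is to collapse the $B$-factor to $1_B$. Conceptually this expresses that $(b'\bullet c')\, b''\otimes c'' \in B\otimes B$ is invariant under the diagonal adjoint coaction: the element $b\otimes c$ is adjoint-invariant because $b$ and $c$ are cosymmetric (Section~\ref{about_tle}), the tensor-coalgebra comultiplication $b\otimes c \mapsto b'\otimes c'\otimes b''\otimes c''$ is adjoint-equivariant since $\Delta_B$ is (conjugation being compatible with multiplication), and $\bullet$ is an adjoint-invariant form because it is balanced (Section~\ref{about_bd}); applying the equivariant contraction $\bullet\otimes\id$ to the invariant element $b'\otimes c'\otimes b''\otimes c''$ produces an invariant element. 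The same conclusion follows from a direct Sweedler reduction of the displayed coaction using \eqref{balanced} and \eqref{cyclic_inv}. Either route gives $\Delta\big(\Phi(x,y)\big) = \Phi(x,y)\otimes 1_B$, so the image lies in $A_B^{\operatorname{inv}}$. Combining the descent and the invariance yields the asserted map $b\smile c$.
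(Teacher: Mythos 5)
Your proposal has the same architecture as the paper's proof: define the bilinear map $\Phi(x,y)=(b'\bullet c')\,x_{b''}\,y_{c''}$, verify the balancing identity $\Phi\big(\ad^r(x,a),y\big)=\Phi\big(x,\ad^\ell(a,y)\big)$ so that $\Phi$ descends to $A^r\!\otimes_A\!{}^\ell\!A$, and then verify that the values are $B$-invariant. For the balancing step you name the identities the paper actually uses, in the order it uses them: expand via \eqref{mult} and \eqref{antipodeidentity}, reorder by \eqref{addirel}, reposition the argument of $\bullet$ by \eqref{cyclic_inv}, transfer from the $b$-side to the $c$-side by \eqref{balanced}/\eqref{balanced_bis}, then run the expansion backwards on the $c$-side. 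Two ingredients you leave unnamed are still needed to close it: the second multiplicativity relation \eqref{cocomult}, which recombines $a'_{s_B(b^{(2)})}\,a''_{b^{(4)}}$ into $a_{s_B(b^{(2)})b^{(4)}}$ before \eqref{cyclic_inv} and \eqref{balanced} can act on a single Sweedler string of $b$ (and which splits the label again on the $c$-side), and the cocommutativity \eqref{coco} of $A$, which exchanges $a'$ and $a''$ so that the $c$-side reassembles to $\ad^\ell(a,y)$ rather than to an expression with $s_A$ on the wrong leg. Since you explicitly defer this bookkeeping (``the main obstacle''), your write-up is an outline of the key step rather than a proof of it; but the outline is exactly the paper's computation, and it does go through with those two additions. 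Where you genuinely differ is the invariance check: the paper's proof is the short direct Sweedler reduction using \eqref{cyclic_inv} and \eqref{balanced_bis} (which you offer as a backup), whereas your primary argument is structural --- $b\otimes c$ is invariant under the adjoint coaction because cosymmetric elements are (Remark in Section~\ref{about_tle}), $\Delta_B$ is equivariant for the adjoint coaction, and contraction with $\bullet$, which is adjoint-invariant because it is balanced (Remark in Section~\ref{about_bd}), preserves invariance. That route is correct and conceptually cleaner, but it rests on the two Remarks that the paper states without proof, so the direct computation remains the self-contained option.
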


\begin{proof}
Fix  cosymmetric  $b,c\in B$.
For any $x,y \in A$,  set
$$
(b \smile c)(x,y)= (b' \bullet c')\, x_{b''} y_{c''} \in A_B .
$$
Let $s=s_B$ denote the antipode of $B$.
For any homogeneous $x,y,z\in A$, we have
\begin{eqnarray*}
&& (b \smile c)\big(\operatorname{ad}^r(x,y),z\big) \\
&=& (-1)^{\vert x \vert \vert y' \vert} (b' \bullet c')  \big(s_A(y') x y'' \big)_{b''} z_{c''} \\
&=& (-1)^{\vert x \vert \vert y' \vert} (b^{(1)} \bullet c')\, y'_{s(b^{(2)})} x_{b^{(3)}}  y''_{b^{(4)}} z_{c''} \\
&=& (-1)^{\vert x \vert \vert y \vert} (b^{(1)} \bullet c')\, y_{s(b^{(2)}) b^{(4)}} x_{b^{(3)}}   z_{c''} \\
& \by{cyclic_inv} & (-1)^{\vert x \vert \vert y \vert} (b^{(2)} \bullet c')\, y_{s(b^{(3)}) b^{(1)}} x_{b^{(4)}}   z_{c''}  \\
& \by{balanced} &(-1)^{\vert x \vert \vert y \vert}  (b' \bullet c^{(2)})\, y_{s(c^{(1)}) c^{(3)}} x_{b''}   z_{c^{(4)}} \\
&=& (-1)^{\vert x \vert \vert y \vert + \vert y' \vert \vert y'' \vert } (b' \bullet c^{(2)})\, \big(s_A(y'')\big)_{c^{(1)}} y'_{c^{(3)}} x_{b''}   z_{c^{(4)}} \\
&=& (-1)^{ \vert y'' \vert \vert z \vert } (b' \bullet c^{(2)})\,  x_{b''}  y'_{c^{(3)}}  z_{c^{(4)}} \big(s_A(y'')\big)_{c^{(1)}}  \\
& \by{cyclic_inv} & (-1)^{ \vert y'' \vert \vert z \vert } (b' \bullet c^{(1)})\,  x_{b''}  y'_{c^{(2)}}  z_{c^{(3)}} \big(s_A(y'')\big)_{c^{(4)}}  \\
&=&  (-1)^{ \vert y'' \vert \vert z \vert } (b' \bullet c')\,  x_{b''}  \big(y' z s_A(y'')\big)_{c''}  \ = \ (b \smile c)\big(x,\operatorname{ad}^\ell (y,z)\big).
\end{eqnarray*}
Thus we obtain a linear map $b \smile c: A^r\! \otimes_A\! {}^\ell\! A \to A_B$, and it remains to verify that it takes values in  $A_B^{\operatorname{inv}}$.
Indeed, for any $x,y\in A$, we have
\begin{eqnarray*}
\Delta\big((b \smile c)(x\otimes y)\big) &=& (b' \bullet c')\,  \Delta(x_{b''})\, \Delta(y_{c''}) \\
&=& (b^{(1)} \bullet c^{(1)})\, x_{b^{(3)}} y_{c^{(3)}} \otimes s(b^{(2)}) b^{(4)}  s(c^{(2)}) c^{(4)} \\
&\by{cyclic_inv} &  (b^{(2)} \bullet c^{(2)})\, x_{b^{(4)}} y_{c^{(4)}} \otimes s(b^{(3)}) b^{(1)}  s(c^{(3)}) c^{(1)} \\
& \by{balanced_bis} &  (b^{(1)} \bullet c^{(3)})\, x_{b^{(4)}} y_{c^{(4)}} \otimes s(b^{(3)}) b^{(2)}  s(c^{(2)}) c^{(1)} \\
&=& (b' \bullet c')\, x_{b''} y_{c''} \otimes 1_B \ = \ (b \smile c)(x\otimes y) \otimes 1_B
\end{eqnarray*}
which shows that $(b \smile c)(x\otimes y)  \in A_B^{\operatorname{inv}}  $.
\end{proof}

\begin{theor} \label{smile_to_cup}
Assume the conditions of Theorem~\ref{double_to_simple}.
Then, for any $x,y\in A$ and any cosymmetric  $b,c\in B$, we have
$$
\bracket{x_b}{y_c} = (b\smile c)\, \Theta_\rho(\check x,\check y) \ \in A_B^{\operatorname{inv}}.
$$
\end{theor}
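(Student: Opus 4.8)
The plan is to expand each side by its definition and then match them term by term. For the left-hand side I start from \eqref{main_formula} and split the subscript of $\rho(x',y')$ using the second multiplicativity relation \eqref{cocomult}, writing $\rho(x',y')_{s_B(b^{(3)})b^{(1)}}=\rho(x',y')'_{s_B(b^{(3)})}\,\rho(x',y')''_{b^{(1)}}$, so that
\[
\bracket{x_b}{y_c}=(-1)^{\vert x''\vert\vert y'\vert_n}(c''\bullet b^{(2)})\,\rho(x',y')'_{s_B(b^{(3)})}\,\rho(x',y')''_{b^{(1)}}\,x''_{b^{(4)}}\,y''_{c'}.
\]
For the right-hand side I use the explicit form $\Theta_\rho(\check x,\check y)=\operatorname{ad}^r(x',\rho(x'',y'))\otimes y''$ from Lemma~\ref{Theta}, the formula $\operatorname{ad}^r(x',w)=(-1)^{\vert x'\vert\vert w'\vert}s_A(w')x'w''$, the definition of $b\smile c$, the first multiplicativity relation \eqref{mult}, and the antipode identity \eqref{antipodeidentity}, obtaining (with $w=\rho(x'',y')$)
\[
(b\smile c)\,\Theta_\rho(\check x,\check y)=(-1)^{\vert x'\vert\vert w'\vert}(b^{(1)}\bullet c')\,w'_{s_B(b^{(2)})}\,x'_{b^{(3)}}\,w''_{b^{(4)}}\,y''_{c''}.
\]

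To identify these, I first exploit that $b$ is cosymmetric: by Lemma~\ref{lemma-cosym} the four legs $b^{(1)}\otimes\cdots\otimes b^{(4)}$ may be cyclically permuted via \eqref{cyclic_inv}, which brings the left-hand side to the shape $(c''\bullet b^{(1)})\,\rho(x',y')'_{s_B(b^{(2)})}\,\rho(x',y')''_{b^{(4)}}\,x''_{b^{(3)}}\,y''_{c'}$. Three discrepancies with the right-hand side then remain, each removed by one structural identity: the symmetry of the balanced form together with the cosymmetry \eqref{sym} of $c$ turns $(c''\bullet b^{(1)})\cdots y''_{c'}$ into $(b^{(1)}\bullet c')\cdots y''_{c''}$; the cocommutativity \eqref{coco} of $A$ interchanges $x'\leftrightarrow x''$, converting $\rho(x',y')$ with free $x''$ into $\rho(x'',y')$ with free $x'$; and the graded commutativity \eqref{addirel} of $A_B$ reorders $\rho(x'',y')''_{b^{(4)}}$ and $x'_{b^{(3)}}$ into the order appearing on the right-hand side.

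The only genuine work is the sign count, which is clean because $B$ is ungraded, so every $b^{(i)}$ and $c^{(j)}$ has degree $0$ and contributes no Koszul sign under the reorderings in $A_B$. Collecting the sign from \eqref{main_formula}, the factor $(-1)^{\vert x'\vert\vert x''\vert}$ from cocommutativity, and the factor $(-1)^{\vert\rho(x'',y')''\vert\vert x'\vert}$ from the reordering, the left-hand side acquires the sign $(-1)^{\vert x'\vert\vert y'\vert_n+\vert x'\vert\vert x''\vert+\vert\rho(x'',y')''\vert\vert x'\vert}$, to be compared with $(-1)^{\vert x'\vert\vert\rho(x'',y')'\vert}$ on the right. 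Their ratio has exponent $\vert x'\vert\big(\vert x''\vert+\vert y'\vert_n+\vert\rho(x'',y')'\vert+\vert\rho(x'',y')''\vert\big)$, and since $\vert\rho(x'',y')'\vert+\vert\rho(x'',y')''\vert=\vert x''\vert+\vert y'\vert+n$ and $\vert y'\vert_n=\vert y'\vert+n$, this exponent equals $\vert x'\vert\cdot 2(\vert x''\vert+\vert y'\vert+n)\equiv 0\pmod 2$. Hence the two expressions coincide.

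This proves $\bracket{x_b}{y_c}=(b\smile c)\,\Theta_\rho(\check x,\check y)$; the asserted membership in $A_B^{\operatorname{inv}}$ is then automatic, since $b\smile c$ takes values in $A_B^{\operatorname{inv}}$ by the lemma immediately preceding this theorem. I expect the main obstacle to be purely the disciplined tracking of the Koszul signs through the cyclic shift and the two (co)commutativity swaps; the structural skeleton is rigidly dictated by the defining formulas, and the two potential mismatches both evaporate — the apparent $B$-degree contributions because $B$ is ungraded, and the apparent $x$-degree mismatch because $\vert\rho(x'',y')\vert$ has the parity of $\vert x''\vert+\vert y'\vert+n$.
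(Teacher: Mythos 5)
Your proof is correct and follows essentially the same route as the paper's: both expand $\bracket{x_b}{y_c}$ via \eqref{main_formula}, split the subscript using \eqref{cocomult} and \eqref{antipodeidentity}, apply the cyclic invariance \eqref{cyclic_inv} of the cosymmetric element $b$, and then match with the definition of $\operatorname{ad}^r$ and $b\smile c$ using the cocommutativity of $A$, the commutativity of $A_B$, the symmetry of $\bullet$ together with the cosymmetry of $c$, and the degree count $\vert\rho(x'',y')'\vert+\vert\rho(x'',y')''\vert=\vert x''\vert+\vert y'\vert+n$ for the signs. The only difference is organizational (you expand both sides and meet in the middle, while the paper transforms the left side step by step into the right side), and your sign bookkeeping checks out.
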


\begin{proof}
Indeed,
\begin{eqnarray*}
\bracket{x_b}{y_c}  &=&  (-1)^{ \vert   x''\vert   \vert y' \vert_n}  (c''\bullet b^{(2)})\, \rho(x',y')_{ {s}_B(b^{(3)}) b^{(1)}}\,   x''_{b^{(4)}}\,   y''_{c' } \\
&=&  (-1)^{ \vert   x''\vert   \vert y' \vert_n}  (c''\bullet b^{(2)})\, s_A(\rho(x',y')')_{ b^{(3)}} \rho(x',y')''_{ b^{(1)}}\,   x''_{b^{(4)}}\,   y''_{c' } \\
&=&   (-1)^{ \vert   x''\vert   \vert y' \rho(x',y')'' \vert_n}  (c''\bullet b^{(2)})\, s_A(\rho(x',y')')_{ b^{(3)}} x''_{b^{(4)}}\, \rho(x',y')''_{ b^{(1)}}\,      y''_{c' } \\
& \by{cyclic_inv} &   (-1)^{ \vert   x''\vert   \vert y' \rho(x',y')'' \vert_n}  (c''\bullet b^{(1)})\, s_A(\rho(x',y')')_{ b^{(2)}} x''_{b^{(3)}}\, \rho(x',y')''_{ b^{(4)}}\,      y''_{c' } \\
&=&  (-1)^{ \vert   x''\vert   \vert y' \rho(x',y')'' \vert_n}  (c''\bullet b')\, \big(s_A(\rho(x',y')') x'' \rho(x',y')''\big)_{b''}\,      y''_{c' } \\
&=&  (-1)^{ \vert   x''\vert   \vert \rho(x',y')' x' \vert}  (c''\bullet b')\, \big(s_A(\rho(x',y')') x'' \rho(x',y')''\big)_{b''}\,      y''_{c' } \\
&=&  (-1)^{ \vert   x'\vert   \vert \rho(x'',y')' \vert}  (c''\bullet b')\, \big(s_A(\rho(x'',y')') x' \rho(x'',y')''\big)_{b''}\,      y''_{c' } \\
&=& (b'\bullet c'')\, \big(\operatorname{ad}^r(x',\rho(x'',y'))\big)_{b''}\, y''_{c'} \ = \ (b \smile c)\, \Theta_\rho (\check x,\check y).
\end{eqnarray*}

\up
\end{proof}

 We  illustrate Theorem \ref{smile_to_cup} by considering the examples of Section~\ref{trace-like_examples}, where
  $B$ is  the coordinate algebra of one of the classical group schemes $\GL_N$, $\hbox{SL}_N$ or $\hbox{O}_N$,
and the balanced biderivation $\bullet=\bullet_t:B \times B \to \kk$ is induced by the usual trace $t\in B$.
The  map $t\smile t:  A^r\! \otimes_A\! {}^\ell\! A \to A_B^{\operatorname{inv}}$ is easily
computed from the formulas given  there:   for  any $x,y \in A$,
$$
(t \smile t)(x \otimes y) =
\left\{\begin{array}{ll} (xy)_t & \hbox{if } B =\kk[\GL_N], \\
(xy)_t - \frac{1}{N} x_t y_t & \hbox{if } B =\kk[\hbox{SL}_N], \\
\frac{1}{2}(xy)_t  - \frac{1}{2}\big(xs_A(y)\big)_t & \hbox{if } B =\kk[\hbox{O}_N].
\end{array}\right.
$$
  Theorem \ref{smile_to_cup} implies that, in   the notations \eqref{<-,->}--\eqref{|-,-|},
\begin{equation} \label{classical_groups}
\bracket{x_t}{y_t} =
\left\{\begin{array}{ll}
\langle \check x , \check y \rangle_t & \hbox{if } B =\kk[\GL_N], \\
\langle \check x , \check y \rangle_t -\frac{1}{N} \vert \check x, \check y\vert^\ell_t\, \vert \check x, \check y\vert^r_t   & \hbox{if } B =\kk[\hbox{SL}_N], \\
\frac{1}{2} \langle \check x , \check y \rangle_t  - \frac{1}{2} \langle \check x , \check y \rangle^s_t & \hbox{if } B =\kk[\hbox{O}_N]
\end{array}\right.
\end{equation}
where we expand   $\vert-,-\vert =  \vert-,-\vert^\ell \otimes \vert-,-\vert^r$ in the second  formula.

\section{From surfaces to Poisson brackets} \label{surfaces}

In this section, $\Sigma$ is a connected    oriented surface with non-empty bondary
 and      $\pi=\pi_1(\Sigma,\ast)$ is  the  fundamental group of $\Sigma$ based at a point $\ast \in \partial \Sigma$.
 We assume   that 2 is invertible in the ground ring $\kk$ and   view the group algebra $A =\kk \pi$ as an ungraded Hopf algebra in the standard way.

\subsection{The intersection pairing}

The    Hopf   algebra $A =\kk \pi$ carries a canonical Fox pairing~$\rho$   called the \emph{homotopy   intersection pairing of~$\Sigma$}.
It was first  introduced (in a slightly different form) in \cite{Tu_loops}, and is defined as follows.
Pick  a   point $\star \in \partial \Sigma\setminus \{\ast \}$ lying slightly before~$\ast$ with respect to  the  orientation  of~$\partial \Sigma$ induced by that of~$\Sigma$.
Let $\nu_{\star \ast}$ be a short   path in $\partial \Sigma$ running from~$\star$ to~$\ast$   in the positive direction,
and let $\overline{\nu}_{\ast \star }$ be the inverse path.
 Given a loop~$\beta$ in~$\Sigma$ based at~$\ast$, we let    $[\beta]\in \pi$ be the homotopy class of $\beta$.
Every simple point~$p $ on  such a~$\beta$  splits~$\beta$ as a product of the path  $\beta_{\ast p}$  running   from~$\ast$ to~$p$ along~$\beta$
 and the path    $\beta_{p\ast}$    running  from $p$ to $\ast$  along $\beta$. Similar notation applies to simple points of loops based at~$\star$.
For any $x,y\in \pi$,  set
\begin{equation} \label{intersection}
\rho(x,y) = \sum_{p\in \alpha \cap \beta}\varepsilon_p(\alpha,\beta)  \,
\left[\overline{\nu}_{\ast \star }\alpha_{\star p} \beta_{p \ast}\right] + \frac{1}{2}(x-1)(y-1) \in A
\end{equation}
where   we use the following notation:
$\alpha$ is a loop  in $\Sigma$  based at $\star$  such that $[\overline{\nu}_{\ast \star} \alpha \nu_{\star \ast}]=x$;
$\beta$~is a loop    in $\Sigma$ based at $*$ such that  $[\beta]=y$ and $  \beta$ meets $\alpha$ transversely in a
finite set  $ \alpha \cap \beta$ of simple points;  for
    $p\in \alpha \cap \beta$, we set  $\varepsilon_p(\alpha,\beta)=+ 1$ if  the
frame (the positive tangent vector of $\alpha$ at $p$, the positive
tangent vector of $\beta$ at $p$) is positively oriented and $\varepsilon_p(\alpha,\beta)= -1$ otherwise.
Then $\rho :A \times A \to A $ is a well-defined  antisymmetric Fox pairing.
As explained in   Appendix~\ref{FPvDB},   this Fox pairing    is quasi-Poisson.

  \subsection{The operation $\Theta$}
 The module $\check A= A/[A,A]$  can be identified with the module $\kk \check{\pi}$ freely generated by the set $\check{\pi}$ of homotopy classes of free loops in $\Sigma$.
 By   Lemma \ref{Theta}, the intersection pairing $\rho : A \times A \to A$ induces   a bilinear   pairing
$$
\Theta=\Theta_\rho : \kk \check{\pi} \times \kk \check{\pi} \longrightarrow (\kk \pi)^r \otimes_{\kk \pi}\! {}^\ell (\kk \pi)
$$
where the left and right $\kk\pi$-module structures ${}^\ell (\kk \pi)$ and $(\kk \pi)^r$ on $\kk\pi$ are induced  by the conjugation action of  $\pi$.
A direct   computation shows that, for any   $\check x,\check y \in \check \pi$,
\begin{equation} \label{generalized_Goldman_bracket}
\Theta(\check x,\check y) =  \sum_{p\in \alpha \cap \beta} \varepsilon_p(\alpha,\beta)  \, [\gamma_{p} \alpha_{p} \overline{\gamma_p}] \otimes  [\gamma_{p} \beta_{p} \overline{\gamma_p}]
\end{equation}
  where we use the following notations:  $\alpha, \beta$ are free loops
 in $\Sigma$  representing, respectively,  $\check x, \check y$  and meeting transversely in a finite set $ \alpha \cap \beta$ of simple points;
 $\alpha_p,\beta_p$ are  the loops $\alpha,\beta$ based at $p \in \alpha \cap \beta$, and   $\gamma_p$ is an arbitrary path  in $\Sigma$   from $\ast$ to $p$.

 \subsection{Brackets in $A_B$}   Let  $B$ be a commutative ungraded Hopf algebra carrying  a balanced biderivation~$\bullet$.
By Theorem~\ref{double_to_simple}, $\rho$ and~$\bullet$ induce   a  bracket $\bracket{-}{-}$  in the representation algebra~$A_B$.
Using \eqref{main_formula} and \eqref{balanced_bis}, we obtain   for  any  $x,y \in \pi$ and   $b,c\in B$,
{\small \begin{eqnarray}
 \label{LSNi_revisited}  \bracket{x_b}{y_c} & = &\sum_{p\in \alpha \cap \beta}\varepsilon_p(\alpha,\beta)\,   (c'' \bullet b^{(2)}) \,
\left[\overline{\nu}_{\ast \star }\alpha_{\star p} \beta_{p \ast}\right]_{s_B(b^{(3)}) b^{(1)} } x_{b^{(4)}} y_{c'}\\
\notag && + \frac{1}{2}\big( (c' \bullet b'')\, x_{b'} y_{c''} + (c''\bullet b')\, x_{b''} y_{c'} - (c''\bullet b'')\, x_{b'} y_{c'} - (c' \bullet b')\, y_{c''} x_{b''}\big),
\end{eqnarray}}
where we use the same notation as in  \eqref{intersection}.
If      $b$ and $c $     are {cosymmetric},  then      Theorem~\ref{smile_to_cup}  gives a   simpler expression for the bracket:
\begin{equation}
\label{Goldman_revisited}
\bracket{x_b}{y_c}  =  \sum_{p\in \alpha \cap \beta} \varepsilon_p(\alpha,\beta)\, (b' \bullet c') \, [\gamma_{p} \alpha_{p} \overline{\gamma_p}]_{b''}\,  [\gamma_{p} \beta_{p} \overline{\gamma_p}]_{c''}.
\end{equation}

 Assume  now   that $\bullet=\bullet_t$ where $t\in B$ is a trace-like element.  It follows from  Theorem~\ref{qPoisson_case} that
 the  bracket $\bracket{-}{-}$ in $A_B$ is quasi-Poisson with respect to~$t$. As observed in Section~\ref{qP_algebras},
 this  bracket   restricts  to a Poisson bracket in $(A_B)^{\operatorname{inv}}$.
According to Section~\ref{ex_monoid_algebras}, the algebra $A_B$ is the coordinate algebra of the affine scheme $\Hom_{\Gr}(\pi,\Gscr(-))$
where $\Gscr$ is the group scheme associated to $B$. By Appendix \ref{Lie_algebra}, the   biderivation $\bullet=\bullet_t$
   is tantamount to a metric on the Lie algebra of~$\Gscr$.
 The  Poisson bracket $\bracket{-}{-}$  in $(A_B)^{\operatorname{inv}}$ is
 an algebraic version of the Atiyah--Bott--Goldman Poisson structure
 on the moduli space of representations of~$\pi$ in a Lie group  whose Lie algebra is endowed with a metric,
see  \cite{AB,Go1}.
Indeed, formula~\eqref{Goldman_revisited}  is the algebraic analogue of  Goldman's  formula \cite[Theorem~3.5]{Go2},
where the operation \eqref{generalized_Goldman_bracket}  appears implicitly;
for instance, the  formulas \eqref{classical_groups} correspond to \cite[Theorems 3.13--3.15]{Go2}.
  The     quasi-Poisson bracket   $\bracket{-}{-}$   in $A_B$ is an algebraic version of the quasi-Poisson refinement  of the Atiyah--Bott--Goldman     bracket
   introduced    in \cite{AKsM} and studied in \cite{LS,Ni}.
Indeed,  formula~\eqref{LSNi_revisited}    is the algebraic  analogue of the quasi-Poisson refinement  of Goldman's formulas
obtained by  Li-Bland \& \v Severa \cite[Theorem 3]{LS} and Nie \cite[Theorem 2.5]{Ni}.
For $\Gscr=\GL_N$, formula~\eqref{LSNi_revisited}  was obtained in \cite{MT_dim_2}  using Van den Bergh's theory (see Appendix \ref{relation_to_VdB}).
  Note that our proof of the quasi-Jacobi identity in $A_B$
  (and, consequently, of the Jacobi identity in  $(A_B)^{\operatorname{inv}}$) is purely algebraic  and involves neither
   infinite-dimensional methods of \cite{AB,Go1, Go2} nor the inductive   ``fusion''   method of   \cite{AKsM,LS,Ni}.

\appendix

\section{Group schemes} \label{group_schemes}

We       review      group schemes     (following  mainly   \cite{Ja})
  and   reformulate in terms of   group schemes some of the notions introduced in the main body of the  paper.
 In this appendix, by a module/algebra we mean an ungraded module/algebra  over the ground ring~$\kk$.

\subsection{Affine schemes}

Let $\Com$ be the category of  commutative algebras and algebra homomorphisms. Let $\Set$  be the category of sets and maps.
By a \emph{$\kk$-functor} we mean a    (covariant)  functor $  \Com \to \Set$;
  for instance, the forgetful functor  $\Com \to \Set$  is a $\kk$-functor which we denote by $\Kscr$.
A \emph{morphism} $\alpha:\Xscr \to \Yscr$ of $\kk$-functors is a natural transformation of  functors; for  a  commutative algebra $C$,
we let $\alpha_C: \Xscr(C) \to \Yscr(C)$ be the   map determined by~$\alpha$. Such a morphism~$\alpha$ is an  \emph{isomorphism}   if
$\alpha_C$ is a bijection for all~$C$.

For example, a commutative  algebra  $B$   determines a $\kk$-functor  $\Hom_{\Com}( B  , -) $
and a homomorphism (respectively,   isomorphism) of commutative algebras ${B\to B'}$ determines
a morphism (respectively,   isomorphism) of  $\kk$-functors $\Hom_{\Com}( B'  , -)\to \Hom_{\Com}( B  , -)$.

An  \emph{affine scheme $ \Xscr$ (over $\kk$) with coordinate algebra $B$}   is a triple consisting of a   $\kk$-functor $ \Xscr$,
  a commutative algebra~$B$, and an isomorphism of $\kk$-functors $\eta: \Xscr \to \Hom_{\Com}(B, -) $.  For shorteness,  such a triple is    denoted simply by $\Xscr$,
  the algebra $B$  is  denoted by  $\kk[\Xscr]$, and  the  isomorphism  $\eta$ is  suppressed from notation.
  The \emph{evaluation} of an
    $f\in B$ at      $x\in \Xscr(C) $ is defined by   $f\vert_x =\eta_C(x) (f) \in C$.
  By the Yoneda lemma,  these    evaluations define a  canonical  bijection
  \begin{equation} \label{usual_K[X]}
  B \stackrel{\simeq}{\longrightarrow} \hbox{Mor}(\Xscr,\Kscr),
  \end{equation}
  where $\hbox{Mor}(\Xscr,\Kscr)$ is   the set of morphisms of $\kk$-functors   $\Xscr \to \Kscr$.

 A    \emph{morphism}    of  affine schemes is  a morphism of the underlying $\kk$-functors.
By the  Yoneda   lemma, given a  morphism   of  affine schemes $\alpha:\Xscr \to \Yscr$,
 there is a unique  algebra homomorphism $\alpha^*:\kk[\Yscr] \to \kk[\Xscr]$ such that
\begin{equation}\label{dual_morphism}
\alpha_C(x) = x \circ \alpha^*   \in  \Hom_{\Com}(\kk[\Yscr], C)   \simeq   \Yscr(C)
\end{equation}
for any commutative algebra $C$ and any $x\in  \Hom_{\Com}(\kk[\Xscr], C) \simeq    \Xscr(C)$.
 Note that
\begin{equation} \label{evaluate_morphisms}
\alpha^*(g)\vert_x=x (\alpha^*(g))= \big(\alpha_C(x)\big) (g) = g \vert_{\alpha_C(x)}
\end{equation}
for any  $g\in \kk[\Yscr]$.  

Given two  $\kk$-functors $\Xscr$ and $\Yscr$,  the product $\kk$-functor $\Xscr \times \Yscr$ carries any commutative algebra $C$ to
 $(\Xscr\times \Yscr) (C)=\Xscr(C) \times \Yscr(C)$.
If $\Xscr, \Yscr$ are affine schemes, then so is $\Xscr\times \Yscr$  with coordinate algebra   $\kk[\Xscr\times \Yscr]=\kk[\Xscr] \otimes \kk[\Yscr]$.

Given an affine scheme $\Xscr$ and a commutative algebra $C$,
we let  $\Xscr_C$ be   the $C$-functor which assigns to any commutative $C$-algebra $D$    the set $\Xscr(D)$. Then
$$
\Xscr_C(D) \simeq \Hom_{\Com}(\kk[\Xscr], D) = \Hom_{C\hbox{-}\Com}(\kk[\Xscr] \otimes C, D),
$$
where $C\hbox{-}\Com$  is  the category of commutative algebras over $C$.
Hence, $\Xscr_C$ is an affine scheme over (the underlying ring of) $C$ with coordinate algebra $C[\Xscr_C]=\kk[\Xscr] \otimes C$.

\subsection{Monoid schemes}\label{Monoid schemes}

Let $\Mon$ be the category of monoids   and monoid homomorphisms.
  A \emph{monoid scheme}  (over $\kk$) is  an affine scheme whose underlying $\kk$-functor
is lifted to the category $\Mon$ with respect to  the forgetful functor $\Mon \to \Set$.
The theory of monoid schemes   is equivalent to the theory of  commutative
 bialgebras. If~$B$ is a commutative bialgebra, then $\Hom_{\Com}(B,-)$ is   a monoid scheme:
for any commutative algebra $C$, the  set   $  \Hom_{\Com}(B,C)$ with the convolution product is a monoid with unit $\varepsilon_B   1_C:B\to C$.
 Conversely, if $\Gscr$ is a monoid scheme, then its coordinate algebra $B=\kk[\Gscr]$  is  a commutative bialgebra:
the   counit $ \varepsilon_B : B \to \kk$ is the  neutral   element
of the monoid $\Gscr(\kk)\simeq   \Hom_{\Com}(B, \kk)$;  the
comultiplication $ \Delta_B : B \to B \otimes B=  \kk[\Gscr \times \Gscr]$ is evaluated on any $f \in B$ by    $ \Delta_B
(f)\vert_{(x,y)}= f\vert_{xy}  \in C $ for any commutative algebra~$C$ and any  $x,y\in \Gscr(C)$.

A (left) \emph{action  of a monoid scheme $\Gscr$  on  a $\kk$-functor~$\Xscr$} is a  morphism   of $\kk$-functors  $  \omega :\Gscr \times \Xscr \to \Xscr$
such that for any commutative algebra~$C$, the map   $$\omega_C: (\Gscr  \times \Xscr)(C)=\Gscr(C) \times \Xscr(C) \to \Xscr(C)$$
is a (left)   action of  the monoid  $\Gscr(C)$ on the set  $\Xscr(C)$.
The   image  under $\omega_C$   of a pair  $(g\in \Gscr(C), x\in \Xscr(C))$  is denoted by $g\cdot x$.

 A (left) \emph{action of a monoid scheme  $\Gscr$ on a  module~$M$} (over $\kk$)  is an action of~$\Gscr$
 on the $\kk$-functor  $  \Mscr  = M \otimes  (-)$ such that, for any commutative algebra~$C$,
the monoid $\Gscr(C)$ acts on $\Mscr(C)=M\otimes  C$ by $C$-linear transformations.
The study of  such actions  is equivalent to the study of  (right) $B$-comodules, where $B=\kk[\Gscr]$.
 Specifically,   a  comodule map $\Delta_M:M \to M \otimes B$   induces  the following  action of $\Gscr$  on $M$:
for  a   commutative algebra~$C$,   an element~$g$ of $ \Gscr(C) \simeq   \Hom_{\Com}(B, C)$ acts  on    $m\otimes c \in  M  \otimes C$   by
\begin{equation}  \label{action}
g \cdot (m\otimes c ) = m^\ell \otimes \big(m^r\vert_g\, c\big)=   m^\ell \otimes g(m^r)\, c
\end{equation}
where $\Delta_M(m)=m^\ell \otimes m^r \in M \otimes B$ in Sweedler's notation.
Conversely, an action of~$\Gscr$  on~$M$ induces a  comodule map  ${ \Delta_M: M \to M \otimes B}$   carrying  any $m\in M$ to
$ \id_{B}\cdot (m\otimes 1)$,  where $\id_{B}\in \Gscr(B) \simeq   \Hom_{\Com}(B, B) $  acts on $  M \otimes B$.
 Note that an element $m\in M$ is $\Gscr$-invariant (in the sense that    $g\cdot (m\otimes 1_C)= m\otimes 1_C$  for any $g\in \Gscr(C)$
and any commutative algebra $C$)
if and only if $\Delta_M(m)= m\otimes 1_B$.
 Note    also that, when~$M$ is an algebra, $\Delta_M$ is an algebra homomorphism if and only if
 for   all  commutative algebras $C$, the monoid $\Gscr(C)$ acts on $M\otimes C$ by   $C$-algebra endomorphisms;
we    speak then of an   \emph{action of~$\Gscr$ on the algebra $M$}.

\subsection{Group schemes}\label{induced_actions}

A \emph{group scheme}  (over $\kk$) is a   monoid scheme   $ \Gscr  $ such that  the monoid $\Gscr (C)$ is a group for every commutative algebras $C$.
Under the equivalence between monoid schemes and commutative bialgebras,   the group schemes correspond to
the commutative Hopf algebras. The antipode in the coordinate   algebra~$B$   of a group scheme
is the inverse of $\id_B$ in the group $\Hom_{\Com}(B,B)$. Conversely, if~$B$ is a commutative Hopf algebra and~$C$ is a commutative algebra,
then the monoid   $ \Hom_{\Com}(B,C)$ is a group with inversion obtained by composition with the antipode of~$B$.

 In the next two lemmas,  $\Gscr$ is    a group scheme with
coordinate  algebra $B=\kk[\Gscr] $ and $ s_B:B \to B$ is the antipode of  $B$.

\begin{lemma} \label{general_lemma}
A (left) action $\omega$ of $\Gscr$ on an affine scheme $\Xscr$
induces  a   (left) action of $\Gscr$  on the algebra $\kk[\Xscr]$.   Specifically,
for any   commutative algebra $C$, the action of a   $g\in \Gscr(C)$ on  an  $f\in \kk[\Xscr] \otimes C = C[\Xscr_C]  $   is given by
\begin{equation} \label{(g.f)(x)}
(g\cdot f)\vert_{x} = f\vert_{g^{-1}\cdot x}  = f\vert_{\omega_{D}(g^{-1}, x)} \in D
\end{equation}
for any commutative $C$-algebra $D$ and any  $x\in \Xscr_C(D)=  \Xscr(D)$.
The corresponding (right) comodule map $\Delta = \Delta_{\kk[\Xscr]}: \kk[\Xscr]  \to \kk[\Xscr] \otimes B$  is computed by
\begin{equation}  \label{decompose_Delta}
\Delta= (\id_{\kk[\Xscr]} \otimes s_B) \Perm \omega^*
\end{equation}
where $\omega^*:  \kk[\Xscr] \longrightarrow  \kk[\Gscr \times \Xscr] = B  \otimes \kk[\Xscr]$ is  induced by the morphism $\omega: \Gscr \times \Xscr \to \Xscr$
 and $\Perm: B  \otimes \kk[\Xscr] \to \kk[\Xscr] \otimes B$ is the permutation.
\end{lemma}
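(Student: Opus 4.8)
The plan is to reduce both assertions to the evaluation correspondence \eqref{usual_K[X]}--\eqref{evaluate_morphisms}, base-changed to the relevant ground algebra. For the first claim I fix a commutative algebra $C$ and an element $g\in\Gscr(C)$ and introduce the morphism of $C$-functors $\lambda_{g^{-1}}\colon\Xscr_C\to\Xscr_C$ whose value on $D$-points (for a commutative $C$-algebra $D$) sends $x\in\Xscr_C(D)=\Xscr(D)$ to $g^{-1}\cdot x=\omega_D(g^{-1},x)$. Naturality of $\lambda_{g^{-1}}$ in $D$ follows from naturality of $\omega$ together with the compatibility of $g^{-1}\in\Gscr(C)$ with base change along $C$-algebra maps; thus $\lambda_{g^{-1}}$ is an isomorphism of affine schemes over $C$, with inverse $\lambda_g$. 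By \eqref{evaluate_morphisms} over $C$, its pullback $\lambda_{g^{-1}}^*$ acts on $f\in C[\Xscr_C]=\kk[\Xscr]\otimes C$ by precisely the rule \eqref{(g.f)(x)}; hence \eqref{(g.f)(x)} does define an element $g\cdot f$, and, being the pullback along a morphism of affine schemes, $\lambda_{g^{-1}}^*$ is automatically a $C$-algebra homomorphism.

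Next I verify the axioms of a left action. The unit axiom $1\cdot f=f$ is immediate, and for the composition law I use \eqref{(g.f)(x)} twice together with the left-action identity for $\omega$ and the relation $h^{-1}g^{-1}=(gh)^{-1}$ to obtain $(g\cdot(h\cdot f))|_x=f|_{(gh)^{-1}\cdot x}=((gh)\cdot f)|_x$. Since the $\lambda_{g^{-1}}^*$ are $C$-algebra homomorphisms for every $C$, this establishes an action of $\Gscr$ on the algebra $\kk[\Xscr]$ in the sense of the appendix, proving the first assertion.

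For the comodule map I invoke the general recipe recalled in the appendix: the induced comodule map sends $f\in\kk[\Xscr]$ to $\Delta(f)=\id_B\cdot(f\otimes 1_B)$, where $\id_B\in\Gscr(B)=\Hom_{\Com}(B,B)$ acts on $\kk[\Xscr]\otimes B$. Because the inverse of $\id_B$ in the group $\Gscr(B)$ is its composition with the antipode, $(\id_B)^{-1}=s_B$, formula \eqref{(g.f)(x)} with $C=B$ gives $\Delta(f)|_x=f|_{s_B\cdot x}$ for every commutative $B$-algebra $D$ and every $x\in\Xscr(D)$. To match this with the right-hand side of \eqref{decompose_Delta}, I write $\omega^*(f)=\sum_i g_i\otimes f_i\in B\otimes\kk[\Xscr]$ and recall that $\omega^*$ is characterized by $f|_{g\cdot x'}=\sum_i (g_i|_g)(f_i|_{x'})$. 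Evaluating $(\id_{\kk[\Xscr]}\otimes s_B)\Perm\,\omega^*(f)=\sum_i f_i\otimes s_B(g_i)$ at $x$ gives $\sum_i(f_i|_x)\,\sigma(s_B(g_i))$, where $\sigma\colon B\to D$ is the structure map; applying the characterization of $\omega^*$ to the point $s_B\in\Gscr(B)$ base-changed to $\sigma\circ s_B\in\Gscr(D)$ yields the same value for $f|_{s_B\cdot x}$. Equality of all evaluations then yields \eqref{decompose_Delta} through the bijection \eqref{usual_K[X]}.

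The substantive content is only these two evaluation computations, so I expect the main obstacle to be purely bookkeeping: keeping the identifications $\kk[\Xscr]\otimes C=C[\Xscr_C]$ and $B\otimes\kk[\Xscr]=\kk[\Gscr\times\Xscr]$ straight, tracking the base change of the acting element ($g$, respectively $\id_B$) along the structure maps, and---most crucially---correctly locating the inverse through the identity $(\id_B)^{-1}=s_B$ in $\Gscr(B)$, which is exactly what turns the contravariant pullback \eqref{(g.f)(x)} into the antipode appearing in \eqref{decompose_Delta}. Once these identifications are fixed, no genuine difficulty remains.
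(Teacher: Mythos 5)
Your proposal is correct and follows essentially the same route as the paper: the action is obtained by pulling back the translation $x\mapsto g^{-1}\cdot x$ (with the inversion inserted exactly to compensate the contravariance of pullback), and the formula \eqref{decompose_Delta} is established by a Yoneda evaluation argument in which the antipode arises from the identity ``inverse $=$ composition with $s_B$'' in the group of points. The only difference is one of bookkeeping: the paper checks \eqref{decompose_Delta} by evaluating both sides at arbitrary pairs $(x,g)\in \Xscr(C)\times\Gscr(C)$ over $\kk$, using the action formula \eqref{action}, whereas you compute $\Delta(f)=\id_B\cdot(f\otimes 1_B)$ at the universal point $\id_B\in\Gscr(B)$ (with $(\id_B)^{-1}=s_B$) and evaluate over commutative $B$-algebras -- both computations rest on the same identities \eqref{(g.f)(x)} and \eqref{evaluate_morphisms}.
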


\begin{proof}
Fix a commutative algebra $C$ and  $g\in \Gscr(C)$.
For a  commutative $C$-algebra~$D$, the  algebra homomorphism $C \to D, c\mapsto c \cdot 1_D$
 allows    us to  view  $g\in \Gscr(C)$   as an element of $\Gscr(D)$.
Consider the    automorphism of the affine scheme  $\Xscr_C $
  given, for any commutative $C$-algebra $D$, by  the bijection $$\omega_D(g,-): \Xscr_C(D) =\Xscr(D) \to \Xscr(D)=   \Xscr_C(D).$$
Thus we obtain   a group homomorphism $\Gscr(C) \to \operatorname{Aut}(\Xscr_C)$,
which we compose and pre-compose with the following group anti-isomorphisms:{\small
$$
\xymatrix{
\Gscr(C) \ar[rr]^-{\operatorname{group\, inversion}}_{ \simeq } && \Gscr(C) \ar[r]  & \Aut(\Xscr_C)  \ar[r]_-\simeq
&  \Aut_{C\hbox{-}\Com}(C[\Xscr_C]) \subset  \Aut_{\Com}(\kk[\Xscr]\otimes C).
}
$$}
The composed  group homomorphisms  $\Gscr(C) \to  \Aut_{\Com}(\kk[\Xscr]\otimes C)$ defined for all commutative algebras $C$
 yield an  action   of $\Gscr$ on the algebra $\kk[ \Xscr  ]$.
The formula \eqref{(g.f)(x)} easily follows from this definition.
 To show \eqref{decompose_Delta}, we need to verify that for all $m\in \kk[\Xscr]$,
 $$
 \Delta (m)=(\id_{\kk[\Xscr]} \otimes s_B) \Perm \omega^*(m) \in
 \kk[\Xscr] \otimes B = \kk[\Xscr \times \Gscr].
 $$
 It  suffices to show that the  evaluations of both sides at $(x,g)$ coincide
for any  commutative algebra $C$
and any   $$x \in \Xscr(C)\simeq \Hom_{\Com}(\kk[\Xscr],C)  \quad {\text {and}} \quad g \in \Gscr(C)\simeq \Hom_{\Com}(B,C)  .$$
Observe that
\begin{eqnarray} 
 \notag  g^{-1} \cdot x  &=&  \omega_C(g^{-1},x) \\
  \label{dual_of_action} &\by{dual_morphism}&   \hbox{mult}_C\circ (g^{-1}\otimes x) \circ\omega^*   \in \Xscr(C) \simeq \Hom_{\Com}(\kk[\Xscr],C).
\end{eqnarray}
By definition,
$$
\Delta (m)\vert_{(x,g)} =  \hbox{mult}_C (x \otimes g) \big( \Delta (m)\big) = m^\ell \vert_x\, m^r \vert_g \in C
$$
where   $\Delta (m)=m^\ell \otimes m^r$  in Sweedler's notation. On the other hand,
\begin{eqnarray*}
(\id_{\kk[\Xscr]} \otimes s_B) \Perm \omega^*(m)\vert_{(x,g)} &=& \hbox{mult}_C (x \otimes g) \big( (\id_{\kk[\Xscr]} \otimes s_B) \Perm \omega^*(m) \big)\\
&=&  \hbox{mult}_C (g^{-1} \otimes x) \big( \omega^*(m)\big) \\
&\by{dual_of_action} & (g^{-1} \cdot x)(m) \\
&=& (g^{-1} \cdot x)(m \otimes 1_C) \\
& \by{(g.f)(x)} &   x  \big( g \cdot (m\otimes 1_C)\big)\\
& \by{action} & x\big(m^\ell \otimes g(m^r) \big) \ = \  x(m^\ell) g(m^r) \ = \
m^\ell \vert_x  m^r \vert_g
\end{eqnarray*}
where, in  the last three lines, $\Xscr(C)\simeq \Hom_{\Com}(\kk[\Xscr],C) $  is also identified with the set $ \Hom_{C\hbox{-}\Com}(\kk[\Xscr] \otimes C,C) $.
Hence the two evaluations at $(x,g)$ coincide.
\end{proof}

We  now recall  the ``group scheme'' interpretation  of the adjoint coaction \eqref{adjoint_coaction}.

\begin{lemma}   \label{conjugation_action}
The action of~$\Gscr$  on itself     by   conjugation   induces an action  of $\Gscr$ on the algebra $B=\kk[\Gscr] $.
The  corresponding    comodule map $\Delta: {B} \to {B} \otimes {B}$  is  given~by
\begin{equation} \label{Delta^c}
\Delta (f) = f'' \otimes s_B(f') f'''
\end{equation}
for any $f\in B$, where    $(\Delta_B \otimes \id) \Delta_B(f)=f' \otimes f'' \otimes f'''$   in Sweedler's notation.
\end{lemma}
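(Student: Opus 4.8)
The plan is to deduce both assertions from Lemma~\ref{general_lemma}, applied to the affine scheme $\Xscr=\Gscr$ carrying the conjugation action $\omega\colon\Gscr\times\Gscr\to\Gscr$ given on $C$-points by $\omega_C(g,x)=gxg^{-1}$. This $\omega$ is a composite of the multiplication and inversion morphisms of the group scheme $\Gscr$, hence a morphism of $\kk$-functors, and it is a left action because $\omega_C(g_1,\omega_C(g_2,x))=(g_1g_2)\,x\,(g_1g_2)^{-1}=\omega_C(g_1g_2,x)$. Therefore Lemma~\ref{general_lemma} yields an action of $\Gscr$ on the algebra $B=\kk[\Gscr]$ (the first assertion) and identifies the corresponding comodule map via \eqref{decompose_Delta} as $\Delta=(\id_{B}\otimes s_B)\,\Perm\,\omega^{*}$. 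The remaining task is to evaluate this expression.

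The crucial computation is that of $\omega^{*}\colon B\to B\otimes B=\kk[\Gscr\times\Gscr]$. Writing $(\Delta_B\otimes\id)\Delta_B(f)=f'\otimes f''\otimes f'''$ and using the evaluation rule $\Delta_B(f)\vert_{(a,b)}=f\vert_{ab}$ from Section~\ref{Monoid schemes} together with coassociativity, I would compute, for any commutative algebra $C$ and $g,x\in\Gscr(C)\simeq\Hom_{\Com}(B,C)$,
\[
\omega^{*}(f)\vert_{(g,x)}=f\vert_{gxg^{-1}}=f'\vert_g\,f''\vert_x\,f'''\vert_{g^{-1}}.
\]
Since the inverse in $\Hom_{\Com}(B,C)$ is composition with $s_B$, one has $f'''\vert_{g^{-1}}=s_B(f''')\vert_g$, and as $g$ is an algebra homomorphism this rewrites as $\omega^{*}(f)\vert_{(g,x)}=\big(f'\,s_B(f''')\big)\vert_g\,f''\vert_x$. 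Because an element of the coordinate algebra is determined by its evaluations (Yoneda lemma), and the first tensor factor corresponds to the group variable $g$ while the second corresponds to the point $x$, this gives $\omega^{*}(f)=f'\,s_B(f''')\otimes f''$.

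Finally I would substitute into \eqref{decompose_Delta}: applying $\Perm$ and then $\id_B\otimes s_B$, and using that $s_B$ is an antihomomorphism and that $B$ is commutative (so $s_B^2=\id_B$), I obtain
\[
\Delta(f)=f''\otimes s_B\big(f'\,s_B(f''')\big)=f''\otimes s_B^2(f''')\,s_B(f')=f''\otimes f'''\,s_B(f')=f''\otimes s_B(f')\,f''',
\]
which is exactly \eqref{Delta^c}. The single genuinely delicate point is the identification of $\omega^{*}$: once conjugation is expressed as a threefold product and the antipode is matched with group inversion, everything else is a formal consequence of the involutivity and commutativity of $B$.
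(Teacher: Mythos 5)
Your proof is correct and takes essentially the same route as the paper: both deduce the first assertion from Lemma~\ref{general_lemma}, compute $\omega^*(f)=f'\,s_B(f''')\otimes f''$ by evaluating at pairs of points via \eqref{evaluate_morphisms}, and then obtain \eqref{Delta^c} from \eqref{decompose_Delta}. The only difference is cosmetic: you write out the final substitution explicitly (using that $s_B$ is an antihomomorphism with $s_B^2=\id_B$ on the commutative algebra $B$), a step the paper leaves implicit.
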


\begin{proof}
The first statement directly follows from Lemma \ref{general_lemma} and we only need to prove \eqref{Delta^c}.
The conjugation action of $\Gscr$ on itself is given by the morphism $\omega: \Gscr \times \Gscr \to \Gscr$
defined by $\omega_C(g,m)= gmg^{-1}$ for any commutative algebra $C$ and any $g,m\in \Gscr(C)$.
The  value of $\omega^*: B \to B \otimes B$ on  an element    $f\in B$ is  computed by
\begin{eqnarray*}
\omega^*(f)\vert_{(g,m)}&  \by{evaluate_morphisms} & f\vert_{\omega_C(g,m)} \\
&=& f\vert_{gmg^{-1}}\\
&= &f'\vert_g f''\vert_m f'''\vert_{g^{-1}} \\
& = & \big(f's_B(f''')\big)\vert_g f''\vert_m \ = \ \big( f's_B(f''') \otimes f'' \big)\vert_{(g,m)}.
\end{eqnarray*}
Thus,   $\omega^*(f) = f's_B(f''') \otimes f''$ and   \eqref{Delta^c} follows from \eqref{decompose_Delta}.
\end{proof}

By   Remark \ref{about_tle}.1,   an element of $  B=\kk[\Gscr]$ is cosymmetric if and only if
it is invariant under the $\Gscr$-action given by Lemma \ref{conjugation_action}.
This observation   can be used to produce cosymmetric elements    from linear representations of~$\Gscr$.
Consider an action~$\omega$ of the group scheme~$\Gscr$ on a finitely-generated free module~$M$.
The \emph{character}  of $\omega$ is  the morphism $\chi_\omega: \Gscr   \to  \Kscr$
defined, for any commutative algebra $C$,  by the composition
$$
\xymatrix{
\Gscr(C) \ar[r]^-{\omega_C} & \Aut_C(M\otimes C) \ar[r]^-{\operatorname{trace}} & C.
}
$$
Clearly, $\chi_\omega$ is $\Gscr$-equivariant if $\Kscr$ is endowed with the trivial $\Gscr$-action.
Therefore the element $t_\omega\in B$  corresponding to $\chi_\omega\in \hbox{Mor}(\Gscr,\Kscr)$ via the bijection \eqref{usual_K[X]} is $\Gscr$-invariant,
and we deduce   that $t_\omega$ is cosymmetric.
  The examples of trace-like elements given in Section \ref{trace-like_examples} for $\Gscr= \GL_N, \hbox{SL}_N, \hbox{O}_N$ arise in this way;
further examples are obtained by considering other matrix group schemes.

 \subsection{Equivariant pairings} \label{equivariance}

 Let $\Gscr$ be a group scheme acting on   modules $M,N$  and  let $q :M \times M \to N$ be a bilinear map.
Given a commutative algebra~$C$, we define
 a $C$-bilinear  map $q_C: (M\otimes C) \times (M\otimes C) \to   N\otimes C $   by
$$q_C(m_1\otimes c_1, m_2 \otimes c_2) = q(m_1,m_2) \otimes c_1c_2$$ for any  $m_1,m_2 \in M$ and $c_1,c_2 \in C$.
The pairing $q  $ is said to be \emph{$\Gscr$-equivariant} if
$$q_C \big(g\cdot (m_1\otimes c_1), g\cdot (m_2 \otimes c_2)\big) = g \cdot  q_C(m_1\otimes c_1, m_2 \otimes c_2) $$
for all $C,  m_1,m_2 ,  c_1,c_2$ as above and   all $g \in \Gscr(C)$.
If $N=\kk$ with the trivial $\Gscr$-action, then the pairing  $q$ is said to be \emph{$\Gscr$-invariant}.

\begin{lemma} \label{i_b_f}
A bilinear map $q :M \times M \to N$ is  $\Gscr$-equivariant
if, and only if, it is $B$-equivariant in the sense of
Section~\ref{comodules}, where $B=\kk[\Gscr]$ and
$M,N$ are viewed as $B$-comodules.
\end{lemma}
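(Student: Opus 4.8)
The plan is to unwind the definition of $\Gscr$-equivariance by means of the explicit formula \eqref{action} for the action on a module induced by a $B$-comodule, and then to match the outcome with the $B$-equivariance identity \eqref{B-equivariance}. Write $B=\kk[\Gscr]$ and use Sweedler's comodule notation $\Delta_M(m)=m^\ell\otimes m^r$ and $\Delta_N(n)=n^\ell\otimes n^r$. For a commutative algebra $C$ and $g\in\Gscr(C)\simeq\Hom_{\Com}(B,C)$, formula \eqref{action} gives $g\cdot(m\otimes 1_C)=m^\ell\otimes g(m^r)$. Hence, taking $c_1=c_2=1_C$ (the general case following by $C$-bilinearity of $q_C$ and $C$-linearity of the action), the left-hand side of the $\Gscr$-equivariance identity becomes $q(m_1^\ell,m_2^\ell)\otimes g(m_1^r)g(m_2^r)$ and its right-hand side becomes $q(m_1,m_2)^\ell\otimes g\big(q(m_1,m_2)^r\big)$; since $g$ is an algebra homomorphism, $g(m_1^r)g(m_2^r)=g(m_1^r m_2^r)$.

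First I would reduce to the universal instance $C=B$, $g=\id_B$, exactly as in the proof of Lemma~\ref{general_lemma}: the identity for all pairs $(C,g)$ follows from its instance at $(B,\id_B)$ upon applying $\id_N\otimes g$, while that instance is itself a special case. This shows that $\Gscr$-equivariance of $q$ is equivalent to the single identity
\[
q(m_1^\ell,m_2^\ell)\otimes m_1^r m_2^r \;=\; q(m_1,m_2)^\ell\otimes q(m_1,m_2)^r \qquad (\ast)
\]
for all $m_1,m_2\in M$; that is, to the assertion that $q$ is a morphism of right $B$-comodules $M\otimes M\to N$ for the tensor-product (diagonal) coaction on $M\otimes M$.

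It then remains to prove that $(\ast)$ is equivalent to the $B$-equivariance identity \eqref{B-equivariance}. For the implication $(\ast)\Rightarrow\eqref{B-equivariance}$ I would apply $(\ast)$ to the pair $(m_1^\ell,m_2)$ and multiply the $B$-component on the right by $s_B(m_1^r)$. Simplifying, one uses the comodule coassociativity \eqref{comodule-def} to reindex the twofold coaction of $m_1$, the commutativity of $B$ to move $s_B(m_1^r)$ past the coaction factor of $m_2$, and finally the antipode and counit axioms \eqref{antip-prop} to collapse the factor $m_1^\ell\otimes (m_1^r)'\,s_B\big((m_1^r)''\big)$ to $m_1\otimes 1_B$. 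The result is precisely $q(m_1,m_2^\ell)\otimes m_2^r=q(m_1^\ell,m_2)^\ell\otimes q(m_1^\ell,m_2)^r s_B(m_1^r)$, which is \eqref{B-equivariance}; the reverse implication is obtained by the symmetric manipulation, multiplying instead by $m_1^r$ and cancelling an antipode.

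The unwinding and the universal-element reduction are routine, duplicating the bookkeeping already performed for Lemma~\ref{general_lemma}. The main obstacle is the equivalence $(\ast)\Leftrightarrow\eqref{B-equivariance}$: conceptually it merely records that the diagonal-coaction form of equivariance and the antipode-twisted form \eqref{B-equivariance} coincide, but verifying it demands careful Sweedler bookkeeping and, crucially, the commutativity of $B$, which is what allows $s_B(m_1^r)$ to be commuted past the coaction factor of $m_2$. This hypothesis is available here since $B=\kk[\Gscr]$ is the coordinate algebra of a group scheme, hence commutative.
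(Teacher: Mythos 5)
Your proof is correct, but it takes a genuinely different route from the paper's. The paper argues directly at the level of points of the group scheme, in both directions: for the forward implication it moves $g$ across the pairing, writing $q_C\big(m_1\otimes 1_C,\, g\cdot(m_2\otimes 1_C)\big) = g\cdot q_C\big(g^{-1}\cdot(m_1\otimes 1_C),\, m_2\otimes 1_C\big)$, so that the antipode enters geometrically through the inversion $g^{-1}=g\circ s_B$ and the twisted identity \eqref{B-equivariance} appears in one stroke; the converse is a direct Sweedler computation in which a factor $s_B(m_1^{\ell r})\, m_1^r$ collapses via \eqref{antip-prop}. You instead factor the argument through an intermediate, antipode-free characterization which the paper never states: by the universal-element reduction ($C=B$, $g=\id_B$, using \eqref{action}), $\Gscr$-equivariance of $q$ is equivalent to $q$ being a morphism of right $B$-comodules $M\otimes M\to N$ for the diagonal coaction, and you then convert this untwisted identity into the twisted form \eqref{B-equivariance} by pure Hopf-algebra manipulation (comodule coassociativity, commutativity of $B$, and \eqref{antip-prop}), with a symmetric manipulation for the converse. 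Both of your Hopf-algebraic implications check out, and your bookkeeping of where commutativity of $B$ is needed is accurate. What your route buys: the Yoneda-style reduction performs the ``evaluation at points'' argument once rather than separately in each direction, and it isolates a clean conceptual fact, namely that for commutative $B$ the diagonal-coaction form of equivariance coincides with the antipode-twisted form used throughout the paper. What the paper's route buys: the origin of the antipode in \eqref{B-equivariance} is transparent (it is inversion in the group scheme), and no intermediate identity needs to be introduced; the commutativity of $B$ is then used only implicitly, through the group structure of $\Gscr(C)=\Hom_{\Com}(B,C)$.
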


\begin{proof}
Assume first  that $q$ is $\Gscr$-equivariant.   We must prove that
\begin{equation} \label{G-equivariance}
q(m_1, m_2^{\ell}) \otimes  m_2^{r} = \big(q(m_1^{\ell},m_2)\big)^{\ell} \otimes  \big(q(m_1^{\ell},m_2)\big)^{r}  s_B(m_1^{r}) \ \in N \otimes B
\end{equation}
for any $m_1, m_2 \in M$. For a  commutative algebra $C$ and   $g\in \Gscr(C)$, we have
\begin{eqnarray*}
q(m_1, m_2^{\ell})\otimes m_2^{r}\vert_g & = & q_C(m_1 \otimes 1_C ,  m^{\ell}_2 \otimes m_2^{r}\vert_g)\\
 &\by{action} & q_C\big( m_1 \otimes 1_C,g \cdot (m_2\otimes 1_C) \big) \\
&=&   g \cdot q_C\big( g^{-1} \cdot ( m_1 \otimes 1_C), m_2\otimes 1_C \big) \\
& \by{action}     & g \cdot q_C\big(m_1^{\ell} \otimes m_1^{r}\vert_{g^{-1}} , m_2\otimes 1_C\big) \\
&=& g\cdot \big(q(m_1^{\ell},m_2)\otimes  s_B(m_1^{r})\vert_g\big) \\
& \by{action} & \big( q(m_1^{\ell},m_2)\big)^{\ell} \otimes   \big(q(m_1^{\ell},m_2)\big)^{r}\big\vert_g\, s_B(m_1^{r})\vert_g.
\end{eqnarray*}
Hence the formula \eqref{G-equivariance} follows.

Conversely, assume \eqref{G-equivariance} and let $m_1,m_2 \in M$, $c_1,c_2 \in C$ and $g\in \Gscr(C)$ where~$C$ is an arbitrary commutative algebra. Consider the comodule map $\Delta_M:M\to M\otimes B$ and expand  
$(\Delta_M\otimes \id_B)\Delta_M(m)= m^\ell\otimes m^{\ell r} \otimes m^r$ for all $m\in M$.  Then
\begin{eqnarray*}
&&  q_C \big(g\cdot (m_1\otimes c_1), g\cdot (m_2 \otimes c_2)\big) \\
&\by{action} & q_C\big(m_1^\ell \otimes m_1^{r}\vert_g c_1, m_2^\ell \otimes m_2^{r}\vert_g \, c_2\big) \\
 &=& q(m_1^\ell,m_2^\ell) \otimes m_2^{r}\vert_g    m_1^{r}\vert_g \, c_1c_2  \\
 & \by{G-equivariance} &\left.\big(q(m_1^\ell,m_2)\big)^\ell \otimes  \left( \big(q(m_1^\ell,m_2)\big)^r  s_B (m_1^{\ell r})\right)\right\vert_g   m_1^r \vert_g \, c_1c_2  \\
 &=&  \big(q(m_1^\ell,m_2)\big)^\ell \otimes     \big(q(m_1^\ell,m_2)\big)^r \big\vert_g  \left(s_B(m_1^{\ell r}) m_1^r\right)\vert_g \,  c_1c_2  \\
 &=&  \big( q(m_1,m_2)\big)^\ell \otimes     \big(q(m_1,m_2)\big)^r \big\vert_g \, c_1c_2  \\
&  \by{action}  & g \cdot \big(  q(m_1,m_2) \otimes c_1c_2\big) \
= \ g \cdot  q_C(m_1\otimes c_1, m_2 \otimes c_2).
\end{eqnarray*}
This proves the $\Gscr$-equivariance of $q$.
\end{proof}

For example, consider the action of $\Gscr$ on the algebra
$B=\kk[\Gscr]$ induced by conjugation (see Lemma \ref{conjugation_action}).
  Remark \ref{about_bd}.2
and Lemma~\ref{i_b_f} imply  that a bilinear form $ B \times B \to \kk$ is balanced
if and only if it is symmetric and  $\Gscr$-invariant.

\subsection{The Lie algebra of a group scheme} \label{Lie_algebra}

 Let $\Gscr$ be a group scheme with coordinate algebra
 $B=\kk[\Gscr]$. Consider the ideal   $I = \Ker(\varepsilon_B)$
 of~$B$, where $\varepsilon_B:B \to \kk$ is the counit.

\begin{lemma} \label{stabilizes_I}
The action   of $\Gscr$ on ${B}$  induced by  conjugation   (see
Lemma \ref{conjugation_action}) stabilizes the ideal $I$ and all
its powers.
\end{lemma}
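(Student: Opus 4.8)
The plan is to translate the statement into the language of comodules and reduce it to a one-line computation with the antipode. By the equivalence between $\Gscr$-actions and right $B$-comodules recalled in Section~\ref{Monoid schemes}, together with the explicit formula \eqref{action}, a submodule $J \subseteq B$ is stabilized by the conjugation action if and only if it is a subcomodule, i.e. $\Delta(J) \subseteq J \otimes B$, where $\Delta$ is the adjoint coaction of Lemma~\ref{conjugation_action}. Indeed, for $f \in J$ with $\Delta(f) = f^\ell \otimes f^r \in J \otimes B$ one has $g\cdot(f \otimes 1_C) = f^\ell \otimes g(f^r) \in J \otimes C$ for every $g \in \Gscr(C)$ by \eqref{action}; conversely, taking $g = \id_B \in \Gscr(B)$ recovers $\Delta(f) \in J \otimes B$ from stability. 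Thus it suffices to prove $\Delta(I^k) \subseteq I^k \otimes B$ for all $k \geq 0$.

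First I would treat $k = 1$. Since $B = \kk\,1_B \oplus I$ and $\varepsilon_B$ is the projection onto the first summand, we have $I \otimes B = \Ker(\varepsilon_B \otimes \id_B)$, so $\Delta(f) \in I \otimes B$ if and only if $(\varepsilon_B \otimes \id_B)\Delta(f) = 0$. Using the formula $\Delta(f) = f'' \otimes s_B(f')\,f'''$ from \eqref{Delta^c} and applying $\varepsilon_B$ to the \emph{first} (left) leg, I get $(\varepsilon_B \otimes \id_B)\Delta(f) = \varepsilon_B(f'')\,s_B(f')\,f'''$; applying the counit to the central tensorand collapses the three-fold coproduct to the ordinary coproduct of $f$, leaving $s_B(f')\,f'' = \varepsilon_B(f)\,1_B$ by the antipode property \eqref{antip-prop}. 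For $f \in I$ we have $\varepsilon_B(f) = 0$, hence $\Delta(f) \in I \otimes B$, which proves that $I$ is stabilized.

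For the powers I would bootstrap from the case $k = 1$ using that $\Delta$ is an algebra homomorphism: Lemma~\ref{conjugation_action} gives an action on the \emph{algebra} $B$, so the coaction $\Delta$ is multiplicative. Since $(I^i \otimes B)(I^j \otimes B) \subseteq I^{i+j} \otimes B$ inside $B \otimes B$, multiplicativity yields $\Delta(I^k) = \Delta(I)^k \subseteq (I \otimes B)^k \subseteq I^k \otimes B$; equivalently, each $g$ acts by an algebra automorphism, so $g \cdot I \subseteq I$ forces $g \cdot I^k \subseteq I^k$. This gives the claim for every $k$. The computation is entirely routine; the only points requiring care are the bookkeeping of which tensor factor carries the ideal (the coaction lands in $B \otimes B$ with the \emph{left} factor the coordinate algebra being acted on and the \emph{right} factor the Hopf algebra $B$) and the clean identity $(\varepsilon_B \otimes \id_B)\Delta = \varepsilon_B(-)\,1_B$, which is precisely the statement that $\varepsilon_B \in B$ is a fixed point of conjugation. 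I do not expect any genuine obstacle beyond invoking the action/comodule dictionary correctly.
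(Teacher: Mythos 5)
Your proof is correct, but it takes a different route from the paper's. The paper reduces to the case of $I$ itself exactly as you do (each $g$ acts by a $C$-algebra automorphism, so stability of $I$ forces stability of $I^k$), but then it argues \emph{geometrically} on the action side: it identifies $I\otimes C$ with $\Ker(\varepsilon_{C[\Gscr_C]})$, notes that this counit is evaluation at the neutral element $1\in\Gscr(C)$, and uses that conjugation fixes $1$, so that $\big(g\cdot(b\otimes c)\big)\vert_1=(b\otimes c)\vert_{g^{-1}1g}=(b\otimes c)\vert_1=0$ for $b\in I$. You instead pass to the comodule side via the action/comodule dictionary of Section~\ref{Monoid schemes} and prove $\Delta(I)\subseteq I\otimes B$ by a purely Hopf-algebraic computation: with \eqref{Delta^c}, $(\varepsilon_B\otimes\id_B)\Delta(f)=\varepsilon_B(f'')\,s_B(f')f'''=s_B(f')f''=\varepsilon_B(f)1_B$ by the counit and antipode axioms, together with the splitting $B=\kk 1_B\oplus I$ which identifies $\Ker(\varepsilon_B\otimes\id_B)$ with $I\otimes B$. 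These are two faces of the same fact (the counit, i.e., the neutral element of $\Gscr(\kk)$, is fixed by conjugation), but your version works universally with a single coaction computation and never needs the base-changed Hopf algebra $C[\Gscr_C]$ or the evaluation formalism \eqref{(g.f)(x)}, whereas the paper's version makes the geometric content transparent at the cost of that extra bookkeeping. Two minor points: your phrase ``$\varepsilon_B\in B$ is a fixed point of conjugation'' is a slip --- $\varepsilon_B$ is not an element of $B$ but the neutral element of $\Gscr(\kk)\simeq\Hom_{\Com}(B,\kk)$; and for $k\geq 2$ the map $I^k\otimes C\to B\otimes C$ need not be injective, so ``stabilized'' should be read as stabilizing the image of $I^k\otimes C$ --- your argument (choose a representative of $\Delta(f)$ in $I^k\otimes B$ and apply \eqref{action}) handles this, and the paper is no more careful on this point.
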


\begin{proof}
Since~$\Gscr$ acts on $B$ by algebra automorphisms, it suffices to
prove that the action   of~$\Gscr$ on~${B}$ stabilizes~$I$, i.e.,
that
\begin{equation} \label{g(I.C)}
g\cdot(I \otimes C) \subset I\otimes C
\end{equation}
for any commutative algebra $C$ and   any $g\in \Gscr(C)$. Observe that
$$
I \otimes C = \Ker(\varepsilon_B \otimes \id_C) = \Ker(\varepsilon_{C[\Gscr_C]})
$$
where $\Gscr_C$ denotes the group scheme over $C$ induced by $\Gscr$ and $C[\Gscr_C]$
 is the corresponding  commutative   Hopf algebra over $C$.
Then, for any $b\in I$ and $c\in C$, we~have
\begin{eqnarray*}
\varepsilon_{C[\Gscr_C]}\big(g \cdot(b\otimes c)\big)  &=& \big(g \cdot(b\otimes c)\big) \vert_1 \\
&\by{(g.f)(x)} & (b\otimes c)\vert_{g^{-1}1g} \\
&=& (b\otimes c)\vert_1 \ = \  \varepsilon_{C[\Gscr_C]}(b\otimes c) \ = \ \varepsilon_B(b) \otimes c \ = \ 0
\end{eqnarray*}
where $1$ denotes the   neutral    element    of the group $\Gscr_C(C)=
\Gscr(C)$.  It  follows that $g \cdot(b\otimes c) \in I
\otimes C $. This proves \eqref{g(I.C)}.
\end{proof}

Denote by $\mathfrak{g}$  the module of derivations $B\to \kk$,
 where~$\kk$ is regarded as a  $B$-module via~$\varepsilon_B$.
 Then $\mathfrak{g}$ is a Lie algebra  with Lie bracket
 $$
 [\mu,\nu](b) = \mu(b') \nu(b'') - \nu(b') \mu(b'')
 $$
 for any   $\mu,\nu \in \mathfrak{g}$ and $b\in B$, where  $\Delta_B(b)=b' \otimes b''$ is the comultiplication
 in~$B$.
   Any derivation $ B \to \kk $ induces  a linear map
$ I/I^2\to \kk$, and this   identifies~$\mathfrak{g}$   with  $(I/I^2)^*$.

  We say that  the group scheme $\Gscr$ is \emph{infinitesimally-flat} if  the module $I/I^2$ is finitely-generated and projective,
or, equivalently, is   finitely-presented   and flat, cf.\ \cite[Chap$.$ II, \S 5.2]{Bo}.
   Our  condition of infinitesimal flatness is   somewhat weaker than   the one  in \cite{Ja} and, when
  $\kk$ is a field, is equivalent to the condition that  the Lie algebra $\mathfrak{g}$ is   finite-dimensional.
If $\Gscr$ is  infinitesimally-flat, then  for any commutative algebra~$C$, we have isomorphisms
\begin{equation} \label{inf-flat}
\mathfrak{g} \otimes C = (I/I^2)^* \otimes C \simeq \Hom(I/I^2,C)
\simeq \Hom_C\big((I/I^2) \otimes C,C\big).
\end{equation}
 Lemma \ref{stabilizes_I} yields    an  action   of~$\Gscr$ on
the module $I/  I^2$, which  induces   an action of~$\Gscr$
on~$\mathfrak{g} $ via  \eqref{inf-flat}.  The latter action
  is  called the \emph{adjoint representation} of~$\Gscr$.
 A~\emph{metric} in $\mathfrak{g}$ is a  $\Gscr$-invariant nonsingular symmetric bilinear form  $ \mathfrak{g} \times \mathfrak{g} \to \kk$.

\begin{lemma} \label{metric_on_g}
Let $\Gscr$ be an infinitesimally-flat group scheme with Lie algebra $\mathfrak{g}$.
There is a  canonical    embedding of the set of  metrics  in $\mathfrak{g}$
into the set of balanced biderivations in~$B=\kk[\Gscr]$.
\end{lemma}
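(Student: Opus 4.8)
The plan is to realize the desired embedding as a composite of two injections: first dualize a metric on $\mathfrak{g}$ into a nonsingular symmetric $\Gscr$-invariant bilinear form on $I/I^2$, and then transport such a form to a balanced biderivation in $B$ by means of the correspondence~\eqref{correspondence}. The whole argument is an exercise in transporting a single invariant form across several canonical identifications, keeping track of the $\Gscr$-actions throughout.

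First I would record the key dictionary: under~\eqref{correspondence}, biderivations in $B$ correspond to bilinear forms on $I/I^2$, and this correspondence is $\Gscr$-equivariant for the conjugation action. Indeed, by Lemma~\ref{stabilizes_I} the conjugation action of $\Gscr$ on $B$ stabilizes $I$ and $I^2$, hence descends to the adjoint representation on $I/I^2$ (Section~\ref{Lie_algebra}), and the surjection $p:B \to I/I^2$ is $\Gscr$-equivariant by construction. Combining this with Remark~\ref{about_bd}.2, Lemma~\ref{conjugation_action}, and Lemma~\ref{i_b_f}, I obtain that~\eqref{correspondence} restricts to a bijection
\[
\{\text{balanced biderivations in } B\} \ \stackrel{\simeq}{\longleftrightarrow}\ \{\text{symmetric $\Gscr$-invariant bilinear forms on } I/I^2\}.
\]

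Next, using infinitesimal flatness, the module $M:=I/I^2$ is finitely generated and projective, hence reflexive, and $\mathfrak{g}=M^*$ with $\mathfrak{g}^*\cong M$ compatibly with the $\Gscr$-actions, by~\eqref{inf-flat} and the definition of the adjoint representation. A metric $g$ on $\mathfrak{g}$ is by definition a nonsingular symmetric $\Gscr$-invariant form, so its adjoint $\hat g:\mathfrak{g}\to\mathfrak{g}^*\cong M$ is a $\Gscr$-equivariant isomorphism. I would then define the \emph{inverse form} $g^\vee$ on $M$ by $g^\vee(\xi,\eta)=g\big(\hat g^{-1}(\xi),\hat g^{-1}(\eta)\big)$; it is symmetric, nonsingular, and, since $\hat g^{-1}$ is $\Gscr$-equivariant and $g$ is $\Gscr$-invariant, also $\Gscr$-invariant. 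By the displayed bijection, $g^\vee$ corresponds to a balanced biderivation $\bullet_g$ in $B$, and the assignment $g\mapsto\bullet_g$ is the required map. Injectivity follows because $g\mapsto g^\vee$ is itself injective, being a bijection between nonsingular symmetric forms on $\mathfrak{g}$ and on $M$ whose inverse is the analogous dualization using $M^{**}=M$, and because the passage $g^\vee\mapsto\bullet_g$ is a bijection onto its image.

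The main obstacle is the bookkeeping of the three $\Gscr$-module structures — on $\mathfrak{g}$, on $\mathfrak{g}^*$, and on $M=I/I^2$ — and checking that they are matched correctly, so that $\hat g$ and $g^\vee$ are genuinely equivariant and invariant. This rests on verifying that the identifications of~\eqref{inf-flat} intertwine the adjoint action on $\mathfrak{g}$ with the action induced on $I/I^2$ by Lemma~\ref{stabilizes_I}, and that restriction along $p$ in~\eqref{correspondence} is $\Gscr$-equivariant. Once these equivariances are in place, the dualization and reflexivity arguments for finitely generated projective modules are routine.
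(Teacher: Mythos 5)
Your proof is correct and follows essentially the same route as the paper's: you identify balanced biderivations in $B$ with $\Gscr$-invariant symmetric bilinear forms on $I/I^2$ via the correspondence \eqref{correspondence} (using the equivariance of $p$ and the equivalence of balancedness with invariance), and then use the duality $\mathfrak{g}=(I/I^2)^*$ to match metrics on $\mathfrak{g}$ with nonsingular invariant symmetric forms on $I/I^2$. The only difference is one of detail: the paper dismisses the second step as a ``canonical bijective correspondence,'' whereas you make it explicit via the inverse-form construction and the reflexivity of the finitely generated projective module $I/I^2$.
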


\begin{proof}
The  linear map  $p:B \to I/I^2$ defined by $b \mapsto (b-\varepsilon_B(b)1_B)\! \mod I^2$ is $\Gscr$-equivariant,
and the $\Gscr$-invariance of a  symmetric bilinear form $\bullet: B \times B \to \kk$ is equivalent to the condition~\eqref{balanced}
as we saw at the end of Section~\ref{equivariance}.
Therefore,    we have a  bijective  correspondence
$$
\xymatrix{
\left\{\hbox{\small balanced biderivations in $B$} \right\}  \ar@/^2pc/[rr]^-{\hbox{\scriptsize restriction}}
\!\!\!\!   \!\!\!\!  && \ar@/^2pc/[ll]^-{\hbox{\scriptsize pre-composition with $p\times p$}} 
\!\!\!\!  \!\!\!\!   \left\{\hbox{\small $\Gscr$-invariant symmetric bilinear forms in $I/I^2$} \right\}.
}
$$
 Since   $\mathfrak{g}=(I/I^2)^*$, there is a canonical bijective correspondence
  between the set of nonsingular $\Gscr$-invariant symmetric bilinear forms in $I/I^2$ and the set of  metrics in $\mathfrak{g}$.
\end{proof}

\subsection{Representation algebras}
 We   give an interpretation  of the coaction    in
Lemma~\ref{coaction_M} (at least in the ungraded case).

\begin{lemma}
\label{conjugation_action_bis} Let $\Gscr$ be    a group scheme
with coordinate   algebra $B=\kk[\Gscr] $ and let $ s_B:{B \to B}$
be the antipode of  $B$. For any (ungraded) cocommutative bialgebra~$A$, the action of $\Gscr$ on $B$ given by
Lemma \ref{conjugation_action} induces, in a natural way, an
action of $\Gscr$ on the affine scheme $\Yscr_B^A$ of
$B$-representations of $A$. The corresponding comodule map
$\Delta: A_B \to A_B \otimes B$ is the unique algebra homomorphism such that
\begin{equation} \label{Delta(x_b)}
\Delta (x_b) = x_{b''} \otimes s_B(b') b'''
\end{equation}
for any  $x\in A$ and  $b\in B$.
\end{lemma}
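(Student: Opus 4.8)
The plan is to realize the asserted coaction as the one coming from a genuine conjugation action on the functor of points of $\Yscr=\Yscr_B^A$, and then to pin down the induced comodule map through Lemma~\ref{general_lemma}. Recall that for a commutative algebra $C$ the set $\Yscr(C)=\Rscr^A_B(C)$ consists of the algebra homomorphisms $u\colon A\to H_B(C)$ satisfying \eqref{strange-property}, whereas $\Gscr(C)=\Hom_{\Com}(B,C)$ is the group of those $g\in H_B(C)$ that are algebra homomorphisms $B\to C$; each such $g$ is group-like for the convolution product $\ast$, with inverse $g\circ s_B$ (the composition being an algebra homomorphism since $B$ is commutative). First I would define, for $g\in\Gscr(C)$ and $u\in\Yscr(C)$, the map $g\cdot u\colon A\to H_B(C)$ by $(g\cdot u)(x)=g\ast u(x)\ast(g\circ s_B)$. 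In the group-scheme case $A=\kk\Gamma$ this is the pointwise conjugation $\gamma\mapsto g\,u(\gamma)\,g^{-1}$, so it is the expected lift of the conjugation action of Lemma~\ref{conjugation_action}.

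Next I would verify that $g\cdot u$ again lies in $\Yscr(C)$ and that $\omega_C\colon(g,u)\mapsto g\cdot u$ is a functorial left action, thereby producing a morphism of affine schemes $\omega\colon\Gscr\times\Yscr\to\Yscr$. Since conjugation by the $\ast$-invertible element $g$ is an algebra automorphism of the convolution algebra $H_B(C)$, the composite $g\cdot u$ is again an algebra homomorphism. The step requiring genuine work, and the main obstacle, is to check that $g\cdot u$ still satisfies the two identities \eqref{strange-property}. Evaluation at $1_B$ is immediate because $g(1_B)=(g\circ s_B)(1_B)=1_C$; for multiplicativity over $B$ I would expand $(g\cdot u)(x)(bc)$ as $g(b'c')\,u(x)(b''c'')\,(g\circ s_B)(b'''c''')$, apply the second identity in \eqref{strange-property} for $u$ together with the multiplicativity of $g$ and of $g\circ s_B$, and finally use the commutativity of $C$ to regroup the factors into $(g\cdot u)(x')(b)\,(g\cdot u)(x'')(c)$. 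That $\omega$ is a left action follows from $(g\ast h)\ast u(x)\ast(g\ast h)^{-1}=g\ast\big(h\ast u(x)\ast h^{-1}\big)\ast g^{-1}$, and naturality in $C$ is clear from the definitions.

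Finally I would identify the comodule map. Applying Lemma~\ref{general_lemma} to the action $\omega$ on $\Yscr$, with coordinate algebra $A_B=\kk[\Yscr]$, yields an action of $\Gscr$ on the algebra $A_B$ whose associated comodule map $\Delta\colon A_B\to A_B\otimes B$ is an algebra homomorphism described by \eqref{decompose_Delta}; in particular, as in the proof of that lemma, $\Delta(m)\vert_{(u,g)}=(g^{-1}\cdot u)(m)$ for $m\in A_B$, $u\in\Yscr(C)$ and $g\in\Gscr(C)$. Evaluating on a generator $m=x_b$ and using the bijection of Lemma~\ref{just--+} (so that $(g^{-1}\cdot u)(x_b)=(g^{-1}\cdot u)(x)(b)$), together with $(g^{-1})^{-1}=g$ and $g^{-1}=g\circ s_B$, gives $\Delta(x_b)\vert_{(u,g)}=g\big(s_B(b')\big)\,u(x)(b'')\,g(b''')$, which is precisely the evaluation of $x_{b''}\otimes s_B(b')\,b'''$ at $(u,g)$. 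Since the evaluations at all such pairs separate the elements of $A_B\otimes B=\kk[\Yscr\times\Gscr]$ by \eqref{usual_K[X]}, I conclude $\Delta(x_b)=x_{b''}\otimes s_B(b')\,b'''$, that is \eqref{Delta(x_b)}. As $\Delta$ is then an algebra homomorphism satisfying \eqref{Delta_M}, the uniqueness clause of Lemma~\ref{coaction_M} identifies it with the coaction constructed there, which completes the proof.
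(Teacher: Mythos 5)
Your proof is correct and follows essentially the same route as the paper's: lift the conjugation action to the points of $\Yscr^A_B$, invoke Lemma~\ref{general_lemma} to get the coaction on $A_B=\kk[\Yscr^A_B]$, and identify it on the generators $x_b$ by evaluating against pairs $(u,g)$. Your definition $(g\cdot u)(x)=g\ast u(x)\ast(g\circ s_B)$ coincides with the paper's formula $(g\cdot u)(x)(b)=u(x)\big(g^{-1}\cdot(b\otimes 1_C)\big)$ (unwind the adjoint coaction and use $g^{-1}=g\circ s_B$ with $s_B^2=\id$), and has the merit of making the membership $g\cdot u\in\Yscr^A_B(C)$ — which the paper dismisses as "easily verified" — genuinely transparent.
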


\begin{proof}
Let $C$ be a commutative algebra. Recall that  in our
notation,
$$H_B(C)= \Hom(B,C)     \simeq \Hom_C(B\otimes C,C)$$
and that $H_B(C)$   is an algebra with   convolution
multiplication.    For   $g\in \Gscr(C)$ and  $u\in \Yscr^A_B(C) \subset \Hom(A, H_B(C))$,
we define a linear map $g\cdot u: A \to H_B(C)$   by
\begin{equation} \label{g.u}
(g\cdot u)(x)(b) = u(x)\big(g^{-1}\cdot(b\otimes 1_C)\big) \in C,
\end{equation}
 where  $x$ runs over $ A$ and $b$ runs over $B$.  It can be
 easily verified that $g\cdot u$ is an algebra homomorphism
  satisfying \eqref{strange-property}, that is   $g\cdot u \in \Yscr^A_B(C)$.
  This  defines an action of the group $\Gscr(C)$ on the set $\Yscr^A_B(C)$.
Varying $C$, we obtain an  action,  $\omega$, of
the  group scheme $\Gscr $  on the  affine scheme
$\Yscr^A_B$.  The induced map   $\omega^*: A_B \to B \otimes
A_B$ is evaluated on any   generator $x_b$ of  $A_B$ as follows:
for any   $C$,   $g $,  $u$ as above,
\begin{eqnarray*}
\omega^*(x_b)\vert_{(g,u)} \  \by{evaluate_morphisms} \ (x_b)\vert_{\omega_C(g,u)}
&=& (x_b)\vert_{g\cdot u}\\
&=& (g\cdot u)(x)(b) \\
&\by{g.u}&   u(x)\big(g^{-1}\cdot (b\otimes 1_C)\big)  \\
& \by{action} & u(x)\big( b^\ell  \otimes b^r\vert_{g^{-1}} \big) \\
&=& u(x) \big( b'' \otimes (s_B(b')b''')\vert_{g^{-1}} \big)\\
&=& \big(u(x)(b'')\big) \cdot  (s_B(b')b''')\vert_{g^{-1}} \\
&=& (s_B(b')b''')\vert_{g^{-1}} \cdot   \big(u(x)(b'')\big) \\
&=& (b's_B(b'''))\vert_g\cdot (x_{b''})\vert_u \\
&=& \big(b's_B(b''') \otimes x_{b''}\big)\vert_{(g,u)}.
\end{eqnarray*}
Here the third and the ninth identity follow from the description
of the bijection $\Yscr^A_B(C) \simeq \Hom_{\Com}(A_B,C)$
in the proof of Lemma \ref{just--+},
 we  use Lemma~\ref{conjugation_action} in the sixth identity
 to compute $\Delta(b)=b^\ell \otimes b^r\in B\otimes B$,  and, starting from the seventh identity,
 the dot denotes the multiplication in $C$. We deduce that
 $$
 \omega^*(x_b)= b's_B(b''') \otimes x_{b''} \in B \otimes A_B
 $$
 and \eqref{Delta(x_b)} then follows from \eqref{decompose_Delta}.
\end{proof}

We illustrate Lemma \ref{conjugation_action_bis}  with  two examples.
In both, $\Gscr$ is   a group scheme with coordinate   algebra $B$.
Let $A=\kk \Gamma$ be the   bialgebra of  a monoid~$\Gamma$ and recall from Section~\ref{ex_monoid_algebras}
the isomorphism of group schemes
$$
\Yscr^A_B \simeq \Hom_{\Mon}(\Gamma,\Gscr(-)).
$$
 Under this isomorphism, the action of $\Gscr$ on $\Yscr^A_B$ given  in    Lemma \ref{conjugation_action_bis} corresponds to the action of~$\Gscr$
on $\Hom_{\Mon}(\Gamma,\Gscr(-))$ by conjugation on the target.

Assume now that   $\Gscr$ is infinitesimally-flat,  and let
 $A=U(\mathfrak{p})$ be the enveloping algebra of a Lie algebra $\mathfrak{p}$.
 Recall  from Section~\ref{enveloping_algebras} the  isomorphism of group schemes
$$
\Yscr^A_B \simeq \Hom_{\Lie}\big(\mathfrak{p}, \mathfrak{g}\otimes(-)\big).
$$
 Under this isomorphism,   the action of $\Gscr$ on $\Yscr^A_B $ given in    Lemma
\ref{conjugation_action_bis} corresponds to  the action of $\Gscr$
on ${\Hom_{\Lie}\big(\mathfrak{p}, \mathfrak{g}\otimes(-)\big)}$ induced by the   adjoint representation of~$\mathfrak{g}$.

\section{Relations to Van den Bergh's    double brackets} \label{relation_to_VdB}

   We   outline    relations between our work and  Van den Bergh's  theory  of double brackets.

\subsection{Double brackets} \label{d_b}

Van den Bergh \cite{VdB} introduced
  double brackets in   algebras
as  non-commutative versions of   Poisson brackets in commutative algebras.
We  recall here    his    main  definitions and results reformulated for  graded algebras.

Fix an   integer $n $.  An  {\it $n$-graded double bracket}     in a graded algebra $A$ is a  linear map
$\double{-}{-}:A^{\otimes 2} \to A^{\otimes 2}$ satisfying  certain conditions formulated in \cite{VdB} (see also  \cite{MT_high_dim} for     the graded case).
 These conditions   amount to an $n$-graded version  of  the    Leibniz derivation rule  with respect to each of the two variables,   the inclusion
$$
\double{A^p}{A^q}  \subset  \bigoplus_{{i+j=p+q+{{n}}}}\, A^i \otimes A^j
$$
for any   $p,q \in \ZZ$, and   the {\it ${{n}}$-antisymmetry}
\begin{equation} \label{n-antisymmetry}
\double{y}{x} = - (-1)^{\vert x \vert_n \vert y \vert_n+ \left\vert \double{x}{y}^\ell\right\vert  \left\vert \double{x}{y}^r\right\vert }
\double{x}{y}^r \otimes \double{x}{y}^\ell
\end{equation}
 for  any homogeneous $x,y\in A$ where  we expand   $\double{x}{y}={\double{x}{y}^\ell \otimes \double{x}{y}^r}$.

Pick  now an  integer $N\geq 1$.
The functor $$\Hom_{\grAlg}(A,\hbox{Mat}_N(-)):\grCom   \longrightarrow   \Set, $$
which assigns to any  commutative graded algebra $C$ the set of $C$-linear actions  of $A$ on $C^N$
is   representable:  one   defines by generators and relations a  commutative graded algebra  $A_N$ such that $  \Hom_{\grAlg}(A,\hbox{Mat}_N(C)) \simeq \Hom_{\grCom}(A_N,C) $ for any $C$.
The  generators of $A_N$ are the symbols $x_{ij}$ where $x$ runs over $  A$ and $i,j$ run over the set $\overline{N}=  \{1,\dots, N\}$.   Van den Bergh   shows that   any    $n$-graded double bracket   $\double{-}{-}$ in $A$
induces  an  $n$-antisymmetric $n$-graded bracket in $A_N$   by
\begin{equation} \label{VdB_formula}
\bracket{x_{ij}}{y_{uv}}=  \double{x}{y }_{uj}^\ell  \double{x }{y }_{iv}^r
\end{equation}
for   $x,y  \in A$ and $i,j,u,v\in \overline{N}$.
To study the Jacobi identity, Van den Bergh   associates to $\double{-}{-}$   an  endomorphism $\triple{-}{-}{-}$ of $A^{\otimes 3}$,    the  {\it     triple bracket},   by
\begin{equation}\label{triplebracket}
\triple{-}{-}{-} = \sum_{i=0}^2  \Perm_{312}^i ( \double{-}{-}
\otimes \id_A) (\id_A \otimes \double{-}{-}) \Perm_{312,n}^{-i}
\end{equation}
where    $\Perm_{312}, \Perm_{312,n}\in \End(A^{\otimes 3})$ are as    in Section~\ref{triple}.
The double bracket $\double{-}{-}$ is \emph{Gerstenhaber of degree $n$} if $\triple{-}{-}{-}=0$.
This condition implies  that, for any  $N\geq 1$,   the bracket \eqref{VdB_formula} in $A_N$ is Gerstenhaber of degree $n$.
  Gerstenhaber double brackets of degree~$0$ in ungraded algebras are called \emph{Poisson double brackets}.

\subsection{Fox pairings versus double brackets} \label{FPvDB}
As we now explain (without proofs),   Fox pairings and   double brackets  in Hopf algebras  are closely related.
Consider an   involutive   graded Hopf algebra $A$ with counit $\varepsilon_A$, comultiplication $\Delta_A$, and   antipode $s_A$.
Any  antisymmetric  Fox   pairing~$\rho$ of degree $n \in \ZZ $  in~$A$
induces an $n$-graded  double bracket   $\double{-}{-}_\rho   $  in~$A$~by
\begin{equation} \label{double_rho}
  \double{x}{y}_{ \rho}   = (-1)^{\vert y' \vert  \vert x\vert_n + \vert x'\vert \vert  (\rho(x'',y''))'  \vert}\,    y's_A \!\left(\big(\rho( x'', y'')\big)'\right)  x' \otimes \big(\rho( x'', y'')\big)''
\end{equation}
where   $x,y$ run over all      homogeneous   elements of $  A$.
The pairing $\rho$ may be recovered from this double bracket by     $\rho=(\varepsilon_A  \otimes \id_A)  \double{-}{-}_{\rho} $.
Thus,  for an involutive graded Hopf algebra $A$,   we can view   $n$-graded double brackets in~$A$ as generalizations of antisymmetric  Fox pairings in~$A$ of degree $n$.
  For   cocommutative $A$, we make  a few further claims:
\begin{itemize}
\item[(i)] An  $n$-graded double bracket $\double{-}{-}$ in $A$
arises from an antisymmetric  Fox pairing of degree~$n$ in~$A$  if and only if $ \double{-}{-}$  is \emph{reducible}
in the   sense that
 \begin{equation*}
  x' s_A\big( \double{x''}{y'}^{ \ell}  \big) \otimes  \double{x''}{y'}^r  s_A(y'')  \in  \Delta_A  (A)  \subset  A \otimes A
\end{equation*}
  for any $x,y \in A$.
If   $\double{-}{-}=\double{-}{-}_\rho $    arises   from an antisymmetric  Fox pairing $\rho$ in $A$ of degree $n$,
then the   tritensor map  \eqref{triring}   and the tribracket \eqref{triplebracket} are related~by
\begin{equation} \label{tripleS}
\triple{-}{-}{-} = \Perm_{213} \circ \triplep{-}{-}{-} \circ \Perm_{213,n}.
\end{equation}
\item[(ii)] If  $\double{-}{-}$ is an  $n$-graded double bracket in $A$ and  $B$ is a commutative ungraded  Hopf algebra   equipped with a trace-like    $t\in B$,
then there  is a unique   $n$-graded $n$-antisymmetric bracket $\bracket{-}{-}$ in the graded algebra $A_B  $
 such that
\begin{equation} \label{xbyc}
\bracket{x_b}{y_c} =(-1)^{ \vert x''\vert   \vert y'\vert_n   }
\double{x'}{y'}_{t^{(1)}}^{\ell}\, \double{x'}{y'}^{r}_{t^{(3)}}\,
x''_{b \curlyvee t^{(2)}} \, y''_{c \curlyvee t^{(4)}}
\end{equation}
for any   $x,y \in A$ and   $b,c \in B$, where  $\curlyvee$ is defined by  \eqref{ltimes}.
When   $\double{-}{-}=\double{-}{-}_\rho $    arises  from an antisymmetric  Fox pairing $\rho$  of degree $n$,
  the bracket \eqref{xbyc} coincides with the bracket  \eqref{main_formula}
derived from $\rho$ and $\bullet=\bullet_t$.
\end{itemize}

  These claims have the following consequences.
First of all, it follows from \eqref{tripleS} that   an antisymmetric  Fox pairing $\rho$ in
a  cocommutative  graded Hopf algebra $A$    is Gerstenhaber in our sense
if and only if the double bracket $ \double{-}{-}_\rho$ defined by \eqref{double_rho} is Gerstenhaber in the sense of   \cite{VdB}.
For instance, it is proved in \cite{MT_high_dim} that   $\double{-}{-}_\rho$ is Gerstenhaber for the Fox pairing $\rho$   evoked    in Example~\ref{Examples-rho}.3
which implies that $\rho$ is Gerstenhaber.

  Next, the above claims and   Theorem \ref{MAINMAIN} imply   that
the bracket \eqref{xbyc}  in $A_B$ derived from  a reducible Gerstenhaber double bracket   of degree $n$ in $A$ is   Gersthenhaber of degree~$n$.
We wonder
 whether   this   extends to non-reducible double brackets. Note that the reducibility property   is quite restrictive:
  for   instance, most of the    Poisson    double brackets     in    free associative algebras
  constructed in \cite{PW,ORS} are \emph{not} reducible with respect to the standard Hopf algebra structures in these algebras.
When   $B=\kk[\GL_N]$ and~$t$ is the usual trace (see Section~\ref{GL_ex}),    the bracket  \eqref{VdB_formula}  is carried to the bracket    \eqref{xbyc}
by the   algebra homomorphism $A_N \to A_B, x_{ij} \mapsto x_{(t_{ij})}$ for  all    $x\in A$ and $i,j\in \overline{N}$.
  Since the image of this homomorphism    generates  $A_B$,
 the bracket  \eqref{xbyc} in $A_B$ determined by any (possibly,  non-reducible) Gerstenhaber double bracket in~$A$ is  Gerstenhaber   for these~$B$ and~$t$.

  Finally, the above claims have similar implications in the quasi-Poisson case.
In particular, \eqref{tripleS}  implies   that   an antisymmetric  Fox pairing $\rho$ in a  cocommutative ungraded Hopf algebra     is quasi-Poisson
if and only if the double bracket $ \double{-}{-}_\rho$ defined by \eqref{double_rho} is quasi-Poisson in the sense of Van den Bergh \cite{VdB}.
By  \cite{MT_dim_2},   $\double{-}{-}_\rho$ is quasi-Poisson for the Fox pairing $\rho$   in  Section~\ref{surfaces}    so that    $\rho$ is quasi-Poisson   in our sense.

\section{Free commutative Hopf algebras} \label{free}

 We consider    commutative   Hopf algebras freely generated by  coalgebras in the sense of Takeuchi,
 see \cite{Ta} and \cite[Appendix B]{AK}.
We show that  balanced biderivations in these Hopf algebras  naturally arise from  cyclic bilinear forms  on  coalgebras    defined  in~\cite{Tu_rep_alg}.
 Unless otherwise mentioned, in this appendix,  by a module/algebra/coalgebra we mean an ungraded module/algebra/coalgebra.

    Let $M$ be a   coalgebra with comultiplication $\Delta_M$ and  counit $\varepsilon_M$. Takeuchi   introduced
    a commutative   Hopf algebra $ F(M)$ called the
    \emph {free commutative Hopf algebra generated by~$M$}.
  This  Hopf algebra   is  defined in \cite[Section 11]{Ta} as an initial object in the category of
   commutative Hopf algebras~$X$ endowed with a coalgebra homomorphism $M\to X$.
     It can be explicitly constructed as follows,  see \cite[Appendix B]{AK}.
 As an algebra, $F(M)$ is generated by the symbols $\{m^+,m^-\}_{m\in M}$  subject to  the following relations: for any $k\in \kk$ and $l,m\in M$,
\begin{equation} \label{relations_Takeuchi_1}
(km)^\pm = k m^\pm, \quad  (l+m)^\pm = l^\pm + m^\pm,
\end{equation}
\begin{equation} \label{relations_Takeuchi_2}
l^\pm\, m^\pm = m^\pm\, l^\pm, \quad  l^\pm\, m^\mp = m^\mp\, l^\pm,
\end{equation}
\begin{equation} \label{relations_Takeuchi_3}
\quad (m')^+ (m'')^- = \varepsilon_M(m)\, 1=  (m')^- (m'')^+,
\end{equation}
 where $\Delta_M(m)=m' \otimes m''$   in Sweedler's notation.
  It is easily verified that there are  algebra homomorphisms
 $$\Delta: F(M) \to F(M) \otimes F(M), \quad \varepsilon: F(M) \to \kk, \quad s:F(M) \to F(M)$$
  defined on  the generators by {\small
$$
\Delta(m^+) = (m')^+ \otimes (m'')^+, \ \Delta(m^-) =  (m'')^- \otimes (m')^-, \quad \varepsilon(m^\pm) =\varepsilon_M(m), \quad s(m^\pm) = m^\mp,
$$}
for any $m \in M$.
These homomorphisms turn~$F(M)$ into a commutative Hopf algebra which, together with the   map $  M \to F(M), m\mapsto  m^+$,
  has the desired universal property.

Consider the ideal  $I=\Ker \varepsilon \subset F(M)$. Computing  the module  $I/I^2$   from the presentation of $F(M)$,
we obtain that the linear map
$$
M\longrightarrow I/I^2, m\longmapsto m^+-\varepsilon (m^+)\! \mod I^2
$$ is an isomorphism.
As a consequence, the linear map $  M \to F(M), m\mapsto  m^+$  is   injective. This allows us to treat $M$ as a submodule of $F(M)$.
 By   the correspondence \eqref{correspondence},
every bilinear form $M\times M\to \kk$ extends uniquely to a biderivation   $F(M) \times F(M) \to \kk$.

A    bilinear form $\bullet_M: M \times M \to \kk$ is   said to be  \emph{cyclic}, see \cite{Tu_rep_alg},  if
\begin{equation} \label{cyclic_condition}
(l\bullet_M m'')\, m' \otimes m''' = (m \bullet_M l'')\, l''' \otimes l'
\end{equation}
for any $l,m\in M$.
 Applying $\varepsilon_M \otimes \varepsilon_M$ to both sides of \eqref{cyclic_condition}, we obtain   that   cyclic bilinear forms are  symmetric.

\begin{lemma} \label{cyclic_to_balanced}
Any cyclic bilinear form $\bullet_M:M \times M \to \kk$ extends uniquely to a balanced biderivation $  F(M) \times F(M) \to \kk$.
\end{lemma}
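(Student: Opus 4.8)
The plan is to first produce the biderivation extending $\bullet_M$ and then check that it is balanced; uniqueness will come for free. By the discussion preceding the statement, the linear map $M \to I/I^2$, $m \mapsto m^+ - \varepsilon(m^+)\!\mod I^2$, is an isomorphism, where $I = \Ker\varepsilon \subset F(M)$. Hence $\bullet_M$ transports through this isomorphism to a bilinear form on $I/I^2$, and the correspondence \eqref{correspondence} turns the latter into a biderivation $\bullet$ on $F(M)$, which is the unique biderivation restricting to $\bullet_M$ on $M \times M$. Since a balanced biderivation is in particular a biderivation, any balanced extension must equal $\bullet$; this gives uniqueness, and it remains only to verify that $\bullet$ satisfies \eqref{balanced}. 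Note that cyclicity has not been used yet — it enters solely in the balancedness check.

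The key structural step is to reduce \eqref{balanced} to the algebra generators $\{m^+,m^-\}_{m\in M}$. Both sides of \eqref{balanced} are bilinear in $(b,c)$, so it is enough to treat $b,c$ in a generating set, provided validity of \eqref{balanced} is stable under multiplying either argument. I would prove this stability directly: writing the twofold comultiplication of $b_1 b_2$ as $b_1'b_2' \otimes b_1''b_2'' \otimes b_1'''b_2'''$ and expanding $b_1b_2 \bullet d$ and $d \bullet b_1''b_2''$ by the derivation rules \eqref{bider}, the counit identity \eqref{cocounit} together with $s(b'')\,b' = \varepsilon(b)\,1_{F(M)}$ (which follows from \eqref{antip-prop} and involutivity of $s$ in the commutative Hopf algebra $F(M)$) collapse the right-hand side of \eqref{balanced} for $(b_1b_2,c)$ into the sum of the right-hand sides for $(b_1,c)$ and $(b_2,c)$, matching the left-hand side. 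The analogous computation handles the second argument, and both sides vanish when $b=1$ or $c=1$. Thus \eqref{balanced} for all $b,c$ reduces to the case $b,c \in \{m^+,m^-\}$.

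Evaluating $\bullet$ on generators is the last ingredient. Because the antipode of any Hopf algebra acts as $-\id$ on $I/I^2$ and $m^- = s(m^+)$, the class of $m^-$ in $I/I^2$ is the negative of that of $m^+$; hence $m^{\pm}\bullet n^{\pm} = m\bullet_M n$ and $m^{\pm}\bullet n^{\mp} = -\,m\bullet_M n$. Using the Takeuchi formulas so that the twofold comultiplication of $n^+$ is $(n')^+ \otimes (n'')^+ \otimes (n''')^+$ while that of $n^-$ is $(n''')^- \otimes (n'')^- \otimes (n')^-$ (the order reversing, as in \eqref{relations_Takeuchi_3}, because the $(-)^-$ generators invert the comultiplication), and $s(m^{\pm}) = m^{\mp}$, each of the four sign choices for $(b,c)$ rewrites \eqref{balanced} as the single identity
$$(m \bullet_M n'')\, (n')^-\, (n''')^+ = (n \bullet_M m'')\, (m''')^-\, (m')^+ \in F(M),$$
the signs cancelling in pairs. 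This identity is exactly the image of the cyclicity relation \eqref{cyclic_condition} (taking its first variable to be $m$ and its second to be $n$) under the linear map $M \otimes M \to F(M)$, $a \otimes b \mapsto a^-\,b^+$, so it holds at once.

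The main obstacle I anticipate is the reduction step of the second paragraph: propagating balancedness across products requires careful Hopf-algebraic bookkeeping with \eqref{bider}, \eqref{cocounit} and the antipode, and one must be attentive to the reversed Sweedler order in the comultiplication of $m^-$. A more conceptual alternative would bypass this by invoking Remark~\ref{about_bd} to identify balancedness with symmetry together with invariance under the adjoint coaction \eqref{adjoint_coaction}, reducing the claim to the assertion that cyclicity of $\bullet_M$ is precisely adjoint-invariance of the induced form on $I/I^2 \cong M$ (symmetry being automatic from \eqref{cyclic_condition} via uniqueness of the extension). I would nonetheless keep the direct computation as the primary argument, since it is entirely self-contained within this appendix.
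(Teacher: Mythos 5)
Your proof is correct and follows essentially the same route as the paper: extend $\bullet_M$ to a biderivation via the correspondence \eqref{correspondence}, establish a Leibniz-type stability of \eqref{balanced} under products (the paper phrases this as the defect map $\kappa(b,c)=(b\bullet c'')\,s(c')c'''-(c\bullet b'')\,s(b''')b'$ being a derivation in each variable), and then verify the resulting identity on generators, where it is exactly the image of the cyclicity condition \eqref{cyclic_condition} under $a\otimes b\mapsto a^-b^+$. The only cosmetic difference is that you check the four sign cases on the multiplicative generators $\{m^+,m^-\}$, whereas the paper observes that $\kappa$ annihilates $\kk\cdot 1+I^2$ in each variable and hence need only vanish on $M\times M$.
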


\begin{proof}
By the above, $\bullet_M$ extends uniquely to a biderivation $\bullet$ in $ F(M)  $. We only need to show that $\bullet$ is balanced.
Define a bilinear map $\kappa: F(M) \times F(M) \to F(M)$   by
$$
\kappa(b,c)=(b\bullet c'')\, s(c') c'''- (c \bullet b'') s(b''') b'
$$
for any $b,c\in F(M)$.  Straightforward   computations show that
$$
\kappa(b_1b_2,c) = \varepsilon(b_2)\, \kappa(b_1,c) + \varepsilon(b_1)\, \kappa(b_2,c), \quad \kappa(b,c_1 c_2) = \varepsilon(c_2)\, \kappa(b,c_1) + \varepsilon(c_1)\, \kappa(b,c_2)
$$
for any   $b,b_1,b_2,c,c_1,c_2 \in F(M)$.
Therefore the submodule  $\kk\cdot 1+I^2 $  of $F(M)$   is contained both in the left  and the  right annihilators of $\kappa$.
 Hence, $\kappa$ is fully determined by its restriction to $M\times M$. The   condition  \eqref{cyclic_condition}  implies that $\kappa(M, M)=0$. Hence $\kappa=0$ and $\bullet$ is   balanced.
\end{proof}

  The following   claim is a particular case  of Theorem~\ref{double_to_simple}.

\begin{corol} \label{free_case}
Let~$\rho$ be an antisymmetric Fox pairing    of degree $n\in \ZZ$ in a cocommutative  graded Hopf algebra~$A$.
Let ~$\bullet_M$  be a cyclic bilinear form in a   coalgebra $M$ and let   $B=F(M)$.
Then there is a unique $n$-graded   bracket $\bracket{-}{-}$ in $ A_B$    such that
\begin{eqnarray}  
 \bracket{x_b}{y_c}
\notag & =&  (-1)^{  \vert x' \vert \vert \rho(x'',y'')' \vert + \vert y' \vert \vert x \vert_n}   (b\bullet_M\! c'') \cdot\\
\label{specase} &&  \big(y' s_A(\rho(x'',y'')') x'\big)_{c'}\,  \big(\rho(x'',y'')''\big)_{c'''}
\end{eqnarray}
for any homogeneous $x,y \in A$ and $b,c \in M$.  This $n$-graded   bracket is antisymmetric.
\end{corol}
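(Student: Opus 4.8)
The plan is to realize this corollary as a direct instance of Theorem~\ref{double_to_simple}. First, by Lemma~\ref{cyclic_to_balanced} the cyclic bilinear form $\bullet_M$ extends uniquely to a balanced biderivation $\bullet$ in $B=F(M)$. Feeding this $\bullet$ together with $\rho$ into Theorem~\ref{double_to_simple} produces a unique antisymmetric $n$-graded bracket $\bracket{-}{-}$ in $A_B$ whose value on generators is given by \eqref{main_formula}. Thus existence, antisymmetry, and uniqueness are inherited at once from Theorem~\ref{double_to_simple}; the only thing left to establish is that, for $b,c$ lying in the submodule $M\subset B$, the general formula \eqref{main_formula} collapses to the asserted formula \eqref{specase}.

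Before computing I would record a reduction. Since $F(M)$ is generated as an algebra by the symbols $\{m^+,m^-\}_{m\in M}$, and since $x_{m^-}=x_{s_B(m^+)}=(s_A(x))_{m^+}$ by the antipode identity \eqref{antipodeidentity}, the relations \eqref{cocomult} show that $A_B$ is generated by the elements $\{x_m:x\in A,\,m\in M\}$. Consequently any $n$-graded bracket is determined by its values on such generators, so it suffices to match \eqref{main_formula} and \eqref{specase} there.

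For the computation itself I would start from \eqref{main_formula} with $b,c\in M$. Because $M$ is a subcoalgebra of $F(M)$, every iterated comultiplication of $b$ and of $c$ stays in $M$, so the biderivation $\bullet$ is evaluated only on $M\times M$, where it equals $\bullet_M$. I would then split the subscript $s_B(b^{(3)})b^{(1)}$ of $\rho(x',y')$ using \eqref{cocomult}, rewrite the antipodal factor via \eqref{antipodeidentity}, reorder the resulting $A$-generators using the cocommutativity \eqref{coco} of $A$, and fuse them with the multiplicativity relation \eqref{mult}. The decisive move is to invoke the cyclic property \eqref{cyclic_condition} of $\bullet_M$ --- equivalently the balanced identity \eqref{balanced} --- to rotate the comultiplication of $b$ so that $b$ is paired, as a whole, against a single (middle) comultiplication factor of $c$ inside the bilinear form, while the remaining factors of $c$ are distributed over the subscripts. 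This is precisely the coordinate-free analogue of the index-chaining carried out for $\GL_N$ in the derivation of \eqref{almost_VdB-}, and it delivers \eqref{specase}.

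The main obstacle is exactly this cyclic-rotation step: one must correctly align the four comultiplication factors of $b$ with those of $c$ and thread the pairing through \eqref{cyclic_condition}, all while tracking the Koszul signs coming from the $\ZZ$-grading of $A$. Once the rotation is arranged so that $b$ occurs only in the factor $(b\bullet_M c'')$, the remaining manipulations are the same bookkeeping as in the $\GL_N$ computation, and the sign appearing in \eqref{specase} should match that of \eqref{almost_VdB-} after the relabeling $x'\leftrightarrow x''$, $y'\leftrightarrow y''$ permitted by the cocommutativity of $A$.
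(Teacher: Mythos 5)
Your proposal is correct and follows essentially the same route as the paper's own proof: existence and antisymmetry via Lemma~\ref{cyclic_to_balanced} combined with Theorem~\ref{double_to_simple}, uniqueness from the fact that $A_B$ is generated by $\{x_m\}_{x\in A,\,m\in M}$, and a verification on these generators that reduces \eqref{main_formula} to \eqref{specase} using \eqref{cocomult}, \eqref{antipodeidentity}, the (co)commutativity relabelings, and the cyclic condition \eqref{cyclic_condition} (which the paper threads through in two stages, first moving all subscripts onto $b$ via the consequence $(l\bullet m'')\,m'=(m\bullet l')\,l''$ and only at the end flipping the roles of $b$ and $c$).
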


\begin{proof}
Since the algebra $B$ is generated by  the set $\{m, s(m)\}_{m\in M}$,  the algebra $A_B$ is generated by the set  $\{x_m\}_{x\in A, m\in M}$ which proves the unicity.
To prove the existence,   consider the balanced biderivation $\bullet$ in $B$  extending  $\bullet_M$ and the bracket $\bracket{-}{-}$ in $A_B$
 obtained by an application of Theorem~\ref{double_to_simple}   to $\rho$ and $\bullet$.
Note that \eqref{cyclic_condition} implies that
\begin{equation} \label{cyclic_bis}
(l \bullet  m'')\, m' = (m \bullet  l')\, l''
\end{equation}  for any $l,m \in M$.
Then, for any homogeneous $x,y \in A$ and $b,c \in M$,  we have
\begin{eqnarray*}
 && \bracket{x_b}{y_c} \\
 &=&          (-1)^{ \vert   x''\vert   \vert y' \vert_n}  (c''\bullet b^{(2)})\, \rho(x',y')_{ {{s}}(b^{(3)}) b^{(1)}}\,   x''_{b^{(4)}}\,   y''_{c' } \\
 & \by{cyclic_bis} &    (-1)^{ \vert   x''\vert   \vert y' \vert_n}  (c \bullet b^{(2)})\, \rho(x',y')_{ {{s}}(b^{(4)}) b^{(1)}}\,   x''_{b^{(5)}}\,   y''_{b^{(3)} } \\
 &=&   (-1)^{ \vert   x''\vert   \vert y' \vert_n}  (c \bullet b^{(2)})\, s_A(\rho(x',y')')_{ b^{(4)}}\, \rho(x',y')''_{  b^{(1)}} \,   x''_{b^{(5)}}\,   y''_{b^{(3)} } \\
 &=&   (-1)^{ \vert   x''\vert   \vert y' \rho(x',y')'' \vert_n +\vert y'' \vert \vert x y' \vert_n  }  (c \bullet b^{(2)})\, y''_{b^{(3)}}\, s_A(\rho(x',y')')_{ b^{(4)}}\,  x''_{b^{(5)}}\,  \rho(x',y')''_{  b^{(1)}} \\
 &=&   (-1)^{ \vert   x''\vert   \vert y' \rho(x',y')'' \vert_n +\vert y'' \vert \vert x y' \vert_n  }  (c \bullet b'')\, \big(y''\, s_A(\rho(x',y')')  x''\big)_{b'''}\,  \rho(x',y')''_{  b'} \\
 &=&   (-1)^{ \vert   x''\vert   \vert  \rho(x',y')' x' \vert +\vert y'' \vert \vert x y' \vert_n  }  (c \bullet b'')\, \big(y''\, s_A(\rho(x',y')')  x''\big)_{b'''}\,  \rho(x',y')''_{  b'} \\
 &=&    (-1)^{ \vert   x'\vert   \vert  \rho(x'',y')'  \vert +\vert y'' \vert \vert x y' \vert_n  }  (c \bullet b'')\, \big(y''\, s_A(\rho(x'',y')')  x'\big)_{b'''}\,  \rho(x'',y')''_{  b'} \\
 &=&    (-1)^{ \vert   x'\vert   \vert  \rho(x'',y'')'  \vert +\vert y' \vert \vert x  \vert_n  }  (c \bullet b'')\, \big(y'\, s_A(\rho(x'',y'')')  x'\big)_{b'''}\,  \rho(x'',y'')''_{  b'}
 \end{eqnarray*}
 so that \eqref{specase} now  follows from \eqref{cyclic_condition}.
\end{proof}

We briefly discuss trace-like elements of $F(M)$ lying in $M\subset F(M)$. Assume for simplicity that the underlying module of~$M$ is free of finite rank.
An element $t\in M  $ is cosymmetric  if and only if the bilinear form
$$
(-,-)_t:M^* \times M^* \longrightarrow \kk, \ (l,m) \longmapsto l(t')\, m(t'')
$$
is symmetric. An element $t\in M  $ is infinitesimally-nonsingular if and only if the form $(-,-)_t$ is nonsingular.
Consequently,  $t\in M  $ is   trace-like   if and only if the  algebra $M^*$ dual to $M$ and equipped with the bilinear form $(-,-)_t$ is a symmetric Frobenius algebra.
For a trace-like   $t\in M$, the  balanced biderivation $\bullet_t$ in $F(M)$ restricts to  a cyclic bilinear form on~$M$.
This connection between symmetric Frobenius algebras and cyclic bilinear forms   on coalgebras
was first observed    in \cite{Tu_rep_alg}. The bracket \eqref{xbyc} specializes in this case to the bracket   in \cite{Tu_rep_alg};
 this    directly follows from \eqref{specase} if the double bracket    in \eqref{xbyc} is reducible.

  For example, consider the  coalgebra $M= \big(\Mat_N(\kk)\big)^*$ dual to the algebra of $N\times N$ matrices $ \Mat_N(\kk)$.
Then the Hopf algebra $F(M)$ is nothing but the Hopf algebra $B$  from Section~\ref{GL_ex}.
Indeed, it is easily checked that $B$ verifies the universal property of $F(M)$  (cf.\@  \cite[Example B.3]{AK}).
 The trace-like element $t\in B$ pointed out  in Section~\ref{GL_ex} belongs to   $M\subset B$.
 Thus, the balanced biderivation $\bullet_t$ in $B$ is induced by  a cyclic bilinear form in $M$.
  Corollary \ref{free_case} applies and yields again the formula \eqref{almost_VdB-}.

\end{document}